\documentclass{amsart}

\usepackage{kantlipsum}
\usepackage[shortlabels]{enumitem}

\usepackage{amsmath,amscd}
\usepackage{amssymb}
\usepackage{mathtools}

\usepackage[all]{xy}

\usepackage[T1]{fontenc}

\usepackage{times}

\usepackage[T1]{fontenc}
\usepackage{graphicx}
\usepackage[svgnames]{xcolor}
\usepackage{framed}

\usepackage{tikz}

\usepackage[driverfallback=dvipdfm]{hyperref}

\author{Liran Shaul}
\address{Department of Algebra, Faculty of Mathematics and Physics, Charles University in Prague, Sokolovsk\'a 83, 186 75 Praha, Czech Republic}

\email{shaul@karlin.mff.cuni.cz}
%
%
%


\newtheorem{thm}[equation]{Theorem}
\newtheorem*{thm*}{Theorem}

\newtheorem{cthm}{Classical Theorem}

\newtheorem{cor}[equation]{Corollary}
\newtheorem{prop}[equation]{Proposition}
\newtheorem{lem}[equation]{Lemma}

\theoremstyle{definition}
\newtheorem{dfn}[equation]{Definition}
\newtheorem{rem}[equation]{Remark}

\newtheorem{exa}[equation]{Example}


\newcommand{\inj}{\hookrightarrow}

\newcommand{\opn}{\operatorname}
\newcommand{\cat}[1]{\operatorname{\mathsf{#1}}}

\newcommand{\mfrak}[1]{\mathfrak{#1}}

\newcommand{\mrm}[1]{\mathrm{#1}}

\renewcommand{\k}{\Bbbk}

\renewcommand{\a}{\mfrak{a}}
\renewcommand{\b}{\mfrak{b}}
\renewcommand{\c}{\mfrak{c}}

\newcommand{\m}{\mfrak{m}}
\newcommand{\n}{\mfrak{n}}
\newcommand{\p}{\mfrak{p}}
\newcommand{\q}{\mfrak{q}}
\newcommand{\injdim}{\operatorname{inj\,dim}}

\newcommand{\flatdim}{\operatorname{flat\,dim}}
\newcommand{\depth}{\operatorname{depth}}
\newcommand{\amp}{\operatorname{amp}}
\def\skewtimes{\ltimes\!}
\newcommand{\lcdim}{\operatorname{lc.\dim}}

\begin{document}

\title{The Cohen-Macaulay property in derived commutative algebra}

\begin{abstract}
By extending some basic results of Grothendieck and Foxby about local cohomology to commutative DG-rings,
we prove new amplitude inequalities about finite DG-modules of finite injective dimension over commutative local DG-rings,
complementing results of J{\o}rgensen and resolving a recent conjecture of Minamoto.
When these inequalities are equalities, we arrive to the notion of a local-Cohen-Macaulay DG-ring.
We make a detailed study of this notion,
showing that much of the classical theory of Cohen-Macaulay rings and modules can be generalized to the derived setting,
and that there are many natural examples of local-Cohen-Macaulay DG-rings.
In particular, local Gorenstein DG-rings are local-Cohen-Macaulay.
Our work is in a non-positive cohomological situation,
allowing the Cohen-Macaulay condition to be introduced to derived algebraic geometry,
but we also discuss extensions of it to non-negative DG-rings,
which could lead to the concept of Cohen-Macaulayness in topology.
\end{abstract}

\thanks{{\em Mathematics Subject Classification} 2010:
13C14, 13D45, 16E35, 16E45}

\setcounter{tocdepth}{1}
\setcounter{section}{-1}

\maketitle
\tableofcontents

\section{Introduction}
 
In classical commutative algebra,
the classes of Gorenstein and Cohen-Macaulay rings are among the most important classes of local rings.
In particular, the theory of Cohen-Macaulay rings and modules is among the most deep and influential parts of commutative algebra,
with numerous applications in commutative algebra, algebraic geometry and combinatorics.

The Gorenstein condition has been introduced long ago to higher algebra and related fields.
Its first incarnation was probably in the work \cite{FHT} of F{\'e}lix, Halperin and Thomas about Gorenstein spaces in topology.
Some other occurrences of it are in the works of Avramov and Foxby \cite{AF1} and Frankild, Iyengar and J{\o}rgensen \cite{FJ,FIJ}
about Gorenstein DG-rings, of Dwyer, Greenlees and Iyengar \cite{DGI} about Gorenstein $\mathbb{S}$-algebras (where $\mathbb{S}$ is the sphere spectrum), in the work of Lurie about Gorenstein spectral algebraic spaces \cite[Chapter 6.6.5]{Lu} and many more.

Despite the great success of the Gorenstein condition in higher algebra,
and of the Cohen-Macaulay condition in classical commutative algebra,
until now it was completely missing from higher algebra.
The aim of this paper is to extend the theory of Cohen-Macaulay rings and Cohen-Macaulay modules to the setting of commutative noetherian differential graded rings.

We work with commutative non-positive DG-rings $A = \bigoplus_{n=-\infty}^0 A^n$ with a differential of degree $+1$.
These include (and in characteristic zero are equivalent to) the normalizations of the simplicial commutative rings,
so they include affine derived schemes.

Given a commutative DG-ring (or a ring) $A$,
we denote by $\cat{D}(A)$ the unbounded derived category of $A$-modules.
For $M \in \cat{D}(A)$,
its amplitude is the number (or $+\infty$)
\[
\amp(M) = \sup\{i\mid \mrm{H}^i(M) \ne 0\} - \inf\{i\mid \mrm{H}^i(M) \ne 0\}.
\]

For a ring $A$, we denote by $\dim(A)$ the Krull dimension of $A$,
and similarly for an $A$-module $M$, $\dim(M)$ is the Krull dimension of $M$.
To describe the main results of this paper, 
let us first summarize some important facts from the classical theory of Cohen-Macaulay rings which we are going to generalize:

\begin{cthm}\label{cthmA}
Let $(A,\m)$ be a noetherian local ring.
Then the following are equivalent:
\begin{enumerate}
\item The ring $A$ is Cohen-Macaulay.
\item There is an $A$-regular sequence $x_1,\dots,x_d \in \m \subseteq A$ of length $d=\dim(A)$ 
which is a system of parameters of $A$,
and moreover, for each $1 \le i \le d$, the ring $A_i = A/(x_1,\dots,x_i)$ is a Cohen-Macaulay ring and $\dim(A_i) = \dim(A) - i$.
\item One has $\amp\left(\mrm{R}\Gamma_{\m}(A)\right) = 0$, i.e, the local cohomology of $A$ is concentrated in a single degree.
\item The $\m$-adic completion $(\widehat{A},\widehat{\m})$ is a Cohen-Macaulay ring.
\item The Bass conjecture holds: there exists a finitely generated $A$-module $M \ne 0$ of finite injective dimension;
that is, $0 \ne M \in \cat{D}^{\mrm{b}}_{\mrm{f}}(A)$ such that $\amp(M) = 0$ and $\injdim_A(M) < \infty$.
\end{enumerate}
If moreover $A$ has a dualizing complex $R$ then this is also equivalent to:
\begin{enumerate}
\setcounter{enumi}{5}
\item One has $\amp\left(R\right) = 0$, i.e, $A$ has a dualizing module.
\end{enumerate}
Furthermore, the following rings satisfy these equivalent statements:
\begin{enumerate}[(a)]
\item Gorenstein rings.
\item Local rings $A$ with $\dim(A) = 0$.
\end{enumerate}
\end{cthm}

A few remarks are in order. 
First, that item (5) holds is the proof of the Bass conjecture, 
which was introduced in \cite{Ba},
due to Peskine and Szpiro (see \cite{PS}).
A noetherian local ring has a dualizing complex if and only if it a quotient of a Gorenstein ring.
This is a theorem of Kawasaki (see \cite{Ka}), proving a conjecture of Sharp. 
The fact that a local ring with a dualizing complex is Cohen-Macaulay if and only if it has a dualizing module is a consequence of Grothendieck's local duality theorem.

Next, let us recall some basics of the theory of Cohen-Macaulay modules over Cohen-Macaulay rings which we will generalize:

\begin{cthm}\label{cthmB}
Let $(A,\m)$ be a noetherian local Cohen-Macaulay ring with a dualizing module $R$.
Denote by $\cat{CM}(A)$ the category of Cohen-Macaulay $A$-modules,
and by $\cat{MCM}(A)$ its full subcategory of maximal Cohen-Macaulay $A$-modules.
Then the following hold:
\begin{itemize}
\item
The functor $D(-) := \mrm{R}\opn{Hom}_A(-,R)$ induces a duality on $\cat{CM}(A)$.
More precisely, if $M \in \cat{CM}(A)$, then $D(M)$ is a shift of an object of $\cat{CM}(A)$,
and the natural map $M \to D(D(M))$ is an isomorphism.
\item 
The above duality restricts to a duality on $\cat{MCM}(A)$, so that
if $M\in \cat{MCM}(A)$, then its dual $D(M)$ is a shift of an object in $\cat{MCM}(A)$.
\end{itemize}
Moreover, we have that $A,R \in \cat{MCM}(A)$.
\end{cthm}

We wish to generalize these results to derived commutative algebra.
We say that a DG-ring $A$ is noetherian if the ring $\mrm{H}^0(A)$ is a noetherian ring,
and for each $i <0$ the $\mrm{H}^0(A)$-module $\mrm{H}^i(A)$ is finitely generated.
If $A$ is noetherian and $(\mrm{H}^0(A),\bar{\m},\k)$ is a local ring,
we say that $(A,\bar{\m})$ (or $(A,\bar{\m},\k)$) is a noetherian local DG-ring.
We will recall in Section \ref{sec:lc} the notion of local cohomology
of a DG-ring $A$ with respect to a finitely generated ideal in $\mrm{H}^0(A)$.
In particular, if $(A,\bar{\m})$ is a noetherian local DG-ring,
attached to it is the local cohomology functor
\[
\mrm{R}\Gamma_{\bar{\m}}:\cat{D}(A) \to \cat{D}(A).
\]
The analogue of the notion of a dualizing complex over a DG-ring is called a dualizing DG-module,
and is recalled in Section \ref{sec:dc}.
The notion of a regular sequence in the DG-setting is recalled in Definition \ref{dfn:seq}.

As a first step to generalize Classical Theorem \ref{cthmA}, 
we prove the following new inequalities about the amplitude of local cohomology and of dualizing DG-modules,
and on the length of regular sequences over noetherian local DG-rings:

\begin{thm}\label{thm:main-bounds}
The following inequalities hold:
\begin{enumerate}[wide, labelwidth=!, labelindent=0pt]
\item If $(A,\bar{\m})$ is a noetherian local DG-ring with bounded cohomology then
\[
\amp(A) \le \amp\left(\mrm{R}\Gamma_{\bar{\m}}(A)\right).
\]
\item If $(A,\bar{\m})$ is a noetherian local DG-ring with bounded cohomology,
and $\opn{seq.depth}(A)$ denotes the maximal length of an $A$-regular sequence contained in $\bar{\m}$,
then 
\[
\opn{seq.depth}(A) \le \dim(\mrm{H}^0(A)).
\]
\item If $A$ is a noetherian DG-ring, and $R$ is a dualizing DG-module over $A$
then 
\[
\amp(A) \le \amp(R).
\]
\end{enumerate}
\end{thm}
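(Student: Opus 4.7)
The plan is to prove the three parts in the order \textup{(2)}, \textup{(1)}, \textup{(3)}, with \textup{(3)} reducing to \textup{(1)} via DG local duality.

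For \textup{(2)}: the definition of an $A$-regular sequence in $\bar\m$ should imply that the image $\bar x \in \mrm{H}^0(A)$ of any regular element is a non-zero-divisor on $\mrm{H}^0(A)$ (this is forced by the long exact sequence associated with the Koszul triangle $A \xrightarrow{\tilde x} A \to A/\!/\tilde x$, where $\tilde x \in A^0$ lifts $\bar x$, together with the amplitude-dropping property of DG-regularity). Iterating along an $A$-regular sequence $x_1,\dots,x_n \in \bar\m$ and invoking Krull's principal ideal theorem gives $\dim\mrm{H}^0\bigl(A/\!/(x_1,\dots,x_n)\bigr) = \dim\mrm{H}^0(A) - n \ge 0$, whence $n \le \dim\mrm{H}^0(A)$.

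For \textup{(1)}: I would analyze the convergent spectral sequence
\[
E_2^{p,q} = \mrm{H}^p_{\bar\m}\bigl(\mrm{H}^q(A)\bigr) \;\Rightarrow\; \mrm{H}^{p+q}\bigl(\mrm{R}\Gamma_{\bar\m}(A)\bigr)
\]
afforded by the theory of Section \ref{sec:lc}. Normalizing $\sup A = 0$ and $\inf A = -s$ with $s = \amp(A)$, the $E_2$-page is supported on the rectangle $[0,d]\times[-s,0]$, where $d = \dim\mrm{H}^0(A)$. The top corner $(d,0)$ is nonzero by Grothendieck non-vanishing, and every $d_r$-differential at this position has source or target outside the rectangle, so $E_\infty^{d,0} \neq 0$ and hence $\sup \mrm{R}\Gamma_{\bar\m}(A) = d$. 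It then suffices to exhibit nonzero cohomology of $\mrm{R}\Gamma_{\bar\m}(A)$ in some total degree $\le d-s$. The natural candidate is the corner $\bigl(\dim\mrm{H}^{-s}(A),\,-s\bigr)$ of the bottom row, which is nonzero on $E_2$ by Grothendieck non-vanishing and whose outgoing $d_r$-differentials vanish automatically (their targets have $q < -s$). The principal obstacle---and the most delicate step of the whole theorem---is controlling the incoming $d_r$-differentials, which could in principle cancel this class; I would try either a direct \v{C}ech-complex computation exploiting the DG-ring multiplicative structure of $\mrm{H}^0(A)$ on $\mrm{H}^{-s}(A)$ to produce a surviving class, or an induction on $d$ using \textup{(2)} to extract an element $\tilde x \in \bar\m$ and pass through $A \xrightarrow{\tilde x} A \to A/\!/\tilde x$ under $\mrm{R}\Gamma_{\bar\m}$ to reduce to smaller dimension.

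Finally, \textup{(3)} is deduced from \textup{(1)}. Since $A$ is non-positive and $\mrm{H}^0(A) \neq 0$, one has $\sup A_\mathfrak{m} = 0$ for every maximal ideal $\mathfrak{m}$ of $\mrm{H}^0(A)$, so $\amp(A) = \max_\mathfrak{m} \amp(A_\mathfrak{m})$, and since $R_\mathfrak{m}$ is a dualizing DG-module over $A_\mathfrak{m}$ by localization, the problem reduces to the local case. There, the DG version of Grothendieck's local duality---developed in Section \ref{sec:dc}---yields a quasi-isomorphism $\mrm{R}\Gamma_{\bar\m}(A) \simeq \opn{Hom}_{\mrm{H}^0(A)}(R, E)$ with $E$ the injective hull of the residue field; Matlis duality reverses degrees and preserves amplitudes on bounded complexes with finitely generated cohomology, so $\amp \mrm{R}\Gamma_{\bar\m}(A) = \amp R$, and combining with \textup{(1)} yields $\amp A \le \amp R$.
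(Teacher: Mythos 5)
Your argument for part (2) rests on a step that is false. You claim that the definition of an $A$-regular sequence forces each $\bar{x}$ to be a non-zero-divisor on $\mrm{H}^0(A)$, so that Krull's principal ideal theorem makes $\dim\mrm{H}^0$ drop by one at each stage. But in Definition \ref{dfn:seq} regularity is tested on the \emph{bottom} cohomology $\mrm{H}^{\inf(A)}(A)$, not on $\mrm{H}^0(A)$, and the long exact sequence of the triangle $A\xrightarrow{\bar{x}}A\to A//\bar{x}$ only shows that the kernel of $\bar{x}$ on $\mrm{H}^0(A)$ contributes to $\mrm{H}^{-1}(A//\bar{x})$; it does not force injectivity in degree $0$. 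Example \ref{exa:reg-not-par} (with $A=R\skewtimes M[1]$, $R=\k[[x,y]]/(xy)$, $M=R/(x)$) gives an $A$-regular element $y$ that is a zero-divisor on $\mrm{H}^0(A)$ and for which $\dim(\mrm{H}^0(A//y))=\dim(\mrm{H}^0(A))$ — exactly the phenomenon the introduction flags as the reason (2) is non-trivial. The actual proof does not induct on a system of parameters at all: it identifies $\opn{seq.depth}(A)$ with $\depth_A(A)-\inf(A)$ (Minamoto's result, quoted as (\ref{eqn:chdepth})), proves $\inf\mrm{R}\Gamma_{\bar{\m}}(A)=\depth_A(A)$ and $\depth_A(A)\le\dim(\mrm{H}^{\inf(A)}(A))+\inf(A)$ (Propositions \ref{prop:depth} and \ref{prop:depth-dg-bound}, via the reduction functor $\mrm{R}\opn{Hom}_A(\mrm{H}^0(A),-)$ and Foxby's bound over $\mrm{H}^0(A)$), and deduces (2) from the same local-cohomology estimates as (1) (Corollary \ref{cor:chdepth}).

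For part (1) you leave the decisive step open yourself: your computation of $\sup\mrm{R}\Gamma_{\bar{\m}}(A)=d$ is fine (it matches Corollary \ref{cor:non-vanish}), but the survival of the class at $(\dim\mrm{H}^{-s}(A),-s)$ against incoming differentials is precisely the substantive content, and neither suggested remedy is carried out; the fallback induction "using (2)" relies on the broken step above, and even if a killing differential originates from a nonzero $E_2$-entry of lower total degree, that entry need not survive to $E_\infty$, so no nonvanishing of the abutment in total degree $\le d-s$ is produced. The paper bypasses the spectral sequence entirely, obtaining $\inf\mrm{R}\Gamma_{\bar{\m}}(A)=\depth_A(A)\le\dim(\mrm{H}^{\inf(A)}(A))+\inf(A)\le d+\inf(A)$ as above (Corollary \ref{cor:rgammainf}), which together with $\sup=d$ gives (1) in Theorem \ref{thm:main-amp}. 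Your part (3) is essentially the paper's argument (localize at a prime where the amplitude of $A$ is preserved, use that $R_{\bar{\p}}$ stays dualizing, then local duality plus (1)), but it is conditional on completing (1), and the case $\amp(A)=\infty$ needs a separate word (the paper cites \cite{Ye1}).
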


This result is contained in Theorem \ref{thm:main-amp} and Corollary \ref{cor:chdepth} below.
It is worth noting that item (2) above is non-trivial: Given a noetherian local DG-ring $(A,\bar{\m})$,
and given $\bar{x} \in \bar{\m}$, the noetherian local DG-ring $A//\bar{x}$ is given by a Koszul-complex type construction which is recalled in the beginning of Section \ref{sec:regseq}. 
Unlike rings, the situation for DG-rings is that even if $\bar{x} \in \bar{\m}$ is $A$-regular,
it could happen that $\dim(\mrm{H}^0(A//\bar{x})) = \dim(\mrm{H}^0(A))$ (see Example \ref{exa:reg-not-par}).

Given a commutative DG-ring $A$, and a finitely generated ideal $\bar{\a} \subseteq \mrm{H}^0(A)$,
the derived $\bar{\a}$-adic completion of $A$, 
denoted by $\mrm{L}\Lambda(A,\bar{\a})$, is a commutative DG-ring,
defined in \cite{Sh1}, and recalled in Section \ref{sec:DerComp} below.

In view of the inequalities in Theorem \ref{thm:main-bounds},
it is natural to study DG-rings for which these are equalities.
Let us say that a noetherian local DG-ring $(A,\bar{\m})$ with bounded cohomology is \textbf{local-Cohen-Macaulay} if there is an equality 
$\opn{seq.depth}(A) = \dim(\mrm{H}^0(A))$.
We characterize local-Cohen-Macaulay DG-rings in the next result which is a precise derived analogue of Classical Theorem \ref{cthmA}.

\begin{thm}\label{thm:CMMain}
Let $(A,\bar{\m})$ be a noetherian local DG-ring with bounded cohomology.
Then the following are equivalent:
\begin{enumerate}
\item The DG-ring $A$ is local-Cohen-Macaulay, i.e, $\opn{seq.depth}(A) = \dim(\mrm{H}^0(A))$.
\item There exists an $A$-regular sequence $\bar{x}_1,\dots,\bar{x}_d \in \bar{\m}\subseteq \mrm{H}^0(A)$ of length 
$d=\dim(\mrm{H}^0(A))$ which is a system of parameters of $\mrm{H}^0(A)$,
and moreover, for each $1 \le i \le d$, 
the DG-ring $A_i = A//(\bar{x}_1,\dots,\bar{x}_i)$ is local-Cohen-Macaulay,
and there is an equality
$\dim(\mrm{H}^0(A_i)) = \dim(\mrm{H}^0(A)) - i$.
\item There is an equality $\amp\left(\mrm{R}\Gamma_{\bar{\m}}(A)\right) = \amp(A)$.
\item The derived $\bar{\m}$-adic completion $\mrm{L}\Lambda(A,\bar{\m})$ is local-Cohen-Macaulay.
\item The analogue of the Bass conjecture holds: 
there exists $0 \ne M \in \cat{D}^{\mrm{b}}_{\mrm{f}}(A)$ such that $\amp(M) = \amp(A)$,
and $\injdim_A(M) < \infty$.\footnote{To be precise, we show that the other conditions imply this condition, and that the converse holds under the additional assumption that $A$ has a noetherian model, see Remark \ref{rem:noet-model} for details why this extra assumption is needed.}
\end{enumerate}
If moreover $A$ has a dualizing DG-module $R$
then this is also equivalent to:
\begin{enumerate}
\setcounter{enumi}{5}
\item There is an equality $\amp\left(R\right) = \amp(A)$.
\end{enumerate}
Furthermore, the following DG-rings are local-Cohen-Macaulay:
\begin{enumerate}[(a)]
\item Local Gorenstein DG-rings.
\item Local DG-rings $(A,\bar{\m})$ with $\dim(\mrm{H}^0(A)) = 0$.
\end{enumerate}
\end{thm}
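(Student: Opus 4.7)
My plan is to organize everything around one central identity relating amplitude of local cohomology to depth and dimension, and to deduce each numbered equivalence from it. The earlier sections on local cohomology should supply the two basic formulas
\[
\sup \mrm{R}\Gamma_{\bar{\m}}(A) = \dim(\mrm{H}^0(A)), \qquad \inf \mrm{R}\Gamma_{\bar{\m}}(A) = \opn{seq.depth}(A) + \inf(A),
\]
whose inequality forms are exactly Theorem \ref{thm:main-bounds}(1),(2). Since $A$ is a non-positive local DG-ring we have $\sup(A) = 0$, so $\amp(A) = -\inf(A)$, and subtracting the two displays gives
\[
\amp\mrm{R}\Gamma_{\bar{\m}}(A) \; - \; \amp(A) \;=\; \dim(\mrm{H}^0(A)) \;-\; \opn{seq.depth}(A).
\]
This identity makes (1) $\Leftrightarrow$ (3) automatic and will be the workhorse throughout the proof.

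For (1) $\Leftrightarrow$ (2) I would induct on $d = \dim(\mrm{H}^0(A))$. Assuming (1), prime avoidance inside $\mrm{H}^0(A)$—avoiding simultaneously the minimal primes of $\mrm{H}^0(A)$ realizing the maximal dimension and the associated primes of the finitely many nonzero $\mrm{H}^i(A)$—produces $\bar{x} \in \bar{\m}$ that is at once $A$-regular and a parameter of $\mrm{H}^0(A)$; Example \ref{exa:reg-not-par} is exactly the warning that the parameter condition is a real extra requirement in the DG-setting. The defining triangle of $A//\bar{x}$ applied to $\mrm{R}\Gamma_{\bar{\m}}(-)$, combined with the central identity above, then shows $\opn{seq.depth}(A//\bar{x}) = d - 1 = \dim(\mrm{H}^0(A//\bar{x}))$, so $A//\bar{x}$ is again local-CM with strictly smaller $\mrm{H}^0$-dimension, and the inductive hypothesis delivers the remainder of the sequence and the CM property of each $A_i$. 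The reverse direction (2) $\Rightarrow$ (1) is immediate since cutting down by such an $\bar{x}$ decreases both $\opn{seq.depth}(A)$ and $\dim(\mrm{H}^0(A))$ by one.

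The remaining equivalences should be short. (3) $\Leftrightarrow$ (4) follows because derived $\bar{\m}$-adic completion preserves $\amp(A)$ for DG-rings of bounded cohomology and preserves $\amp\mrm{R}\Gamma_{\bar{\m}}(A)$ by MGM duality. (3) $\Leftrightarrow$ (6) follows from local duality: $\mrm{R}\Gamma_{\bar{\m}}(A)$ is the Matlis dual of $R$, Matlis duality preserves amplitude, hence $\amp\mrm{R}\Gamma_{\bar{\m}}(A) = \amp(R)$, and Theorem \ref{thm:main-bounds}(3) supplies the one-sided inequality needed for the iff. For (5), the forward direction is immediate after reducing via (4) to the derived completion (where a dualizing DG-module is available under the noetherian model hypothesis), since $M = R$ then satisfies $\amp(M) = \amp(A)$ by (6) and has finite injective dimension by construction. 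The statements (a) and (b) are one line each: a local Gorenstein DG-ring has $R \simeq A[n]$, giving (6) trivially; and if $\dim(\mrm{H}^0(A)) = 0$ then every element of $\bar{\m}$ is nilpotent on $\mrm{H}^0(A)$, hence a zero-divisor on each $\mrm{H}^i(A)$, so $\opn{seq.depth}(A) = 0 = \dim(\mrm{H}^0(A))$. The hardest step I foresee is the reverse direction in (5): even classically the Bass conjecture rests on the Peskine--Szpiro/Roberts intersection theorem, and transporting such an input to the DG-setting is precisely what forces the noetherian model assumption recorded in the footnote; the strategy there would be to pass to a noetherian model, apply the classical intersection theorem to bound $\amp(R)$ by $\amp(M) = \amp(A)$, and then invoke (6). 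Apart from this point, the entire proof is careful amplitude bookkeeping anchored on the two formulas for $\sup$ and $\inf$ of local cohomology.
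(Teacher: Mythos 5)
Most of your outline coincides with the paper's actual architecture: your central identity $\amp\left(\mrm{R}\Gamma_{\bar{\m}}(A)\right)-\amp(A)=\dim(\mrm{H}^0(A))-\opn{seq.depth}(A)$ is exactly Corollary \ref{cor:chdepth}, obtained from Theorem \ref{thm:non-vanish}, Proposition \ref{prop:depth} and Minamoto's formula $\opn{seq.depth}_A(A)=\depth_A(A)-\inf(A)$; your prime-avoidance set for (1)$\Rightarrow$(2) is the paper's $\opn{Ass}_A(A)\cup W_0^A(A)$ (for $M=A$ the set $W_0^A(A)$ is precisely the set of minimal primes of $\mrm{H}^0(A)$ of maximal coheight, so your description matches); and (3)$\Leftrightarrow$(4), (3)$\Leftrightarrow$(6), (a) and (b) are handled in essentially the same way as in the paper (invariance of $\amp$, $\lcdim$ and $\depth$ under derived completion; local duality $\amp(\mrm{R}\Gamma_{\bar{\m}}(A))=\amp(R)$; $A$ dualizing over itself in the Gorenstein case; nilpotence of $\bar{\m}$ in dimension zero).

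The genuine gap is in your treatment of (5). For the forward direction, ``reduce via (4) to the derived completion and take $M=R$'' does not prove the statement: (5) asks for $M\in\cat{D}^{\mrm{b}}_{\mrm{f}}(A)$ with $\injdim_A(M)<\infty$, whereas a dualizing DG-module $R$ over $B=\mrm{L}\Lambda(A,\bar{\m})$ has cohomology finitely generated only over $\mrm{H}^0(B)=\Lambda_{\bar{\m}}(\mrm{H}^0(A))$, which is not module-finite over $\mrm{H}^0(A)$ unless $A$ is already complete, and its injective dimension is finite over $B$, not a priori over $A$; there is no general descent of condition (5) along $A\to B$, so the reduction you invoke is unavailable. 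The paper instead constructs $M$ directly over $A$: by Corollary \ref{cor:CMhasSP} choose a maximal $A$-regular sequence $\bar{x}_1,\dots,\bar{x}_d$ which is a system of parameters, set $N=A//(\bar{x}_1,\dots,\bar{x}_d)$ (so $\flatdim_A(N)<\infty$, $\amp(N)=\amp(A)$, and each $\mrm{H}^{-n}(N)$ has finite length because $\mrm{H}^0(N)$ is artinian), and take $M=\mrm{R}\opn{Hom}_A(N,E(A,\bar{\m}))$; Matlis duality gives $M\in\cat{D}^{\mrm{b}}_{\mrm{f}}(A)$ with $\amp(M)=\amp(A)$, and the adjunction $\mrm{R}\opn{Hom}_A(-,M)\cong\mrm{R}\opn{Hom}_A(-\otimes^{\mrm{L}}_A N,E)$ gives $\injdim_A(M)<\infty$. (Note also that this direction requires no noetherian model, contrary to your parenthetical.) For the converse of (5) your sketch leans on a dualizing DG-module which $A$ need not possess; the paper instead quotes J{\o}rgensen's amplitude inequality $\amp(M)\ge\amp\left(\mrm{R}\Gamma_{\bar{\m}}(A)\right)$ for $M\in\cat{D}^{\mrm{b}}_{\mrm{f}}(A)$ of finite injective dimension --- this is the only place the noetherian model is used --- and combines it with Theorem \ref{thm:main-amp}(1) to get both $\amp(M)\ge\amp(A)$ and the implication (5)$\Rightarrow$(3).
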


The proof of this result takes the majority of Sections \ref{sec:CMDG} and \ref{sec:regseq} below.
The reason for the terminology \textbf{local-Cohen-Macaulay} is that, 
unlike the case of rings, this property need not be preserved under localization.
See Section \ref{sec:localiz} below for a discussion and for a global variant of this property.

Next, we study local-Cohen-Macaulay DG-modules over a local-Cohen-Macaulay DG-ring, and prove an analogue of Classical Theorem \ref{cthmB}.
We give in Section \ref{sec:MCM} below definitions of local-Cohen-Macaulay DG-modules and maximal local-Cohen-Macaulay DG-modules, and show in Section \ref{sec:MCM} that:

\begin{thm}
Let $(A,\bar{\m})$ be a noetherian local-Cohen-Macaulay DG-ring,
and let $R$ be a dualizing DG-module over $A$.
Denote by $\cat{CM}(A)$ the category of local-Cohen-Macaulay DG-modules over $A$.
Let $\cat{MCM}(A)$ be the full subcategory of $\cat{CM}(A)$ which consists of
maximal local-Cohen-Macaulay DG-modules over $A$.
Then the following hold:
\begin{itemize}
\item
The functor $\mrm{R}\opn{Hom}_A(-,R)$ induces a duality on $\cat{CM}(A)$.
\item 
The above duality restricts to a duality on $\cat{MCM}(A)$, so that
if $M\in \cat{MCM}(A)$, then its dual $\mrm{R}\opn{Hom}_A(M,R)$ is an object in $\cat{MCM}(A)$.
\end{itemize}
Moreover, we have that $A,R \in \cat{MCM}(A)$.
\end{thm}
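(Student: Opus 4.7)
The strategy is to prove both dualities by transporting the CM-type amplitude equalities along $D = \mrm{R}\opn{Hom}_A(-, R)$ by means of the derived local duality isomorphism
\[
\mrm{R}\Gamma_{\bar{\m}}\left(\mrm{R}\opn{Hom}_A(M,R)\right) \simeq \mrm{R}\opn{Hom}_A\left(M, \mrm{R}\Gamma_{\bar{\m}}(R)\right),
\]
which is available for finite DG-modules. The key input from the hypothesis that $A$ is local-Cohen-Macaulay is Theorem \ref{thm:CMMain}(6), which gives $\amp(R) = \amp(A)$, and Theorem \ref{thm:CMMain}(3), which gives $\amp\left(\mrm{R}\Gamma_{\bar{\m}}(A)\right) = \amp(A)$; applying local duality to $A$ itself, $\mrm{R}\Gamma_{\bar{\m}}(R)$ will play the role of a shift of the injective hull of the residue field in the classical Matlis duality, up to a predictable shift by $\amp(A)$.

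First, I would use the general biduality theorem for dualizing DG-modules (recalled in Section \ref{sec:dc}) to obtain that $D$ sends $\cat{D}^{\mrm{b}}_{\mrm{f}}(A)$ into itself and that $M \to D(D(M))$ is an isomorphism there; this gives the formal duality statement as soon as we verify that $D$ preserves each of the subcategories $\cat{CM}(A)$ and $\cat{MCM}(A)$. For this preservation, given $M \in \cat{CM}(A)$ I would apply the local duality isomorphism above and compute the amplitude of $\mrm{R}\Gamma_{\bar{\m}}(D(M))$ in terms of the amplitude of $M$, its local cohomology, and $\amp(A)$. The CM-equality for $M$, together with the concentration of $\mrm{R}\Gamma_{\bar{\m}}(R)$ derived from Theorem \ref{thm:CMMain}, forces an equality in the local cohomology amplitude bound of Theorem \ref{thm:main-bounds}(1) applied to $D(M)$, so $D(M) \in \cat{CM}(A)$. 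For $M \in \cat{MCM}(A)$ one additionally tracks the equality $\opn{seq.depth}(M) = \dim(\mrm{H}^0(A))$ through the same local duality computation, obtaining the maximality for $D(M)$.

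Finally, $A \in \cat{MCM}(A)$ is simply the assumption that $A$ is local-Cohen-Macaulay; and $R \in \cat{MCM}(A)$ follows from $A \in \cat{MCM}(A)$ via the isomorphism $R \simeq D(A)$ and the preservation of $\cat{MCM}(A)$ under $D$ established in the previous step. The main obstacle is the amplitude bookkeeping: unlike the classical case, neither $R$ nor $\mrm{R}\Gamma_{\bar{\m}}(R)$ is concentrated in a single degree, both carrying amplitude exactly $\amp(A)$, so every Matlis-type identification must be shifted by $\amp(A)$ and the extremal indices on each side of local duality must be matched carefully in order to upgrade the amplitude inequalities of Theorem \ref{thm:main-bounds} to the equalities defining membership in $\cat{CM}(A)$ and $\cat{MCM}(A)$.
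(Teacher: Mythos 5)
Your proposal is correct and takes essentially the same route as the paper: the DG local duality theorem of \cite{Sh2} (equivalently, the identification of $\mrm{R}\Gamma_{\bar{\m}}(R)$ with the injective DG-module $E(A,\bar{\m})$) converts amplitudes of local cohomology into amplitudes of $R$-duals via a Matlis-type computation, and together with biduality and the identity $\lcdim(\mrm{R}\opn{Hom}_A(M,R)) - \sup(\mrm{R}\opn{Hom}_A(M,R)) = \opn{seq.depth}_A(M)$ this is exactly how Proposition \ref{CMbyDual}, Theorem \ref{thm:str-of-dc} and the final propositions of Section \ref{sec:MCM} establish the dualities on $\cat{CM}(A)$ and $\cat{MCM}(A)$ and the memberships $A,R\in\cat{MCM}(A)$. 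One small repair: the equality $\amp(\mrm{R}\opn{Hom}_A(M,R))=\amp(A)$ should come from applying the same local duality to $M$ itself (as in Proposition \ref{CMbyDual}), not from ``forcing equality'' in Theorem \ref{thm:main-amp}(1) applied to the dual, since that inequality is stated for DG-rings rather than for DG-modules.
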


Let us now describe the rest of the contents of this paper.
In Section \ref{sec:prel} we gather various preliminaries about DG-rings that will be used throughout this paper.
In sections \ref{sec:KDim} and \ref{sec:depth} we make a detailed study of local cohomology in the DG setting.
The main result is Theorem \ref{thm:non-vanish} which is a DG version of Grothendieck's vanishing and non-vanishing theorems for local cohomology.
We introduce the notion of a local-Cohen-Macaulay DG-ring in Section \ref{sec:CMDG},
give examples, and study some its basic properties.
Then, in Section \ref{sec:regseq} we study regular sequences, associated primes and other related notions in the DG-setting,
following works of Christensen and Minamoto.
Using these ideas and our results about local cohomology, 
we show in Corollary \ref{cor:DGhasSOP} that:

\begin{thm}
Let $(A,\bar{\m})$ be a noetherian local DG-ring with bounded cohomology.
Then there exists a maximal $A$-regular sequence $\bar{x}_1,\dots,\bar{x}_n \in \bar{\m}$
such that $\bar{x}_1,\dots,\bar{x}_n$ can be completed to system of parameters of $\mrm{H}^0(A)$.
\end{thm}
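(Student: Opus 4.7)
The plan is to argue by induction on $n := \opn{seq.depth}(A)$, which is finite by Theorem~\ref{thm:main-bounds}(2). For the base case $n = 0$ the empty sequence is vacuously a maximal $A$-regular sequence, and it can be completed to a system of parameters of $\mrm{H}^0(A)$ since the noetherian local ring $\mrm{H}^0(A)$ admits one. For the inductive step with $n \ge 1$, the main task is to produce a single element $\bar{x}_1 \in \bar{\m}$ that is simultaneously $A$-regular and part of a system of parameters of $\mrm{H}^0(A)$; one can then replace $A$ by $A//\bar{x}_1$ and invoke the induction hypothesis.

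To build such an $\bar{x}_1$ I would invoke the theory of associated primes of DG-modules developed in Section~\ref{sec:regseq} following Christensen and Minamoto. That theory provides a finite set $\opn{Ass}(A) \subseteq \opn{Spec}(\mrm{H}^0(A))$ together with the characterization that an element $\bar{x} \in \mrm{H}^0(A)$ is $A$-regular if and only if it avoids every prime in $\opn{Ass}(A)$. On the commutative-algebra side, an element $\bar{x} \in \bar{\m}$ is part of a system of parameters of $\mrm{H}^0(A)$ if and only if it avoids every prime in the finite set $S$ consisting of those minimal primes $\p$ of $\mrm{H}^0(A)$ with $\dim(\mrm{H}^0(A)/\p) = \dim(\mrm{H}^0(A))$. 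By prime avoidance, a suitable $\bar{x}_1$ exists unless $\bar{\m}$ is contained in some single prime belonging to $\opn{Ass}(A) \cup S$. If $\bar{\m} \subseteq \p \in \opn{Ass}(A)$ then $\bar{\m} = \p \in \opn{Ass}(A)$, so every element of $\bar{\m}$ is a zero-divisor on $A$, forcing $n = 0$, a contradiction. If instead $\bar{\m} \subseteq \p \in S$ then $\bar{\m}$ is itself a minimal prime of $\mrm{H}^0(A)$, so $\dim(\mrm{H}^0(A)) = 0$, and Theorem~\ref{thm:main-bounds}(2) again forces $n = 0$, a contradiction.

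With $\bar{x}_1$ in hand, set $A' := A//\bar{x}_1$. Then $A'$ is a noetherian local DG-ring with bounded cohomology, with $\mrm{H}^0(A') = \mrm{H}^0(A)/(\bar{x}_1)$, and one has $\dim(\mrm{H}^0(A')) = \dim(\mrm{H}^0(A)) - 1$ by the choice of $\bar{x}_1 \notin \bigcup_{\p \in S} \p$. Moreover $\opn{seq.depth}(A') = n - 1$ by the standard one-step reduction for regular sequences, valid in the DG setting by the material of Section~\ref{sec:regseq}. The induction hypothesis, applied to $A'$, then supplies a maximal $A'$-regular sequence $\bar{x}_2, \ldots, \bar{x}_n$ of length $n-1$ in the maximal ideal of $\mrm{H}^0(A')$ that extends to a system of parameters of $\mrm{H}^0(A')$. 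Lifting this sequence to $\bar{\m}$ and prepending $\bar{x}_1$ gives a maximal $A$-regular sequence $\bar{x}_1, \ldots, \bar{x}_n$, which together with any lift of the completing elements forms a system of parameters of $\mrm{H}^0(A)$, as required.

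The hard part is genuinely the two-constraint prime-avoidance step, since it is what simultaneously forces regularity (via $\opn{Ass}(A)$) and parametricity (via $S$) at each stage; all the work is pushed into having a clean regularity criterion phrased through associated primes of a DG-ring, together with Theorem~\ref{thm:main-bounds}(2) as the tool that rules out the degenerate case $\bar{\m} \in S$. Once Section~\ref{sec:regseq} supplies this, the remainder of the argument is a direct transcription of the familiar classical construction of systems of parameters containing a prescribed maximal regular sequence.
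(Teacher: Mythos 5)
Your proposal is correct, and its skeleton (induction on $\opn{seq.depth}(A)$, prime avoidance against a finite set of ``bad'' primes controlling regularity together with a finite set controlling dimension drop) is the same as the engine behind the paper's argument. The difference is in how the dimension-drop half is organized. The paper proves the more general Theorem \ref{thm:dg-sys-par}: for any $M \in \cat{D}^{\mrm{b}}_{\mrm{f}}(A)$ with $\amp(\mrm{R}\Gamma_{\bar{\m}}(M)) \ge \amp(M)$ there is a maximal $M$-regular sequence along which $\lcdim$ drops by exactly one at each step; this requires the set $W_0^A(M)$, its finiteness (Proposition \ref{prop:W0finite}), the Koszul reduction to $\mrm{H}^0(A)$ (Lemma \ref{lem:koszul-red}, Lemma \ref{lem:W0Aequal}) and Christensen's result \cite[Proposition 2.8]{ChII} via Proposition \ref{prop:out-of-w0-dim}. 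The corollary you are proving is then the specialization $M = A$, where $\lcdim(A//(\bar{x}_1,\dots,\bar{x}_i)) = \dim(\mrm{H}^0(A)/(\bar{x}_1,\dots,\bar{x}_i))$ turns the $\lcdim$ drops into the system-of-parameters condition. You work with $M=A$ from the start, replacing $W_0$ by the classical criterion that $\bar{x}$ is part of a system of parameters of $\mrm{H}^0(A)$ iff it avoids the minimal primes of maximal dimension; note that for $M = A$ your set $S$ is literally $W_0^A(A)$, since $\sup(A_{\bar{\p}})=0$ for every $\bar{\p}$. So your route buys a more elementary proof of exactly the stated corollary (no Koszul reduction, no citation to \cite{ChII}), at the cost of not yielding the paper's more general statement about arbitrary DG-modules, which the paper reuses later. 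You also correctly use Theorem \ref{thm:main-bounds}(2) (i.e.\ Corollary \ref{cor:chdepth}), which is proved before this corollary, so there is no circularity.

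One small inaccuracy to fix in the write-up: the ``if and only if'' characterization of $A$-regularity via $\opn{Ass}_A(A)$ is an overstatement. The paper only proves (and you only need) one direction, Proposition \ref{prop:out-of-ass-reg}: avoiding every prime of $\opn{Ass}_A(A)$ implies regularity. The converse fails in general, since regularity concerns only $\mrm{H}^{\inf(A)}(A)$ while $\opn{Ass}_A(A)$ can contain primes detected by higher cohomology (the element $y$ in Example \ref{exa:reg-not-par} is $A$-regular yet lies in an associated prime). In the only place you invoke the converse, namely the degenerate case $\bar{\m} \in \opn{Ass}_A(A)$, the conclusion does hold: by Proposition \ref{prop:depth-lower-bound} this means $\bar{\m}$ is an associated prime of $\mrm{H}^{\inf(A)}(A)$, equivalently $\depth_A(A) = \inf(A)$, so $\opn{seq.depth}(A) = 0$ by (\ref{eqn:chdepth}), which is the contradiction you want. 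With that rephrasing your argument is complete.
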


Using this result, 
we then prove in Theorem \ref{thm:bassConj} a DG-version of the Bass conjecture,
and a bit more generally:

\begin{thm}
Let $(A,\bar{\m})$ be a noetherian local DG-ring with bounded cohomology.
\begin{enumerate}[wide, labelwidth=!, labelindent=0pt]
\item If $A$ is local-Cohen-Macaulay, there exists $0 \ncong M \in \cat{D}^{\mrm{b}}_{\mrm{f}}(A)$
such that $\injdim_A(M) < \infty$, and such that $\amp(M) = \amp(A)$.
\item Assume further that $A$ has a noetherian model.
For any $0 \ncong M \in \cat{D}^{\mrm{b}}_{\mrm{f}}(A)$
such that $\injdim_A(M) < \infty$,
we have that $\amp(M) \ge \amp(A)$.
If there exists such $M$ with $\amp(M) = \amp(A)$,
then $A$ is local-Cohen-Macaulay.
\end{enumerate}
\end{thm}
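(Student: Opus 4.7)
I would reduce, via the system-of-parameters theorem just stated, to the case $\dim \mrm{H}^0(A) = 0$, and then exhibit $M$ as a dualizing DG-module in that setting. Since $A$ is local-Cohen-Macaulay, there is a regular sequence $\bar{x}_1,\dots,\bar{x}_d \in \bar{\m}$ of length $d = \dim \mrm{H}^0(A)$ that is a system of parameters. Put $B := A//(\bar{x}_1,\dots,\bar{x}_d)$. By Theorem \ref{thm:CMMain}(2) the DG-ring $B$ is again local-Cohen-Macaulay with $\dim \mrm{H}^0(B) = 0$, and since Koszul against a regular element preserves cohomological amplitude one has $\amp(B) = \amp(A)$. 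Moreover $B$ is a finitely generated free $A$-DG-module of rank $2^d$, so in particular $\projdim_A(B) < \infty$. In the dim-zero case $\mrm{H}^0(B)$ is Artinian local, so the theory of Section \ref{sec:dc} endows $B$ with a dualizing DG-module $R_B$; Theorem \ref{thm:CMMain}(6) applied to the local-Cohen-Macaulay DG-ring $B$ then forces $\amp(R_B) = \amp(B) = \amp(A)$. Setting $M := R_B$ viewed as an $A$-DG-module, finite generation over $A$ is inherited from that over $B$, and $\injdim_A(M) < \infty$ follows from $\injdim_B(R_B) < \infty$ together with $\projdim_A(B) < \infty$ by a standard change-of-rings argument.

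\textbf{Plan for part (2).} Assume $A$ has a noetherian model, so that $A$ admits a dualizing DG-module $R$. Given $0 \ne M \in \cat{D}^{\mrm{b}}_{\mrm{f}}(A)$ with $\injdim_A(M) < \infty$, Foxby equivalence identifies $M \simeq \mrm{R}\opn{Hom}_A(N,R)$ for some $N \in \cat{D}^{\mrm{b}}_{\mrm{f}}(A)$ of finite projective dimension. A bounded resolution of $N$ by finitely generated free $A$-DG-modules lets one read off the amplitude of $M$ and yields $\amp(M) \ge \amp(R)$; combined with Theorem \ref{thm:main-bounds}(3) this gives $\amp(M) \ge \amp(R) \ge \amp(A)$, which is the desired lower bound. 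If equality $\amp(M) = \amp(A)$ holds for some such $M$, then necessarily $\amp(R) = \amp(A)$, and Theorem \ref{thm:CMMain}(6) yields that $A$ is local-Cohen-Macaulay.

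\textbf{Where the difficulty lies.} The substantive ingredient of part (1) is the existence of a dualizing DG-module for $B$ in the dim-zero case, and the verification that finite injective dimension transfers from $B$ to $A$ along the finite map $A \to B$; the remainder of the reduction is formal. In part (2) the delicate point is the amplitude bound $\amp(\mrm{R}\opn{Hom}_A(N,R)) \ge \amp(R)$ for $N$ of finite projective dimension: it rests on Foxby equivalence and hence on the existence of a dualizing DG-module, and this is precisely the step where the noetherian-model hypothesis flagged in Remark \ref{rem:noet-model} is needed.
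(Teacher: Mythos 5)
Your part (2) rests on the claim that a noetherian model provides a dualizing DG-module $R$, and that inference is false: a noetherian local ring is its own noetherian model and need not admit a dualizing complex, and Example \ref{exa:no-dualizing} of this paper exhibits a local-Cohen-Macaulay DG-ring which literally satisfies the noetherian model condition (it is an honest DG-ring with noetherian degree-zero part and finitely generated components) yet has no dualizing DG-module. So the Foxby-equivalence reduction never gets off the ground in the stated generality. Moreover, even when $R$ does exist, the inequality $\amp\left(\mrm{R}\opn{Hom}_A(N,R)\right) \ge \amp(R)$ for $N \ncong 0$ of finite projective dimension cannot simply be ``read off'' a bounded free resolution; such amplitude inequalities over local DG-rings are precisely the nontrivial content of J{\o}rgensen's work. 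The paper's proof of (2) instead quotes \cite[Theorem B]{Jo2} directly: for $0 \ncong M \in \cat{D}^{\mrm{b}}_{\mrm{f}}(A)$ with $\injdim_A(M) < \infty$ over a DG-ring with a noetherian model one has $\amp(M) \ge \amp\left(\mrm{R}\Gamma_{\bar{\m}}(A)\right)$; combined with Theorem \ref{thm:main-amp}(1) this gives $\amp(M) \ge \amp(A)$, and equality forces $\amp\left(\mrm{R}\Gamma_{\bar{\m}}(A)\right) = \amp(A)$, i.e.\ local-Cohen-Macaulayness. This is where the noetherian model hypothesis actually enters (see Remark \ref{rem:noet-model}), not through existence of a dualizing DG-module.

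Your part (1) is close to the paper's argument but also leans on an unproved existence statement: ``the theory of Section \ref{sec:dc} endows $B$ with a dualizing DG-module'' is not justified, since Section \ref{sec:dc} contains no existence theorem and the paper stresses that noetherian local DG-rings need not possess dualizing DG-modules. In the zero-dimensional case the assertion is in fact true, but proving it amounts to showing that $E(B,\bar{\m}_B)$, equivalently $\mrm{R}\opn{Hom}_A\left(B,E(A,\bar{\m})\right)$, has finitely generated cohomology and satisfies $\mrm{R}\opn{Hom}_B(E,E) \cong \mrm{L}\Lambda(B,\bar{\m}) \cong B$ --- which is essentially the computation the paper performs directly, bypassing dualizing modules altogether: with $N = A//(\bar{x}_1,\dots,\bar{x}_d)$ for a maximal $A$-regular sequence which is a system of parameters (Corollary \ref{cor:CMhasSP}), it sets $M = \mrm{R}\opn{Hom}_A\left(N,E(A,\bar{\m})\right)$, obtains $\amp(M) = \amp(N) = \amp(A)$ from the Matlis-type identification $\mrm{H}^n(M) \cong \opn{Hom}_{\mrm{H}^0(A)}(\mrm{H}^{-n}(N),\bar{E})$ of \cite[Theorem 4.10]{Sh2}, gets $\injdim_A(M) < \infty$ from $\flatdim_A(N) < \infty$ and adjunction, and finite generation of each $\mrm{H}^n(M)$ from the finite length of $\mrm{H}^{-n}(N)$ over the Artinian ring $\mrm{H}^0(N)$. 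If you replace the appeal to Section \ref{sec:dc} by this construction your reduction goes through; as written, however, both halves of your proposal rest on an existence claim for dualizing DG-modules that the paper neither assumes nor proves.
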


This amplitude inequality, which was mentioned in the abstract,
solves a recent conjecture of Minamoto.

Section \ref{sec:MCM} introduces local-Cohen-Macaulay and maximal local-Cohen-Macaulay DG-modules over a local DG-ring.
Among its results, we prove the following general result about the structure of dualizing DG-modules over local DG-rings.

\begin{thm}
Let $(A,\bar{\m})$ be a noetherian local DG-ring with bounded cohomology.
Setting $n = \amp(A)$ and $d = \dim(\mrm{H}^0(A))$,
let $R$ be a dualizing DG-module over $A$,
normalized so that $\inf(R) = -d$.
Then the following hold:
\begin{enumerate}
\item
For every $0 \le i \le d$ there is an inequality
\[
\dim\left(\mrm{H}^{-i+n}(R)\right) \le i
\]
\item We have that $A$ is a local-Cohen-Macaulay DG-ring if and only if 
\[
\dim\left(\mrm{H}^{\sup(R)}(R)\right) = d.
\]
\end{enumerate}
\end{thm}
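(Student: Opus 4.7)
The plan is to derive both parts from DG local duality together with the DG version of Grothendieck's vanishing/non-vanishing theorems for local cohomology established earlier in the paper (Theorem \ref{thm:non-vanish}). Concretely, for a normalized dualizing DG-module $R$ with $\inf(R) = -d$, DG local duality supplies, for every $j \in \mbb{Z}$, an isomorphism
\[
\mrm{H}^{j}_{\bar{\m}}(A) \;\cong\; \mrm{Hom}_{\mrm{H}^0(A)}\!\bigl(\mrm{H}^{-j}(R),\, E(\k)\bigr),
\]
where $E(\k)$ is the injective hull of the residue field over $\mrm{H}^0(A)$. Since Matlis duality preserves annihilators over the $\bar{\m}$-adic completion of $\mrm{H}^0(A)$, and Krull dimension is invariant under that completion, this yields the key identity
\[
\dim_{\mrm{H}^0(A)} \mrm{H}^{-j}(R) \;=\; \dim\!\bigl(\mrm{H}^0(A)/\opn{Ann}(\mrm{H}^{j}_{\bar{\m}}(A))\bigr),
\]
which transports statements about the cohomology of $R$ to statements about the local cohomology of $A$.

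For part (1), setting $j = i - n$, the claim reduces to the inequality
\[
\dim\!\bigl(\mrm{H}^0(A)/\opn{Ann}(\mrm{H}^{i-n}_{\bar{\m}}(A))\bigr) \;\le\; i \qquad (0 \le i \le d).
\]
I would prove this by induction on $d$. The base case $d = 0$ is trivial, as every relevant dimension is at most zero. For the inductive step, I would pick, via prime avoidance, an element $\bar{x} \in \bar{\m}$ avoiding all minimal primes of $\mrm{H}^0(A)$ of maximal dimension (and the associated primes producing the offending support), and pass to $A' = A//\bar{x}$, whose $\mrm{H}^0$ has Krull dimension $d - 1$ and which still has bounded cohomology. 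The change-of-rings comparison between $\mrm{R}\Gamma_{\bar{\m}}(A)$ and $\mrm{R}\Gamma_{\bar{\m}/(\bar{x})}(A')$ developed in Sections \ref{sec:lc} and \ref{sec:regseq}, combined with the inductive hypothesis applied to $A'$, then yields the required bound. An alternative route proceeds via the complex $\bar{R} := \mrm{R}\opn{Hom}_A(\mrm{H}^0(A), R)$ of the ring $\mrm{H}^0(A)$: the triangle $\bar{R} \to R \to \mrm{R}\opn{Hom}_A(\tau^{<0}A, R) \to \bar{R}[1]$, together with $\amp(\tau^{<0}A) \le n - 1$, shows $\inf\bar{R} = -d$ and thus identifies $\bar{R}$ as a normalized classical dualizing complex over $\mrm{H}^0(A)$; the classical inequality $\dim \mrm{H}^{-i}(\bar{R}) \le i$ then transfers back through the triangle, with the shift by $n$ accounted for by the amplitude of $\tau^{<0}A$.

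For part (2), both directions flow from part (1) combined with Theorem \ref{thm:CMMain}(6). Forward: if $A$ is local-Cohen-Macaulay, then $\amp(R) = n$ and $\sup(R) = -d + n$; the equality $\dim \mrm{H}^{\sup R}(R) = d$ translates, via the key identity above, into the assertion that $\mrm{H}^{d-n}_{\bar{\m}}(A)$---the bottom non-zero term of $\mrm{R}\Gamma_{\bar{\m}}(A)$ in the Cohen-Macaulay case---has annihilator contained in the nilradical of $\mrm{H}^0(A)$, and this is a non-vanishing statement of precisely the type supplied by Theorem \ref{thm:non-vanish}, applied after localizing at a minimal prime of maximal dimension. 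Reverse: Theorem \ref{thm:main-bounds}(3) gives $\sup(R) \ge -d + n$, while DG local duality combined with $\inf \mrm{R}\Gamma_{\bar{\m}}(A) \ge \inf(A) = -n$ yields $\sup(R) \le n$. Writing $\sup(R) = -i + n$ for some $0 \le i \le d$, part (1) gives $\dim \mrm{H}^{\sup R}(R) \le i$; the hypothesis $\dim \mrm{H}^{\sup R}(R) = d$ therefore forces $i = d$, whence $\sup(R) = -d + n$ and $\amp(R) = n = \amp(A)$, so that Theorem \ref{thm:CMMain}(6) concludes that $A$ is local-Cohen-Macaulay.

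The main obstacle is the shift by $n$ appearing in part (1), which is the genuinely derived feature of the statement: classically the dualizing complex sits in degrees $[-d, 0]$, but in the DG setting its cohomological support can extend up to $\sup(R) = n - \text{depth}(A)$, potentially strictly positive, and the Grothendieck-type dimension bound must be adjusted accordingly. Coordinating the induction (or the comparison with the ring-theoretic dualizing complex $\bar{R}$) with the amplitude of $A$ is the technical heart of the proof; once part (1) is in hand, part (2) follows almost formally from the amplitude characterization of local-Cohen-Macaulayness in Theorem \ref{thm:CMMain}(6).
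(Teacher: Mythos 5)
Your reverse implication in (2) is essentially sound (and close to what the paper does), but the heart of the statement, part (1), is never actually proved, and the forward direction of (2) leans on it plus a localization argument that is incompletely specified. Concerning part (1): your primary route (induction on $d$ after Matlis-dualizing into a statement about $\dim(\mrm{H}^0(A)/\opn{ann}(\mrm{H}^{i-n}_{\bar{\m}}(A)))$) replaces a statement about finitely generated cohomologies of $R$ by one about annihilators of non-finitely-generated (artinian) local cohomology modules, and nothing in the sketch explains how these annihilator dimensions are controlled through the long exact sequence coming from the comparison of $\mrm{R}\Gamma_{\bar{\m}}(A)$ with $\mrm{R}\Gamma(A//\bar{x})$ (Proposition \ref{prop:RGamma-and-Tensor}); moreover the bookkeeping changes under the quotient, since $\amp(A//\bar{x})$ can jump to $n+1$ when $\bar{x}$ is not $A$-regular and the normalized dualizing DG-module of $A//\bar{x}$ is a shift of $R//\bar{x}$ that must be tracked. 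Your alternative route via $\bar{R}=\mrm{R}\opn{Hom}_A(\mrm{H}^0(A),R)$ can be made to work, but the phrase ``transfers back through the triangle'' hides exactly the technical content: from the triangle $\bar{R}\to R\to \mrm{R}\opn{Hom}_A(\opn{smt}^{\le -1}(A),R)$ you still must bound $\dim\mrm{H}^{k}\left(\mrm{R}\opn{Hom}_A(\opn{smt}^{\le -1}(A),R)\right)$, which requires a d\'evissage over the cohomology modules of $\opn{smt}^{\le -1}(A)$ together with the classical bound $\dim\opn{Ext}^{-j}_{\bar{A}}(M,\bar{R})\le j$ for finitely generated $\bar{A}$-modules; none of this appears in the proposal.

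In the forward direction of (2) there is also a genuine slip: $\dim\left(\mrm{H}^0(A)/\opn{ann}(\mrm{H}^{d-n}_{\bar{\m}}(A))\right)=d$ means the annihilator is contained in \emph{some} minimal prime of maximal dimension, not in the nilradical; and localizing at an arbitrary minimal prime $\bar{\p}$ of maximal dimension does not suffice, because unless $\bar{\p}\in\opn{Supp}(\mrm{H}^{\inf(A)}(A))$ (which is where Proposition \ref{prop:dim-of-inf-cm} enters) one only has $\amp(A_{\bar{\p}})<n$, whence $\mrm{H}^{\sup(R)}(R)_{\bar{\p}}=0$ and that prime witnesses nothing. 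For comparison, the paper's proof avoids Matlis duals, induction and localization altogether: since $\mrm{R}\Gamma_{\bar{\m}}(R)\cong E(A,\bar{\m})$ by \cite[Proposition 7.25]{Sh2}, one has $\sup\left(\mrm{R}\Gamma_{\bar{\m}}(R)\right)=n$, so Theorem \ref{thm:non-vanish} gives $\lcdim(R)=n$; part (1) is then immediate from the definition of $\lcdim$, and part (2) follows by combining $\lcdim(R)=n$ with (\ref{eqn:max-dim}) and Proposition \ref{prop:CM-by-DC}, which converts $\sup(R)=n-d$ into $\amp(R)=\amp(A)$ and hence into the local-Cohen-Macaulay property. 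You have all these ingredients available earlier in your argument (you do use $\inf\mrm{R}\Gamma_{\bar{\m}}(A)\ge -n$ and the amplitude characterization), so the shortest repair of your proposal is to replace both the induction and the localization step by this single computation of $\lcdim(R)$.
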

This result is contained in Theorem \ref{thm:str-of-dc}.

In Section \ref{sec:text} we consider the problem of determining
when are DG-rings that arise from trivial extensions of local rings by cochain complexes are Cohen-Macaulay.
We show in particular that any Cohen-Macaulay module over a local ring give rise to a local-Cohen-Macaulay DG-ring.

Section \ref{sec:localiz} discusses two points where the DG theory diverges from the classical theory:
independence of the Cohen-Macaulay property from the base,
and localization.
We explain the reason for this divergence, 
and construct an example of a local-Cohen-Macaulay DG-ring which has a localization who is not local-Cohen-Macaulay.
We then give a global definition of the notion of a Cohen-Macaulay DG-ring, 
and prove in Corollary \ref{cor:irr-loc} that any local-Cohen-Macaulay DG-ring which has a dualizing DG-module,
and whose spectrum is irreducible,
is a Cohen-Macaulay DG-ring in the global sense.

In the final Section \ref{sec:positive} we briefly discuss the problem of defining Cohen-Macaulay DG-rings
in the case where DG-rings are non-negatively graded. 
Such DG-rings arise in topology. 
We explain that our amplitude inequalities described above do not hold in the non-negative case,
and suggest a possible way to overcome this problem.

\numberwithin{equation}{section}

\section{Preliminaries}\label{sec:prel}

In this section we will gather various preliminaries about commutative DG-rings that will be used throughout this paper.
A complete reference about derived categories of differential graded rings is the book \cite{YeBook},
and a good summary is in \cite[Section 1]{Ye2}.
However, our terminology will sometimes diverge from the terminology of \cite{YeBook},
and we will explicitly indicate such changes in terminology.

\subsection{Basics about commutative DG-rings, noetherian conditions}

A differential graded ring (abbreviated DG-ring) is a graded ring
\[
A = \bigoplus_{n=-\infty}^{\infty} A^n
\]
equipped with a $\mathbb{Z}$-linear differential $d:A \to A$ of degree $+1$,
such that the Leibniz rule
\begin{equation}\label{eqn:Leib}
d(a\cdot b) = d(a)\cdot b + (-1)^i\cdot a \cdot d(b)
\end{equation}
is satisfied for any $a \in A^i, b \in A^j$ and any $i,j \in \mathbb{Z}$.
We will further say that $A$ is commutative (called strongly commutative in \cite{YeBook}) 
if $b\cdot a = (-1)^{i\cdot j}\cdot a \cdot b$,
and moreover, if $i$ is odd, then $a^2 = 0$.
\textbf{All DG-rings in this paper will be assumed to be commutative}.
A DG-ring $A$ is called non-positive if $A^i = 0$ for all $i>0$.
From now on, in the rest of this paper except Section \ref{sec:positive}, \textbf{we will assume that all DG-rings are non-positive}.

Taking cohomology, note that $\mrm{H}^0(A)$ has the structure of a commutative ring.
We will often denote it by $\bar{A} := \mrm{H}^0(A)$. It is called the \textbf{cohomological reduction} of $A$.
Note that there is a natural map of DG-rings $\pi_A : A \to \bar{A}$.
The set $A^0$ of degree zero elements of $A$ is also a commutative ring,
and $\mrm{H}^0(A)$ is a quotient of it.

A differential graded-module $M$ over $A$ is a graded $A$-module $M$ equipped with a differential $d:M \to M$ of degree $+1$ which satisfies a Leibniz rule similar to (\ref{eqn:Leib}). 
The DG-modules over $A$ form an abelian category, denoted by $\opn{DGMod}(A)$,
in which the morphisms are given by degree $0$ $A$-linear homomorphisms which respect the differential.
Inverting quasi-isomorphisms in $\opn{DGMod}(A)$, 
we obtain the derived category of DG-modules over $A$,
denoted by $\cat{D}(A)$. It is a triangulated category.
For any $M \in \cat{D}(A)$, 
and any $n \in \mathbb{Z}$,
we have that $\mrm{H}^n(M)$ is an $\mrm{H}^0(A)$-module.

For any $n \in \mathbb{Z}$ there are smart truncation functors
\[
\opn{smt}^{>n}, \opn{smt}^{\le n} : \cat{D}(A) \to \cat{D}(A)
\]
such that for all $M \in \cat{D}(A)$, 
there are equalities
\[
\mrm{H}^i\left(\opn{smt}^{>n}(M)\right) = \left\{ \begin{array}{lr} \mrm{H}^i(M), & \text{if $i > n$,}\\
																				                             0, & \text{if $i \le n$,}\end{array}\right.
\]
and
\[
\mrm{H}^i\left(\opn{smt}^{\le n}(M)\right) = \left\{ \begin{array}{lr} \mrm{H}^i(M), & \text{if $i \le n$,}\\
																				                             0, & \text{if $i > n$.}\end{array}\right.
\]
Moreover, there is a distinguished triangle
\[
\opn{smt}^{\le n}(M) \to M \to \opn{smt}^{>n}(M) \to \opn{smt}^{\le n}(M)[1]
\]
in $\cat{D}(A)$.

Given $M \in \cat{D}(A)$, the infimum and supremum of $M$ are the numbers (or $\pm \infty$)
\[
\inf(M) = \inf\{n \in \mathbb{Z} \mid \mrm{H}^n(M) \ne 0\},
\quad
\sup(M) = \sup\{n \in \mathbb{Z} \mid \mrm{H}^n(M) \ne 0\}.
\]

In the book \cite{YeBook}, what we denote here by $\inf(M)$ (resp. $\sup(M)$) is denoted by $\inf(\mrm{H}(M))$ (resp., $\sup(\mrm{H}(M))$). We prefer the shorter notation used here, as this paper is entirely cohomological in nature,
and we will never need to consider the non-cohomological infimum and supremum.

The full subcategory of $\cat{D}(A)$ consisting of DG-modules $M$ with $\inf(M) > -\infty$ is denoted by $\cat{D}^{+}(A)$,
and the full subcategory of $\cat{D}(A)$ consisting of DG-modules $M$ with $\sup(M) < \infty$ is denoted by $\cat{D}^{-}(A)$.
These are triangulated subcategories of $\cat{D}(A)$.
Their intersection is also a triangulated subcategory of $\cat{D}(A)$,
the category of bounded DG-modules (called cohomologically bounded DG-modules in \cite{YeBook}), which will be denoted by $\cat{D}^{\mrm{b}}(A)$.

Given $M \in \cat{D}^{\mrm{b}}(A)$,
its amplitude is the number
\[
\amp(M) = \sup(M) - \inf(M) \in \mathbb{N}.
\]
If $M \notin \cat{D}^{\mrm{b}}(A)$, 
we set $\amp(M) = +\infty$. 
Again, in \cite{YeBook}, this is called the cohomological amplitude, and is denoted by $\amp(\mrm{H}(M))$.
We will say that a DG-ring $A$ has bounded cohomology if $\amp(A) < \infty$.
Under the assumption that $A$ is non-positive, 
this is equivalent to assuming that $\inf(A) > -\infty$.

A DG-ring $A$ is called noetherian (the terminology in \cite{YeBook} is cohomologically pseudo-noetherian) if the commutative ring $\mrm{H}^0(A)$ is a noetherian ring,
and for all $i < 0$, the $\mrm{H}^0(A)$-module $\mrm{H}^i(A)$ is finitely generated.
See \cite[Theorem 6.6]{Sh2} for a justification of this definition.

If $A$ is a noetherian DG-ring,
we say that $M \in \cat{D}(A)$ has finitely generated cohomology if for all $n \in \mathbb{Z}$,
the $\mrm{H}^0(A)$-modules $\mrm{H}^n(M)$ are finitely generated.
We denote by $\cat{D}_{\mrm{f}}(A)$ the full triangulated subcategory of $\cat{D}(A)$ consisting of DG-modules with finitely generated cohomology. We also set $\cat{D}^{-}_{\mrm{f}}(A) = \cat{D}_{\mrm{f}}(A) \cap \cat{D}^{-}(A)$.
Similarly we will consider $\cat{D}^{+}_{\mrm{f}}(A), \cat{D}^{\mrm{b}}_{\mrm{f}}(A)$. 
All these are full triangulated subcategories of $\cat{D}(A)$.

If $A$ is a noetherian DG-ring,
and if the noetherian ring $\mrm{H}^0(A)$ is a local ring with maximal ideal $\bar{\m}$,
we will say that $(A,\bar{\m})$ is a noetherian local DG-ring.

We say that a noetherian DG-ring $A$ has a noetherian model if there exist commutative DG-rings $B,C_1,\dots,C_n,A_1,\dots,A_n$,
and quasi-isomorphisms of DG-rings 
\[
A \leftarrow C_1 \rightarrow A_1 \leftarrow C_2 \rightarrow A_2 \leftarrow C_3 \rightarrow \dots \rightarrow A_n \leftarrow B
\]
such that $B^0$ is a noetherian ring, and for each $i<0$,
the $B^0$-module $B^i$ is finitely generated.
As shown in the proof of \cite[Lemma 7.8]{Ye1}, 
a sufficient condition for $A$ to have a noetherian model is that 
there exists a noetherian ring $\k$ and a map of DG-rings $\k \to A$,
such that the induced map $\k \to \mrm{H}^0(A)$ is essentially of finite type.
The above zig-zag of quasi-isomorphisms implies that the triangulated categories $\cat{D}(A)$
and $\cat{D}(B)$ are equivalent, and this equivalence respects standard derived functors (see \cite[Theorem 12.7.2]{YeBook}),
so with regards to statements about their derived categories,
we will be able to freely replace $A$ with $B$.

We do not know if any noetherian DG-ring has a noetherian model,
though all noetherian DG-rings that arise in nature do have a noetherian model.
Nevertheless, with the exception of one point (see Remark \ref{rem:noet-model}),
we will avoid using the noetherian model assumption in this paper,
and will almost always make only the weaker noetherian assumption, without assuming the existence of a noetherian model.

\subsection{Reduction functors}

Given a commutative DG-ring $A$,
the natural map of DG-rings $\pi_A:A \to \mrm{H}^0(A)$ induces two functors
\[
\mrm{R}\opn{Hom}_A(\mrm{H}^0(A),-) : \cat{D}^{+}(A) \to \cat{D}^{+}(A),
\]
and
\[
-\otimes^{\mrm{L}}_A \mrm{H}^0(A) : \cat{D}^{-}(A) \to \cat{D}^{-}(A)
\]
which are sometimes called the reduction functors,
and are extremely useful in studying $\cat{D}(A)$.
One reason for their usefulness is the following property:
for any $M \in \cat{D}^{+}(A)$,
by \cite[Proposition 3.3]{Sh2}
we have that
\begin{equation}\label{eqn:red-rhom}
\inf(M) = \inf\left(\mrm{R}\opn{Hom}_A(\mrm{H}^0(A),M)\right), \hspace{0.5em} \mrm{H}^{\inf(M)}(M) \cong \mrm{H}^{\inf(M)}\left(\mrm{R}\opn{Hom}_A(\mrm{H}^0(A),M)\right)
\end{equation}
Dually, for any $M \in \cat{D}^{-}(A)$,
it follows from the proof of \cite[Proposition 3.1]{Ye1} that
\begin{equation}\label{eqn:red-ten}
\sup(M) = \sup\left(M\otimes^{\mrm{L}}_A \mrm{H}^0(A)\right), \quad \mrm{H}^{\sup(M)}(M) \cong \mrm{H}^{\sup(M)}\left(M\otimes^{\mrm{L}}_A \mrm{H}^0(A)\right)
\end{equation}

\subsection{Injective DG-modules}

This section follows \cite{Sh2}.
Given a commutative DG-ring $A$,
and given $M \in \cat{D}^{+}(A)$,
the injective dimension of $M$, denoted by $\injdim_A(M)$, is defined in \cite[Section 2]{Sh2}, 
similarly to the definition of injective dimension over rings.
An injective DG-module is a DG-module $M \in \cat{D}^{+}(A)$ such that either $M \cong 0$,
or the injective dimension of $M$ is $0$, and moreover $\inf(M) =0$.
The category of injectives over $A$ is denoted by $\opn{Inj}(A)$.
The functor $\mrm{H}^0$ is an equivalence of categories
\[
\mrm{H}^0:\opn{Inj}(A) \to \opn{Inj}(\mrm{H}^0(A)).
\]
In particular, if $(A,\bar{\m})$ is a commutative noetherian local DG-ring,
there is, unique up to isomorphism, DG-module $E \in \opn{Inj}(A)$
such that $\mrm{H}^0(E)$ is the injective hull of the residue field of the local ring $(\mrm{H}^0(A),\bar{\m})$.
Following \cite[Section 7]{Sh2}, 
we will denote this DG-module by $E(A,\bar{\m})$.

\subsection{Dualizing DG-modules}\label{sec:dc}

We now recall the notion of a dualizing DG-module over a noetherian DG-ring.
These generalize Grothendieck's notion of a dualizing complex over noetherian rings (see \cite[Chapter V]{RD}).
References for all facts in this section are \cite{FIJ,Ye1}.
Let $A$ be a commutative noetherian DG-ring.
We say that a DG-module $R \in \cat{D}^{+}_{\mrm{f}}(A)$ is a dualizing DG-module over $A$
if $R$ has finite injective dimension over $A$,
and the canonical map
\[
A \to \mrm{R}\opn{Hom}_A(R,R)
\]
is an isomorphism in $\cat{D}(A)$.
It follows that if $R$ is a dualizing DG-module over $A$,
then for any $M \in \cat{D}_{\mrm{f}}(A)$,
the natural map
\[
M \to \mrm{R}\opn{Hom}_A(\mrm{R}\opn{Hom}_A(M,R),R)
\]
is an isomorphism in $\cat{D}(A)$.

Similarly to the (non-)uniqueness theorem for dualizing complexes over rings,
a similar result is true over DG-rings.
In particular, if $(A,\bar{\m})$ is a noetherian local DG-ring,
and if $R_1,R_2$ are dualizing DG-modules over $A$,
then there exists $n \in \mathbb{Z}$ such that $R_1 \cong R_2[n]$.
We say that a dualizing DG-module $R$ over a noetherian local DG-ring $(A,\bar{\m})$ is normalized if 
$\inf(R) = -\dim(\mrm{H}^0(A))$.

\subsection{Local cohomology over commutative DG-rings}\label{sec:lc}

Following \cite{BIK,Sh1}, let us recall the notion of local cohomology over commutative DG-rings.
Let $A$ be a commutative DG-ring,
and let $\bar{\a} \subseteq \mrm{H}^0(A)$ be a finitely generated ideal.
Recall that an $\mrm{H}^0(A)$-module $\bar{M}$ is called $\bar{\a}$-torsion if
for any $\bar{m} \in \bar{M}$ there exists $n \in \mathbb{N}$ such that $\bar{\a}^n \cdot \bar{m} = 0$,
equivalently, if 
\[
\bar{M} = \varinjlim \opn{Hom}_{\mrm{H}^0(A)}(\mrm{H}^0(A)/\bar{\a}^n,\bar{M}).
\]
The category of all $\bar{\a}$-torsion modules is a thick abelian subcategory of $\opn{Mod}(\mrm{H}^0(A))$.
This implies that the 
category $\cat{D}_{\bar{\a}-\opn{tor}}(A)$ consisting of DG-modules $M$ such that for all $n \in \mathbb{Z}$,
the $\mrm{H}^0(A)$-module $\mrm{H}^n(M)$ is $\bar{\a}$-torsion, is a triangulated subcategory of $\cat{D}(A)$.
One can show (see \cite{BIK,Sh1} for details) that the inclusion functor 
\[
\cat{D}_{\bar{\a}-\opn{tor}}(A) \inj \cat{D}(A)
\]
has a right adjoint
\[
\cat{D}(A) \to \cat{D}_{\bar{\a}-\opn{tor}}(A),
\]
and composing this right adjoint with the inclusion,
one obtains a triangulated functor
\[
\mrm{R}\Gamma_{\bar{\a}}:\cat{D}(A) \to \cat{D}(A),
\]
which we call the derived torsion or local cohomology functor of $A$ with respect to $\bar{\a}$.

In case $A = \mrm{H}^0(A)$ is a commutative noetherian ring,
this coincide with Grothendieck's local cohomology functor,
the (total) right derived functor of the $\bar{\a}$-torsion functor
\[
\Gamma_{\bar{\a}}(-) := \varinjlim \opn{Hom}_{A}(A/\bar{\a}^n,-) : \opn{Mod}(A) \to \opn{Mod}(A).
\]

\textbf{Warning:} If $A = \mrm{H}^0(A)$ is a ring which is not noetherian,
the functor $\mrm{R}\Gamma_{\bar{\a}}$ might be different in general from 
the right derived functor of the $\bar{\a}$-torsion functor.
In this paper, when we write $\mrm{R}\Gamma_{\bar{\a}}$ we will always mean the functor introduced above, 
which may be different from the right derived functor of the $\bar{\a}$-torsion functor
(The condition which guarantees these functors coincide is that the ideal $\bar{\a}$ is weakly proregular,
which is always the case if $\mrm{H}^0(A)$ is noetherian).

\begin{rem}\label{rem:lc-notation}
Given a commutative DG-ring $A$ and a finitely generated ideal $\bar{\a} \subseteq \mrm{H}^0(A)$,
we will sometimes need to work both local cohomology functor of $A$ with respect to $\bar{\a}$,
and with the local cohomology functor of $\mrm{H}^0(A)$ with respect to $\bar{\a}$.
The former is a functor $\cat{D}(A) \to \cat{D}(A)$,
while the latter is a functor $\cat{D}(\mrm{H}^0(A)) \to \cat{D}(\mrm{H}^0(A))$.
The notation we introduced above doesn't allow one to distinguish between the two.
To fix this issue, when we work with both of these functors,
we will denote the former by
\[
\mrm{R}\Gamma_{\bar{\a}}^A : \cat{D}(A) \to \cat{D}(A),
\]
and the latter by
\[
\mrm{R}\Gamma_{\bar{\a}}^{\bar{A}} : \cat{D}(\mrm{H}^0(A)) \to \cat{D}(\mrm{H}^0(A)).
\]
\end{rem}

To actually compute the functor $\mrm{R}\Gamma_{\bar{\a}}(-)$,
we recall the telescope complex,
following \cite{GM,PSY}.
Given a commutative ring $A$,
and given $a \in A$,
the telescope complex associated to $A$ and $a$ is the complex
\[
0 \to \bigoplus_{n=0}^{\infty} A \to \bigoplus_{n=0}^{\infty} A \to 0
\]
in degrees $0,1$, with the differential being defined by
\[
d(e_i) =      \left\{ \begin{array}{lr} e_0, & \text{if $i = 0$,}\\
																				e_{i-1}-a\cdot e_i, & \text{if $i \ge 1$}.
											\end{array}\right.
\]
where we have let $e_0,e_1,\dots$ denote the standard basis of the countably generated free $A$-module $\oplus_{n=0}^{\infty} A$.
We denote this complex by $\opn{Tel}(A;a)$.
Given a finite sequence $\mathbf{\a} = a_1,\dots, a_n \in A$,
we set
\[
\opn{Tel}(A;a_1,\dots,a_n) = \opn{Tel}(A;a_1) \otimes_A \opn{Tel}(A;a_2) \otimes_A \dots \otimes_A \opn{Tel}(A;a_n).
\]
The complex $\opn{Tel}(A;\mathbf{\a})$ is a bounded complex of free $A$-modules,
called the telescope complex associated to $A$ and $\mathbf{\a}$.

One important property of the telescope complex is its behavior with respect to base change.
Let $A$ be a commutative ring, 
let $\mathbf{\a}$ be a finite sequence of elements of $A$,
let $B$ be another commutative ring,
and let $f:A \to B$ be a ring homomorphism.
Denoting by $\mathbf{\b}$ the image of $\mathbf{\a}$ by $f$,
there is an isomorphism of complexes of $B$-modules:
\[
\opn{Tel}(A;\mathbf{a}) \otimes_A B \cong \opn{Tel}(B;\mathbf{b}).
\]

As is well known (see for instance \cite[Proposition 4.8]{PSY}),
if $A$ is a noetherian ring,
$\a \subseteq A$,
and $\mathbf{\a}$ is a finite sequence of elements of $A$ that generates $\a$,
there is a natural isomorphism
\[
\mrm{R}\Gamma_{\a}(M) \cong \opn{Tel}(A;\mathbf{\a}) \otimes_A M
\]
for every $M \in \cat{D}(A)$.

More generally, as shown in \cite[Corollary 2.13]{Sh1},
if $A$ is a commutative DG-ring and $\bar{\a} \subseteq \mrm{H}^0(A)$ is a finitely generated ideal,
and if $\mathbf{\a} = (a_1,\dots,a_n)$ is a finite sequence of elements of $A^0$,
whose image in $\mrm{H}^0(A)$ generates $\bar{\a}$,
then there is a natural isomorphism
\begin{equation}\label{eqn:RGamma}
\mrm{R}\Gamma_{\bar{\a}}(M) \cong \opn{Tel}(A^0;\mathbf{\a})\otimes_{A^0} A\otimes_A M
\end{equation}
for any $M \in \cat{D}(A)$.

If $\bar{\a},\bar{\b} \subseteq \mrm{H}^0(A)$ are two finitely generated ideals such that
$\sqrt{\bar{\a}} = \sqrt{\bar{\b}}$,
then by \cite[Corollary 2.15]{Sh1},
there is a natural isomorphism
\begin{equation}\label{eqn:RGammaRadical}
\mrm{R}\Gamma_{\bar{\a}}(-) \cong \mrm{R}\Gamma_{\bar{\b}}(-).
\end{equation}

\subsection{Derived completion of DG-modules and derived completion of DG-rings}\label{sec:DerComp}

Let $A$ be a commutative DG-ring,
and let $\bar{\a} \subseteq \mrm{H}^0(A)$ be a finitely generated ideal.
As explained in \cite{Sh1},
the local cohomology functor $\mrm{R}\Gamma_{\bar{\a}}:\cat{D}(A) \to \cat{D}(A)$
has a left adjoint, 
which we denote by $\mrm{L}\Lambda_{\bar{\a}}:\cat{D}(A) \to \cat{D}(A)$,
and call the derived $\bar{\a}$-adic completion functor.
The reason for this name is that if $A = \mrm{H}^0(A)$ is a noetherian ring,
it coincides with the left derived functor of the $\bar{\a}$-adic completion functor
$\Lambda_{\bar{\a}}(-) := \varprojlim -\otimes_A A/{\bar{\a}}^n$
which was introduced in \cite{GM}. 
As for derived torsion, in case $A$ is a ring, the condition under which these two operations coincide is that $\bar{\a}$ is a weakly proregular ideal. In case $A$ is a ring, the left adjoint we discuss here is studied in \cite[Section 091N]{SP}.

Similarly to the previous section,
if $A$ is a commutative DG-ring and $\bar{\a} \subseteq \mrm{H}^0(A)$ is a finitely generated ideal,
and if $\mathbf{\a} = (a_1,\dots,a_n)$ is a finite sequence of elements of $A^0$,
whose image in $\mrm{H}^0(A)$ generates $\bar{\a}$,
then there is a natural isomorphism
\[
\mrm{L}\Lambda_{\bar{\a}}(M) \cong \opn{Hom}_{A^0}(\opn{Tel}(A^0;\mathbf{\a})\otimes_{A^0} A,M)
\]
for any $M \in \cat{D}(A)$.

Given a commutative DG-ring $A$ and a finitely generated ideal $\bar{\a} \subseteq \mrm{H}^0(A)$,
it is explained \cite[Section 4]{Sh1} that the DG-module $\mrm{L}\Lambda_{\bar{\a}}(A)$
has the structure of a commutative non-positive DG-ring,
called the derived $\bar{\a}$-adic completion of $A$.
We denote this DG-ring by $\mrm{L}\Lambda(A,\bar{\a})$.
If $A$ is noetherian then its derived completion is also noetherian,
and if $(A,\bar{\m})$ is a noetherian local DG-ring,
then $\mrm{L}\Lambda(A,\bar{\m})$ is a noetherian local DG-ring.
Furthermore, we also have the following result, complementing \cite{Sh1}:

\begin{prop}\label{prop:amp-of-dc}
Let $(A,\bar{\m})$ be a commutative noetherian local DG-ring,
and let $B = \mrm{L}\Lambda(A,\bar{\m})$ be the derived $\bar{\m}$-adic completion of $A$.
Then for all $i \le 0$, we have that
\[
\mrm{H}^i(B) \cong \Lambda_{\bar{\m}}(\mrm{H}^i(A)).
\]
In particular $\amp(B) = \amp(A)$.
\end{prop}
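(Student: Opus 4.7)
The plan is to reduce the problem, via the telescope presentation of $\mrm{L}\Lambda_{\bar{\m}}$, to classical $\bar{\m}$-adic completion of the finitely generated $\mrm{H}^0(A)$-modules $\mrm{H}^i(A)$, and then exploit that such completion is exact on finitely generated modules over a noetherian local ring. The key tool is a bicomplex spectral sequence whose convergence is ensured by the boundedness of the telescope.

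First I would choose a sequence $\mathbf{a} = (a_1,\ldots,a_n)$ in $A^0$ lifting a set of generators of $\bar{\m}$ and set $T = \opn{Tel}(A^0;\mathbf{a})$. Applying tensor-Hom adjunction along $A^0 \to A$ to the telescope formula of Section \ref{sec:DerComp}, $B = \mrm{L}\Lambda_{\bar{\m}}(A)$ is represented by the Hom double complex $\opn{Hom}_{A^0}(T, A)$, with $A$-action coming from the target. Because $T$ is a bounded complex in degrees $[0,n]$ of free $A^0$-modules, the filtration by $T$-degree on this double complex is bounded, and the associated spectral sequence converges strongly even though $A$ itself need not be bounded below.

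Taking cohomology first in the $A$-direction uses that for each $p$, the complex $\opn{Hom}_{A^0}(T^p, A^\bullet)$ is a product of copies of $A^\bullet$; since products of complexes of $A^0$-modules preserve cohomology, the first page is $E_1^{p,q} \cong \opn{Hom}_{A^0}(T^p, \mrm{H}^q(A))$. The module $\mrm{H}^q(A)$ is finitely generated over $\bar{A} = \mrm{H}^0(A)$, and the $A^0$-action on it factors through the surjection $A^0 \twoheadrightarrow \bar{A}$, since boundaries in $A^0$ act trivially on cohomology by the Leibniz rule. Combined with the base-change identity $T \otimes_{A^0} \bar{A} = \opn{Tel}(\bar{A}; \bar{\mathbf{a}})$ and tensor-Hom adjunction, this gives
\[
E_2^{p,q} \cong \mrm{H}^p\bigl(\opn{Hom}_{\bar{A}}(\opn{Tel}(\bar{A}; \bar{\mathbf{a}}), \mrm{H}^q(A))\bigr) \cong \mrm{H}^p\bigl(\mrm{L}\Lambda_{\bar{\m}}^{\bar{A}}(\mrm{H}^q(A))\bigr).
\]
Since $\bar{A}$ is noetherian local and $\mrm{H}^q(A)$ is finitely generated, classical exactness of $\bar{\m}$-adic completion on finitely generated modules collapses the latter to $\Lambda_{\bar{\m}}(\mrm{H}^q(A))$ concentrated in degree $0$. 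Hence $E_2$ is supported in the single column $p = 0$, the spectral sequence degenerates, and I read off $\mrm{H}^i(B) \cong \Lambda_{\bar{\m}}(\mrm{H}^i(A))$.

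For the amplitude equality, Krull's intersection theorem for the noetherian local ring $(\bar{A}, \bar{\m})$ yields that $\Lambda_{\bar{\m}}$ is faithful on finitely generated $\bar{A}$-modules: $\Lambda_{\bar{\m}}(N) = 0$ if and only if $N = 0$. Therefore $\mrm{H}^i(B) = 0$ precisely when $\mrm{H}^i(A) = 0$, and $\amp(B) = \amp(A)$. The main obstacle I expect to manage is the careful convergence bookkeeping for the bicomplex spectral sequence when $A$ is cohomologically unbounded below; the saving grace is that $T$ is bounded in $[0,n]$, which bounds one direction of the filtration and forces strong convergence.
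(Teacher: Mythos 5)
Your argument is correct, but it takes a genuinely different route from the paper. The paper's proof is short given machinery already developed in \cite{Sh2}: it writes $B \cong \mrm{R}\opn{Hom}_A(E,E)$ with $E = E(A,\bar{\m})$ the injective DG-module attached to $\bar{\m}$ (\cite[Theorem 7.22]{Sh2}), computes $\mrm{H}^i(\mrm{R}\opn{Hom}_A(E,E)) \cong \opn{Hom}_{\bar{A}}\bigl(\opn{Hom}_{\bar{A}}(\mrm{H}^i(A),\mrm{H}^0(E)),\mrm{H}^0(E)\bigr)$ using the structure theory of injective DG-modules (\cite[Theorem 4.10, Corollary 4.12]{Sh2}), and then identifies this double dual with $\Lambda_{\bar{\m}}(\mrm{H}^i(A))$ by Matlis duality, since $\mrm{H}^0(E)$ is the injective hull of the residue field. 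You instead work directly with the telescope presentation $\opn{Hom}_{A^0}(T,A)$ of $\mrm{L}\Lambda_{\bar{\m}}(A)$ and run the column-filtration spectral sequence, which converges because $T$ sits in finitely many degrees even though $A$ may be cohomologically unbounded; the reduction of the $E_1$-rows to $\opn{Hom}_{\bar{A}}(\opn{Tel}(\bar{A};\bar{\mathbf{a}}),\mrm{H}^q(A))$ via the factorization of the $A^0$-action through $\bar{A}$ and telescope base change is exactly right, and the collapse at $E_2$ reduces everything to the fact that derived $\bar{\m}$-adic completion of a finitely generated module over the noetherian ring $\bar{A}$ is its ordinary completion in degree zero. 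That last step is the one place where your phrasing ("exactness of completion") is slightly looser than what is needed; the clean justification is to resolve $\mrm{H}^q(A)$ by finite free $\bar{A}$-modules and use flatness of the completion of $\bar{A}$, which is standard. The trade-off: your proof is more self-contained and elementary, using only the telescope formula already quoted in the paper plus classical commutative algebra, at the cost of spectral-sequence bookkeeping; the paper's proof avoids any filtration argument but leans on the theory of injective DG-modules and the identification of $\mrm{L}\Lambda(A,\bar{\m})$ as an endomorphism DG-ring. The final step, deducing $\amp(B)=\amp(A)$ from faithfulness of adic completion on finitely generated modules (Krull intersection), is the same in both.
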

\begin{proof}
Let $E:=E(A,\bar{\m})$ be the injective DG-module corresponding to the maximal ideal $\bar{\m}$.
By \cite[Theorem 7.22]{Sh2}, we have that 
\[
B \cong \mrm{R}\opn{Hom}_A(E,E).
\]
Hence, it follows from \cite[Theorem 4.10, Corollary 4.12]{Sh2} that
\begin{align}
\mrm{H}^i(B) = \mrm{H}^i\left(\mrm{R}\opn{Hom}_A(E,E)\right) \cong
\opn{Hom}_{\mrm{H}^0(A)}(\mrm{H}^{-i}(E),\mrm{H}^0(E)) \cong\nonumber\\
\opn{Hom}_{\mrm{H}^0(A)}\left(\opn{Hom}_{\mrm{H}^0(A)}(\mrm{H}^i(A),\mrm{H}^0(E)),\mrm{H}^0(E)\right).\nonumber
\end{align}

Since $\mrm{H}^0(E)$ is exactly the injective hull of the residue field of the local ring $\mrm{H}^0(A)$,
the result follows from Matlis duality over the noetherian local ring $(\mrm{H}^0(A),\bar{\m})$.
Finally, the equality $\amp(B) = \amp(A)$ follows from faithfulness of adic completion on finitely generated modules.
\end{proof}

\subsection{Localization and support}

Given a commutative DG-ring $A$, 
we recall, following \cite[Section 4]{Ye1}, 
that one may localize it at at prime ideals of $\mrm{H}^0(A)$.
Given a prime $\bar{\p} \in \opn{Spec}(\mrm{H}^0(A))$,
the localization $A_{\bar{\p}}$ is defined as follows:
let $\pi_A:A \to \mrm{H}^0(A)$ be the canonical surjection,
and $\pi_A^0:A^0 \to \mrm{H}^0(A)$ its degree $0$ component.
Let $\p = (\pi_A^0)^{-1}(\bar{\p})$. 
Then $\p \in \opn{Spec}(A^0)$,
and one sets
\[
A_{\bar{\p}} := A \otimes_{A^0} A^0_{\p}.
\]
More generally, given $M \in \cat{D}(A)$,
we define
\[
M_{\bar{\p}} := M\otimes_A A_{\bar{\p}} \cong M\otimes_{A^0} A^0_{\p} \in \cat{D}(A_{\bar{\p}}).
\]
Since $A^0_{\p}$ is flat over $A_0$,
it follows that for all $n \in \mathbb{Z}$,
we have that

\begin{equation}\label{eqn:loc-coho}
\mrm{H}^n(A_{\bar{\p}}) = \mrm{H}^n(A)_{\bar{\p}}, \quad  \mrm{H}^n(M_{\bar{\p}}) = \mrm{H}^n(M)_{\bar{\p}}.
\end{equation}

It follows that if $A$ is a noetherian DG-ring,
then $(A_{\bar{\p}},\bar{\p}\cdot \mrm{H}^0(A)_{\bar{\p}})$ is a noetherian local DG-ring,
and that $\amp(A_{\bar{\p}}) \le \amp(A)$.

\begin{lem}\label{lem:local-ten-hz}
Let $A$ be a commutative DG-ring,
and let $M \in \cat{D}(A)$.
For any $\bar{\p} \in \opn{Spec}(\mrm{H}^0(A))$,
there is a natural isomorphism
\[
M_{\bar{\p}} \otimes^{\mrm{L}}_A \mrm{H}^0(A) \cong M\otimes^{\mrm{L}}_A \mrm{H}^0(A)_{\bar{\p}}
\]
in $\cat{D}(\mrm{H}^0(A))$.
\end{lem}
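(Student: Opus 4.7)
The plan is to reduce everything to two uses of the flatness of the localization map $A^0 \to A^0_{\p}$, where $\p = (\pi_A^0)^{-1}(\bar{\p})$, combined with associativity of the derived tensor product.

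First, I would observe that the DG-ring map $A \to A_{\bar{\p}}$ is flat, in the sense that extension of scalars along it is an exact functor on DG-modules. Indeed, by definition $A_{\bar{\p}} = A\otimes_{A^0} A^0_{\p}$, and since $A^0_{\p}$ is flat over $A^0$, for any DG-module $N$ over $A$ the identification $N\otimes_A A_{\bar{\p}} = N\otimes_{A^0} A^0_{\p}$ shows that $-\otimes_A A_{\bar{\p}}$ is exact. Consequently there is a natural isomorphism $M\otimes^{\mrm{L}}_A A_{\bar{\p}} \cong M\otimes_A A_{\bar{\p}} = M_{\bar{\p}}$ in $\cat{D}(A_{\bar{\p}})$, and in particular in $\cat{D}(A)$.

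Second, I would apply associativity of the derived tensor product to rewrite
\[
M_{\bar{\p}}\otimes^{\mrm{L}}_A \mrm{H}^0(A) \cong \bigl(M\otimes^{\mrm{L}}_A A_{\bar{\p}}\bigr)\otimes^{\mrm{L}}_A \mrm{H}^0(A) \cong M\otimes^{\mrm{L}}_A \bigl(A_{\bar{\p}}\otimes^{\mrm{L}}_A \mrm{H}^0(A)\bigr).
\]
So the lemma reduces to identifying the coefficient DG-module $A_{\bar{\p}}\otimes^{\mrm{L}}_A \mrm{H}^0(A)$ with $\mrm{H}^0(A)_{\bar{\p}}$ in $\cat{D}(\mrm{H}^0(A))$. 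Again by flatness of $A\to A_{\bar{\p}}$, this derived tensor product equals the underived one, and a change-of-ring computation gives
\[
A_{\bar{\p}}\otimes_A \mrm{H}^0(A) \cong \bigl(A\otimes_{A^0} A^0_{\p}\bigr)\otimes_A \mrm{H}^0(A) \cong A^0_{\p}\otimes_{A^0} \mrm{H}^0(A) \cong \mrm{H}^0(A)_{\bar{\p}},
\]
where the last isomorphism uses that $\bar{\p}$ is the image of $\p$ under $\pi_A^0$, so that localizing $\mrm{H}^0(A)$ at $\bar{\p}$ agrees with extending scalars along $A^0\to A^0_{\p}$.

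Combining these two steps yields the claimed natural isomorphism in $\cat{D}(\mrm{H}^0(A))$. There is no real obstacle here: the whole content is bookkeeping around the flatness of $A^0_{\p}/A^0$, which lifts through the tensor product over $A$ and through the base change to $\mrm{H}^0(A)$. The only mild subtlety is to make sure that the associativity isomorphism is applied in a way that respects the $\mrm{H}^0(A)$-module structure, but since both sides of the asserted isomorphism arise from the right action of $\mrm{H}^0(A)$ on its localization $\mrm{H}^0(A)_{\bar{\p}}$, naturality over $\mrm{H}^0(A)$ is automatic.
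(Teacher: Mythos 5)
Your proof is correct and follows essentially the same route as the paper's (which simply cites associativity of the derived tensor product together with the isomorphism $A_{\bar{\p}} \otimes^{\mrm{L}}_A \mrm{H}^0(A) \cong \mrm{H}^0(A)_{\bar{\p}}$); you have merely spelled out the flatness bookkeeping that the paper leaves implicit.
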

\begin{proof}
This follows from associativity of the derived tensor product,
and the fact that $A_{\bar{\p}} \otimes^{\mrm{L}}_A \mrm{H}^0(A) \cong \mrm{H}^0(A)_{\bar{\p}}$.
\end{proof}

\begin{dfn}\label{dfn:support}
Let $A$ be a commutative DG-ring,
and let $M \in \cat{D}(A)$.
We define the support of $M$ over $A$ to be the set
\[
\opn{Supp}_A(M) := \{\bar{\p} \in \opn{Spec}(\mrm{H}^0(A)) \mid M_{\bar{\p}} \ncong 0\}.
\]
\end{dfn}

It follows from the definition and from (\ref{eqn:loc-coho}) that
\[
\opn{Supp}_A(M) = \bigcup_{n \in \mathbb{Z}} \opn{Supp}_{\mrm{H}^0(A)}\left(\mrm{H}^n(M)\right).
\]

\begin{prop}\label{prop:supp-red}
Let $A$ be a commutative DG-ring,
and let $M \in \cat{D}^{-}(A)$.
Then there is an equality
\[
\opn{Supp}_A(M) = \opn{Supp}_{\mrm{H}^0(A)}\left(M\otimes^{\mrm{L}}_A \mrm{H}^0(A)\right).
\]
\end{prop}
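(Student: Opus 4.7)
The plan is to reduce both sides of the claimed equality to the condition ``$M_{\bar{\p}} \ncong 0$'' by combining Lemma \ref{lem:local-ten-hz} with the reduction identity \eqref{eqn:red-ten} applied over $A_{\bar{\p}}$. Fix $\bar{\p} \in \opn{Spec}(\mrm{H}^0(A))$ throughout.

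First I would translate membership in the right-hand side. By the description of support via localization of cohomology, $\bar{\p} \in \opn{Supp}_{\mrm{H}^0(A)}(M\otimes^{\mrm{L}}_A \mrm{H}^0(A))$ iff $(M\otimes^{\mrm{L}}_A \mrm{H}^0(A))\otimes_{\mrm{H}^0(A)} \mrm{H}^0(A)_{\bar{\p}} \ncong 0$, which is $M\otimes^{\mrm{L}}_A \mrm{H}^0(A)_{\bar{\p}} \ncong 0$. Applying Lemma \ref{lem:local-ten-hz} rewrites this as $M_{\bar{\p}} \otimes^{\mrm{L}}_A \mrm{H}^0(A) \ncong 0$. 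Next, using associativity of the derived tensor product and the identity $A_{\bar{\p}} \otimes^{\mrm{L}}_A \mrm{H}^0(A) \cong \mrm{H}^0(A)_{\bar{\p}} = \mrm{H}^0(A_{\bar{\p}})$ (which is contained in \eqref{eqn:loc-coho}), I can rewrite
\[
M_{\bar{\p}} \otimes^{\mrm{L}}_A \mrm{H}^0(A) \;\cong\; M_{\bar{\p}} \otimes^{\mrm{L}}_{A_{\bar{\p}}} \mrm{H}^0(A_{\bar{\p}}).
\]

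The key step is then to apply the reduction identity \eqref{eqn:red-ten} \emph{over the DG-ring $A_{\bar{\p}}$} to the object $M_{\bar{\p}}$. For this I need $M_{\bar{\p}} \in \cat{D}^{-}(A_{\bar{\p}})$, which follows because $M \in \cat{D}^{-}(A)$ by hypothesis and $\mrm{H}^n(M_{\bar{\p}}) = \mrm{H}^n(M)_{\bar{\p}}$ by \eqref{eqn:loc-coho}, so $\sup(M_{\bar{\p}}) \le \sup(M) < \infty$. Identity \eqref{eqn:red-ten} then gives
\[
\sup(M_{\bar{\p}}) \;=\; \sup\bigl(M_{\bar{\p}} \otimes^{\mrm{L}}_{A_{\bar{\p}}} \mrm{H}^0(A_{\bar{\p}})\bigr),
\]
with the top cohomologies canonically isomorphic. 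In particular $M_{\bar{\p}} \cong 0$ iff $M_{\bar{\p}} \otimes^{\mrm{L}}_{A_{\bar{\p}}} \mrm{H}^0(A_{\bar{\p}}) \cong 0$, which is exactly the equivalence between $\bar{\p} \in \opn{Supp}_A(M)$ and $\bar{\p} \in \opn{Supp}_{\mrm{H}^0(A)}(M\otimes^{\mrm{L}}_A \mrm{H}^0(A))$ after unwinding the above rewriting.

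I do not expect a real obstacle: the only subtlety is ensuring the boundedness hypothesis of \eqref{eqn:red-ten} is maintained after localization, which is immediate from flatness of $A^0_{\p}$ over $A^0$. Note that the hypothesis $M \in \cat{D}^{-}(A)$ is genuinely used, since \eqref{eqn:red-ten} is false without a supremum bound; without this assumption the inclusion $\opn{Supp}_A(M) \subseteq \opn{Supp}_{\mrm{H}^0(A)}(M\otimes^{\mrm{L}}_A \mrm{H}^0(A))$ may fail.
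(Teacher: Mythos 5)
Your argument is correct and is essentially the paper's own proof: both reduce the claim to the equivalence $M_{\bar{\p}} \ncong 0 \iff \left(M\otimes^{\mrm{L}}_A \mrm{H}^0(A)\right)_{\bar{\p}} \ncong 0$ using Lemma \ref{lem:local-ten-hz} together with \eqref{eqn:red-ten}. The only (harmless) difference is that you apply \eqref{eqn:red-ten} over $A_{\bar{\p}}$ after the extra identification $M_{\bar{\p}}\otimes^{\mrm{L}}_A \mrm{H}^0(A) \cong M_{\bar{\p}}\otimes^{\mrm{L}}_{A_{\bar{\p}}}\mrm{H}^0(A_{\bar{\p}})$, whereas the paper applies it directly over $A$ to $M_{\bar{\p}} \in \cat{D}^{-}(A)$.
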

\begin{proof}
Given $\bar{\p} \in \opn{Spec}(\mrm{H}^0(A))$,
we have that $\bar{\p} \in \opn{Supp}_A(M)$ 
if and only if $M_{\bar{\p}} \ncong 0$.
Note that $M_{\bar{\p}} \in \cat{D}^{-}(A)$.
By (\ref{eqn:red-ten}), we have that $M_{\bar{\p}} \ncong 0$ if and only if
$M_{\bar{\p}}\otimes^{\mrm{L}}_A \mrm{H}^0(A) \ncong 0$.
By Lemma \ref{lem:local-ten-hz}, this is equivalent to
\[
M\otimes^{\mrm{L}}_A \mrm{H}^0(A)_{\bar{\p}} \cong \left(M\otimes^{\mrm{L}}_A \mrm{H}^0(A)\right)_{\bar{\p}} \ncong 0
\]
if and only if
\[
\bar{\p} \in \opn{Supp}_{\mrm{H}^0(A)}\left(M\otimes^{\mrm{L}}_A \mrm{H}^0(A)\right),
\]
as claimed.
\end{proof}

\section{Local cohomology Krull dimension over commutative local DG-rings}\label{sec:KDim}

The next definition follows \cite[Section 3]{Fo}:

\begin{dfn}\label{dfn:krulldim}
Let $(A,\bar{\m})$ be a noetherian local DG-ring,
and let $M \in \cat{D}^{-}(A)$ be a bounded above DG-module.
We define the \textbf{local cohomology Krull dimension} of $M$ to be
\[
\lcdim(M) := \sup_{\ell \in \mathbb{Z}} \left\{\dim(\mrm{H}^{\ell}(M)) + \ell\right\}
\]
where $\dim(\mrm{H}^{\ell}(M))$ is the usual Krull dimension of the $\mrm{H}^0(A)$-module 
$\mrm{H}^{\ell}(M)$; that is, the Krull dimension of the noetherian ring $\mrm{H}^0(A)/\opn{ann}(\mrm{H}^{\ell}(M))$.
\end{dfn}

\begin{rem}
The name \textit{local cohomology Krull dimension} is justified by Theorem \ref{thm:non-vanish} below.
\end{rem}

\begin{rem}
Since any $\mrm{H}^0(A)$-module $\bar{M}$ has 
\[
0 \le \dim(\bar{M}) \le \dim(\mrm{H}^0(A)),
\]
we necessarily have 
\begin{equation}\label{eqn:dim}
\sup(M) \le \lcdim(M) \le \sup(M) + \dim(\mrm{H}^0(A))
\end{equation}
for any $M \in \cat{D}^{-}(A)$,
with $\lcdim(M) = \sup(M) + \dim(\mrm{H}^0(A))$ if and only if 
\begin{equation}\label{eqn:max-dim}
\dim(\mrm{H}^{\sup(M)}(M)) = \dim(\mrm{H}^0(A)).
\end{equation}
In particular, it follows, since $A$ is non-positive, that 
\begin{equation}\label{eqn:dimofA}
\lcdim(A) = \dim(\mrm{H}^0(A)).
\end{equation}
\end{rem}

\begin{rem}
The paper \cite{BSW} discusses another notion of Krull dimension for differential graded algebras, 
and shows that in some nice cases, 
it coincides with $\dim(\mrm{H}^0(A))$, as in the above definition.
\end{rem}

We now discuss two results we will need about bounds of local cohomology over commutative local rings 
(and not DG-rings, as in the rest of this paper).
These results are probably well known, 
and we simply wish to emphasize that they also hold for unbounded complexes,
as we will need to apply them in an unbounded situation.

\begin{prop}\label{prop:dim-in-rings}
Let $(A,\m)$ be a noetherian local ring,
and let $M \in \cat{D}^{-}_{\mrm{f}}(A)$.
Then
\[
\lcdim(M) = \sup\left( \mrm{R}\Gamma_{\m}(M) \right).
\]
\end{prop}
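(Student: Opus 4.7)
The plan is to combine an upper bound via the standard hypercohomology spectral sequence with a reduction to the bounded case, and then handle the bounded case by induction on amplitude, using only Grothendieck's vanishing and non-vanishing theorems for local cohomology of finitely generated modules.

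For the upper bound $\sup(\mrm{R}\Gamma_{\m}(M)) \le \lcdim(M)$, I would invoke the convergent spectral sequence
\[
E_2^{p,q} = \mrm{H}^p_{\m}(\mrm{H}^q(M)) \;\Longrightarrow\; \mrm{H}^{p+q}(\mrm{R}\Gamma_{\m}(M)),
\]
which converges because $M$ is bounded above (alternatively, model $\mrm{R}\Gamma_{\m}$ by the telescope complex, which is a bounded complex of flat $A$-modules, and use its double-complex filtration). Grothendieck's vanishing theorem gives $E_2^{p,q} = 0$ whenever $p > \dim(\mrm{H}^q(M))$, hence whenever $p + q > \lcdim(M)$, which yields the upper bound.

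To reduce the equality to the bounded case, I would use smart truncation. For $N \in \mbb{N}$ consider the distinguished triangle
\[
\opn{smt}^{<-N}(M) \to M \to \opn{smt}^{\ge -N}(M) \to \opn{smt}^{<-N}(M)[1].
\]
Applying the upper bound already proved to $\opn{smt}^{<-N}(M)$, whose cohomology sits in degrees $\le -N-1$ and whose modules have Krull dimension at most $\dim(\mrm{H}^0(A))$, gives $\sup(\mrm{R}\Gamma_{\m}(\opn{smt}^{<-N}(M))) \le \dim(\mrm{H}^0(A)) - N - 1$. Taking $N$ large enough so that this upper bound is strictly less than $\lcdim(M)$, the long exact sequence in cohomology shows that the top cohomology of $\mrm{R}\Gamma_{\m}(M)$ agrees with that of $\mrm{R}\Gamma_{\m}(\opn{smt}^{\ge -N}(M))$, and that $\lcdim(M) = \lcdim(\opn{smt}^{\ge -N}(M))$ as well. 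So it suffices to prove the theorem for $M \in \cat{D}^{\mrm{b}}_{\mrm{f}}(A)$.

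For the bounded case I would induct on $\amp(M)$. When $\amp(M) = 0$ we have $M \simeq N[-q]$ for some finitely generated $A$-module $N$, and the classical Grothendieck theorems give $\sup(\mrm{R}\Gamma_{\m}(N)) = \dim(N)$, which is exactly $\lcdim(M) - q$ after the shift. For the induction step with $s = \sup(M)$, use the truncation triangle
\[
\opn{smt}^{<s}(M) \to M \to \mrm{H}^s(M)[-s] \to \opn{smt}^{<s}(M)[1],
\]
apply $\mrm{R}\Gamma_{\m}$, and analyze the long exact sequence. Writing $d_1 = \lcdim(\opn{smt}^{<s}(M))$ and $d_2 = \dim(\mrm{H}^s(M)) + s$, the inductive hypothesis and the base case give $\sup(\mrm{R}\Gamma_{\m}(\opn{smt}^{<s}(M))) = d_1$ and $\sup(\mrm{R}\Gamma_{\m}(\mrm{H}^s(M)[-s])) = d_2$, with $\lcdim(M) = \max(d_1, d_2)$. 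In the two cases $d_1 > d_2$ and $d_2 > d_1$, the long exact sequence directly identifies a nonzero top cohomology class; in the delicate equal case $d_1 = d_2 = d$, one uses that $\mrm{H}^{d+1}(\mrm{R}\Gamma_{\m}(\opn{smt}^{<s}(M))) = 0$ by induction to obtain a surjection $\mrm{H}^d(\mrm{R}\Gamma_{\m}(M)) \twoheadrightarrow \mrm{H}^{d-s}_{\m}(\mrm{H}^s(M))$, whose target is nonzero by Grothendieck non-vanishing since $d - s = \dim(\mrm{H}^s(M))$.

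The main obstacle is precisely this equal case: naively the differentials of the spectral sequence could kill the top-degree class, which is why I prefer to bypass the spectral sequence for the lower bound and argue inductively through the truncation triangle, where the surjection in the long exact sequence makes the non-vanishing at the highest degree transparent.
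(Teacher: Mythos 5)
Your upper bound $\sup(\mrm{R}\Gamma_{\m}(M)) \le \lcdim(M)$ and the truncation argument reducing the equality to the bounded case are correct, and they essentially reproduce what the paper does (compare Proposition \ref{prop:classical-bound} and the truncation step in the paper's proof of Proposition \ref{prop:dim-in-rings}). The real difference is that the paper does not reprove the bounded case at all: it quotes Foxby \cite[Proposition 3.14(d)]{Fo}, the point of the proposition being only the passage to unbounded complexes. You instead try to reprove the bounded non-vanishing by induction on amplitude, and that induction has a genuine gap.

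The gap is in the case $d_1 > d_2$, which you declare easy. With $s=\sup(M)$, $d_1=\lcdim(\opn{smt}^{<s}(M))$ and $d_2=\dim(\mrm{H}^s(M))+s$: if $d_1\ge d_2+2$ the long exact sequence indeed gives $\mrm{H}^{d_1}(\mrm{R}\Gamma_{\m}(M))\cong \mrm{H}^{d_1}(\mrm{R}\Gamma_{\m}(\opn{smt}^{<s}(M)))\ne 0$, but if $d_1=d_2+1$ it only gives an exact sequence
\[
\mrm{H}^{\dim(\mrm{H}^s(M))}_{\m}\bigl(\mrm{H}^s(M)\bigr) \xrightarrow{\;\delta\;} \mrm{H}^{d_1}\bigl(\mrm{R}\Gamma_{\m}(\opn{smt}^{<s}(M))\bigr) \to \mrm{H}^{d_1}\bigl(\mrm{R}\Gamma_{\m}(M)\bigr) \to 0,
\]
so the class you need lives in the cokernel of the connecting map $\delta$, whose source and target are both nonzero (Grothendieck non-vanishing, respectively your inductive hypothesis); nothing in your argument rules out $\delta$ being surjective. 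Already for $\amp(M)=1$ the complex $M$ is classified by an arbitrary class in $\opn{Ext}^2_A(\mrm{H}^s(M),\mrm{H}^{s-1}(M))$ with $\dim(\mrm{H}^{s-1}(M))=\dim(\mrm{H}^s(M))+2$, and you would have to show that no such class induces a surjection of the top local cohomology of $\mrm{H}^s(M)$ onto that of $\mrm{H}^{s-1}(M)$. That is true, but it requires genuine input beyond the long exact sequence, e.g. Matlis/local duality over the completion, or the Macdonald--Sharp description of attached primes of top local cohomology (all attached primes of $\mrm{H}^{\dim N}_{\m}(N)$ have coheight $\dim N$, so a quotient of the source cannot have an attached prime of larger coheight); in the general induction step the target is the top local cohomology of a complex, so even this fix needs the very theory you are trying to establish. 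In short, the hard kernel of the statement -- Foxby's bounded non-vanishing -- is left unproved; either supply such a duality or attached-primes argument in the case $d_1=d_2+1$, or simply cite the bounded case as the paper does and keep your (correct) reduction to it.
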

\begin{proof}
If $M$ has bounded cohomology, then this is precisely \cite[Proposition 3.14(d)]{Fo}.
In the general case, let $d = \dim(A)$. Then $\amp(\mrm{R}\Gamma_{\m}(A)) \le d$.
Let $n = \sup(M) - (d+1)$, and set
\[
M' := \opn{smt}^{\le n}(M), \quad M'' := \opn{smt}^{> n}(M).
\]
Then there is a distinguished triangle
\[
M' \to M \to M'' \to M'[1]
\]
in $\cat{D}(A)$. Applying the triangulated functor $\mrm{R}\Gamma_{\m}(-)$, 
we obtain a distinguished triangle:
\[
\mrm{R}\Gamma_{\m}(M') \to \mrm{R}\Gamma_{\m}(M) \to \mrm{R}\Gamma_{\m}(M'') \to \mrm{R}\Gamma_{\m}(M')[1]
\]
If $i < \sup(M) -d$, we must have 
\[
i+\dim(\mrm{H}^i(M)) < \sup(M)
\]
so it follows that $\lcdim(M) = \lcdim(M'')$.
Since $\mrm{R}\Gamma_{\m}(A) \in [0,d]$, and 
$\mrm{R}\Gamma_{\m}(M') \cong \mrm{R}\Gamma_{\m}(A) \otimes^{\mrm{L}}_A M'$,
we have that 
\[
\mrm{H}^i\left(\mrm{R}\Gamma_{\m}(M')\right) = 0 \quad, \mbox{for $i\ge \sup(M)$}
\]
Hence, the above distinguished triangle and the fact that $M''$ is a bounded complex imply there are equalities
\[
\sup\left(\mrm{R}\Gamma_{\m}(M)\right) = \sup\left(\mrm{R}\Gamma_{\m}(M'')\right) = \lcdim(M'') = \lcdim(M).
\]
\end{proof}
Similarly, the following unbounded version of \cite[Proposition 3.7]{Fo} holds.
We omit the similar proof.

\begin{prop}\label{prop:classical-bound}
Let $(A,\m)$ be a noetherian local ring,
and let $M \in \cat{D}^{-}(A)$.
Then 
\[
\sup\left(\mrm{R}\Gamma_{\m}(M)\right) \le \lcdim(M).
\]
\end{prop}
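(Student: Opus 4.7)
The plan is to mimic the smart-truncation reduction used in the proof of Proposition \ref{prop:dim-in-rings}, replacing the argument that yielded an equality there with a simpler one that only produces an upper bound.

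Assume $L := \lcdim(M) < \infty$ (otherwise the inequality is vacuous); note that by the definition of $\lcdim$ this forces $\sup(M) \le L$. Set $d = \dim(A)$ and $n = L - d$, and split $M$ via the distinguished triangle
\[
M' \to M \to M'' \to M'[1]
\]
with $M' := \opn{smt}^{\le n}(M)$ and $M'' := \opn{smt}^{>n}(M)$. Applying the triangulated functor $\mrm{R}\Gamma_{\m}$ and using that $\sup(Y) \le \max(\sup(X), \sup(Z))$ for any distinguished triangle $X \to Y \to Z \to X[1]$, it suffices to bound both $\sup(\mrm{R}\Gamma_{\m}(M'))$ and $\sup(\mrm{R}\Gamma_{\m}(M''))$ by $L$.

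For $M''$: since $\sup(M) \le L < \infty$, we have $M'' \in \cat{D}^{\mrm{b}}(A)$, so the bounded case \cite[Proposition 3.7]{Fo} gives $\sup(\mrm{R}\Gamma_{\m}(M'')) \le \lcdim(M'') \le \lcdim(M) = L$, the last inequality because the nonzero cohomologies of $M''$ form a subset of those of $M$. For $M'$: the telescope-complex description (\ref{eqn:RGamma}) gives $\mrm{R}\Gamma_{\m}(M') \cong \mrm{R}\Gamma_{\m}(A) \otimes^{\mrm{L}}_A M'$; since $\mrm{R}\Gamma_{\m}(A)$ has cohomology concentrated in degrees $[0,d]$ and $M'$ has cohomology concentrated in degrees $\le n$, the derived tensor product is concentrated in degrees $\le n + d = L$.

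The only real subtlety, absent in the bounded setting, is justifying the upper-degree bound on the derived tensor product when $M'$ is unbounded below. This is standard, for example via a K-flat resolution of $M'$ together with the amplitude bound $\amp(\mrm{R}\Gamma_{\m}(A)) \le d$, and should not be a serious obstacle.
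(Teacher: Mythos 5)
Your argument is correct and is essentially the proof the paper has in mind: the statement is flagged as the unbounded version of \cite[Proposition 3.7]{Fo} with the proof omitted as ``similar'' to that of Proposition \ref{prop:dim-in-rings}, and your smart-truncation into $M'=\opn{smt}^{\le n}(M)$ and $M''=\opn{smt}^{>n}(M)$, bounding $\sup(\mrm{R}\Gamma_{\m}(M'))$ via $\mrm{R}\Gamma_{\m}(A)\otimes^{\mrm{L}}_A M'$ and $\sup(\mrm{R}\Gamma_{\m}(M''))$ via the bounded case, is exactly that argument (your choice $n=L-d$ versus the paper's $\sup(M)-(d+1)$ is immaterial). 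The degree bound $\sup(X\otimes^{\mrm{L}}_A Y)\le\sup(X)+\sup(Y)$ for $X,Y\in\cat{D}^{-}(A)$ that you flag as the only subtlety is standard and is already used implicitly in the paper's proof of Proposition \ref{prop:dim-in-rings}.
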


\begin{prop}\label{prop:RGamma-and-Tensor}
Let $A,B$ be commutative DG-rings,
let $\bar{\a} \subseteq \mrm{H}^0(A)$ be a finitely generated ideal,
let $f:A \to B$ be a map of DG-rings,
and let 
\[
\bar{\b} := \mrm{H}^0(f)(\bar{\a})\cdot \mrm{H}^0(B)
\]
be the ideal in $\mrm{H}^0(B)$ generated by the image of $\bar{\a}$.
Given $M \in \cat{D}(A)$,
there is a natural isomorphism
\[
\mrm{R}\Gamma_{\bar{\a}}(M) \otimes^{\mrm{L}}_A B \cong \mrm{R}\Gamma_{\bar{\b}}(M \otimes^{\mrm{L}}_A B)
\]
in $\cat{D}(B)$.
\end{prop}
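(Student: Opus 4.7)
The plan is to compute both sides via the explicit telescope-complex formula (\ref{eqn:RGamma}) and exploit the base-change property of the telescope complex that was recorded just before it.

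First I would lift the ideal $\bar{\a}$ to a finite sequence $\mathbf{a} = (a_1,\dots,a_n) \in A^0$ whose image in $\mrm{H}^0(A)$ generates $\bar{\a}$, and set $\mathbf{b} = (f^0(a_1),\dots,f^0(a_n)) \in B^0$, where $f^0:A^0 \to B^0$ is the degree-zero part of $f$. By the definition of $\bar{\b}$, the image of $\mathbf{b}$ in $\mrm{H}^0(B)$ generates $\bar{\b}$, so (\ref{eqn:RGamma}) is applicable on both sides: for any $M \in \cat{D}(A)$,
\[
\mrm{R}\Gamma_{\bar{\a}}(M) \cong \opn{Tel}(A^0;\mathbf{a}) \otimes_{A^0} A \otimes_A M,
\]
and for any $N \in \cat{D}(B)$,
\[
\mrm{R}\Gamma_{\bar{\b}}(N) \cong \opn{Tel}(B^0;\mathbf{b}) \otimes_{B^0} B \otimes_B N.
\]

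Next I would tensor the first formula with $B$ over $A$ and chase the tensor products. Since $\opn{Tel}(A^0;\mathbf{a})$ is a bounded complex of free $A^0$-modules, the DG-module $\opn{Tel}(A^0;\mathbf{a}) \otimes_{A^0} A$ is a bounded complex of free $A$-modules, hence K-projective (in particular K-flat) over $A$, so all ordinary tensor products in sight represent derived tensor products. Using associativity,
\[
\mrm{R}\Gamma_{\bar{\a}}(M) \otimes^{\mrm{L}}_A B \cong \opn{Tel}(A^0;\mathbf{a}) \otimes_{A^0} \bigl(M \otimes^{\mrm{L}}_A B\bigr).
\]
Then I factor the $A^0$-tensor through $B^0$ using $A^0 \to B^0 \to B$, writing
\[
\opn{Tel}(A^0;\mathbf{a}) \otimes_{A^0} \bigl(M \otimes^{\mrm{L}}_A B\bigr) \cong \bigl(\opn{Tel}(A^0;\mathbf{a}) \otimes_{A^0} B^0\bigr) \otimes_{B^0} \bigl(M \otimes^{\mrm{L}}_A B\bigr).
\]
The key input, stated just before (\ref{eqn:RGamma}), is the telescope base-change isomorphism
\[
\opn{Tel}(A^0;\mathbf{a}) \otimes_{A^0} B^0 \cong \opn{Tel}(B^0;\mathbf{b}),
\]
which is straightforward from the definition of the telescope complex as an iterated tensor product of the length-one telescope associated to a single element.

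Substituting this identification and applying (\ref{eqn:RGamma}) for $B$ and $\bar{\b}$ to the DG-module $M \otimes^{\mrm{L}}_A B \in \cat{D}(B)$ yields
\[
\opn{Tel}(B^0;\mathbf{b}) \otimes_{B^0} \bigl(M \otimes^{\mrm{L}}_A B\bigr) \cong \mrm{R}\Gamma_{\bar{\b}}\bigl(M \otimes^{\mrm{L}}_A B\bigr),
\]
proving the claim. There is no serious obstacle here; the only point that requires a moment of care is confirming that the strict tensor product $\opn{Tel}(A^0;\mathbf{a})\otimes_{A^0}A$ is K-flat over the DG-ring $A$, so that the rearrangements of $\otimes^{\mrm{L}}_A$ are all legitimate — and this follows immediately because each term of that bounded complex is a free $A$-module.
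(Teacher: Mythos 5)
Your proposal is correct and follows essentially the same route as the paper's proof: compute $\mrm{R}\Gamma_{\bar{\a}}(M)$ via the telescope formula (\ref{eqn:RGamma}), apply the base-change isomorphism $\opn{Tel}(A^0;\mathbf{a})\otimes_{A^0}B^0 \cong \opn{Tel}(B^0;\mathbf{b})$, and identify the result with $\mrm{R}\Gamma_{\bar{\b}}(M\otimes^{\mrm{L}}_A B)$ by (\ref{eqn:RGamma}) over $B$. The extra remark on K-flatness of $\opn{Tel}(A^0;\mathbf{a})\otimes_{A^0}A$ is a correct justification of a step the paper leaves implicit.
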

\begin{proof}
Let $\bar{x}_1,\dots,\bar{x}_n$ be a finite sequence of elements in $\mrm{H}^0(A)$ that generates $\bar{\a}$,
and let $x_1,\dots,x_n$ be lifts of these elements to $A^0$.
Then by \cite[Corollary 2.13]{Sh1}, we have that
\[
\mrm{R}\Gamma_{\bar{\a}}(M) \cong \opn{Tel}(A^0;x_1,\dots,x_n)\otimes_{A^0} A \otimes_A M.
\]
Hence, by the base change property of the telescope complex 
\[
\mrm{R}\Gamma_{\bar{\a}}(M) \otimes^{\mrm{L}}_A B \cong
\opn{Tel}(B^0;f(x_1),\dots,f(x_n))\otimes_{B^0} \left(M \otimes^{\mrm{L}}_A B\right).
\]
Since the images of $f(x_1),\dots,f(x_n)$ in $\mrm{H}^0(B)$ generate $\bar{\b}$,
we deduce from \cite[Corollary 2.13]{Sh1} that the latter is naturally isomorphic to 
$\mrm{R}\Gamma_{\bar{\b}}(M \otimes^{\mrm{L}}_A B)$,
as claimed.
\end{proof}

In the next results we will use the notation $\mrm{R}\Gamma^{\bar{A}}$ and $\mrm{R}\Gamma^A$ introduced in Remark \ref{rem:lc-notation}.
Similarly, we will discuss both the local cohomology Krull dimension of DG-modules over a DG-ring $A$ and of complexes over the ring $\mrm{H}^0(A)$.
To distinguish between them, we will denote the former by $\lcdim_A(-)$ and the latter by $\lcdim_{\bar{A}}(-)$.

\begin{prop}\label{prop:rgammatohz}
Let $A$ be a noetherian DG-ring,
let $\bar{\a} \subseteq \mrm{H}^0(A)$ be a an ideal,
and let $M \in \cat{D}(A)$.
Then there is a natural isomorphism
\[
\mrm{R}\Gamma^A_{\bar{\a}}(M) \otimes^{\mrm{L}}_A \mrm{H}^0(A)
\cong 
\mrm{R}\Gamma^{\bar{A}}_{\bar{\a}}\left(M \otimes^{\mrm{L}}_A \mrm{H}^0(A)\right)
\]
in $\cat{D}(\mrm{H}^0(A))$.
\end{prop}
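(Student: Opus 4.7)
The plan is to apply the base change result Proposition \ref{prop:RGamma-and-Tensor} to the canonical reduction map of DG-rings $f = \pi_A : A \to \mrm{H}^0(A)$, where the target is viewed as a commutative non-positive DG-ring concentrated in degree $0$. Setting $B := \mrm{H}^0(A)$, the induced map $\mrm{H}^0(f) : \mrm{H}^0(A) \to \mrm{H}^0(B) = \mrm{H}^0(A)$ is the identity, so the ideal $\bar{\b} \subseteq \mrm{H}^0(B)$ produced by that proposition is simply $\bar{\a}$ itself. This immediately yields a natural isomorphism
\[
\mrm{R}\Gamma^A_{\bar{\a}}(M) \otimes^{\mrm{L}}_A \mrm{H}^0(A) \;\cong\; \mrm{R}\Gamma^B_{\bar{\a}}\bigl(M \otimes^{\mrm{L}}_A \mrm{H}^0(A)\bigr)
\]
in $\cat{D}(B) = \cat{D}(\mrm{H}^0(A))$, where on the right we use the DG local cohomology functor of Section \ref{sec:lc} applied to the DG-ring $B$.

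It then remains to identify $\mrm{R}\Gamma^B_{\bar{\a}}$ with the classical Grothendieck local cohomology functor $\mrm{R}\Gamma^{\bar{A}}_{\bar{\a}}$. Since $B = \mrm{H}^0(A)$ is a noetherian ring concentrated in degree zero, both functors are characterized as the right adjoint to the inclusion $\cat{D}_{\bar{\a}-\opn{tor}}(\mrm{H}^0(A)) \inj \cat{D}(\mrm{H}^0(A))$, so they coincide. As pointed out in the warning in Section \ref{sec:lc}, this right adjoint further agrees with the right derived $\bar{\a}$-torsion functor precisely because $\bar{\a}$ is weakly proregular, which holds automatically in the noetherian setting.

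I do not anticipate a real obstacle: the statement is essentially a direct corollary of Proposition \ref{prop:RGamma-and-Tensor}, and the only care required is the (formal) identification of the DG local cohomology functor on a ring concentrated in degree $0$ with its classical counterpart. If one prefers to bypass this identification entirely, both sides can be computed directly via the telescope formula \eqref{eqn:RGamma}: lifting generators of $\bar{\a}$ to a finite sequence $x_1,\dots,x_n \in A^0$, the base change property of the telescope complex together with associativity of the derived tensor product reduces both sides to $\opn{Tel}(A^0;x_1,\dots,x_n) \otimes_{A^0} \bigl(M \otimes^{\mrm{L}}_A \mrm{H}^0(A)\bigr)$, giving the isomorphism on the nose.
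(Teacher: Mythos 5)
Your proposal is correct and follows essentially the same route as the paper: the paper's proof also consists of applying Proposition \ref{prop:RGamma-and-Tensor} to the reduction map $A \to \mrm{H}^0(A)$ and then invoking the noetherian hypothesis to identify the DG-theoretic local cohomology functor over $\mrm{H}^0(A)$ with the classical one. Your extra remarks (the adjoint characterization, weak proregularity, and the telescope computation) just make explicit the identification the paper states in one line.
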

\begin{proof}
This follows from applying Proposition \ref{prop:RGamma-and-Tensor} to the map $A\to \mrm{H}^0(A)$.
The assumption that $\mrm{H}^0(A)$ is noetherian is required,
in order for the local cohomology functor discussed in the DG context to coincide with the classical local cohomology functor 
$\mrm{R}\Gamma^{\bar{A}}_{\bar{\a}}$.
\end{proof}

\begin{lem}\label{lem:dim-reduction}
Let $(A,\bar{\m})$ be a noetherian local DG-ring,
and let $M \in \cat{D}^{-}(A)$.
Then there is an equality
\[
\sup\left(\mrm{R}\Gamma^A_{\bar{\m}}(M)\right) = \sup\left(\mrm{R}\Gamma^{\bar{A}}_{\bar{\m}}(M \otimes^{\mrm{L}}_A \mrm{H}^0(A)) \right).
\]
If moreover $M \in \cat{D}^{-}_{\mrm{f}}(A)$, then there is also an equality
\[
\sup\left(\mrm{R}\Gamma^A_{\bar{\m}}(M)\right) = \lcdim_{\bar{A}} ( M\otimes^{\mrm{L}}_A \mrm{H}^0(A) ).
\]
\end{lem}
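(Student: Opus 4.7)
The plan is to reduce both equalities to results already established in the paper, using the behavior of derived torsion under tensoring with $\mrm{H}^0(A)$ and the ring-theoretic case of Proposition~\ref{prop:dim-in-rings}. The key preliminary observation is that $\mrm{R}\Gamma^A_{\bar{\m}}(M)$ lies in $\cat{D}^{-}(A)$ whenever $M$ does: indeed, by the telescope description (\ref{eqn:RGamma}), $\mrm{R}\Gamma^A_{\bar{\m}}(M)$ is computed by tensoring $M$ with the bounded complex $\opn{Tel}(A^0;\mathbf{x}) \otimes_{A^0} A$, so boundedness above of $M$ transfers to $\mrm{R}\Gamma^A_{\bar{\m}}(M)$. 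This is the fact that makes formula (\ref{eqn:red-ten}) applicable.

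For the first equality, I would apply (\ref{eqn:red-ten}) to the bounded-above DG-module $\mrm{R}\Gamma^A_{\bar{\m}}(M)$, obtaining
\[
\sup\left(\mrm{R}\Gamma^A_{\bar{\m}}(M)\right) = \sup\left(\mrm{R}\Gamma^A_{\bar{\m}}(M) \otimes^{\mrm{L}}_A \mrm{H}^0(A)\right).
\]
Invoking Proposition~\ref{prop:rgammatohz} then identifies the right-hand side with $\sup\left(\mrm{R}\Gamma^{\bar{A}}_{\bar{\m}}(M \otimes^{\mrm{L}}_A \mrm{H}^0(A))\right)$, which is the first claimed equality.

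For the second equality, under the additional hypothesis $M \in \cat{D}^{-}_{\mrm{f}}(A)$, I would observe that $M \otimes^{\mrm{L}}_A \mrm{H}^0(A) \in \cat{D}^{-}_{\mrm{f}}(\mrm{H}^0(A))$. This can be seen by taking a semi-free resolution of $M$ with finitely many generators in each degree (possible because $A$ is noetherian and $M$ has finitely generated cohomology bounded above), whence the tensor product has finitely generated cohomology as an $\mrm{H}^0(A)$-module. Having verified this, I apply Proposition~\ref{prop:dim-in-rings} over the noetherian local ring $(\mrm{H}^0(A),\bar{\m})$ to conclude
\[
\sup\left(\mrm{R}\Gamma^{\bar{A}}_{\bar{\m}}\left(M \otimes^{\mrm{L}}_A \mrm{H}^0(A)\right)\right) = \lcdim_{\bar{A}}\left(M \otimes^{\mrm{L}}_A \mrm{H}^0(A)\right),
\]
which together with the first equality yields the desired statement.

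The one place that requires a bit of care is the finite-generation claim for $M \otimes^{\mrm{L}}_A \mrm{H}^0(A)$, since the result must apply without assuming $A$ has bounded cohomology. However, since Proposition~\ref{prop:dim-in-rings} has been extended to the unbounded-above setting of $\cat{D}^{-}_{\mrm{f}}$, and since the finite-generation of Tor modules can be checked degree by degree via the standard convergent spectral sequence whose $E_2$-terms are $\opn{Tor}^{\mrm{H}^0(A)}_{\bullet}(\mrm{H}^{\bullet}(M), \mrm{H}^0(A))$ (or more directly via a semi-free resolution), no genuine obstacle arises.
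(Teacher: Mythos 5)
Your proof is correct and follows essentially the same route as the paper: bounded-aboveness of $\mrm{R}\Gamma^A_{\bar{\m}}(M)$ via the telescope description, then (\ref{eqn:red-ten}) combined with Proposition \ref{prop:rgammatohz} for the first equality, and Proposition \ref{prop:dim-in-rings} applied to $M\otimes^{\mrm{L}}_A \mrm{H}^0(A) \in \cat{D}^{-}_{\mrm{f}}(\mrm{H}^0(A))$ for the second. The only difference is that you spell out (correctly, via a degreewise-finite semi-free resolution) the finite-generation of the cohomology of $M\otimes^{\mrm{L}}_A \mrm{H}^0(A)$, which the paper simply asserts.
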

\begin{proof}
Letting $x_1,\dots,x_n \in A^0$ be a finite sequence of elements of $A^0$ such that their image in $\mrm{H}^0(A)$ generates $\bar{\m}$,
it follows from (\ref{eqn:RGamma}) that
\[
\mrm{R}\Gamma_{\bar{\m}}(M) \cong \opn{Tel}(A^0;x_1,\dots,x_n) \otimes_{A^0} A \otimes_A M
\]
Since the DG-module $\opn{Tel}(A^0;x_1,\dots,x_n) \otimes_{A^0} A$ is bounded-above, we deduce that the DG-module
$\mrm{R}\Gamma_{\bar{\m}}(M)$ is also bounded above.
Hence, by (\ref{eqn:red-ten}) we have that
\[
\sup\left(\mrm{R}\Gamma^A_{\bar{\m}}(M)\right) = \sup\left(\mrm{R}\Gamma^A_{\bar{\m}}(M) \otimes^{\mrm{L}}_A \mrm{H}^0(A) \right) =
\sup\left(\mrm{R}\Gamma^{\bar{A}}_{\bar{\m}}(M \otimes^{\mrm{L}}_A \mrm{H}^0(A)) \right)
\]
where the second equality follows from Proposition \ref{prop:rgammatohz}.
This proves the first claim.
Finally, if in addition $M \in \cat{D}^{-}_{\mrm{f}}(A)$, then we have that
\[
M \otimes^{\mrm{L}}_A \mrm{H}^0(A) \in \cat{D}^{-}_{\mrm{f}}(\mrm{H}^0(A)),
\]
so it follows from Proposition \ref{prop:dim-in-rings} that
\[
\sup\left(\mrm{R}\Gamma^{\bar{A}}_{\bar{\m}}(M \otimes^{\mrm{L}}_A \mrm{H}^0(A)) \right)
= \lcdim_{\bar{A}} ( M\otimes^{\mrm{L}}_A \mrm{H}^0(A) ).
\]
proving the second claim.
\end{proof}

\begin{prop}\label{prop:dim-reduction}
Let $(A,\bar{\m})$ be a noetherian local DG-ring,
and let $M \in \cat{D}^{-}(A)$.
Then there is an equality
\[
\lcdim_A(M) = \lcdim_{\bar{A}}(M\otimes^{\mrm{L}}_A \mrm{H}^0(A)).
\]
\end{prop}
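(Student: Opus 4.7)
The plan is to reindex both sides of the desired equality as suprema over the primes $\bar{\p} \in \opn{Spec}(\mrm{H}^0(A))$, and then compare them prime by prime using the already-established supremum invariance (\ref{eqn:red-ten}).

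First, starting from the identity $\dim_{\bar{A}}(\bar{N}) = \sup\{\dim(\mrm{H}^0(A)/\bar{\p}) : \bar{\p} \in \opn{Supp}_{\bar{A}}(\bar{N})\}$ for any $\mrm{H}^0(A)$-module $\bar{N}$, plugging into Definition \ref{dfn:krulldim} and swapping the order of the two suprema (terms with $\mrm{H}^\ell = 0$ contribute $-\infty$ and are harmless), I obtain
\[
\lcdim_A(M) = \sup_{\bar{\p}} \bigl\{ \dim(\mrm{H}^0(A)/\bar{\p}) + \sup\{\ell : \bar{\p} \in \opn{Supp}_{\bar{A}}(\mrm{H}^\ell(M))\} \bigr\}.
\]
Applying (\ref{eqn:loc-coho}), the inner supremum equals $\sup(M_{\bar{\p}})$, so
\[
\lcdim_A(M) = \sup_{\bar{\p}} \bigl\{\dim(\mrm{H}^0(A)/\bar{\p}) + \sup(M_{\bar{\p}})\bigr\}.
\]
The identical manipulation, carried out over the noetherian ring $\mrm{H}^0(A)$ for the complex $L := M \otimes^{\mrm{L}}_A \mrm{H}^0(A)$, gives
\[
\lcdim_{\bar{A}}(L) = \sup_{\bar{\p}} \bigl\{\dim(\mrm{H}^0(A)/\bar{\p}) + \sup(L_{\bar{\p}})\bigr\}.
\]

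Second, I would identify $L_{\bar{\p}} \cong M_{\bar{\p}} \otimes^{\mrm{L}}_{A_{\bar{\p}}} \mrm{H}^0(A_{\bar{\p}})$. This follows from associativity of the derived tensor product together with Lemma \ref{lem:local-ten-hz} and the flatness-driven identity $\mrm{H}^0(A)_{\bar{\p}} = \mrm{H}^0(A_{\bar{\p}})$ from (\ref{eqn:loc-coho}). Since $M \in \cat{D}^{-}(A)$ implies $M_{\bar{\p}} \in \cat{D}^{-}(A_{\bar{\p}})$ (again by (\ref{eqn:loc-coho})), equation (\ref{eqn:red-ten}), applied to the noetherian local DG-ring $(A_{\bar{\p}}, \bar{\p}\cdot\mrm{H}^0(A)_{\bar{\p}})$ and the DG-module $M_{\bar{\p}}$, gives $\sup(M_{\bar{\p}}) = \sup(L_{\bar{\p}})$ for every $\bar{\p}$. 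Comparing the two reformulations term by term yields the desired equality.

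The main obstacle is not in the main identity (\ref{eqn:red-ten})—which does all the real work—but in the bookkeeping of the reindexing step: one must take care that primes outside $\opn{Supp}_A(M)$ contribute $-\infty$ on both sides consistently. This is automatic once one observes (as in Proposition \ref{prop:supp-red}) that the two supports agree, but the cleanest way is simply to note that both $\sup(M_{\bar{\p}})$ and $\sup(L_{\bar{\p}})$ equal $-\infty$ precisely when $M_{\bar{\p}} \cong 0$, by the same (\ref{eqn:red-ten}). Once this is noted, the equality is a direct consequence.
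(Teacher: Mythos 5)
Your proposal is correct and is essentially the paper's own argument: the paper likewise rewrites $\lcdim$ as $\sup_{\bar{\p}}\{\dim(\mrm{H}^0(A)/\bar{\p})+\sup(M_{\bar{\p}})\}$ (citing the proof of Foxby's Proposition 3.5, which is exactly your reindexing step) and then compares prime by prime using (\ref{eqn:red-ten}) together with Lemma \ref{lem:local-ten-hz}. The only cosmetic difference is that you apply (\ref{eqn:red-ten}) over the localized DG-ring $A_{\bar{\p}}$, whereas the paper applies it to $M_{\bar{\p}}$ viewed in $\cat{D}^{-}(A)$; the two are interchangeable.
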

\begin{proof}
By our definition,
\[
\lcdim_A(M) = \sup_{\ell \in \mathbb{Z}} \left\{\dim(\mrm{H}^{\ell}(M)) + \ell\right\}.
\]
First, exactly as in the proof of \cite[Proposition 3.5]{Fo}, note that this number satisfies 
\begin{equation}\label{eqn:dim-alt-equ}
\lcdim(M) = \sup_{\bar{\p} \in \opn{Spec}(\mrm{H}^0(A))} \left\{\dim(\mrm{H}^0(A)/\bar{\p}) + \sup(M_{\bar{\p}}) \right\}.
\end{equation}
Now, given $\bar{\p} \in \opn{Spec}(\mrm{H}^0(A))$,
by (\ref{eqn:red-ten}) we have that
\[
\sup(M_{\bar{\p}}) = \sup\left(M_{\bar{\p}} \otimes^{\mrm{L}}_A \mrm{H}^0(A)\right).
\]
By Lemma \ref{lem:local-ten-hz}, we have that
\[
M_{\bar{\p}} \otimes^{\mrm{L}}_A \mrm{H}^0(A) \cong 
\left(M\otimes^{\mrm{L}}_A \mrm{H}^0(A)\right)_{\bar{\p}}.
\]
Hence, using \cite[Proposition 3.5]{Fo} we see that $\lcdim_A(M)$ is equal to
\[
\sup_{\bar{\p} \in \opn{Spec}(\mrm{H}^0(A)} \left\{\dim(\mrm{H}^0(A)/\bar{\p}) + \sup\left(M\otimes^{\mrm{L}}_A \mrm{H}^0(A)\right)_{\bar{\p}} \right\} = \lcdim_{\bar{A}}(M\otimes^{\mrm{L}}_A \mrm{H}^0(A))
\]
as claimed.
\end{proof}

The next result is a DG version of Grothendieck's vanishing and non-vanishing theorems for local cohomology
(see \cite[Theorems 6.1.2 and 6.1.4]{BS}).

\begin{thm}\label{thm:non-vanish}
Let $(A,\bar{\m})$ be a noetherian local DG-ring,
and $M \in \cat{D}^{-}(A)$. 
Then
\[
\sup\left(\mrm{R}\Gamma_{\bar{\m}}(M)\right) \le \lcdim(M).
\]
If moreover $M \in \cat{D}^{-}_{\mrm{f}}(A)$ then
\[
\sup\left(\mrm{R}\Gamma_{\bar{\m}}(M)\right) = \lcdim(M).
\]
In particular, for $M \in \cat{D}^{-}_{\mrm{f}}(A)$ we have that
\[
\mrm{H}^{\lcdim(M)}_{\bar{\m}}(M) :=
\mrm{H}^{\lcdim(M)}\left(\mrm{R}\Gamma_{\bar{\m}}(M)\right) \ne 0.
\]
\end{thm}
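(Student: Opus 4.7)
The plan is to reduce the DG statement to the classical ring-theoretic statement over $\mrm{H}^0(A)$, which has already been established in Propositions \ref{prop:dim-in-rings} and \ref{prop:classical-bound}. The bridge to classical local cohomology is provided by Lemma \ref{lem:dim-reduction}, which translates $\sup(\mrm{R}\Gamma^A_{\bar{\m}}(M))$ into a statement about $M \otimes^{\mrm{L}}_A \mrm{H}^0(A)$ in $\cat{D}(\mrm{H}^0(A))$, and by Proposition \ref{prop:dim-reduction}, which similarly translates the local cohomology Krull dimension.

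For the first (inequality) claim, I would argue as follows. Given $M \in \cat{D}^{-}(A)$, the DG-module $M \otimes^{\mrm{L}}_A \mrm{H}^0(A)$ lies in $\cat{D}^{-}(\mrm{H}^0(A))$, so the classical unbounded bound in Proposition \ref{prop:classical-bound} applies to give
\[
\sup\bigl(\mrm{R}\Gamma^{\bar{A}}_{\bar{\m}}(M \otimes^{\mrm{L}}_A \mrm{H}^0(A))\bigr) \le \lcdim_{\bar{A}}(M \otimes^{\mrm{L}}_A \mrm{H}^0(A)).
\]
The left-hand side equals $\sup(\mrm{R}\Gamma^A_{\bar{\m}}(M))$ by the first equality of Lemma \ref{lem:dim-reduction}, and the right-hand side equals $\lcdim_A(M)$ by Proposition \ref{prop:dim-reduction}. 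Chaining these gives the desired inequality.

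For the equality under the finite hypothesis $M \in \cat{D}^{-}_{\mrm{f}}(A)$, observe that $M \otimes^{\mrm{L}}_A \mrm{H}^0(A)$ then lies in $\cat{D}^{-}_{\mrm{f}}(\mrm{H}^0(A))$, so the second statement of Lemma \ref{lem:dim-reduction} already gives
\[
\sup\bigl(\mrm{R}\Gamma^A_{\bar{\m}}(M)\bigr) = \lcdim_{\bar{A}}(M \otimes^{\mrm{L}}_A \mrm{H}^0(A)),
\]
and Proposition \ref{prop:dim-reduction} identifies the right-hand side with $\lcdim_A(M)$, yielding the equality. The last (non-vanishing) assertion is then immediate: if $\lcdim(M) = \sup(\mrm{R}\Gamma_{\bar{\m}}(M))$ is finite, then by definition of $\sup$ the cohomology $\mrm{H}^{\lcdim(M)}(\mrm{R}\Gamma_{\bar{\m}}(M))$ is nonzero, and in the infinite case the equality still forces the existence of arbitrarily large nonvanishing cohomology.

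In short, the proof is essentially bookkeeping: all of the genuine content sits in Lemma \ref{lem:dim-reduction} and Proposition \ref{prop:dim-reduction} (which realize the reduction to the ring $\mrm{H}^0(A)$ via $-\otimes^{\mrm{L}}_A \mrm{H}^0(A)$), and in the classical ring-level results of Foxby. No obstacle beyond combining these pieces correctly is expected; the only subtle point is making sure the unbounded extension of the classical non-vanishing statement (Proposition \ref{prop:classical-bound}) is applied with the correct boundedness hypothesis, which is satisfied since $M \otimes^{\mrm{L}}_A \mrm{H}^0(A)$ inherits bounded-above cohomology from $M$.
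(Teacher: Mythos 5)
Your proposal is correct and follows essentially the same route as the paper: the inequality is obtained by combining the first equality of Lemma \ref{lem:dim-reduction} with Proposition \ref{prop:classical-bound} and Proposition \ref{prop:dim-reduction}, and the equality in the finite case by the second part of Lemma \ref{lem:dim-reduction} together with Proposition \ref{prop:dim-reduction}, exactly as in the paper's proof. The only cosmetic remark is that for $0 \ne M \in \cat{D}^{-}_{\mrm{f}}(A)$ the number $\lcdim(M)$ is automatically finite (it is bounded by $\sup(M)+\dim(\mrm{H}^0(A))$), so your caveat about an infinite case is unnecessary.
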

\begin{proof}
Take $M \in \cat{D}^{-}(A)$.
By Lemma \ref{lem:dim-reduction}
we have that
\[
\sup\left(\mrm{R}\Gamma^A_{\bar{\m}}(M)\right) = \sup\left(\mrm{R}\Gamma^{\bar{A}}_{\bar{\m}}(M \otimes^{\mrm{L}}_A \mrm{H}^0(A)) \right),
\]
and by Proposition \ref{prop:classical-bound}, 
since $M \otimes^{\mrm{L}}_A \mrm{H}^0(A) \in \cat{D}^{-}(\mrm{H}^0(A))$, we have that
\[
\sup\left(\mrm{R}\Gamma^{\bar{A}}_{\bar{\m}}(M \otimes^{\mrm{L}}_A \mrm{H}^0(A)) \right) \le \lcdim_{\bar{A}}(M \otimes^{\mrm{L}}_A \mrm{H}^0(A))
\]
so the first claim follows from Proposition \ref{prop:dim-reduction}.
Now, assume further that $M \in \cat{D}^{-}_{\mrm{f}}(A)$.
Then by Lemma \ref{lem:dim-reduction}
\[
\sup\left(\mrm{R}\Gamma^A_{\bar{\m}}(M)\right) = \lcdim_{\bar{A}} ( M\otimes^{\mrm{L}}_A \mrm{H}^0(A) ),
\]
and by Proposition \ref{prop:dim-reduction} the latter is equal to $\lcdim(M)$, as claimed.
\end{proof}

\begin{cor}\label{cor:non-vanish}
Let $(A,\bar{\m})$ be a noetherian local DG-ring,
and let $d = \dim(\mrm{H}^0(A))$.
Then
\[
\mrm{H}^{d}\left(\mrm{R}\Gamma_{\bar{\m}}(A)\right) \ne 0.
\]
and 
\[
\mrm{H}^{i}\left(\mrm{R}\Gamma_{\bar{\m}}(A)\right) = 0
\]
for all $i>d$.
\end{cor}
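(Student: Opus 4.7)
The plan is to deduce this as a direct specialization of Theorem \ref{thm:non-vanish} to the DG-module $M = A$ itself. First I would verify that the hypotheses apply: since $A$ is noetherian and non-positive (so $\sup(A) = 0$ and $A \in \cat{D}^{-}_{\mrm{f}}(A)$ trivially, as $\mrm{H}^0(A)$ is a finitely generated module over itself and $\mrm{H}^i(A)$ is finitely generated over $\mrm{H}^0(A)$ for $i<0$ by the noetherian hypothesis), the theorem applies and gives the equality
\[
\sup\left(\mrm{R}\Gamma_{\bar{\m}}(A)\right) = \lcdim(A).
\]

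Next I would invoke the computation of $\lcdim(A)$ recorded in equation (\ref{eqn:dimofA}): the supremum defining $\lcdim(A)$ is attained at $\ell = 0$ since $A$ is non-positive (every $\ell < 0$ contributes at most $\dim(\mrm{H}^{\ell}(A)) + \ell \le \dim(\mrm{H}^0(A)) - 1$, while $\ell = 0$ contributes $\dim(\mrm{H}^0(A))$). Hence $\lcdim(A) = \dim(\mrm{H}^0(A)) = d$, and combining this with the previous equality gives $\sup(\mrm{R}\Gamma_{\bar{\m}}(A)) = d$.

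Finally, by the very definition of $\sup$, this single equality yields both conclusions simultaneously: non-vanishing of $\mrm{H}^d(\mrm{R}\Gamma_{\bar{\m}}(A))$ (the supremum is attained) and vanishing of $\mrm{H}^i(\mrm{R}\Gamma_{\bar{\m}}(A))$ for all $i > d$ (nothing lies strictly above the supremum). There is no substantive obstacle here, as all the work has already been done in Theorem \ref{thm:non-vanish}; the only thing to check carefully is the evaluation of $\lcdim(A)$, which is immediate from non-positivity.
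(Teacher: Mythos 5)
Your proposal is correct and is exactly the paper's argument: the paper's proof also deduces the corollary by applying Theorem \ref{thm:non-vanish} to $M=A$ and invoking the equality $\lcdim(A)=\dim(\mrm{H}^0(A))$ from (\ref{eqn:dimofA}); you merely spell out the routine verifications (that $A\in\cat{D}^{-}_{\mrm{f}}(A)$ and that the supremum defining $\lcdim(A)$ is attained at $\ell=0$).
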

\begin{proof}
This follows from (\ref{eqn:dimofA}) and Theorem \ref{thm:non-vanish}.
\end{proof}

\begin{prop}\label{prop:dim-of-dc}
Let $(A,\bar{\m})$ be a commutative noetherian local DG-ring,
and let $B = \mrm{L}\Lambda(A,\bar{\m})$ be the derived $\bar{\m}$-adic completion of $A$.
Then $\lcdim(B) = \lcdim(A)$.
\end{prop}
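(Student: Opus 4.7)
The plan is to reduce the statement for DG-rings to the classical statement that a noetherian local ring and its $\bar{\m}$-adic completion share the same Krull dimension. First, because $A$ is a non-positive DG-ring with $\mrm{H}^0(A) \neq 0$, we have $\sup(A) = 0$, and the defining condition (\ref{eqn:max-dim}) is trivially satisfied, so by (\ref{eqn:dimofA}) one has $\lcdim(A) = \dim(\mrm{H}^0(A))$. The same argument applies to $B$, provided we know $B$ is a non-positive noetherian local DG-ring. Non-positivity of $B$ is part of the construction of $\mrm{L}\Lambda(A,\bar{\m})$ as a commutative DG-ring recalled in Section \ref{sec:DerComp}, while the noetherianness and locality of $B$ were stated immediately after the construction. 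Hence the problem reduces to proving
\[
\dim(\mrm{H}^0(B)) = \dim(\mrm{H}^0(A)).
\]

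For this, I would invoke Proposition \ref{prop:amp-of-dc}, which identifies $\mrm{H}^0(B)$ with $\Lambda_{\bar{\m}}(\mrm{H}^0(A))$, i.e.\ with the classical $\bar{\m}$-adic completion of the noetherian local ring $(\mrm{H}^0(A),\bar{\m})$. It is a well-known theorem in classical commutative algebra that the $\bar{\m}$-adic completion of a noetherian local ring has the same Krull dimension as the ring itself (this follows, for instance, from the fact that completion is faithfully flat and preserves the maximal ideal up to radical, or directly from the behavior of systems of parameters under completion). Applying this to $\mrm{H}^0(A)$ yields the required equality of Krull dimensions.

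Combining the two steps, we obtain
\[
\lcdim(B) = \dim(\mrm{H}^0(B)) = \dim\bigl(\Lambda_{\bar{\m}}(\mrm{H}^0(A))\bigr) = \dim(\mrm{H}^0(A)) = \lcdim(A),
\]
which is the desired conclusion. There is essentially no obstacle here: the main content of the statement has already been established in Proposition \ref{prop:amp-of-dc}, and what remains is a one-line appeal to the classical invariance of Krull dimension under adic completion of noetherian local rings, together with the elementary identity (\ref{eqn:dimofA}) applied in degree zero.
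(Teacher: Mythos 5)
Your proof is correct and follows essentially the same route as the paper: identify $\mrm{H}^0(B)$ with $\Lambda_{\bar{\m}}(\mrm{H}^0(A))$ (the paper cites an external result, you use Proposition \ref{prop:amp-of-dc}, which works equally well), then use non-positivity to reduce $\lcdim$ to $\dim(\mrm{H}^0(-))$ and invoke the classical invariance of Krull dimension under adic completion. No gaps.
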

\begin{proof}
According to \cite[Proposition 4.16]{Sh1},
we have that 
\[
\mrm{H}^0(B) = \Lambda_{\bar{\m}}(\mrm{H}^0(A)).
\]
Hence, since $B$ is non-positive, we obtain
\[
\lcdim(B) = \dim\left(\mrm{H}^0(B)\right) = \dim\left(\Lambda_{\bar{\m}}(\mrm{H}^0(A))\right) = \dim\left(\mrm{H}^0(A)\right) = \lcdim(A).
\]
\end{proof}

\section{Depth and local cohomology over commutative local DG-rings}\label{sec:depth}

Our definition of depth is identical to the usual homological definition of depth over local rings:

\begin{dfn}
Let $(A,\bar{\m},\k)$ be a noetherian local DG-ring,
and let $M \in \cat{D}^{+}(A)$.
We define the depth of $M$ to be the number
\[
\depth_A(M) := \inf\left(\mrm{R}\opn{Hom}_A(\k,M)\right).
\]
\end{dfn}

Note that this definition is not invariant under translations,
but is very useful for working with local cohomology. 
In Definition \ref{dfn:seq} below we will give a modified definition of depth that is invariant under translations.
It follows from the definition that $\depth_A(0) = -\infty$.

Dually to Proposition \ref{prop:dim-reduction}, 
we have the following reduction formula for the depth, 
and, as in the case of rings, a connection to local cohomology:

\begin{prop}\label{prop:depth}
Let $(A,\bar{\m},\k)$ be a noetherian local DG-ring.
Then for any $M \in \cat{D}^{+}(A)$, there are equalities
\[
\inf\left(\mrm{R}\Gamma_{\bar{\m}}(M)\right) = \depth_A(M) = \depth_{\mrm{H}^0(A)} \left(\mrm{R}\opn{Hom}_A(\mrm{H}^0(A),M) \right).
\]
\end{prop}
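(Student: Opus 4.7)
The plan is to treat the two claimed equalities separately, handling the second one first since it will be used in the proof of the first.

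For the equality $\depth_A(M) = \depth_{\mrm{H}^0(A)}\bigl(\mrm{R}\opn{Hom}_A(\mrm{H}^0(A),M)\bigr)$, I would just invoke tensor-Hom adjunction along the map $\pi_A : A \to \mrm{H}^0(A)$. Since $\k$ is naturally an $\mrm{H}^0(A)$-module, one has $\k \otimes^{\mrm{L}}_{\mrm{H}^0(A)} \mrm{H}^0(A) \cong \k$ in $\cat{D}(A)$, and hence
\[
\mrm{R}\opn{Hom}_A(\k,M) \cong \mrm{R}\opn{Hom}_{\mrm{H}^0(A)}\bigl(\k, \mrm{R}\opn{Hom}_A(\mrm{H}^0(A),M)\bigr).
\]
Taking infima gives the equality immediately.

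For the first equality $\inf\bigl(\mrm{R}\Gamma_{\bar{\m}}(M)\bigr) = \depth_A(M)$, I would exploit the fact that $\mrm{R}\Gamma_{\bar{\m}}$ is right adjoint to the inclusion $\cat{D}_{\bar{\m}-\opn{tor}}(A) \inj \cat{D}(A)$. Since $\k \in \cat{D}_{\bar{\m}-\opn{tor}}(A)$, the counit $\mrm{R}\Gamma_{\bar{\m}}(M) \to M$ induces an isomorphism upon applying $\mrm{R}\opn{Hom}_A(\k, -)$, so
\[
\depth_A(M) = \inf\bigl(\mrm{R}\opn{Hom}_A(\k, M)\bigr) = \inf\bigl(\mrm{R}\opn{Hom}_A(\k, \mrm{R}\Gamma_{\bar{\m}}(M))\bigr).
\]
It therefore suffices to show that $\inf\bigl(\mrm{R}\opn{Hom}_A(\k, N)\bigr) = \inf(N)$ whenever $N \in \cat{D}^{+}(A)$ has $\bar{\m}$-torsion cohomology, applied to $N = \mrm{R}\Gamma_{\bar{\m}}(M)$. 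Boundedness below of $N$ follows from the telescope-complex formula (\ref{eqn:RGamma}), which expresses $\mrm{R}\Gamma_{\bar{\m}}(M)$ as a tensor product of $M$ with the bounded complex $\opn{Tel}(A^0; \mathbf{a})$, and its cohomology is $\bar{\m}$-torsion by construction.

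The last reduction is the main technical step. Applying the right-hand equality already established, we have $\inf\bigl(\mrm{R}\opn{Hom}_A(\k,N)\bigr) = \depth_{\mrm{H}^0(A)}(X)$ where $X := \mrm{R}\opn{Hom}_A(\mrm{H}^0(A), N) \in \cat{D}^{+}(\mrm{H}^0(A))$. By (\ref{eqn:red-rhom}) we have $\inf(X) = \inf(N)$ and $\mrm{H}^{\inf(X)}(X) \cong \mrm{H}^{\inf(N)}(N)$, which is a nonzero $\bar{\m}$-torsion module over the noetherian local ring $\mrm{H}^0(A)$, so its socle is nonzero and $\opn{Hom}_{\mrm{H}^0(A)}\bigl(\k, \mrm{H}^{\inf(X)}(X)\bigr) \ne 0$. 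The hyperext spectral sequence $E_2^{p,q} = \opn{Ext}^p_{\mrm{H}^0(A)}\bigl(\k, \mrm{H}^q(X)\bigr) \Rightarrow \opn{Ext}^{p+q}_{\mrm{H}^0(A)}(\k, X)$ (which converges because $X$ is bounded below) then shows the bottom corner $(0, \inf(X))$ contributes nontrivially to $E_\infty$, so $\inf\bigl(\mrm{R}\opn{Hom}_{\mrm{H}^0(A)}(\k, X)\bigr) = \inf(X) = \inf(N)$. The main obstacle is precisely this last step: checking that the lowest cohomology of $X$ indeed coincides with that of $N$ and contains a nonzero socle element, which hinges on the compatibility of the reduction functor $\mrm{R}\opn{Hom}_A(\mrm{H}^0(A), -)$ with infimum computations, as supplied by (\ref{eqn:red-rhom}).
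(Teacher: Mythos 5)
Your proposal is correct, and for the second equality it coincides with the paper's argument (the adjunction $\mrm{R}\opn{Hom}_A(\k,M)\cong\mrm{R}\opn{Hom}_{\mrm{H}^0(A)}\bigl(\k,\mrm{R}\opn{Hom}_A(\mrm{H}^0(A),M)\bigr)$). For the first equality, however, you take a genuinely different route. The paper first applies (\ref{eqn:red-rhom}) to $\mrm{R}\Gamma_{\bar{\m}}(M)\in\cat{D}^{+}(A)$, then invokes the commutation isomorphism $\mrm{R}\opn{Hom}_A(\mrm{H}^0(A),\mrm{R}\Gamma^A_{\bar{\m}}(M))\cong\mrm{R}\Gamma^{\bar{A}}_{\bar{\m}}\bigl(\mrm{R}\opn{Hom}_A(\mrm{H}^0(A),M)\bigr)$ of \cite[Proposition 7.23]{Sh2}, and finally quotes Foxby's ring-level identity $\inf\mrm{R}\Gamma_{\bar{\m}}=\depth$ (\cite[Proposition 3.8]{Fo}) before closing with the same adjunction. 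You instead exploit the fact that $\mrm{R}\Gamma_{\bar{\m}}$ is the composite of the right adjoint to the inclusion of the $\bar{\m}$-torsion subcategory, so that the counit induces $\mrm{R}\opn{Hom}_A(\k,\mrm{R}\Gamma_{\bar{\m}}(M))\cong\mrm{R}\opn{Hom}_A(\k,M)$, and then you prove directly, via (\ref{eqn:red-rhom}), the nonvanishing of the socle of a nonzero $\bar{\m}$-torsion module, and the hyperext spectral sequence, that a bounded-below complex with $\bar{\m}$-torsion cohomology has depth equal to its infimum; this is essentially a self-contained proof of the special case of Foxby's result that is actually needed. Your approach buys independence from \cite[Proposition 7.23]{Sh2} and from the classical reference, at the cost of redoing a known spectral-sequence argument; the paper's approach is shorter given its toolkit and stays uniform with its general strategy of reducing statements to $\mrm{H}^0(A)$. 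Two cosmetic points you should add: dispose separately of the degenerate case $\mrm{R}\Gamma_{\bar{\m}}(M)\cong 0$ (both sides are then $+\infty$, and your socle argument presupposes $N\ne 0$), and note explicitly that the counit-isomorphism is checked on cohomology via $\mrm{H}^n\mrm{R}\opn{Hom}_A(\k,-)=\opn{Hom}_{\cat{D}(A)}(\k[-n],-)$, using that every shift of $\k$ lies in $\cat{D}_{\bar{\m}-\opn{tor}}(A)$.
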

\begin{proof}
Since $M \in \cat{D}^{+}(A)$, and since the functor $\mrm{R}\Gamma_{\bar{\m}}(-)$ has finite cohomological dimension, $\mrm{R}\Gamma_{\bar{\m}}(M) \in \cat{D}^{+}(A)$.
Hence, by (\ref{eqn:red-rhom}), 
\[
\inf \left( \mrm{R}\Gamma_{\bar{\m}} (M) \right) = \inf \left( \mrm{R}\opn{Hom}_A(\mrm{H}^0(A), \mrm{R}\Gamma_{\bar{\m}}(M)) \right).
\]
According to \cite[Proposition 7.23]{Sh2}, there is an isomorphism
\[
\mrm{R}\opn{Hom}_A(\mrm{H}^0(A),\mrm{R}\Gamma^A_{\bar{\m}}(M)) \cong \mrm{R}\Gamma^{\bar{A}}_{\bar{\m}}\left(\mrm{R}\opn{Hom}_A(\mrm{H}^0(A),M)\right),
\]
in $\cat{D}(\mrm{H}^0(A))$, so it is enough to compute the infimum of the latter.
Since 
\[
\mrm{R}\opn{Hom}_A(\mrm{H}^0(A),M) \in \cat{D}^{+}(\mrm{H}^0(A)),
\]
it follows from \cite[Proposition 3.8]{Fo} that 
\[
\inf \left(  \mrm{R}\Gamma^{\bar{A}}_{\bar{\m}}\left(\mrm{R}\opn{Hom}_A(\mrm{H}^0(A),M)\right) \right) = \inf\left( \mrm{R}\opn{Hom}_{\mrm{H}^0(A)}(\k,\mrm{R}\opn{Hom}_A(\mrm{H}^0(A),M))  \right).
\]
The latter is by definition 
\[
\depth_{\mrm{H}^0(A)} \left(\mrm{R}\opn{Hom}_A(\mrm{H}^0(A),M) \right).
\]
Furthermore, the adjunction isomorphism shows that
\[
\mrm{R}\opn{Hom}_{\mrm{H}^0(A)}(\k,\mrm{R}\opn{Hom}_A(\mrm{H}^0(A),M)) \cong \mrm{R}\opn{Hom}_A(\k,M).
\]
proving the claim.
\end{proof}

Over a local ring, it follows immediately from the definition of local cohomology that the depth of any complex is greater or equal to its infimum, and, as explained, in \cite[(3.3)]{Fo}, 
they are equal if and only if the maximal ideal is an associated prime of the bottommost cohomology of the complex.
Similarly, we have:

\begin{prop}\label{prop:depth-lower-bound}
Let $(A,\bar{\m},\k)$ be a noetherian local DG-ring.
Then for any $0 \ne M \in \cat{D}^{+}(A)$, there are inequalities
\[
\inf\left(\mrm{R}\Gamma_{\bar{\m}}(M)\right) = \depth_A(M) \ge \inf(M).
\]
Moreover, there is an equality 
\[
\depth_A(M) = \inf(M)
\]
if and only if $\bar{\m}$ is an associated prime of $\mrm{H}^{\inf(M)}(M)$.
\end{prop}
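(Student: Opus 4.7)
The first equality $\inf(\mrm{R}\Gamma_{\bar{\m}}(M)) = \depth_A(M)$ is already contained in Proposition \ref{prop:depth}, so what remains is to establish the inequality $\depth_A(M) \geq \inf(M)$ and the characterization of equality. My plan is to reduce everything to the analogous statement for complexes over the noetherian local ring $(\mrm{H}^0(A),\bar{\m},\k)$, which is the classical result \cite[(3.3)]{Fo} referenced in the discussion just before the proposition.

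More concretely, set $N := \mrm{R}\opn{Hom}_A(\mrm{H}^0(A),M) \in \cat{D}^{+}(\mrm{H}^0(A))$. The key observation is that \emph{both} sides of the desired (in)equalities are controlled by $N$: on one hand, Proposition \ref{prop:depth} gives the identity
\[
\depth_A(M) = \depth_{\mrm{H}^0(A)}(N);
\]
on the other hand, the reduction formula (\ref{eqn:red-rhom}) yields
\[
\inf(N) = \inf(M), \qquad \mrm{H}^{\inf(M)}(N) \cong \mrm{H}^{\inf(M)}(M).
\]
Note that $N\neq 0$ since $\inf(N) = \inf(M) < \infty$ by the hypothesis $M\neq 0$ together with $M\in\cat{D}^+(A)$.

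With these reductions in hand, the classical inequality $\depth_{\mrm{H}^0(A)}(N) \geq \inf(N)$ over the noetherian local ring $\mrm{H}^0(A)$ transfers immediately to $\depth_A(M) \geq \inf(M)$. Similarly, the classical statement that equality $\depth_{\mrm{H}^0(A)}(N) = \inf(N)$ holds if and only if $\bar{\m} \in \opn{Ass}_{\mrm{H}^0(A)}(\mrm{H}^{\inf(N)}(N))$, combined with the identification $\mrm{H}^{\inf(N)}(N) \cong \mrm{H}^{\inf(M)}(M)$, gives exactly the desired characterization of equality in the DG setting.

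There is no real obstacle here — the entire content is that the two reduction formulas (Proposition \ref{prop:depth} for depth and (\ref{eqn:red-rhom}) for infimum and bottom cohomology) are compatible, so that the whole question passes through the functor $\mrm{R}\opn{Hom}_A(\mrm{H}^0(A),-)$ without loss of information. The only thing to be mildly careful about is that $\bar{\m}$ is regarded as an element of $\opn{Spec}(\mrm{H}^0(A))$ when speaking of associated primes, which is the natural convention.
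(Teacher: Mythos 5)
Your proposal is correct and follows essentially the same route as the paper's own proof: reduce depth via Proposition \ref{prop:depth} to $\depth_{\mrm{H}^0(A)}\left(\mrm{R}\opn{Hom}_A(\mrm{H}^0(A),M)\right)$, apply the classical statement from \cite[(3.3)]{Fo} over the local ring $\mrm{H}^0(A)$, and transfer back using the identifications of $\inf$ and the bottom cohomology from (\ref{eqn:red-rhom}). Nothing is missing.
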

\begin{proof}
By Proposition \ref{prop:depth},
\[
\depth_A(M) = \depth_{\mrm{H}^0(A)} \left(\mrm{R}\opn{Hom}_A(\mrm{H}^0(A),M) \right)
\]
As remarked above, 
\[
\depth_{\mrm{H}^0(A)} \left(\mrm{R}\opn{Hom}_A(\mrm{H}^0(A),M) \right) \ge \inf \left(\mrm{R}\opn{Hom}_A(\mrm{H}^0(A),M) \right),
\]
with equality if and only if $\bar{\m}$ is an associated prime of 
\[
\mrm{H}^{\inf \left(\mrm{R}\opn{Hom}_A(\mrm{H}^0(A),M) \right)}\left(\mrm{R}\opn{Hom}_A(\mrm{H}^0(A),M)\right).
\]
Hence, the result follows from the equalities
\[
\inf \left(\mrm{R}\opn{Hom}_A(\mrm{H}^0(A),M) \right) = \inf(M)
\]
and
\[
\mrm{H}^{\inf(M)}\left(\mrm{R}\opn{Hom}_A(\mrm{H}^0(A),M) \right) = \mrm{H}^{\inf(M)}(M)
\]
of (\ref{eqn:red-rhom}).
\end{proof}

We shall need the following upper bound satisfied by depth over rings:

\begin{prop}\label{prop:depth-ring-bound}
Let $(A,\m)$ be a noetherian local ring,
and let $M \in \cat{D}^{+}_{\mrm{f}}(A)$.
Then there is an inequality:
\[
\depth_A(M) \le \dim(\mrm{H}^{\inf(M)}(M)) + \inf(M).
\]
\end{prop}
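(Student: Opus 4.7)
The plan is to reduce the unbounded statement to the classical bounded version of the inequality via smart truncation, in the same style as the proof of Proposition~\ref{prop:dim-in-rings} in this excerpt.

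Set $s := \inf(M)$ and $d := \dim(\mrm{H}^s(M))$, and consider the smart truncation triangle
\[
\opn{smt}^{\le s+d}(M) \to M \to \opn{smt}^{>s+d}(M) \to \opn{smt}^{\le s+d}(M)[1].
\]
Since $\inf(\opn{smt}^{>s+d}(M)) \ge s+d+1$, Proposition~\ref{prop:depth-lower-bound} applied to $\opn{smt}^{>s+d}(M)$ gives $\depth_A(\opn{smt}^{>s+d}(M)) \ge s+d+1$, and Proposition~\ref{prop:depth} then tells us that $\mrm{R}\Gamma_{\m}(\opn{smt}^{>s+d}(M))$ is concentrated in degrees strictly greater than $s+d$. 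Applying $\mrm{R}\Gamma_{\m}$ to the truncation triangle and passing to the long exact sequence in cohomology, it follows that
\[
\mrm{H}^i\bigl(\mrm{R}\Gamma_{\m}(\opn{smt}^{\le s+d}(M))\bigr) \cong \mrm{H}^i\bigl(\mrm{R}\Gamma_{\m}(M)\bigr) \qquad \text{for all } i \le s+d.
\]

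Writing $M' := \opn{smt}^{\le s+d}(M)$, one has $M' \in \cat{D}^{\mrm{b}}_{\mrm{f}}(A)$, $\inf(M') = s$, and $\mrm{H}^s(M') = \mrm{H}^s(M)$, so in particular $\dim(\mrm{H}^{\inf(M')}(M')) = d$. The classical bounded version of the inequality, due to Foxby (see \cite[Proposition 3.7]{Fo}), applies to $M'$ and yields $\depth_A(M') \le s+d$. By Proposition~\ref{prop:depth} this means that there exists some $i \le s+d$ with $\mrm{H}^i(\mrm{R}\Gamma_{\m}(M')) \ne 0$, and the isomorphism above then forces $\mrm{H}^i(\mrm{R}\Gamma_{\m}(M)) \ne 0$, so $\depth_A(M) \le i \le s+d$, which is the desired inequality.

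The argument is essentially formal given Propositions~\ref{prop:depth} and~\ref{prop:depth-lower-bound}; the reduction from unbounded to bounded mirrors that of Proposition~\ref{prop:dim-in-rings}. The only genuine input is the classical bounded inequality for complexes in $\cat{D}^{\mrm{b}}_{\mrm{f}}(A)$ over a noetherian local ring, which is a well-established result of Foxby and poses no real obstacle.
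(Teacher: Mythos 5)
Your proof is correct and is essentially the paper's own argument: truncate at $n = \inf(M) + \dim(\mrm{H}^{\inf(M)}(M))$, observe that $\mrm{R}\Gamma_{\m}$ of the upper truncation vanishes in degrees $\le n$, so that $\mrm{H}_{\m}^{i}(\opn{smt}^{\le n}(M)) \cong \mrm{H}_{\m}^{i}(M)$ for all $i \le n$, and then apply Foxby's bounded-case inequality to the lower truncation, which lies in $\cat{D}^{\mrm{b}}_{\mrm{f}}(A)$ and has the same infimum and bottom cohomology as $M$. Two trivial touch-ups: the bounded case is \cite[Proposition 3.17]{Fo} (not Proposition 3.7, which is the dimension bound on $\sup \mrm{R}\Gamma_{\m}$), and in the degenerate case $\opn{smt}^{>n}(M) \cong 0$ you should not invoke Proposition \ref{prop:depth-lower-bound} (its hypothesis $M \ne 0$ fails) but simply note that then $M = \opn{smt}^{\le n}(M)$ and the bounded case applies directly.
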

\begin{proof}
If $M \in \cat{D}^{\mrm{b}}_{\mrm{f}}(A)$,
then this statement is exactly \cite[Proposition 3.17]{Fo}.
In the general case,
let $n := \dim(\mrm{H}^{\inf(M)}(M)) + \inf(M)$,
and consider the following distinguished triangle in $\cat{D}(A)$:
\[
M' \to M \to M'' \to M'[1]
\]
where
\[
M' := \opn{smt}^{\le n}(M), \quad M'' := \opn{smt}^{> n}(M).
\]
Applying the triangulated functor $\mrm{R}\Gamma_{\m}$,
and passing to cohomology, we have for each $i \in \mathbb{Z}$ the following exact sequence of local cohomology modules:
\[
\mrm{H}_{\m}^{i-1}(M'') \to \mrm{H}_{\m}^{i}(M') \to \mrm{H}_{\m}^{i}(M) \to \mrm{H}_{\m}^{i}(M'')
\]
From the definition of local cohomology, and since $\inf(M'') > n$,
it follows that 
\[
\mrm{H}_{\m}^{i}(M'') = 0
\]
for all $i \le n$.
Hence, 
\[
\mrm{H}_{\m}^{i}(M') \cong \mrm{H}_{\m}^{i}(M)
\]
for all $i \le n$.
Since $M' \in \mrm{D}^{\mrm{b}}_{\mrm{f}}(A)$,
according to \cite[Proposition 3.17]{Fo} there exists 
$i \le \dim(\mrm{H}^{\inf(M')}(M')) + \inf(M')$ such that $\mrm{H}_{\m}^{i}(M') \ne 0$.
Since 
\[
\dim(\mrm{H}^{\inf(M')}(M')) + \inf(M') =  \dim(\mrm{H}^{\inf(M)}(M)) + \inf(M) = n,
\]
we deduce that there exists some
\[
i \le \dim(\mrm{H}^{\inf(M)}(M)) + \inf(M)
\]
such that $\mrm{H}_{\m}^{i}(M) \ne 0$, as claimed.
\end{proof}

The next result is a DG-version of Proposition \ref{prop:depth-ring-bound}.
\begin{prop}\label{prop:depth-dg-bound}
Let $(A,\bar{\m})$ be a noetherian local DG-ring,
and let $M \in \cat{D}^{+}_{\mrm{f}}(A)$.
Then there is an inequality:
\[
\depth_A(M) \le \dim(\mrm{H}^{\inf(M)}(M)) + \inf(M).
\]
\end{prop}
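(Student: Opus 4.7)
The plan is to mimic the truncation argument used in Proposition \ref{prop:depth-ring-bound}, and then, for the bounded piece, to transfer the problem across the reduction functor $\mrm{R}\opn{Hom}_A(\mrm{H}^0(A),-)$ so that the ring-theoretic bound can be applied. Set $n := \dim(\mrm{H}^{\inf(M)}(M)) + \inf(M)$ and split $M$ via the smart truncation triangle
\[
M' \to M \to M'' \to M'[1], \qquad M' := \opn{smt}^{\le n}(M),\ M'' := \opn{smt}^{> n}(M).
\]
Then $M' \in \cat{D}^{\mrm{b}}_{\mrm{f}}(A)$, $\inf(M') = \inf(M)$, and $\mrm{H}^{\inf(M')}(M') = \mrm{H}^{\inf(M)}(M)$, while $\inf(M'') > n$.

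First I would eliminate $M''$ from the computation of local cohomology. Applying the triangulated functor $\mrm{R}\Gamma_{\bar{\m}}$ and using Proposition \ref{prop:depth-lower-bound} for $M''$, I obtain
\[
\inf\!\bigl(\mrm{R}\Gamma_{\bar{\m}}(M'')\bigr) = \depth_A(M'') \ge \inf(M'') > n,
\]
so $\mrm{H}^i(\mrm{R}\Gamma_{\bar{\m}}(M'')) = 0$ for all $i \le n$. From the long exact cohomology sequence attached to the triangle $\mrm{R}\Gamma_{\bar{\m}}(M') \to \mrm{R}\Gamma_{\bar{\m}}(M) \to \mrm{R}\Gamma_{\bar{\m}}(M'')$ it then follows that $\mrm{H}^i(\mrm{R}\Gamma_{\bar{\m}}(M')) \cong \mrm{H}^i(\mrm{R}\Gamma_{\bar{\m}}(M))$ for every $i \le n$. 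Consequently it is enough to exhibit some $i \le n$ for which $\mrm{H}^i(\mrm{R}\Gamma_{\bar{\m}}(M'))$ is nonzero, i.e.\ to prove the inequality for the bounded DG-module $M'$.

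For the bounded case I would invoke Proposition \ref{prop:depth} to write
\[
\depth_A(M') = \depth_{\mrm{H}^0(A)}(N'), \qquad N' := \mrm{R}\opn{Hom}_A(\mrm{H}^0(A), M'),
\]
and use the reduction formulas (\ref{eqn:red-rhom}) to identify $\inf(N') = \inf(M')$ and $\mrm{H}^{\inf(N')}(N') \cong \mrm{H}^{\inf(M')}(M')$ as $\mrm{H}^0(A)$-modules, so their Krull dimensions coincide. Provided $N' \in \cat{D}^{+}_{\mrm{f}}(\mrm{H}^0(A))$, Proposition \ref{prop:depth-ring-bound} applied to $N'$ over the noetherian local ring $(\mrm{H}^0(A),\bar{\m})$ yields
\[
\depth_{\mrm{H}^0(A)}(N') \le \dim\!\bigl(\mrm{H}^{\inf(N')}(N')\bigr) + \inf(N') = \dim\!\bigl(\mrm{H}^{\inf(M)}(M)\bigr) + \inf(M) = n,
\]
which is the desired bound.

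The main obstacle is the verification that $N' \in \cat{D}^{+}_{\mrm{f}}(\mrm{H}^0(A))$; that is, that each $\opn{Ext}^i_A(\mrm{H}^0(A), M')$ is a finitely generated $\mrm{H}^0(A)$-module. I would handle this by induction on $\amp(M')$, using the smart truncation triangles of $M'$ to reduce to the case where $M'$ is an $\mrm{H}^0(A)$-module placed in a single cohomological degree, and then exploit the noetherian hypothesis on $A$ (together with the finite generation properties recalled from \cite{Sh2}) to produce resolutions by finitely generated DG-$A$-modules, from which finite generation of each $\opn{Ext}^i$ follows. Once this finiteness is in place, the three steps above combine to give the proposition.
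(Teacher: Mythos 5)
Your argument is correct, and its core is the same as the paper's: pass through Proposition \ref{prop:depth} and the reduction formula (\ref{eqn:red-rhom}) so that Proposition \ref{prop:depth-ring-bound} over the noetherian local ring $(\mrm{H}^0(A),\bar{\m})$ gives the bound. The two places where you diverge are both dispensable. First, the DG-level truncation into $M'$ and $M''$ is redundant: Proposition \ref{prop:depth-ring-bound} was already proved for arbitrary objects of $\cat{D}^{+}_{\mrm{f}}$ over a ring (that is exactly where the paper performs the truncation argument), and $\mrm{R}\opn{Hom}_A(\mrm{H}^0(A),M)$ lies in $\cat{D}^{+}_{\mrm{f}}(\mrm{H}^0(A))$ for any $M \in \cat{D}^{+}_{\mrm{f}}(A)$, so the paper simply applies the reduction directly to $M$; your truncation is sound (the vanishing $\mrm{H}^i(\mrm{R}\Gamma_{\bar{\m}}(M''))=0$ for $i\le n$ via Proposition \ref{prop:depth-lower-bound} and the long exact sequence is fine), but it only serves to make your finiteness induction possible. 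Second, the finiteness you flag as ``the main obstacle'' is a standard fact over noetherian DG-rings: for $N \in \cat{D}^{-}_{\mrm{f}}(A)$ and $M \in \cat{D}^{+}_{\mrm{f}}(A)$ one has $\mrm{R}\opn{Hom}_A(N,M) \in \cat{D}^{+}_{\mrm{f}}(A)$, which the paper quotes without comment. If you do want to prove it, note that the resolution you need is of the \emph{first} argument $\mrm{H}^0(A)$ (a pseudo-finite semi-free resolution, i.e.\ bounded above with finite free graded pieces, which exists because $A$ is noetherian and $\mrm{H}^0(A) \in \cat{D}^{-}_{\mrm{f}}(A)$), applied degreewise via $\opn{Hom}_A(-,M)$; resolving $M'$ itself by finitely generated DG-modules, as your sketch suggests, does not directly compute the relevant Ext modules. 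With that adjustment your outline closes completely; it is just a longer road to the same destination.
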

\begin{proof}
By Proposition \ref{prop:depth},
we have that
\[
\depth_A(M) = \depth_{\mrm{H}^0(A)}\left(\mrm{R}\opn{Hom}_A(\mrm{H}^0(A),M)\right).
\]
Since $M \in \cat{D}^{+}_{\mrm{f}}(A)$, 
we have that $\mrm{R}\opn{Hom}_A(\mrm{H}^0(A),M) \in \cat{D}^{+}_{\mrm{f}}(\mrm{H}^0(A))$.
Hence, by Proposition \ref{prop:depth-ring-bound},
we obtain an inequality
\[
\depth_{\mrm{H}^0(A)}\left(\mrm{R}\opn{Hom}_A(\mrm{H}^0(A),M)\right) \le \dim\left(\mrm{H}^i(\mrm{R}\opn{Hom}_A(\mrm{H}^0(A),M) + i \right)
\]
where $i := \inf\left(\mrm{R}\opn{Hom}_A(\mrm{H}^0(A),M)\right)$.
According to (\ref{eqn:red-rhom}), there are equalities
\[
\inf(M) = \inf\left(\mrm{R}\opn{Hom}_A(\mrm{H}^0(A),M)\right), \quad
\mrm{H}^i(M) = \mrm{H}^i\left(\mrm{R}\opn{Hom}_A(\mrm{H}^0(A),M)\right).
\]
Combining all of the above, we see that
\[
\depth_A(M) = \depth_{\mrm{H}^0(A)}\left(\mrm{R}\opn{Hom}_A(\mrm{H}^0(A),M)\right)
\le dim(\mrm{H}^{\inf(M)}(M)) + \inf(M),
\]
as claimed.
\end{proof}

\begin{cor}\label{cor:rgammainf}
Let $(A,\bar{\m})$ be a noetherian local DG-ring with bounded cohomology,
and let $d = \dim(\mrm{H}^0(A))$.
Then there is an inequality
\[
\inf\left(\mrm{R}\Gamma_{\bar{\m}}(A)\right) \le \inf(A) + d.
\]
If there is an equality $\inf\left(\mrm{R}\Gamma_{\bar{\m}}(A)\right) = \inf(A) + d$
then
\begin{equation}
\dim\left(\mrm{H}^{\inf(A)}(A)\right) = \dim(\mrm{H}^0(A)).
\end{equation}
\end{cor}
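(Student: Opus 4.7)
The plan is to combine Proposition \ref{prop:depth} with Proposition \ref{prop:depth-dg-bound}, applied to the DG-module $M = A$ itself.

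First I would observe that since $A$ has bounded cohomology and is noetherian, $A \in \cat{D}^{\mrm{b}}_{\mrm{f}}(A) \subseteq \cat{D}^{+}_{\mrm{f}}(A)$, so both propositions apply. By Proposition \ref{prop:depth},
\[
\inf\!\left(\mrm{R}\Gamma_{\bar{\m}}(A)\right) = \depth_A(A),
\]
and by Proposition \ref{prop:depth-dg-bound} applied to $M = A$,
\[
\depth_A(A) \le \dim\!\left(\mrm{H}^{\inf(A)}(A)\right) + \inf(A).
\]
Since $\mrm{H}^{\inf(A)}(A)$ is a finitely generated $\mrm{H}^0(A)$-module, one has the trivial bound
\[
\dim\!\left(\mrm{H}^{\inf(A)}(A)\right) \le \dim\!\left(\mrm{H}^0(A)\right) = d.
\]
Chaining these three relations gives the desired inequality $\inf(\mrm{R}\Gamma_{\bar{\m}}(A)) \le \inf(A) + d$.

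For the equality case, assume $\inf(\mrm{R}\Gamma_{\bar{\m}}(A)) = \inf(A) + d$. Then both inequalities in the chain
\[
\inf(A) + d \;=\; \depth_A(A) \;\le\; \dim\!\left(\mrm{H}^{\inf(A)}(A)\right) + \inf(A) \;\le\; d + \inf(A)
\]
must be equalities, and the second forces $\dim\!\left(\mrm{H}^{\inf(A)}(A)\right) = d = \dim(\mrm{H}^0(A))$, which is exactly the claim.

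There is no real obstacle here: the statement is a direct consequence of the depth$/$local cohomology identification and the upper bound for depth established in the two preceding propositions, together with the elementary observation that the Krull dimension of a module over $\mrm{H}^0(A)$ is bounded by $\dim(\mrm{H}^0(A))$. The slightly subtle point worth double-checking is merely that Proposition \ref{prop:depth-dg-bound} applies to $M = A$, which is justified by the bounded cohomology assumption.
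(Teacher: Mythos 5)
Your argument is correct and is essentially identical to the paper's own proof: the paper likewise chains Proposition \ref{prop:depth} and Proposition \ref{prop:depth-dg-bound} applied to $M = A$ with the trivial bound $\dim(\mrm{H}^{\inf(A)}(A)) \le d$, and obtains the equality case by forcing the intermediate inequality to be an equality. Nothing to add.
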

\begin{proof}
By Proposition \ref{prop:depth} and Proposition \ref{prop:depth-dg-bound}
we have that
\begin{equation}\label{eqn:exact-bound}
\inf\left(\mrm{R}\Gamma_{\bar{\m}}(A)\right) = \depth_A(A) \le \dim(\mrm{H}^{\inf(A)}(A)) + \inf(A).
\end{equation}
Since $\mrm{H}^{\inf(A)}(A)$ is an $\mrm{H}^0(A)$-module,
we have that $\dim(\mrm{H}^{\inf(A)}(A) \le d$, proving the claim.
If $\inf\left(\mrm{R}\Gamma_{\bar{\m}}(A)\right) = \inf(A) + d$,
it follow from the (\ref{eqn:exact-bound}) that $d \le \dim(\mrm{H}^{\inf(A)}(A))$,
which implies that $\dim(\mrm{H}^{\inf(A)}(A)) = d$.
\end{proof}

\begin{prop}\label{prop:depth-of-dc}
Let $(A,\bar{\m})$ be a noetherian local DG-ring with bounded cohomology,
and let $B = \mrm{L}\Lambda(A,\bar{\m})$ be the derived $\bar{\m}$-adic completion of $A$.
Then $\depth(B) = \depth(A)$.
\end{prop}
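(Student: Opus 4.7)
The plan is to translate both sides into infima of local cohomology via Proposition \ref{prop:depth}, giving
\[
\depth(A) = \inf\bigl(\mrm{R}\Gamma^A_{\bar{\m}}(A)\bigr)
\quad\text{and}\quad
\depth(B) = \inf\bigl(\mrm{R}\Gamma^B_{\bar{\m}_B}(B)\bigr),
\]
where $\bar{\m}_B \subseteq \mrm{H}^0(B) = \Lambda_{\bar{\m}}(\mrm{H}^0(A))$ is the maximal ideal of $B$. It then suffices to produce an isomorphism $\mrm{R}\Gamma^B_{\bar{\m}_B}(B) \cong \mrm{R}\Gamma^A_{\bar{\m}}(A)$ in $\cat{D}(A)$, from which equality of infima is immediate.

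The argument will proceed in two steps. First, applying Proposition \ref{prop:RGamma-and-Tensor} to the structure map $A \to B$ with $M = A$ (noting that the image of $\bar{\m}$ in $\mrm{H}^0(B)$ generates $\bar{\m}_B$) yields $\mrm{R}\Gamma^A_{\bar{\m}}(A) \otimes^{\mrm{L}}_A B \cong \mrm{R}\Gamma^B_{\bar{\m}_B}(B)$. On the other hand, choosing a sequence $\mathbf{x} \subseteq A^0$ whose image in $\mrm{H}^0(A)$ generates $\bar{\m}$, the telescope formula (\ref{eqn:RGamma}) combined with the flatness of $\opn{Tel}(A^0;\mathbf{x})$ over $A^0$ gives
\[
\mrm{R}\Gamma^A_{\bar{\m}}(A) \otimes^{\mrm{L}}_A B
\cong \opn{Tel}(A^0;\mathbf{x}) \otimes_{A^0} B
\cong \mrm{R}\Gamma^A_{\bar{\m}}(B).
\]
Combining the two identifications gives $\mrm{R}\Gamma^B_{\bar{\m}_B}(B) \cong \mrm{R}\Gamma^A_{\bar{\m}}(B)$, so the problem reduces to comparing $\mrm{R}\Gamma^A_{\bar{\m}}(B)$ with $\mrm{R}\Gamma^A_{\bar{\m}}(A)$.

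The second step invokes the MGM-type identity $\mrm{R}\Gamma_{\bar{\m}} \circ \mrm{L}\Lambda_{\bar{\m}} \cong \mrm{R}\Gamma_{\bar{\m}}$, which is a standard consequence of the adjunction between $\mrm{L}\Lambda_{\bar{\m}}$ and $\mrm{R}\Gamma_{\bar{\m}}$ and is established in the DG setting in \cite{Sh1}. Applied to $M = A$, and using $B = \mrm{L}\Lambda_{\bar{\m}}(A)$, this yields $\mrm{R}\Gamma^A_{\bar{\m}}(B) \cong \mrm{R}\Gamma^A_{\bar{\m}}(A)$, and concatenating the isomorphisms completes the proof. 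The main obstacle is the MGM identity itself, which is nontrivial in the DG setting; however, it is already part of the derived completion formalism developed in \cite{Sh1} and can be invoked as a black box rather than re-proved here.
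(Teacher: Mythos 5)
Your argument is, in substance, the paper's own proof: both reduce $\depth$ to $\inf(\mrm{R}\Gamma_{\bar{\m}}(-))$ via Proposition \ref{prop:depth}, transport derived torsion along the completion map by telescope base change (Proposition \ref{prop:RGamma-and-Tensor} together with \cite[Corollary 2.13]{Sh1}), and conclude with the MGM identity $\mrm{R}\Gamma_{\bar{\m}}\circ\mrm{L}\Lambda_{\bar{\m}}\cong\mrm{R}\Gamma_{\bar{\m}}$. The one wrinkle is your appeal to ``the structure map $A\to B$'': Proposition \ref{prop:RGamma-and-Tensor} needs an honest map of DG-rings, and in the construction of $\mrm{L}\Lambda(A,\bar{\m})$ such a strict map from $A$ need not exist --- the completion is built as $\Lambda_{\c}(C)$ for a weakly proregular resolution $(C,\mathbf{c})$ of $(A,\bar{\m})$, so the genuine DG-ring map is $C\to\widehat{C}$, while $A$ is only connected to $B$ through the quasi-isomorphism $C\to A$. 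The paper's proof handles exactly this point by replacing $A$ with $C$ (which changes neither $\depth$ nor anything else in sight) and running your two steps for $C\to\widehat{C}$; with that substitution your proof goes through verbatim, so the issue is a repairable technical omission rather than a flaw in the strategy.
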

\begin{proof}
In this proof we will use the terminology of \cite{Sh1}.
Let $(C,\mathbf{c})$ be a weakly proregular resolution of $(A,\bar{\m})$ 
(in the sense of \cite[Definition 2.1]{Sh1}).
Since $C \to A$ is a quasi-isomorphism,
we have that $\depth(C) = \depth(A)$.
Let $\c$ be the ideal in $C^0$ generated by $\mathbf{c}$,
and let $\bar{\c}:= \c\cdot \mrm{H}^0(A)$.
The isomorphism $\mrm{H}^0(C) \to \mrm{H}^0(A)$ sends $\bar{\c}$ to $\bar{\m}$.
According to \cite[Theorem 4.8]{Sh1}, there is an isomorphism
\begin{equation}\label{eqn:compute-llambda}
B = \mrm{L}\Lambda(A,\bar{\m}) \cong \Lambda_{\c}(C) = \varprojlim_n C\otimes_{C^0} C^0/\c^n.
\end{equation}
Let us denote the latter by $\widehat{C}$.
The ideal of definition of the local DG-ring $\widehat{C}$ is given by
\[
\widehat{\bar{\c}} := \bar{\c} \cdot \mrm{H}^0(\widehat{C}).
\]
The isomorphism (\ref{eqn:compute-llambda}) implies that $\depth(B) = \depth(\widehat{C})$.
By Proposition \ref{prop:RGamma-and-Tensor}, 
there is an isomorphism
\[
\mrm{R}\Gamma_{\bar{\c}}(C) \otimes^{\mrm{L}}_C \widehat{C} \cong \mrm{R}\Gamma_{\widehat{\bar{\c}}}(\widehat{C}).
\]
Hence, we have that
\[
\depth(\widehat{C}) = \inf\left(\mrm{R}\Gamma_{\bar{\c}}(C) \otimes^{\mrm{L}}_C \widehat{C}\right).
\]
To compute the infimum of the latter,
we may apply the forgetful functor $\cat{D}(\widehat{C}) \to \cat{D}(C)$,
and consider the DG-module
\[
\mrm{R}\Gamma_{\bar{\c}}(C) \otimes^{\mrm{L}}_C \widehat{C} \in \cat{D}(C).
\]
It follows from \cite[Corollary 2.13]{Sh1} that there is an isomorphism
\[
\widehat{C} \cong \mrm{L}\Lambda_{\bar{\c}}(C)
\]
in $\cat{D}(C)$, so there is an isomorphism
\[
\mrm{R}\Gamma_{\bar{\c}}(C) \otimes^{\mrm{L}}_C \widehat{C} \cong
\mrm{R}\Gamma_{\bar{\c}}\left(\mrm{L}\Lambda_{\bar{\c}}(C)\right).
\]
By the MGM equivalence (a result dual to \cite[Proposition 2.7]{Sh1} proven similarly to it),
we have that
\[
\mrm{R}\Gamma_{\bar{\c}}\left(\mrm{L}\Lambda_{\bar{\c}}(C)\right) \cong \mrm{R}\Gamma_{\bar{\c}}(C),
\]
so that
\[
\inf\left(\mrm{R}\Gamma_{\bar{\c}}(C) \otimes^{\mrm{L}}_C \widehat{C}\right) = \depth(C),
\]
which implies that
\[
\depth(A) = \depth(C) = \depth(\widehat{C}) = \depth(B),
\]
as claimed.
\end{proof}

\section{Local-Cohen-Macaulay commutative DG-rings}\label{sec:CMDG}

We are now ready to prove items (1) and (3) of Theorem \ref{thm:main-bounds} from the introduction.
They are contained in the following result:

\begin{thm}\label{thm:main-amp}
The following inequalities hold:
\begin{enumerate}[wide, labelwidth=!, labelindent=0pt]
\item If $(A,\bar{\m})$ is a noetherian local DG-ring with $\amp(A) < \infty$ and $d = \dim(\mrm{H}^0(A))$ then
\[
\amp(A) \le \amp\left(\mrm{R}\Gamma_{\bar{\m}}(A)\right) \le \amp(A) + d.
\]
\item If $A$ is a noetherian DG-ring, 
and $R$ is a dualizing DG-module over $A$ then 
\[
\amp(A) \le \amp(R).
\]
If moreover $A$ has is local and has bounded cohomology, and $d = \dim(\mrm{H}^0(A))$ then $d < \infty$ and
\[
\amp(R) \le \amp(A) + d.
\]
\end{enumerate}
\end{thm}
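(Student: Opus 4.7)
The plan is to control $\sup$ and $\inf$ of $\mrm{R}\Gamma_{\bar{\m}}(A)$ separately using results from Sections~\ref{sec:KDim}--\ref{sec:depth} for part~(1), and then to transfer the amplitude bounds for local cohomology to analogous bounds for $R$ via a DG version of Grothendieck--Matlis duality for part~(2).

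For part~(1), first observe that non-positivity of $A$ together with $\mrm{H}^0(A)\ne 0$ gives $\sup(A)=0$, so $\amp(A)=-\inf(A)$, while Corollary~\ref{cor:non-vanish} identifies $\sup(\mrm{R}\Gamma_{\bar{\m}}(A))=d$. Hence $\amp(\mrm{R}\Gamma_{\bar{\m}}(A))=d-\inf(\mrm{R}\Gamma_{\bar{\m}}(A))$, and it suffices to sandwich $\inf(\mrm{R}\Gamma_{\bar{\m}}(A))$ between $\inf(A)$ and $\inf(A)+d$. The lower estimate $\inf(\mrm{R}\Gamma_{\bar{\m}}(A))\ge \inf(A)$ is the combination of Proposition~\ref{prop:depth} with Proposition~\ref{prop:depth-lower-bound}, and yields $\amp(\mrm{R}\Gamma_{\bar{\m}}(A))\le d-\inf(A)=\amp(A)+d$. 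The opposite estimate $\inf(\mrm{R}\Gamma_{\bar{\m}}(A))\le \inf(A)+d$ is exactly Corollary~\ref{cor:rgammainf}, and gives $\amp(\mrm{R}\Gamma_{\bar{\m}}(A))\ge d-(\inf(A)+d)=-\inf(A)=\amp(A)$.

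For the first inequality of part~(2), where no locality assumption is imposed, I would start from the defining isomorphism $A\cong \mrm{R}\opn{Hom}_A(R,R)$. Since $R$ is dualizing it has finitely generated cohomology and finite injective dimension, so $R\in\cat{D}^{\mrm{b}}_{\mrm{f}}(A)$, and the standard bound for the infimum of $\mrm{R}\opn{Hom}$ of bounded objects gives $\inf(\mrm{R}\opn{Hom}_A(R,R))\ge \inf(R)-\sup(R)=-\amp(R)$. Combining with $\sup(A)=0$ yields $\amp(A)=-\inf(A)\le \amp(R)$, as desired.

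The bound $\amp(R)\le\amp(A)+d$ in the local bounded case is the most delicate step, and is what I would save for last. Normalize $R$ so that $\inf(R)=-d$ (this is possible since dualizing DG-modules over a local DG-ring are unique up to shift), and invoke a DG version of Grothendieck local duality, namely the isomorphism $\mrm{R}\Gamma_{\bar{\m}}(A)\cong \mrm{R}\opn{Hom}_A(R,E)$, where $E=E(A,\bar{\m})$ is the injective DG-module attached to $\bar{\m}$. The Matlis-style identity $\mrm{H}^i(\mrm{R}\opn{Hom}_A(R,E))\cong \opn{Hom}_{\mrm{H}^0(A)}(\mrm{H}^{-i}(R),\mrm{H}^0(E))$, a direct extension of the computation already used in the proof of Proposition~\ref{prop:amp-of-dc}, together with the faithfulness of classical Matlis duality on finitely generated $\mrm{H}^0(A)$-modules, then gives $\amp(\mrm{R}\opn{Hom}_A(R,E))=\amp(R)$; substituting into the upper bound $\amp(\mrm{R}\Gamma_{\bar{\m}}(A))\le \amp(A)+d$ already proved in~(1) closes the argument. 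The main obstacle I anticipate is the DG local duality isomorphism itself: it should be available from the injective DG-module machinery of~\cite{Sh2} combined with the defining property $\mrm{R}\opn{Hom}_A(R,R)\cong A$, but if a ready-made statement is not in the literature one must verify it by hand, leveraging the equivalence $\mrm{H}^0:\opn{Inj}(A)\iso \opn{Inj}(\mrm{H}^0(A))$ from Section~\ref{sec:prel}.
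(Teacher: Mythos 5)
Your part (1) reproduces the paper's own argument: $\sup\left(\mrm{R}\Gamma_{\bar{\m}}(A)\right)=d$ from Corollary \ref{cor:non-vanish}, combined with the two bounds on $\inf\left(\mrm{R}\Gamma_{\bar{\m}}(A)\right)$ coming from Proposition \ref{prop:depth-lower-bound} and Corollary \ref{cor:rgammainf}. For the inequality $\amp(A)\le\amp(R)$ you take a genuinely different and more elementary route: the paper picks $\bar{\p}\in\opn{Supp}(\mrm{H}^{\inf(A)}(A))$, uses that $R_{\bar{\p}}$ is dualizing over $A_{\bar{\p}}$ (\cite[Corollary 6.11]{Sh}) and local duality (\cite[Corollary 7.29]{Sh2}) to get $\amp(R)\ge\amp(R_{\bar{\p}})=\amp\left(\mrm{R}\Gamma_{\bar{\p}}(A_{\bar{\p}})\right)\ge\amp(A_{\bar{\p}})=\amp(A)$, whereas you read the bound directly off $A\cong\mrm{R}\opn{Hom}_A(R,R)$ via the estimate $\inf\left(\mrm{R}\opn{Hom}_A(R,R)\right)\ge\inf(R)-\sup(R)$ (valid over a non-positive DG-ring by the standard t-structure argument) together with $\sup(A)=0$. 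Your argument avoids localization and local duality entirely, which is a real simplification and applies verbatim in the non-local setting of the statement. For $\amp(R)\le\amp(A)+d$ you and the paper do essentially the same thing: reduce to $\amp(R)=\amp\left(\mrm{R}\Gamma_{\bar{\m}}(A)\right)$ and quote the upper bound of (1); the duality you propose to verify by hand is exactly the DG local duality theorem \cite[Theorem 7.26 and Corollary 7.29]{Sh2}, which the paper cites, and the Matlis-type identity you invoke is \cite[Theorem 4.10]{Sh2}, so that step is available off the shelf.

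Two small gaps. First, the statement of (2) also asserts $d<\infty$, which you never address; the paper obtains it because $\mrm{R}\opn{Hom}_A(\mrm{H}^0(A),R)$ is a dualizing complex over $\mrm{H}^0(A)$ by \cite[Proposition 7.5]{Ye1}, and a noetherian ring with a dualizing complex has finite Krull dimension. Second, your claim that a dualizing DG-module automatically lies in $\cat{D}^{\mrm{b}}_{\mrm{f}}(A)$ is not justified when $\amp(A)=\infty$: the definition only puts $R$ in $\cat{D}^{+}_{\mrm{f}}(A)$, and the paper handles the unbounded case separately by citing \cite[Corollary 7.3]{Ye1} to conclude $\amp(R)=\infty$. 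This does not break your argument --- if $\sup(R)=\infty$ the inequality $\amp(A)\le\amp(R)$ is trivial, and if $\sup(R)<\infty$ your estimate applies --- but you should state this case split (or quote the cited result) rather than asserting boundedness of $R$ as a consequence of finite injective dimension.
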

\begin{proof}
\begin{enumerate}[wide, labelwidth=!, labelindent=0pt]
\item Let $(A,\bar{\m})$ be a noetherian local DG-ring.
Let $n = \amp(A)$, $d = \dim(\mrm{H}^0(A))$, and suppose that $n < \infty$.
By Corollary \ref{cor:non-vanish}, we have that
\[
\sup\left(\mrm{R}\Gamma_{\bar{\m}}(A)\right) = d.
\]
By Corollary \ref{cor:rgammainf}, there is an inequality
\[
\inf\left(\mrm{R}\Gamma_{\bar{\m}}(A)\right) \le \inf(A) + d = d-n.
\]
Combining these two facts we obtain:
\[
\amp\left(\mrm{R}\Gamma_{\bar{\m}}(A)\right) = \sup\left(\mrm{R}\Gamma_{\bar{\m}}(A)\right) - \inf\left(\mrm{R}\Gamma_{\bar{\m}}(A)\right) \ge d+(n-d) = n = \amp(A).
\]
On the other hand, by Proposition \ref{prop:depth-lower-bound},
\[
\inf\left(\mrm{R}\Gamma_{\bar{\m}}(A)\right) \ge \inf(A) = -n,
\]
so that
\[
\amp\left(\mrm{R}\Gamma_{\bar{\m}}(A)\right) \le d + n = \amp(A) + d.
\]
\item Let $A$ be a noetherian DG-ring,
and $R$ a dualizing DG-module over $A$.
If $\amp(A) = \infty$ then by \cite[Corollary 7.3]{Ye1} we have that $\amp(R) = \infty$.
We may thus assume that $\amp(A) = n < \infty$.
Let $\bar{\p} \in \opn{Supp}(\mrm{H}^{-n}(A))$.
Then $A_{\bar{\p}}$ is a noetherian local DG-ring,
and since $\mrm{H}^{-n}(A)_{\bar{\p}} \ne 0$,
there is an equality $\amp(A) = \amp(A_{\bar{\p}})$.
Since localization of DG-rings is cohomologically essentially smooth (in the sense of \cite[Definition 6.4]{Sh}),
according to \cite[Corollary 6.11]{Sh} the DG-module $R_{\bar{\p}}$ is a dualizing DG-module over $A_{\bar{\p}}$.
By local duality for local DG-rings (\cite[Corollary 7.29]{Sh2}),
there is an equality
\[
\amp\left(\mrm{R}\Gamma_{\bar{\p}}(A_{\bar{\p}})\right) = \amp\left(R_{\bar{\p}}\right).
\]
By (1) of this theorem, we have that 
\[
\amp\left(\mrm{R}\Gamma_{\bar{\p}}(A_{\bar{\p}})\right) \ge \amp(A_{\bar{\p}}) = \amp(A)
\]
and since $\amp(R) \ge \amp\left(R_{\bar{\p}}\right)$, 
we deduce that $\amp(R) \ge \amp(A)$.
Next, observe that by \cite[Proposition 7.5]{Ye1}, 
the complex $\mrm{R}\opn{Hom}_A(\mrm{H}^0(A),R)$ is a dualizing complex over $\mrm{H}^0(A)$,
so by \cite[Corollary V.7.2]{RD} we have that $d = \dim(\mrm{H}^0(A)) < \infty$.
Finally, assuming that $A$ is local, the inequality
\[
\amp(R) \le \amp(A) + d.
\]
follows from local duality for local DG-rings and from (1).
\end{enumerate}
\end{proof}

In view of Theorem \ref{thm:main-amp}, it makes sense to define:
\begin{dfn}\label{def:CM}
Let $(A,\bar{\m})$ be a noetherian local DG-ring with $\amp(A) < \infty$.
We say that $A$ is local-Cohen-Macaulay if $\amp(\mrm{R}\Gamma_{\bar{\m}}(A)) = \amp(A)$.
\end{dfn}

\begin{exa}
Let $(A,\m)$ be a noetherian local ring. 
Then $A$ is Cohen-Macaulay in the classical sense if and only if $A$ is local-Cohen-Macaulay in the sense of Definition \ref{def:CM}.
\end{exa}

\begin{prop}\label{prop:CM-by-DC}
Let $(A,\bar{\m})$ be a noetherian local DG-ring with $\amp(A) < \infty$,
and let $R$ be a dualizing DG-module over $A$.
Then $A$ is local-Cohen-Macaulay if and only if $\amp(A) = \amp(R)$.
\end{prop}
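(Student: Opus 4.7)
The plan is to reduce the statement directly to local duality for local DG-rings, which was already invoked in the proof of Theorem \ref{thm:main-amp}. Specifically, by \cite[Corollary 7.29]{Sh2}, since $(A,\bar{\m})$ is a noetherian local DG-ring and $R$ is a dualizing DG-module over $A$, there is an equality
\[
\amp\left(\mrm{R}\Gamma_{\bar{\m}}(A)\right) = \amp(R).
\]
This is the key input; everything else is bookkeeping.

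First I would note that both sides of the desired equivalence involve only amplitudes, and that by hypothesis $\amp(A) < \infty$; moreover, the existence of a dualizing DG-module over a noetherian local DG-ring forces $\dim(\mrm{H}^0(A)) < \infty$ (as used in the proof of Theorem \ref{thm:main-amp}(2) via \cite[Proposition 7.5]{Ye1} and \cite[Corollary V.7.2]{RD}), and by Theorem \ref{thm:main-amp}(2) one has $\amp(A) \le \amp(R) < \infty$, so all quantities in play are finite.

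Next I would unwind the definition of local-Cohen-Macaulay. By Definition \ref{def:CM}, $A$ is local-Cohen-Macaulay precisely when $\amp(\mrm{R}\Gamma_{\bar{\m}}(A)) = \amp(A)$. Substituting the local duality identity $\amp(\mrm{R}\Gamma_{\bar{\m}}(A)) = \amp(R)$ into this condition, the equality $\amp(\mrm{R}\Gamma_{\bar{\m}}(A)) = \amp(A)$ becomes equivalent to $\amp(R) = \amp(A)$, which is exactly the desired conclusion.

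I don't anticipate a serious obstacle here: the statement is essentially a one-line corollary of local duality combined with the definition of local-Cohen-Macaulayness. The only point to be careful about is making sure local duality is applicable in the stated generality, i.e., that $A$ is noetherian local with bounded cohomology (which is given) and that $R$ is dualizing (also given), so that \cite[Corollary 7.29]{Sh2} applies verbatim.
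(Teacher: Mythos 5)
Your proposal is correct and matches the paper's own proof: the paper also deduces the statement immediately from the equality $\amp\left(\mrm{R}\Gamma_{\bar{\m}}(A)\right) = \amp(R)$ of \cite[Corollary 7.29]{Sh2} combined with Definition \ref{def:CM}. The additional remarks about finiteness are harmless but not needed.
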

\begin{proof}
This follows immediately from the equality $\amp(\mrm{R}\Gamma_{\bar{\m}}(A)) = \amp(R)$ established in \cite[Corollary 7.29]{Sh2}.
\end{proof}

Following \cite{FJ,FIJ}, 
recall that a noetherian local DG-ring $(A,\bar{\m})$ is called Gorenstein 
if $\amp(A) < \infty$ and $\opn{inj}\dim_A(A) < \infty$.
In this case, note that $A$ is a dualizing DG-module over $A$.
Hence, by Proposition \ref{prop:CM-by-DC} we have:

\begin{prop}\label{prop:GorisCM}
Let $(A,\bar{\m})$ be a noetherian local Gorenstein DG-ring.
Then $A$ is local-Cohen-Macaulay.
\end{prop}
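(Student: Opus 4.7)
The plan is to apply Proposition \ref{prop:CM-by-DC} with the dualizing DG-module taken to be $A$ itself. The key observation is that the Gorenstein hypothesis — namely that $\amp(A)<\infty$ and $\injdim_A(A)<\infty$ — together with the fact that $A$ is a noetherian DG-ring (so $A\in\cat{D}^{\mrm{b}}_{\mrm{f}}(A)$) and the trivial isomorphism $A \iso \mrm{R}\opn{Hom}_A(A,A)$, says exactly that $R:=A$ satisfies all the conditions of a dualizing DG-module over $A$ as recalled in Section \ref{sec:dc}.

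Granting this, I would then invoke Proposition \ref{prop:CM-by-DC}: $A$ is local-Cohen-Macaulay if and only if there exists a dualizing DG-module $R$ with $\amp(A)=\amp(R)$. Choosing $R=A$, this equality is tautological. The conclusion $\amp(\mrm{R}\Gamma_{\bar{\m}}(A))=\amp(A)$, and hence local-Cohen-Macaulayness by Definition \ref{def:CM}, follows immediately.

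There is essentially no obstacle here; the only verification worth recording is that $A$ really qualifies as a dualizing DG-module under the Gorenstein hypothesis — the finite-generation condition $A\in\cat{D}^{+}_{\mrm{f}}(A)$ is automatic from noetherianness, the finite injective dimension is the Gorenstein assumption itself, and the homothety map $A\to\mrm{R}\opn{Hom}_A(A,A)$ is always an isomorphism. After that, the result is a one-line consequence of Proposition \ref{prop:CM-by-DC}.
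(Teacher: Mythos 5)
Your proposal is correct and coincides with the paper's own argument: the Gorenstein hypothesis says exactly that $A$ is a dualizing DG-module over itself, and then Proposition \ref{prop:CM-by-DC} applied with $R=A$ gives local-Cohen-Macaulayness immediately.
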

\qed

Just like noetherian local rings,
noetherian local DG-rings need not to have dualizing DG-modules.
However, passing to their derived completion, 
we showed in \cite[Proposition 7.21]{Sh2} that the derived completion has a dualizing DG-module.
It is thus convenient to know that the local-Cohen-Macaulay property is preserved by the derived completion operation.
In Example \ref{exa:no-dualizing} below we construct a local-Cohen-Macaulay DG-ring $A$ which is not equivalent to a ring,
such that $A$ does not have a dualizing DG-module.

\begin{prop}
Let $(A,\bar{\m})$ be a noetherian local DG-ring with $\amp(A) < \infty$,
and let $B = \mrm{L}\Lambda(A,\bar{\m})$ be its derived $\bar{\m}$-adic completion.
Then $A$ is local-Cohen-Macaulay if and only if $B$ is local-Cohen-Macaulay.
\end{prop}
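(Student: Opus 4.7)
The plan is to express the local-Cohen-Macaulay condition numerically in terms of three invariants that are already known to be preserved under derived $\bar{\m}$-adic completion, and then simply compare them.

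First, I would rewrite the amplitude of $\mrm{R}\Gamma_{\bar{\m}}(A)$ in a more useful form. By Corollary \ref{cor:non-vanish} we have $\sup\bigl(\mrm{R}\Gamma_{\bar{\m}}(A)\bigr) = \dim(\mrm{H}^0(A))$, and by Proposition \ref{prop:depth} we have $\inf\bigl(\mrm{R}\Gamma_{\bar{\m}}(A)\bigr) = \depth_A(A)$. Hence
\[
\amp\bigl(\mrm{R}\Gamma_{\bar{\m}}(A)\bigr) = \dim(\mrm{H}^0(A)) - \depth_A(A),
\]
so that $A$ is local-Cohen-Macaulay if and only if $\dim(\mrm{H}^0(A)) - \depth_A(A) = \amp(A)$. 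The analogous identity holds for $B$ with its maximal ideal $\widehat{\bar{\m}} := \bar{\m}\cdot \mrm{H}^0(B)$.

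Next, I would invoke the three preservation results already established in the paper. Proposition \ref{prop:amp-of-dc} gives $\amp(B) = \amp(A)$. Proposition \ref{prop:depth-of-dc} gives $\depth_B(B) = \depth_A(A)$. Proposition \ref{prop:dim-of-dc} gives $\lcdim(B) = \lcdim(A)$, which, since both $A$ and $B$ are non-positive local noetherian DG-rings, reduces by (\ref{eqn:dimofA}) to $\dim(\mrm{H}^0(B)) = \dim(\mrm{H}^0(A))$.

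Combining these three equalities, the numerical characterization above shows that
\[
\dim(\mrm{H}^0(A)) - \depth_A(A) - \amp(A) = \dim(\mrm{H}^0(B)) - \depth_B(B) - \amp(B),
\]
so one side equals $0$ if and only if the other does, which is the desired equivalence. There is essentially no obstacle here: all the real work is already done in Propositions \ref{prop:amp-of-dc}, \ref{prop:depth-of-dc}, and \ref{prop:dim-of-dc}, and the proof is just a bookkeeping combination of them with the local cohomology amplitude formula.
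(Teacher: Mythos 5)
Your proof is correct and follows essentially the same route as the paper: the paper's own argument is exactly the combination of Propositions \ref{prop:amp-of-dc}, \ref{prop:dim-of-dc} and \ref{prop:depth-of-dc} with the identity $\amp\left(\mrm{R}\Gamma_{\bar{\m}}(A)\right) = \lcdim(A) - \depth_A(A)$, which you have merely spelled out (using $\lcdim(A) = \dim(\mrm{H}^0(A))$). No gaps.
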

\begin{proof}
This follows from the equalities
\[
\amp(A) = \amp(B), \quad \lcdim(A) = \lcdim(B), \quad \depth(A) = \depth(B)
\]
shown in Propositions \ref{prop:amp-of-dc}, \ref{prop:dim-of-dc} and \ref{prop:depth-of-dc}.
\end{proof}

Next we wish to show that zero-dimensional DG-rings are local-Cohen-Macaulay. 
First we need the following lemma about local cohomology with respect to nilpotent ideals over DG-rings:

\begin{lem}\label{lem:lc-of-nil}
Let $A$ be a noetherian DG-ring,
let $\bar{\a} \subseteq \mrm{H}^0(A)$ be an ideal,
and suppose that there is some $n \in \mathbb{N}$ such that $\bar{\a}^n = 0$.
Then the functor $\mrm{R}\Gamma_{\bar{\a}}:\cat{D}(A) \to \cat{D}(A)$ is isomorphic to the identity functor.
\end{lem}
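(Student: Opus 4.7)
The plan is to reduce the claim to the observation that the nilpotence of $\bar{\a}$ forces every $\mathrm{H}^0(A)$-module to be $\bar{\a}$-torsion, which in turn collapses $\cat{D}_{\bar{\a}\text{-}\opn{tor}}(A)$ to all of $\cat{D}(A)$; then the definition of $\mrm{R}\Gamma_{\bar{\a}}$ as (inclusion)$\circ$(right adjoint of inclusion) immediately gives the result.

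First, I would record the module-level statement. Suppose $\bar{\a}^n = 0$ in $\mrm{H}^0(A)$, and let $\bar{M}$ be any $\mrm{H}^0(A)$-module. Then for every $\bar{m} \in \bar{M}$ one has $\bar{\a}^n \cdot \bar{m} = 0$, so $\bar{M}$ is $\bar{\a}$-torsion. Equivalently, since $\mrm{H}^0(A)/\bar{\a}^n = \mrm{H}^0(A)$, the canonical map
\[
\varinjlim_k \opn{Hom}_{\mrm{H}^0(A)}\bigl(\mrm{H}^0(A)/\bar{\a}^k, \bar{M}\bigr) \longrightarrow \bar{M}
\]
is an isomorphism (the system stabilizes for $k \ge n$). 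Thus every object of $\opn{Mod}(\mrm{H}^0(A))$ lies in the $\bar{\a}$-torsion subcategory.

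Second, I would lift this to $\cat{D}(A)$: given any $M \in \cat{D}(A)$ and any $i \in \mathbb{Z}$, the cohomology $\mrm{H}^i(M)$ is an $\mrm{H}^0(A)$-module, hence $\bar{\a}$-torsion by the previous step. By the definition of $\cat{D}_{\bar{\a}\text{-}\opn{tor}}(A)$ recalled in Section~\ref{sec:lc}, this means $M \in \cat{D}_{\bar{\a}\text{-}\opn{tor}}(A)$. Consequently the inclusion
\[
\cat{D}_{\bar{\a}\text{-}\opn{tor}}(A) \inj \cat{D}(A)
\]
is an equality of categories, and in particular an equivalence. Its right adjoint is therefore (isomorphic to) the identity functor, and since $\mrm{R}\Gamma_{\bar{\a}}$ is by construction this right adjoint composed with the inclusion, we conclude $\mrm{R}\Gamma_{\bar{\a}} \cong \opn{id}_{\cat{D}(A)}$.

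There is essentially no obstacle here; the only thing one needs to be careful about is not to attempt a telescope-complex style proof, since a nilpotent ideal $\bar{\a} \subseteq \mrm{H}^0(A)$ need not lift to a sequence of nilpotent elements in $A^0$, and the telescope complex on a non-nilpotent lift is not quasi-isomorphic to $A^0$ in general. The adjunction-based argument sidesteps this issue entirely.
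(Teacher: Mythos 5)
Your argument is correct, and it is genuinely different from the one in the paper. You work directly from the definition of $\mrm{R}\Gamma_{\bar{\a}}$ as the composition of the inclusion $\cat{D}_{\bar{\a}-\opn{tor}}(A) \inj \cat{D}(A)$ with its right adjoint: since $\bar{\a}^n=0$ forces every $\mrm{H}^0(A)$-module to be $\bar{\a}$-torsion, the torsion subcategory is all of $\cat{D}(A)$, the inclusion is the identity, and uniqueness of adjoints gives $\mrm{R}\Gamma_{\bar{\a}} \cong \opn{id}$. The paper instead stays within its computational toolkit: it reduces to showing $\mrm{R}\Gamma_{\bar{\a}}(A)\cong A$ via the formula $\mrm{R}\Gamma_{\bar{\a}}(M)\cong \mrm{R}\Gamma_{\bar{\a}}(A)\otimes^{\mrm{L}}_A M$, passes along $A\to\mrm{H}^0(A)$ using Proposition \ref{prop:rgammatohz} (which is where the noetherian hypothesis enters, to identify the DG-theoretic functor with classical local cohomology over $\mrm{H}^0(A)$, where nilpotence makes $\Gamma_{\bar{\a}}$ the identity), and concludes with a derived Nakayama-type statement from \cite{Ye1}. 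Your route is more formal and arguably more elementary: it needs no computation, no reduction functor, and in fact no noetherian assumption beyond what is needed for $\mrm{R}\Gamma_{\bar{\a}}$ to be defined; the paper's route has the virtue of staying uniform with the telescope/reduction techniques used throughout and of producing the isomorphism through the same explicit machinery invoked elsewhere. One small caveat: your closing remark that a telescope-style proof would break down is overstated. Although a lift of a nilpotent element of $\mrm{H}^0(A)$ to $A^0$ need not be nilpotent, after tensoring $\opn{Tel}(A^0;\mathbf{a})$ with $A$ the complementary piece is a homotopy colimit of multiplication maps whose images act nilpotently on each $\mrm{H}^i(A)$, hence has vanishing cohomology, so the telescope comparison map does become a quasi-isomorphism; this is essentially why the paper's computational argument succeeds. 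This does not affect the validity of your proof, which stands as written.
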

\begin{proof}
Let $M \in \cat{D}(A)$.
In general (even without the nilpotent assumption),
it follows from \cite[Corollary 2.13]{Sh1} that
\[
\mrm{R}\Gamma_{\bar{\a}}(M) \cong \mrm{R}\Gamma_{\bar{\a}}(A) \otimes^{\mrm{L}}_A M,
\]
so it is enough to show that $\mrm{R}\Gamma_{\bar{\a}}(A) \cong A$.
By Proposition \ref{prop:rgammatohz}, we have that
 \[
\mrm{R}\Gamma^A_{\bar{\a}}(A) \otimes^{\mrm{L}}_A \mrm{H}^0(A)
\cong 
\mrm{R}\Gamma^{\bar{A}}_{\bar{\a}}\left(\mrm{H}^0(A)\right) \cong \mrm{H}^0(A)
\]
where the last isomorphism follows from the fact that since $\bar{\a}$ is nilpotent,
the additive functor 
\[
\Gamma_{\bar{\a}}:\opn{Mod}\left(\mrm{H}^0(A)\right) \to \opn{Mod}\left(\mrm{H}^0(A)\right)
\]
is equal to the identity functor, 
so its right derived functor is also the identity functor.
Hence, by \cite[Proposition 3.3(1)]{Ye1}, 
it follows that $\mrm{R}\Gamma_{\bar{\a}}(A) \cong A$,
as claimed.
\end{proof}

\begin{prop}\label{prop:zd-is-cm}
Let $(A,\bar{\m})$ be a noetherian local DG-ring with $\amp(A) < \infty$,
and suppose that $\dim(\mrm{H}^0(A)) = 0$.
Then $A$ is local-Cohen-Macaulay.
\end{prop}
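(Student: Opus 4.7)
The plan is to reduce this to the nilpotent local cohomology lemma (Lemma \ref{lem:lc-of-nil}) that was just established. Since $(\mrm{H}^0(A),\bar{\m})$ is a noetherian local ring with $\dim(\mrm{H}^0(A)) = 0$, the maximal ideal $\bar{\m}$ is the unique prime of $\mrm{H}^0(A)$, hence $\bar{\m} = \sqrt{0}$ in $\mrm{H}^0(A)$. Since $\mrm{H}^0(A)$ is noetherian and $\bar{\m}$ is finitely generated, we get $\bar{\m}^n = 0$ for some $n \in \mathbb{N}$.

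Now I would apply Lemma \ref{lem:lc-of-nil} directly: since $\bar{\m}$ is nilpotent in $\mrm{H}^0(A)$, the functor $\mrm{R}\Gamma_{\bar{\m}} : \cat{D}(A) \to \cat{D}(A)$ is isomorphic to the identity. In particular,
\[
\mrm{R}\Gamma_{\bar{\m}}(A) \cong A
\]
in $\cat{D}(A)$. Taking amplitudes,
\[
\amp\bigl(\mrm{R}\Gamma_{\bar{\m}}(A)\bigr) = \amp(A),
\]
which is precisely the defining condition for $A$ to be local-Cohen-Macaulay in the sense of Definition \ref{def:CM}. The assumption $\amp(A) < \infty$ ensures this is a genuine equality of finite numbers, not a vacuous $\infty = \infty$.

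There is essentially no obstacle here: the whole content is packaged in Lemma \ref{lem:lc-of-nil}, and the only small observation needed is that in a zero-dimensional noetherian local ring the maximal ideal is nilpotent. Alternatively, one could deduce the result from Theorem \ref{thm:main-amp}(1), which gives the bounds $\amp(A) \le \amp(\mrm{R}\Gamma_{\bar{\m}}(A)) \le \amp(A) + d$ with $d = \dim(\mrm{H}^0(A)) = 0$, forcing equality on the left; but invoking the lemma is cleaner and more informative.
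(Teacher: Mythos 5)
Your proof is correct and is essentially the same as the paper's: both reduce to the observation that $\bar{\m}$ is nilpotent in the zero-dimensional noetherian local ring $\mrm{H}^0(A)$ and then invoke Lemma \ref{lem:lc-of-nil} to conclude $\amp(\mrm{R}\Gamma_{\bar{\m}}(A)) = \amp(A)$. The alternative you mention via Theorem \ref{thm:main-amp}(1) with $d=0$ also works, but the main argument matches the paper exactly.
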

\begin{proof}
Since $(\mrm{H}^0(A),\bar{\m})$ is a zero-dimensional local ring,
its maximal ideal $\bar{\m}$ must be nilpotent.
Hence, by Lemma \ref{lem:lc-of-nil} we have
\[
\amp\left(\mrm{R}\Gamma_{\bar{\m}}(A)\right) = \amp(A),
\]
so that $A$ is local-Cohen-Macaulay.
\end{proof}

\begin{exa}\label{exa:finite}
Let $\varphi:(A,\m) \to (B,\n)$ be a local homomorphism between noetherian local rings.
Assume that $\varphi$ is a finite ring map, 
and that it makes $B$ an $A$-module of finite flat dimension.
Let $D = A/\m \otimes^{\mrm{L}}_A B$ be the derived fiber of $\varphi$.
This is a commutative local DG-ring with $\mrm{H}^0(D) = B/\m B$.
The assumption that $\varphi$ is finite implies that $B/\m B$ is a zero dimensional ring,
and the finite flat dimension assumption implies that $\amp(D) < \infty$.
Hence, by Proposition \ref{prop:zd-is-cm}, $D$ is a local-Cohen-Macaulay DG-ring.
\end{exa}

Given a local homomorphism $(A,\m) \to (B,\n)$ of finite flat dimension,
it follows from \cite[Theorem 4.4]{AF1} and \cite[Theorem 7.1]{AF1} that
if $A$ is Gorenstein then $B$ is Gorenstein if and only if the local DG-ring 
$D = A/\m \otimes^{\mrm{L}}_A B$ is Gorenstein. 
Unfortunately, it turns out not to be the case for the local-Cohen-Macaulay property,
as the next example shows:

\begin{exa}
Let $(A,\m)$ be a regular local ring,
and let $B = A/I$ be a quotient of $A$ which is not Cohen-Macaulay.
For a specific example, on can take $A = \k[[x,y,z]]$,
$B = \k[[x,y,z]]/(xy,xz)$.
Since $A$ is regular, $B$ has finite flat dimension over $A$,
so by Example \ref{exa:finite},
the derived fiber $D$ of the map $\varphi: A \to B$ is a local-Cohen-Macaulay DG-ring.
Since $B$ is not a Cohen-Macaulay ring, 
it follows from \cite[(8.9)]{AF2} that the map $\varphi$ is not a Cohen-Macaulay homomorphism in the sense of \cite[(8.1)]{AF2}.
Note also that since $B$ is not Gorenstein,
it follows from \cite[Theorem 7.1]{AF1} that $D$ is not a Gorenstein DG-ring.
\end{exa}

The minimal non-vanishing cohomology of a local-Cohen-Macaulay DG-ring satisfies the following property:

\begin{prop}\label{prop:dim-of-inf-cm}
Let $(A,\bar{\m})$ be a noetherian local-Cohen-Macaulay DG-ring,
and let $n = \amp(A)$.
Then the $\mrm{H}^0(A)$-module $\mrm{H}^{-n}(A)$ satisfies
\[
\dim\left(\mrm{H}^{-n}(A)\right) = \dim\left(\mrm{H}^0(A)\right).
\]
\end{prop}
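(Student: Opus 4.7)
The plan is to combine the non-vanishing theorem for local cohomology with the depth-dimension inequality and then invoke the equality case of Corollary \ref{cor:rgammainf}. Let $d = \dim(\mrm{H}^0(A))$, so that $\inf(A) = -n$ and we want to show $\dim(\mrm{H}^{-n}(A)) = d$. Since $\mrm{H}^{-n}(A)$ is an $\mrm{H}^0(A)$-module, one inequality $\dim(\mrm{H}^{-n}(A)) \le d$ is automatic, so the content is the reverse inequality.

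First, I would pin down $\sup$ and $\inf$ of $\mrm{R}\Gamma_{\bar{\m}}(A)$. By Corollary \ref{cor:non-vanish}, $\sup(\mrm{R}\Gamma_{\bar{\m}}(A)) = d$. Now using the hypothesis that $A$ is local-Cohen-Macaulay, we have $\amp(\mrm{R}\Gamma_{\bar{\m}}(A)) = \amp(A) = n$. Subtracting,
\[
\inf(\mrm{R}\Gamma_{\bar{\m}}(A)) = d - n = \inf(A) + d.
\]

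Second, I would apply the equality case of Corollary \ref{cor:rgammainf} directly: once the above equality $\inf(\mrm{R}\Gamma_{\bar{\m}}(A)) = \inf(A) + d$ holds, that corollary yields $\dim(\mrm{H}^{\inf(A)}(A)) = \dim(\mrm{H}^0(A))$, which is exactly $\dim(\mrm{H}^{-n}(A)) = d$. There is essentially no obstacle here — the whole statement is a one-line consequence of material already proved, and the only thing to double-check is that the normalization $\inf(A) = -n$ is consistent with the convention $\amp(A) = \sup(A) - \inf(A)$ and the fact that $A$ is non-positive, so that $\sup(A) = 0$ and indeed $\inf(A) = -n$.
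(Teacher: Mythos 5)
Your proposal is correct and follows essentially the same route as the paper: the paper's proof likewise extracts $\inf(\mrm{R}\Gamma_{\bar{\m}}(A)) = \inf(A) + d$ from the proof of Theorem \ref{thm:main-amp} (i.e.\ from $\sup(\mrm{R}\Gamma_{\bar{\m}}(A)) = d$ together with the local-Cohen-Macaulay equality of amplitudes) and then invokes the equality case of Corollary \ref{cor:rgammainf}. No gaps.
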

\begin{proof}
Let $d = \dim(\mrm{H}^0(A))$.
By the proof of Theorem \ref{thm:main-amp},
if $A$ is local-Cohen-Macaulay, we must have that
\[
\inf\left(\mrm{R}\Gamma_{\bar{\m}}(A)\right) = \inf(A) + d.
\]
Hence, by equation (\ref{eqn:exact-bound}) in Corollary \ref{cor:rgammainf},
we deduce that
\[
\dim\left(\mrm{H}^{-n}(A)\right) = \dim\left(\mrm{H}^0(A)\right).
\] 
\end{proof}

\section{Regular sequences and the derived Bass conjecture}\label{sec:regseq}

In this section we study regular sequences and system of parameters over noetherian local DG-rings.
Much of this section is inspired, and based on,
the work of Christensen on regular sequences and system of parameters acting on chain complexes over local rings (\cite{ChI,ChII}).
We first recall the notions of a regular sequence in the DG-setting, 
and the notion of a quotient DG-module, essentially following Minamoto (\cite[Section 3.2]{Mi2}).

Given a commutative DG-ring $A$ and $\bar{x} \in \mrm{H}^0(A)$,
the identification 
\[
\mrm{H}^0(A) = \mrm{H}^0\left(\mrm{R}\opn{Hom}_A(A,A)\right) = \opn{Hom}_{\cat{D}(A)}(A,A)
\]
implies that $\bar{x}$ induces a map $A \to A$ in $\cat{D}(A)$,
which we also denote by $\bar{x}$.
We denote the mapping cone of the map $\bar{x}$ by $A//\bar{x}$,
so there is a distinguished triangle

\begin{equation}\label{eqn:xmap-tri}
A \xrightarrow{\bar{x}} A \to A//\bar{x} \to A[1]
\end{equation}
in $\cat{D}(A)$. 
It is shown in \cite[Section 3.2]{Mi2} that $A//\bar{x}$ has the structure of a commutative DG-ring.
If $A$ is noetherian, then $A//\bar{x}$ is also noetherian, 
and if $\amp(A) < \infty$ then $\amp(A//\bar{x}) < \infty$.
It follows from (\ref{eqn:xmap-tri}) and the fact that $A$ is non-positive that
\[
\mrm{H}^0(A//\bar{x}) \cong \mrm{H}^0(A)/\bar{x}.
\]
In particular, if $A$ is a noetherian local DG-ring,
then $A//\bar{x}$ is also a noetherian local DG-ring.

Given $M \in \cat{D}(A)$,
applying the triangulated functor $M \otimes^{\mrm{L}}_A -$ to the triangle (\ref{eqn:xmap-tri}),
we obtain another distinguished triangle in $\cat{D}(A)$:
\[
M \xrightarrow{\bar{x}\otimes^{\mrm{L}}_A M} M \to M//\bar{x}M \to M[1]
\]
where we have set 
\[
M//\bar{x}M := A//\bar{x}\otimes^{\mrm{L}}_A M.
\]
In particular we see that $M//\bar{x}M \in \cat{D}(A//\bar{x})$.

Given a finite set of elements $\bar{x}_1,\dots,\bar{x}_n \in \mrm{H}^0(A)$,
setting $B = A//\bar{x}_1$,
we define inductively 
\[
A//(\bar{x}_1,\dots,\bar{x}_n) = B//(\bar{x}_2,\dots,\bar{x}_n),
\]
where we identified $\bar{x}_2,\dots,\bar{x}_n$ with their images in $\mrm{H}^0(A)/\bar{x}_1$.
Similarly one defines 
\[
M//(\bar{x}_1,\dots,\bar{x}_n)M = M\otimes^{\mrm{L}}_A A//(\bar{x}_1,\dots,\bar{x}_n) \in \cat{D}(A//(\bar{x}_1,\dots,\bar{x}_n)).
\]

\begin{dfn}\label{dfn:seq}
Let $A$ be a commutative DG-ring,
and let $M \in \cat{D}^{+}(A)$.
\begin{enumerate}
\item An element $\bar{x} \in \mrm{H}^0(A)$ is called $M$-regular if it is $\mrm{H}^{\inf(M)}(M)$-regular;
that is, if the multiplication map
\[
\bar{x} \times - : \mrm{H}^{\inf(M)}(M) \to \mrm{H}^{\inf(M)}(M)
\]
is injective.
\item Inductively, a sequence $\bar{x}_1,\dots \bar{x}_n \in \mrm{H}^0(A)$ is called $M$-regular if
$\bar{x}_1$ is $M$-regular, and the sequence $\bar{x}_2, \dots \bar{x}_n$ is $M//\bar{x}_1M$-regular.
\item Assuming $(A,\bar{\m})$ is a noetherian local DG-ring,
the \textbf{sequential depth} of $M$,
denoted by $\opn{seq.depth}_A(M)$ is the length of an $M$-regular sequence contained in $\bar{\m}$ of maximal length.
\end{enumerate}
\end{dfn}

It is shown in \cite[Proposition 3.15]{Mi2} that if $(A,\bar{\m})$ is a noetherian local DG-ring,
and if $0 \ne M \in \cat{D}^{+}_{\mrm{f}}(A)$ then
\begin{equation}\label{eqn:chdepth}
\opn{seq.depth}_A(M) = \depth_A(M) - \inf(M).
\end{equation}
To be precise, Minamoto calls the right hand side of this equality the cohomological depth of $M$,
and shows in that proposition that it coincides with what we called here the sequential depth of $M$.
In view of this formula, 
we make the following definition which is a variation on the local cohomology Krull dimension of a DG-module which is not affected by shifts.

\begin{dfn}
Given a noetherian local DG-ring $(A,\bar{\m})$, and given $M \in \cat{D}^{-}(A)$,
we define the derived Krull dimension of $M$ to be the number
\[
\opn{der.dim}_A(M) = \lcdim_A(M) - \sup(M).
\]
\end{dfn}

\begin{cor}\label{cor:chdepth}
Let $(A,\bar{\m})$ be a noetherian local DG-ring with $\amp(A) < \infty$.
Then there is an inequality
\[
\opn{seq.depth}_A(A) \le \dim(\mrm{H}^0(A)) = \opn{der.dim}_A(A),
\]
with equality if and only if $A$ is local-Cohen-Macaulay.
\end{cor}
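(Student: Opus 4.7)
The plan is to unpack both sides of the claimed equality $\dim(\mrm{H}^0(A)) = \opn{der.dim}_A(A)$ directly from definitions, and then reduce the inequality and the Cohen-Macaulay characterization to Theorem \ref{thm:main-amp}(1) via the formula (\ref{eqn:chdepth}) of Minamoto.

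First I would observe that since $A$ is non-positive, $\sup(A) = 0$, so $\inf(A) = -\amp(A)$ and $\lcdim_A(A) = \dim(\mrm{H}^0(A))$ by (\ref{eqn:dimofA}). Unwinding the definition of derived Krull dimension gives
\[
\opn{der.dim}_A(A) = \lcdim_A(A) - \sup(A) = \dim(\mrm{H}^0(A)),
\]
which establishes the equality on the right side of the displayed inequality. On the depth side, applying (\ref{eqn:chdepth}) to $M = A$ yields
\[
\opn{seq.depth}_A(A) = \depth_A(A) - \inf(A) = \depth_A(A) + \amp(A).
\]

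The main step is then to bound $\depth_A(A) + \amp(A)$ by $\dim(\mrm{H}^0(A))$. By Proposition \ref{prop:depth}, $\depth_A(A) = \inf\bigl(\mrm{R}\Gamma_{\bar{\m}}(A)\bigr)$, and by Corollary \ref{cor:non-vanish}, $\sup\bigl(\mrm{R}\Gamma_{\bar{\m}}(A)\bigr) = \dim(\mrm{H}^0(A))$. Hence
\[
\amp\bigl(\mrm{R}\Gamma_{\bar{\m}}(A)\bigr) = \dim(\mrm{H}^0(A)) - \depth_A(A).
\]
Now I invoke Theorem \ref{thm:main-amp}(1), which gives $\amp(A) \le \amp\bigl(\mrm{R}\Gamma_{\bar{\m}}(A)\bigr)$, to deduce
\[
\opn{seq.depth}_A(A) = \depth_A(A) + \amp(A) \le \dim(\mrm{H}^0(A)),
\]
as desired.

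Finally, for the equivalence, equality $\opn{seq.depth}_A(A) = \dim(\mrm{H}^0(A))$ holds exactly when $\amp(A) = \dim(\mrm{H}^0(A)) - \depth_A(A) = \amp\bigl(\mrm{R}\Gamma_{\bar{\m}}(A)\bigr)$, which is precisely the condition that $A$ be local-Cohen-Macaulay per Definition \ref{def:CM}. There is no real obstacle here: the corollary is essentially a bookkeeping consequence of the identities $\sup(\mrm{R}\Gamma_{\bar{\m}}(A)) = d$, $\depth_A(A) = \inf(\mrm{R}\Gamma_{\bar{\m}}(A))$, and Minamoto's formula (\ref{eqn:chdepth}), combined with the already-proven amplitude inequality in Theorem \ref{thm:main-amp}(1).
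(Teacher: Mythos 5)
Your proof is correct and follows essentially the same route as the paper: both arguments combine $\inf(\mrm{R}\Gamma_{\bar{\m}}(A)) = \depth_A(A)$ (Proposition \ref{prop:depth}), $\sup(\mrm{R}\Gamma_{\bar{\m}}(A)) = \lcdim(A) = \dim(\mrm{H}^0(A))$ (Theorem \ref{thm:non-vanish} together with (\ref{eqn:dimofA}), which is exactly the content of Corollary \ref{cor:non-vanish} you cite), Minamoto's formula (\ref{eqn:chdepth}), and the amplitude inequality of Theorem \ref{thm:main-amp}(1). The bookkeeping and the characterization of equality via Definition \ref{def:CM} match the paper's argument step for step.
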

\begin{proof}
It follows from Theorem \ref{thm:non-vanish} and Proposition \ref{prop:depth} that
\[
\amp\left(\mrm{R}\Gamma_{\bar{\m}}(A)\right) = \lcdim(A) - \depth_A(A).
\]
By Theorem \ref{thm:main-amp}(1),
since $\sup(A) = 0$, we have that
\[
\lcdim(A) - \depth_A(A) \ge \amp(A) = -\inf(A)
\]
which implies by (\ref{eqn:chdepth}) that
\[
\opn{seq.depth}_A(A) \le \lcdim(A) = \opn{der.dim}_A(A).
\]
Moreover, we see that this is an equality if and only if
\[
\amp\left(\mrm{R}\Gamma_{\bar{\m}}(A)\right) = \lcdim(A) - \depth_A(A) = \amp(A)
\]
if and only if $A$ is local-Cohen-Macaulay.
\end{proof}

Assuming $\bar{x} \in \bar{\m}$,
it is shown in \cite[Lemma 3.13]{Mi2} that 
\[
\inf(M)-1 \le \inf(M//\bar{x}M) \le \inf(M),
\]
and that $\inf(M//\bar{x}M) = \inf(M)$ if and only if $\bar{x}$ is $M$-regular.
We further note that by Nakayama's lemma,
it is always the case that $\sup(M//\bar{x}M) = \sup(M)$.
It follows that if $M \in \cat{D}^{\mrm{b}}_{\mrm{f}}(A)$,
and if $\bar{x} \in \bar{\m}$ is $M$-regular,
then 
\begin{equation}\label{eqn:reg-has-same-amp}
\amp(M//\bar{x}M) = \amp(M).
\end{equation}

\begin{rem}
Minamoto's discussion of the above in \cite{Mi2} uses a slightly different terminology,
as instead of working with elements of $\mrm{H}^0(A)$,
he works with lifting of them to the ring $A^0$.
Given $M \in \cat{D}(A)$ and $n \in \mathbb{Z}$,
since the $A^0$ action on $\mrm{H}^n(M)$ factors through $\mrm{H}^0(A)$,
it follows that our definitions are equivalent to his.
\end{rem}

The next definitions are DG-versions of \cite[Definition 2.3]{ChI} and \cite[Definition 2.1]{ChII}.
We note that \cite{ChI,ChII} uses homological notation, 
making the definitions look slightly different.
The support of a DG-module was defined in Definition \ref{dfn:support}.

\begin{dfn}
Let $(A,\bar{\m})$ be a commutative noetherian local DG-ring.
\begin{enumerate}
\item Given $M \in \cat{D}^{+}(A)$,
the set of associated primes of $M$ is given by
\[
\opn{Ass}_A(M) := \{\bar{\p} \in \opn{Supp}_A(M) \mid \depth_{A_{\bar{\p}}}(M_{\bar{\p}}) = \inf(M_{\bar{\p}})\}.
\]
\item Given $M \in \cat{D}^{-}(A)$,
we set
\[
W_0^A(M) := \{\bar{\p} \in \opn{Supp}_A(M) \mid \lcdim_A(M) \le \sup(M_{\bar{\p}}) + \dim(\mrm{H}^0(A)/\bar{\p})\}.
\]
\end{enumerate}
\end{dfn}

We now show that these sets of prime ideals are often finite.

\begin{prop}\label{prop:assfinite}
Let $(A,\bar{\m})$ be a commutative noetherian local DG-ring,
and let $M \in \cat{D}^{\mrm{b}}_{\mrm{f}}(A)$.
Then the set $\opn{Ass}_A(M)$ of associated primes of $M$ is a finite set.
\end{prop}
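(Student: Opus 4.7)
The plan is to reduce the statement to the classical finiteness of associated primes of finitely generated modules over a noetherian ring, by using Proposition \ref{prop:depth-lower-bound} to translate the DG-theoretic depth condition into a condition on associated primes of the individual cohomology modules.

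First I would unpack the definition: a prime $\bar{\p} \in \opn{Supp}_A(M)$ lies in $\opn{Ass}_A(M)$ exactly when $\depth_{A_{\bar{\p}}}(M_{\bar{\p}}) = \inf(M_{\bar{\p}})$. Since $M \in \cat{D}^{\mrm{b}}_{\mrm{f}}(A)$, its localization $M_{\bar{\p}}$ lies in $\cat{D}^{+}(A_{\bar{\p}})$ and is nonzero (as $\bar{\p} \in \opn{Supp}_A(M)$), and $(A_{\bar{\p}}, \bar{\p} \cdot \mrm{H}^0(A)_{\bar{\p}})$ is a noetherian local DG-ring. Proposition \ref{prop:depth-lower-bound}, applied to $M_{\bar{\p}}$ over $A_{\bar{\p}}$, then says that this depth equality holds if and only if $\bar{\p} \cdot \mrm{H}^0(A)_{\bar{\p}}$ is an associated prime of the $\mrm{H}^0(A)_{\bar{\p}}$-module $\mrm{H}^{\inf(M_{\bar{\p}})}(M_{\bar{\p}})$.

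Next I would use equation \eqref{eqn:loc-coho} to rewrite $\mrm{H}^{k}(M_{\bar{\p}}) = \mrm{H}^{k}(M)_{\bar{\p}}$ for every $k$, and invoke the standard bijection between associated primes of a localization $N_{\bar{\p}}$ over $\mrm{H}^0(A)_{\bar{\p}}$ and associated primes of $N$ over $\mrm{H}^0(A)$ that happen to be contained in $\bar{\p}$. Setting $k = \inf(M_{\bar{\p}})$, which necessarily satisfies $\inf(M) \le k \le \sup(M)$, this yields $\bar{\p} \in \opn{Ass}_{\mrm{H}^0(A)}\!\left(\mrm{H}^{k}(M)\right)$. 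Consequently we obtain the inclusion
\[
\opn{Ass}_A(M) \;\subseteq\; \bigcup_{k=\inf(M)}^{\sup(M)} \opn{Ass}_{\mrm{H}^0(A)}\!\left(\mrm{H}^{k}(M)\right).
\]

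Finally, since $A$ is noetherian the ring $\mrm{H}^0(A)$ is noetherian and each cohomology module $\mrm{H}^{k}(M)$ is finitely generated over it; by the classical finiteness theorem each set $\opn{Ass}_{\mrm{H}^0(A)}\!\left(\mrm{H}^{k}(M)\right)$ is finite. The index set $\{\inf(M), \dots, \sup(M)\}$ is finite because $M$ has bounded cohomology, so $\opn{Ass}_A(M)$ is contained in a finite union of finite sets, and is therefore finite. The only mildly delicate step is the second one, where one must take care that \textquotedblleft associated prime\textquotedblright\ in Proposition \ref{prop:depth-lower-bound} is understood as an $\mrm{H}^0$-module associated prime and that the localization correspondence is applied in the correct direction; everything else is bookkeeping.
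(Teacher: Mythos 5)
Your proposal is correct and follows essentially the same route as the paper: apply Proposition \ref{prop:depth-lower-bound} to $M_{\bar{\p}}$ over $A_{\bar{\p}}$, transfer the resulting associated prime of $\mrm{H}^{\inf(M_{\bar{\p}})}(M)_{\bar{\p}}$ back to $\mrm{H}^0(A)$ via the standard localization correspondence, and conclude that $\opn{Ass}_A(M)$ sits inside the finite union $\bigcup_{n=\inf(M)}^{\sup(M)} \opn{Ass}_{\mrm{H}^0(A)}\left(\mrm{H}^n(M)\right)$. The bookkeeping points you flag (nonvanishing of $M_{\bar{\p}}$, the direction of the correspondence, and $\inf(M) \le \inf(M_{\bar{\p}}) \le \sup(M)$) are exactly the ones the paper handles, so nothing is missing.
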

\begin{proof}
Let $\bar{\p} \in \opn{Ass}_A(M)$,
so that $\depth_{A_{\bar{\p}}}(M_{\bar{\p}}) = \inf(M_{\bar{\p}})$.
It follows from Proposition \ref{prop:depth-lower-bound} that
$\bar{\p}\cdot\mrm{H}^0(A)_{\bar{\p}}$ is an associated prime of the $\mrm{H}^0(A)_{\bar{\p}}$-module
\[
\mrm{H}^{\inf(M_{\bar{\p}})}(M_{\bar{\p}}) \cong \left(\mrm{H}^{\inf(M_{\bar{\p}})}(M)\right)_{\bar{\p}}.
\]
This implies (for instance, by \cite[Tag 0310]{SP})
that $\bar{\p}$ is an associated prime of the $\mrm{H}^0(A)$-module
$\mrm{H}^{\inf(M_{\bar{\p}})}(M)$.
It follows that
\[
\opn{Ass}_A(M) \subseteq \bigcup_{n=\inf(M)}^{\sup(M)} \opn{Ass}_{\mrm{H}^0(A)}\left(\mrm{H}^n(M)\right).
\]
Since $\mrm{H}^n(M)$ is a finitely generated $\mrm{H}^0(A)$-module,
it has only finitely many associated primes,
so the fact that $\mrm{H}^n(M) \ne 0$ only for finitely many $n \in \mathbb{Z}$ implies the result.
\end{proof}

\begin{lem}\label{lem:dim-inequal-local}
Let $(A,\bar{\m})$ be a commutative noetherian local DG-ring,
let $M \in \cat{D}^{\mrm{b}}_{\mrm{f}}(A)$,
and let $\bar{\p} \in \opn{Supp}_A(M)$.
Then there is an inequality
\[
\lcdim_A(M) \ge \lcdim_{A_{\bar{\p}}}(M_{\bar{\p}}) + \dim(\mrm{H}^0(A)/\bar{\p}).
\]
\end{lem}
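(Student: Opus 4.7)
The plan is to reduce the lemma to the classical dimension inequality for a single finitely generated module over a noetherian ring, applied cohomology group by cohomology group, and then take a supremum over $\ell$. Concretely, the key classical fact I need is: if $R$ is a noetherian ring, $N$ is a finitely generated $R$-module, and $\bar{\p} \in \opn{Supp}_R(N)$, then
\[
\dim(N) \ge \dim(N_{\bar{\p}}) + \dim(R/\bar{\p}),
\]
which follows by passing to $R/\opn{ann}(N)$ and using the standard inequality $\opn{ht}(\bar{\p}) + \dim(R/\bar{\p}) \le \dim(R/\opn{ann}(N))$ together with $\dim(N_{\bar{\p}}) = \opn{ht}(\bar{\p}/\opn{ann}(N))$.

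First, I will apply this classical inequality with $R = \mrm{H}^0(A)$ and $N = \mrm{H}^\ell(M)$ for each integer $\ell$. Using the identity $\mrm{H}^\ell(M_{\bar{\p}}) = \mrm{H}^\ell(M)_{\bar{\p}}$ from (\ref{eqn:loc-coho}), this gives, for every $\ell$ such that $\bar{\p} \in \opn{Supp}_{\mrm{H}^0(A)}(\mrm{H}^\ell(M))$,
\[
\dim\bigl(\mrm{H}^\ell(M)\bigr) + \ell \;\ge\; \dim\bigl(\mrm{H}^\ell(M_{\bar{\p}})\bigr) + \ell + \dim\bigl(\mrm{H}^0(A)/\bar{\p}\bigr).
\]
For the remaining $\ell$ (those with $\bar{\p} \notin \opn{Supp}(\mrm{H}^\ell(M))$) we have $\mrm{H}^\ell(M_{\bar{\p}}) = 0$, whose Krull dimension is $-\infty$, so these indices contribute nothing to the supremum on the right-hand side.

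Next, I take the supremum over $\ell \in \mathbb{Z}$ on both sides. Since $\bar{\p} \in \opn{Supp}_A(M)$, there is at least one $\ell$ with $\bar{\p} \in \opn{Supp}_{\mrm{H}^0(A)}(\mrm{H}^\ell(M))$, so the supremum on the right is attained by an actual finite value, and the inequality passes through. By Definition \ref{dfn:krulldim} the left-hand supremum is $\lcdim_A(M)$ and the right-hand supremum (excluding the constant shift by $\dim(\mrm{H}^0(A)/\bar{\p})$) is $\lcdim_{A_{\bar{\p}}}(M_{\bar{\p}})$, yielding the desired inequality.

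There is essentially no serious obstacle here; the only minor subtlety is correctly handling indices $\ell$ for which $\mrm{H}^\ell(M)_{\bar{\p}}$ vanishes (so that the classical module-theoretic inequality does not strictly apply in the form above), which is harmless because such $\ell$ do not contribute to $\lcdim_{A_{\bar{\p}}}(M_{\bar{\p}})$ under the convention $\dim(0) = -\infty$. The hypothesis $M \in \cat{D}^{\mrm{b}}_{\mrm{f}}(A)$ is used to ensure that each $\mrm{H}^\ell(M)$ is a finitely generated $\mrm{H}^0(A)$-module so that the classical inequality applies, and that the supremum defining $\lcdim_A(M)$ is finite.
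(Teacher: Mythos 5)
Your proposal is correct and follows essentially the same route as the paper: both reduce to the classical inequality $\dim(N_{\bar{\p}}) + \dim(\mrm{H}^0(A)/\bar{\p}) \le \dim(N)$ for finitely generated $\mrm{H}^0(A)$-modules, applied to the cohomology modules $\mrm{H}^\ell(M)$ via $\mrm{H}^\ell(M_{\bar{\p}}) \cong \mrm{H}^\ell(M)_{\bar{\p}}$. The only cosmetic difference is that the paper applies this at a single degree $n$ where the supremum defining $\lcdim_{A_{\bar{\p}}}(M_{\bar{\p}})$ is attained, whereas you apply it at every degree and then take suprema, with the same handling of degrees where the localized cohomology vanishes.
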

\begin{proof}
By the definition of the local cohomology Krull dimension,
there is some $n \in \mathbb{Z}$ such that $\mrm{H}^n(M_{\bar{\p}}) \ne 0$ and
\[
\lcdim_{A_{\bar{\p}}}(M_{\bar{\p}}) = n + \dim_{\bar{A}_{\bar{\p}}}\left(\mrm{H}^n(M_{\bar{\p}})\right).
\]
By basic properties of Krull dimension, 
the finitely generated $\mrm{H}^0(A)$-module $\mrm{H}^n(M)$ satisfies 
\[
\dim\left(\mrm{H}^n(M)_{\bar{\p}}\right) + \dim\left(\mrm{H}^0(A)/\bar{\p}\right) \le \dim\left(\mrm{H}^n(M)\right).
\]
Hence, we obtain
\begin{align}
\lcdim_{A_{\bar{\p}}}(M_{\bar{\p}}) + \dim(\mrm{H}^0(A)/\bar{\p}) = \nonumber\\
n + \dim_{\bar{A}_{\bar{\p}}}\left(\mrm{H}^n(M_{\bar{\p}})\right) + \dim(\mrm{H}^0(A)/\bar{\p}) 
\le\nonumber\\
n + \dim\left(\mrm{H}^n(M)\right) \le \lcdim_A(M) \nonumber
\end{align}
as claimed.
\end{proof}

\begin{prop}\label{prop:W0finite}
Let $(A,\bar{\m})$ be a commutative noetherian local DG-ring,
and let $M \in \cat{D}^{\mrm{b}}_{\mrm{f}}(A)$.
Then the set $W_0^A(M)$ is a finite set.
\end{prop}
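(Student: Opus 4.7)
The plan is to show that if $\bar{\p} \in W_0^A(M)$ and we set $s := \sup(M_{\bar{\p}})$, then $\bar{\p}$ must be a minimal prime of maximal coheight in $\opn{Supp}_{\mrm{H}^0(A)}(\mrm{H}^s(M))$, and moreover $s$ is constrained to lie in a finite set. Since $\mrm{H}^s(M)$ is a finitely generated $\mrm{H}^0(A)$-module, it has only finitely many minimal primes, so taking a union over the finite range of $s$ yields the desired finiteness.

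First I would fix $\bar{\p} \in W_0^A(M)$ and set $s := \sup(M_{\bar{\p}})$. Since $(\mrm{H}^s(M))_{\bar{\p}} \cong \mrm{H}^s(M_{\bar{\p}}) \ne 0$, we have $\bar{\p} \in \opn{Supp}_{\mrm{H}^0(A)}(\mrm{H}^s(M))$ and in particular $\opn{ann}_{\mrm{H}^0(A)}(\mrm{H}^s(M)) \subseteq \bar{\p}$, so that $\dim(\mrm{H}^0(A)/\bar{\p}) \le \dim(\mrm{H}^s(M))$. Combining the defining inequality of $W_0^A(M)$ with the formula $\lcdim_A(M) = \sup_{\ell}\{\dim(\mrm{H}^{\ell}(M)) + \ell\}$ from Definition \ref{dfn:krulldim}, I obtain
\[
\lcdim_A(M) \le s + \dim(\mrm{H}^0(A)/\bar{\p}) \le s + \dim(\mrm{H}^s(M)) \le \lcdim_A(M),
\]
so equality holds throughout. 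This yields two consequences: $s$ belongs to the set
\[
S := \{\ell \in \mathbb{Z} : \mrm{H}^{\ell}(M) \ne 0 \text{ and } \ell + \dim(\mrm{H}^{\ell}(M)) = \lcdim_A(M)\},
\]
which is finite since $M \in \cat{D}^{\mrm{b}}_{\mrm{f}}(A)$; and $\dim(\mrm{H}^0(A)/\bar{\p}) = \dim(\mrm{H}^s(M))$, so $\bar{\p}$ realizes the maximal coheight of a prime in $\opn{Supp}_{\mrm{H}^0(A)}(\mrm{H}^s(M))$.

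Finally, any prime of $\opn{Supp}_{\mrm{H}^0(A)}(\mrm{H}^s(M))$ of maximal coheight is necessarily a minimal prime of that support, since any strictly smaller prime in the support would have strictly larger coheight. As $\mrm{H}^s(M)$ is a finitely generated module over the noetherian ring $\mrm{H}^0(A)$, it has only finitely many minimal primes in its support. Hence $W_0^A(M)$ is contained in the finite union, over $s \in S$, of the minimal primes of $\opn{Supp}_{\mrm{H}^0(A)}(\mrm{H}^s(M))$ of maximal coheight, which is a finite set. The only mildly delicate point is the chain of inequalities above, where one must carefully track the role of $\sup(M_{\bar{\p}})$ versus the global invariant $\lcdim_A(M)$; once this is set up, the finiteness is immediate.
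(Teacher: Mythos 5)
Your proof is correct and takes essentially the same route as the paper: both arguments force equality in a sandwich of inequalities to conclude that every $\bar{\p} \in W_0^A(M)$ is a minimal prime of $\opn{Supp}_{\mrm{H}^0(A)}\left(\mrm{H}^{\sup(M_{\bar{\p}})}(M)\right)$, and then conclude by finiteness of the minimal primes of the finitely many nonzero cohomology modules. The only difference is cosmetic: the paper runs the sandwich through Lemma \ref{lem:dim-inequal-local} and the resulting vanishing $\dim_{\bar{A}_{\bar{\p}}}\left(\mrm{H}^{\sup(M_{\bar{\p}})}(M_{\bar{\p}})\right) = 0$, whereas you use the global bound $\dim(\mrm{H}^0(A)/\bar{\p}) \le \dim\left(\mrm{H}^{\sup(M_{\bar{\p}})}(M)\right)$ directly and deduce minimality from maximality of the coheight, which is an equivalent formulation of the same step.
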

\begin{proof}
Let $\bar{\p} \in W_0^A(M)$.
By the definition of $W_0^A(M)$ and Lemma \ref{lem:dim-inequal-local}
there are inequalities
\[
\lcdim_{A_{\bar{\p}}}(M_{\bar{\p}}) + \dim(\mrm{H}^0(A)/\bar{\p}) \le \lcdim_A(M) \le \sup(M_{\bar{\p}}) + \dim(\mrm{H}^0(A)/\bar{\p}).
\]
This implies that 
\[
\lcdim_{A_{\bar{\p}}}(M_{\bar{\p}}) \le \sup(M_{\bar{\p}}).
\]
so by (\ref{eqn:dim}) there is an equality
\begin{equation}\label{eqn:dim-of-loc-is-sup}
\lcdim_{A_{\bar{\p}}}(M_{\bar{\p}}) = \sup(M_{\bar{\p}}).
\end{equation}
Since by definition of the local cohomology Krull dimension, 
we have that
\[
\lcdim_{A_{\bar{\p}}}(M_{\bar{\p}}) \ge \sup(M_{\bar{\p}}) + \dim_{\bar{A}_{\bar{\p}}}\left(  \mrm{H}^{\sup(M_{\bar{\p}})}(M_{\bar{\p}}) \right),
\]
it follows from (\ref{eqn:dim-of-loc-is-sup}) that
\[
\dim_{\bar{A}_{\bar{\p}}}\left(  \mrm{H}^{\sup(M_{\bar{\p}})}(M_{\bar{\p}}) \right) = 0.
\]
Since there is an isomorphism
\[
\mrm{H}^{\sup(M_{\bar{\p}})}(M_{\bar{\p}}) \cong \left( \mrm{H}^{\sup(M_{\bar{\p}})}(M) \right)_{\bar{\p}}, 
\]
we deduce that $\bar{\p}$ is a minimal prime ideal of the finitely generated $\mrm{H}^0(A)$-module 
\[
\mrm{H}^{\sup(M_{\bar{\p}})}(M).
\]
It follows that
\[
W_0^A(M) \subseteq \bigcup_{n=\inf(M)}^{\sup(M)} \opn{Min}_{\mrm{H}^0(A)}\left(\mrm{H}^n(M)\right).
\]
Since $\mrm{H}^n(M)$ is a finitely generated $\mrm{H}^0(A)$-module,
it has only finitely many minimal primes,
so the fact that $\mrm{H}^n(M) \ne 0$ only for finitely many $n \in \mathbb{Z}$ implies the result.
\end{proof}

\begin{prop}\label{prop:out-of-ass-reg}
Let $(A,\bar{\m})$ be a noetherian local DG-ring,
let $M \in \cat{D}^{+}(A)$,
let $\bar{x} \in \bar{m} \subseteq \mrm{H}^0(A)$,
and assume that
\[
\bar{x} \notin \bigcup_{\bar{\p} \in \opn{Ass}_A(M)} \bar{\p}.
\]
Then $\bar{x}$ is $M$-regular.
\end{prop}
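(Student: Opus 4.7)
The plan is to reduce to a classical fact: over a noetherian ring $R$, an element $x \in R$ is regular on an $R$-module $N$ if and only if $x$ lies in no (classical) associated prime of $N$. Applying this with $R = \mrm{H}^0(A)$ and $N = \mrm{H}^{\inf(M)}(M)$ (using that $\mrm{H}^0(A)$ is noetherian), the proposition reduces to showing the inclusion
\[
\opn{Ass}_{\mrm{H}^0(A)}\bigl(\mrm{H}^{\inf(M)}(M)\bigr) \;\subseteq\; \opn{Ass}_A(M),
\]
where the left-hand side denotes associated primes in the usual module-theoretic sense. Once this is established, the hypothesis that $\bar{x}$ avoids every $\bar{\p} \in \opn{Ass}_A(M)$ will force $\bar{x}$ to be a nonzerodivisor on $\mrm{H}^{\inf(M)}(M)$, which is exactly the definition of $M$-regularity in Definition \ref{dfn:seq}(1).

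To prove the inclusion, fix a classical associated prime $\bar{\q}$ of $\mrm{H}^{\inf(M)}(M)$. First, because the support of a DG-module is the union over $n$ of the supports of its cohomologies, $\bar{\q}$ lies in $\opn{Supp}_A(M)$. Next, localize: since localization is exact and commutes with cohomology, $\bigl(\mrm{H}^{\inf(M)}(M)\bigr)_{\bar{\q}} \cong \mrm{H}^{\inf(M)}(M_{\bar{\q}})$ is nonzero, which, together with the general inequality $\inf(M_{\bar{\q}}) \ge \inf(M)$, gives
\[
\inf(M_{\bar{\q}}) \;=\; \inf(M),
\]
and $\bar{\q}\cdot \mrm{H}^0(A)_{\bar{\q}}$ is a classical associated prime of $\mrm{H}^{\inf(M_{\bar{\q}})}(M_{\bar{\q}})$.

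Now I apply Proposition \ref{prop:depth-lower-bound} to the DG-module $M_{\bar{\q}} \in \cat{D}^{+}(A_{\bar{\q}})$ over the noetherian local DG-ring $A_{\bar{\q}}$: it yields
\[
\depth_{A_{\bar{\q}}}(M_{\bar{\q}}) \;\ge\; \inf(M_{\bar{\q}}),
\]
with equality precisely when the maximal ideal $\bar{\q}\cdot \mrm{H}^0(A)_{\bar{\q}}$ is a classical associated prime of $\mrm{H}^{\inf(M_{\bar{\q}})}(M_{\bar{\q}})$. By the previous step this condition is satisfied, so equality holds, i.e.\ $\depth_{A_{\bar{\q}}}(M_{\bar{\q}}) = \inf(M_{\bar{\q}})$. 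Hence $\bar{\q} \in \opn{Ass}_A(M)$, completing the inclusion and therefore the proposition.

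The main bookkeeping point (rather than obstacle) is verifying the identification $\inf(M_{\bar{\q}}) = \inf(M)$ for a classical associated prime of the bottom cohomology; nothing here seems delicate, as this is forced by $\bigl(\mrm{H}^{\inf(M)}(M)\bigr)_{\bar{\q}} \ne 0$ together with exactness of localization. Everything else is a direct application of Proposition \ref{prop:depth-lower-bound} and the classical description of zero-divisors as the union of associated primes over a noetherian ring.
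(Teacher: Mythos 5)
Your proposal is correct and is essentially the paper's own argument: the paper runs it contrapositively (if $\bar{x}$ is a zero-divisor on $\mrm{H}^{\inf(M)}(M)$, it lies in a classical associated prime $\bar{\p}$, which is then shown to lie in $\opn{Ass}_A(M)$ via localization, $\inf(M_{\bar{\p}})=\inf(M)$, and Proposition \ref{prop:depth-lower-bound}), while you package the same steps as the inclusion $\opn{Ass}_{\mrm{H}^0(A)}(\mrm{H}^{\inf(M)}(M)) \subseteq \opn{Ass}_A(M)$ plus the classical zero-divisor criterion. The only point worth noting is that this classical criterion must be invoked for a possibly non-finitely generated module (the paper only assumes $M \in \cat{D}^{+}(A)$), which is fine over a noetherian ring.
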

\begin{proof}
Suppose $\bar{x}$ is not $M$-regular.
Then the map
\[
\bar{x}:\mrm{H}^{\inf(M)}(M) \to \mrm{H}^{\inf(M)}(M)
\]
is not injective, so that $\bar{x} \in \mrm{H}^0(A)$ is a zero-divisor for $\mrm{H}^{\inf(M)}(M)$.
Hence, by \cite[Tag 00LD]{SP}, 
there exists an associated prime $\bar{\p}$ of the $\mrm{H}^0(A)$-module $\mrm{H}^{\inf(M)}(M)$ such that 
$\bar{x} \in \bar{\p}$. 
By \cite[Tag 0310]{SP}, we deduce that $\bar{\p}\cdot\mrm{H}^0(A)_{\bar{\p}}$ is an associated prime of $\mrm{H}^{\inf(M)}(M)_{\bar{\p}}$.
Moreover, according to \cite[Tag 0586]{SP},
we have that $\bar{\p} \in \opn{Supp}_{\mrm{H}^0(A)}(\mrm{H}^{\inf(M)}(M))$.
Since
\[
\mrm{H}^{\inf(M)}(M_{\bar{\p}}) \cong \mrm{H}^{\inf(M)}(M)_{\bar{\p}},
\]
we deduce that $\bar{\p} \in \opn{Supp}_A(M)$,
and that
\[
\inf(M_{\bar{\p}}) = \inf(M).
\]
It follows from Proposition \ref{prop:depth-lower-bound}
that $\bar{\p} \in \opn{Ass}_A(M)$,
which gives a contradiction.
Hence, $\bar{x}$ is $M$-regular.
\end{proof}

Given a commutative ring $A$,
a complex of $A$-modules $M$,
and $x \in A$,
we denote by $K_A(x;M)$ the Koszul complex of $M$ with respect to $x$.
Explicitly, we have that
\[
K_A(x;A) = 0 \to A \xrightarrow{\cdot x} A \to 0, \quad K_A(x;M) = K_A(x;A) \otimes_A M
\]
where the complex $K_A(x;A)$ is concentrated in degrees $-1,0$.

\begin{lem}\label{lem:koszul-red}
Let $A$ be a commutative DG-ring,
let $M \in \cat{D}(A)$,
and let $\bar{x} \in \mrm{H}^0(A)$.
Then there is an isomorphism
\[
\left(M//\bar{x}M\right) \otimes^{\mrm{L}}_A \mrm{H}^0(A) \cong K_{\bar{A}}\left(\bar{x};M\otimes^{\mrm{L}}_A \mrm{H}^0(A)\right)
\]
in $\cat{D}(\mrm{H}^0(A))$.
\end{lem}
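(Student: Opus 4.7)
The plan is to obtain the isomorphism by applying the triangulated functor $-\otimes^{\mrm{L}}_A \mrm{H}^0(A)$ to the defining triangle for $M//\bar{x}M$ and then recognizing the resulting cone as a Koszul complex over $\bar{A} = \mrm{H}^0(A)$.

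First, I would start from the distinguished triangle
\[
M \xrightarrow{\bar{x}} M \to M//\bar{x}M \to M[1]
\]
in $\cat{D}(A)$ obtained by applying $M \otimes^{\mrm{L}}_A -$ to the triangle (\ref{eqn:xmap-tri}). Setting $N := M \otimes^{\mrm{L}}_A \mrm{H}^0(A) \in \cat{D}(\bar{A})$, applying the triangulated functor $- \otimes^{\mrm{L}}_A \mrm{H}^0(A)$ yields a distinguished triangle
\[
N \xrightarrow{\bar{x}} N \to (M//\bar{x}M)\otimes^{\mrm{L}}_A \mrm{H}^0(A) \to N[1]
\]
in $\cat{D}(\bar{A})$. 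The key point I would verify is that the left map in this triangle is indeed multiplication by $\bar{x}$ on $N$ via the $\bar{A}$-module structure: the map $\bar{x}: M \to M$ comes from the action $\mrm{H}^0(A) = \opn{Hom}_{\cat{D}(A)}(A,A)$ acting on $M$, and applying $-\otimes^{\mrm{L}}_A \mrm{H}^0(A)$ transfers this to the $\bar{A}$-module action of $\bar{x}$ on $N$.

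Next, I would observe that $K_{\bar{A}}(\bar{x};\bar{A})$ is, by definition, the two-term complex $\bar{A} \xrightarrow{\bar{x}} \bar{A}$ concentrated in degrees $-1,0$, which is a standard model for the mapping cone of $\bar{x}:\bar{A} \to \bar{A}$ in $\cat{D}(\bar{A})$. Since this complex consists of free $\bar{A}$-modules, tensoring with $N$ is automatically derived, so
\[
K_{\bar{A}}(\bar{x};N) = K_{\bar{A}}(\bar{x};\bar{A}) \otimes_{\bar{A}} N
\]
is a mapping cone of $\bar{x}: N \to N$ in $\cat{D}(\bar{A})$.

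Finally, by the (non-canonical) uniqueness of mapping cones in a triangulated category, the two mapping cones of $\bar{x}: N \to N$ obtained above must be isomorphic in $\cat{D}(\bar{A})$, yielding the claimed isomorphism. I do not expect any genuine obstacle here; the only thing that requires care is the identification of the tensored map with $\bar{x}$-multiplication on $N$, which follows from the definition of the action of $\mrm{H}^0(A)$ on objects of $\cat{D}(A)$.
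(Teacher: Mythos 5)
Your proof is correct and follows essentially the same route as the paper: apply $-\otimes^{\mrm{L}}_A \mrm{H}^0(A)$ to the defining triangle for $M//\bar{x}M$ and identify the resulting cone of multiplication by $\bar{x}$ on $M\otimes^{\mrm{L}}_A \mrm{H}^0(A)$ with the Koszul complex. The extra care you take in checking that the tensored map is $\bar{x}$-multiplication and in invoking (non-canonical) uniqueness of cones is exactly the implicit content of the paper's shorter argument.
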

\begin{proof}
To compute the left hand side,
we apply the functor 
\[
-\otimes^{\mrm{L}}_A M \otimes^{\mrm{L}}_A \mrm{H}^0(A)
\]
to the distinguished triangle
\[
A \xrightarrow{\cdot \bar{x}} A \to A//\bar{x} \to A[1]
\]
in $\cat{D}(A)$.
We obtain a distinguished triangle
\[
M \otimes^{\mrm{L}}_A \mrm{H}^0(A) \to M \otimes^{\mrm{L}}_A \mrm{H}^0(A) \to \left(M//\bar{x}M\right) \otimes^{\mrm{L}}_A \mrm{H}^0(A) \to \left(M \otimes^{\mrm{L}}_A \mrm{H}^0(A)\right)[1]
\]
in $\cat{D}(\mrm{H}^0(A))$.
Since the cone of the leftmost map in this triangle is exactly the Koszul complex
\[
K_{\bar{A}}\left(\bar{x};M\otimes^{\mrm{L}}_A \mrm{H}^0(A)\right)
\]
the result follows.
\end{proof}

\begin{lem}\label{lem:W0Aequal}
Let $(A,\bar{\m})$ be a noetherian local DG-ring,
and let $M \in \cat{D}^{-}(A)$. Then there is an equality of sets of prime ideals:
\[
W_0^A(M) = W_0^{\mrm{H}^0(A)}\left(M\otimes^{\mrm{L}}_A \mrm{H}^0(A)\right).
\]
\end{lem}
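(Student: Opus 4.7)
The plan is to compare the two sets prime by prime, using the support, local cohomology Krull dimension, and supremum comparisons already established in Section~\ref{sec:prel} and Section~\ref{sec:KDim}. Set $N := M \otimes^{\mrm{L}}_A \mrm{H}^0(A) \in \cat{D}^{-}(\mrm{H}^0(A))$, and fix $\bar{\p} \in \opn{Spec}(\mrm{H}^0(A))$. The defining inequality for $W_0$ has three ingredients: membership in the support, the local cohomology Krull dimension of the whole object, and the supremum after localization (the $\dim(\mrm{H}^0(A)/\bar{\p})$ term is literally the same quantity on both sides, since both $W_0^A(M)$ and $W_0^{\bar{A}}(N)$ live over $\opn{Spec}(\mrm{H}^0(A))$).

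First, by Proposition~\ref{prop:supp-red} we have $\opn{Supp}_A(M) = \opn{Supp}_{\bar{A}}(N)$, so the ambient sets over which we range agree. Next, Proposition~\ref{prop:dim-reduction} gives $\lcdim_A(M) = \lcdim_{\bar{A}}(N)$, so the left-hand sides of the two defining inequalities agree. It then remains only to check that, for each $\bar{\p}$ in the common support, $\sup(M_{\bar{\p}}) = \sup(N_{\bar{\p}})$.

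For that, observe that $M_{\bar{\p}} \in \cat{D}^{-}(A_{\bar{\p}})$ since $M \in \cat{D}^{-}(A)$ and localization is exact, so equation~(\ref{eqn:red-ten}) applies to give
\[
\sup(M_{\bar{\p}}) = \sup\!\left(M_{\bar{\p}} \otimes^{\mrm{L}}_{A_{\bar{\p}}} \mrm{H}^0(A_{\bar{\p}})\right) = \sup\!\left(M_{\bar{\p}} \otimes^{\mrm{L}}_{A} \mrm{H}^0(A)\right),
\]
where the second equality follows from the associativity of the derived tensor product and the fact that $\mrm{H}^0(A_{\bar{\p}}) = \mrm{H}^0(A)_{\bar{\p}}$ together with the isomorphism $A_{\bar{\p}} \otimes^{\mrm{L}}_A \mrm{H}^0(A) \cong \mrm{H}^0(A)_{\bar{\p}}$ used in Lemma~\ref{lem:local-ten-hz}. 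Applying Lemma~\ref{lem:local-ten-hz} itself, the right-hand side equals $\sup(M \otimes^{\mrm{L}}_A \mrm{H}^0(A)_{\bar{\p}}) = \sup(N_{\bar{\p}})$.

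Combining the three comparisons, the defining condition $\bar{\p} \in \opn{Supp}_A(M)$ together with $\lcdim_A(M) \le \sup(M_{\bar{\p}}) + \dim(\mrm{H}^0(A)/\bar{\p})$ is equivalent to $\bar{\p} \in \opn{Supp}_{\bar{A}}(N)$ together with $\lcdim_{\bar{A}}(N) \le \sup(N_{\bar{\p}}) + \dim(\mrm{H}^0(A)/\bar{\p})$, which gives the desired equality of sets. There is no real obstacle here: the lemma is essentially a bookkeeping assembly of the reduction-to-$\mrm{H}^0(A)$ results already proved, with the only minor point being the identification of $\sup(M_{\bar{\p}})$ with $\sup(N_{\bar{\p}})$ via the tensor-supremum formula (\ref{eqn:red-ten}).
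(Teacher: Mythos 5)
Your proof is correct and follows essentially the same route as the paper: the equality of supports via Proposition \ref{prop:supp-red}, the equality of local cohomology Krull dimensions via Proposition \ref{prop:dim-reduction}, and the primewise identification $\sup(M_{\bar{\p}}) = \sup\bigl((M\otimes^{\mrm{L}}_A \mrm{H}^0(A))_{\bar{\p}}\bigr)$ via (\ref{eqn:red-ten}) and Lemma \ref{lem:local-ten-hz}. The only difference is cosmetic: you apply (\ref{eqn:red-ten}) over $A_{\bar{\p}}$ and then rearrange by associativity, whereas the paper applies it directly to $M_{\bar{\p}}$ viewed in $\cat{D}^{-}(A)$; both are fine.
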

\begin{proof}
By Proposition \ref{prop:supp-red} there is an equality
\[
\opn{Supp}_A(M) = \opn{Supp}_{\mrm{H}^0(A)}\left(M\otimes^{\mrm{L}}_A \mrm{H}^0(A)\right).
\]
By Proposition \ref{prop:dim-reduction},
there is an equality
\[
\lcdim_A(M) = \lcdim_{\bar{A}}\left(M\otimes^{\mrm{L}}_A \mrm{H}^0(A)\right).
\]
By (\ref{eqn:red-ten}) and Lemma \ref{lem:local-ten-hz},
for any $\bar{\p} \in \opn{Spec}(\mrm{H}^0(A))$, there is an equality
\[
\sup(M_{\bar{\p}}) = \sup\left((M\otimes^{\mrm{L}}_A \mrm{H}^0(A))_{\bar{\p}} \right).
\]
These three equalities and the definition of $W_0$ imply the result.
\end{proof}

\begin{prop}\label{prop:out-of-w0-dim}
Let $(A,\bar{\m})$ be a noetherian local DG-ring,
let $M \in \cat{D}^{-}_{\mrm{f}}(A)$,
let $\bar{x} \in \bar{m} \subseteq \mrm{H}^0(A)$,
and assume that
\[
\bar{x} \notin \bigcup_{\bar{\p} \in W_0^A(M)} \bar{\p}.
\]
Then
\[
\lcdim(M//\bar{x}M) = \lcdim(M) - 1.
\]
\end{prop}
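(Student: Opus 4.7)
The plan is to work directly over $A$, extracting everything we need from the defining distinguished triangle $M \xrightarrow{\bar{x}} M \to M//\bar{x}M \to M[1]$ in $\cat{D}(A)$. Passing to cohomology yields, for each $i \in \mathbb{Z}$, a short exact sequence of $\mrm{H}^0(A)$-modules
\[
0 \to \mrm{H}^i(M)/\bar{x}\mrm{H}^i(M) \to \mrm{H}^i(M//\bar{x}M) \to \bigl(0 :_{\mrm{H}^{i+1}(M)} \bar{x}\bigr) \to 0,
\]
so that the dimension of the middle term is bounded above by the maximum of the dimensions of the two outer terms, while dominating the dimension of the leftmost term. The proof then splits into the two matching inequalities.

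For the upper bound $\lcdim(M//\bar{x}M) \leq \lcdim(M) - 1$, I would show separately that $i + \dim(\mrm{H}^i(M)/\bar{x}\mrm{H}^i(M)) \leq \lcdim(M) - 1$ and $i + \dim(0 :_{\mrm{H}^{i+1}(M)} \bar{x}) \leq \lcdim(M) - 1$ for every $i$. The second is immediate, since $(0 :_{\mrm{H}^{i+1}(M)} \bar{x})$ is a submodule of $\mrm{H}^{i+1}(M)$: one has $i + \dim(0 :_{\mrm{H}^{i+1}(M)} \bar{x}) \leq (i+1) + \dim(\mrm{H}^{i+1}(M)) - 1 \leq \lcdim(M) - 1$. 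For the first, the $W_0$-hypothesis is exactly what is needed: if the equality $i + \dim(\mrm{H}^i(M)/\bar{x}\mrm{H}^i(M)) = \lcdim(M)$ held, then since $\opn{Supp}(\mrm{H}^i(M)/\bar{x}\mrm{H}^i(M)) = \opn{Supp}(\mrm{H}^i(M)) \cap V(\bar{x})$, one could find a minimal prime $\bar{\q}$ of $\mrm{H}^i(M)$ with $\bar{x} \in \bar{\q}$ and $\dim(\mrm{H}^0(A)/\bar{\q}) = \dim(\mrm{H}^i(M))$. Then $\bar{\q} \in \opn{Supp}_A(M)$ and $\sup(M_{\bar{\q}}) \geq i$, forcing
\[
\sup(M_{\bar{\q}}) + \dim(\mrm{H}^0(A)/\bar{\q}) \geq i + \dim(\mrm{H}^i(M)) = \lcdim(M),
\]
so $\bar{\q} \in W_0^A(M)$, which contradicts $\bar{x} \notin \bar{\q}$.

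For the lower bound $\lcdim(M//\bar{x}M) \geq \lcdim(M) - 1$, I would pick $\ell$ achieving the supremum in the definition, so $\lcdim(M) = \ell + \dim(\mrm{H}^\ell(M))$. In the noetherian local ring $\mrm{H}^0(A)/\opn{ann}(\mrm{H}^\ell(M))$, whose Krull dimension equals $\dim(\mrm{H}^\ell(M))$, the classical fact that cutting by an element of the maximal ideal decreases Krull dimension by at most one (extend any system of parameters of the quotient by $\bar{x}$) gives $\dim(\mrm{H}^\ell(M)/\bar{x}\mrm{H}^\ell(M)) \geq \dim(\mrm{H}^\ell(M)) - 1$. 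Combining this with the injection $\mrm{H}^\ell(M)/\bar{x}\mrm{H}^\ell(M) \hookrightarrow \mrm{H}^\ell(M//\bar{x}M)$ from the short exact sequence yields
\[
\lcdim(M//\bar{x}M) \geq \ell + \dim(\mrm{H}^\ell(M//\bar{x}M)) \geq \ell + \dim(\mrm{H}^\ell(M)) - 1 = \lcdim(M) - 1.
\]

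The only mildly delicate point is the verification, in the upper bound argument, that the prime $\bar{\q}$ produced actually lies in $W_0^A(M)$; everything else is routine bookkeeping with the long exact sequence and Krull's Hauptidealsatz. Notably, no reduction to the ring case via $M \otimes^{\mrm{L}}_A \mrm{H}^0(A)$ is needed, since the cohomology modules $\mrm{H}^i(M)$ already live over the noetherian ring $\mrm{H}^0(A)$, where classical dimension theory applies verbatim.
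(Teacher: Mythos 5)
Your proof is correct, but it takes a genuinely different route from the paper. The paper does not work over $A$ directly: it first transports everything to the ring $\mrm{H}^0(A)$, using the reduction $\lcdim_A(-) = \lcdim_{\bar{A}}(-\otimes^{\mrm{L}}_A \mrm{H}^0(A))$ (Proposition \ref{prop:dim-reduction}), the identification $(M//\bar{x}M)\otimes^{\mrm{L}}_A \mrm{H}^0(A) \cong K_{\bar{A}}(\bar{x};M\otimes^{\mrm{L}}_A \mrm{H}^0(A))$ (Lemma \ref{lem:koszul-red}), and the equality $W_0^A(M) = W_0^{\mrm{H}^0(A)}(M\otimes^{\mrm{L}}_A \mrm{H}^0(A))$ (Lemma \ref{lem:W0Aequal}), and then quotes Christensen's result for complexes over local rings \cite[Proposition 2.8]{ChII} to get the drop by exactly one. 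You instead argue intrinsically over $A$ via the long exact sequence of the triangle $M \xrightarrow{\bar{x}} M \to M//\bar{x}M \to M[1]$, splitting it into the short exact sequences with outer terms $\mrm{H}^i(M)/\bar{x}\mrm{H}^i(M)$ and $(0:_{\mrm{H}^{i+1}(M)}\bar{x})$, and you reprove the Christensen-type statement from scratch: the kernel term gives the shift-by-one bound, the $W_0$-hypothesis rules out the cokernel term attaining $\lcdim(M)$ (your witness prime $\bar{\q}$ does lie in $W_0^A(M)$, since $\mrm{H}^i(M)_{\bar{\q}}\ne 0$ gives $\sup(M_{\bar{\q}})\ge i$ and $\dim(\mrm{H}^0(A)/\bar{\q})=\dim(\mrm{H}^i(M))$), and Krull's principal ideal theorem plus Nakayama give the lower bound, which indeed needs no hypothesis on $\bar{x}$. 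What the paper's route buys is brevity and reuse: the reduction lemmas are needed elsewhere anyway, and the ring-level combinatorics is delegated to \cite{ChII}; what your route buys is a self-contained, elementary argument that avoids the external citation and the Koszul/reduction machinery, exploiting the fact that the modules $\mrm{H}^i(M)$ already live over the noetherian ring $\mrm{H}^0(A)$ (finite generation, i.e.\ $M \in \cat{D}^{-}_{\mrm{f}}(A)$, is used both for the support identity $\opn{Supp}(\mrm{H}^i(M)/\bar{x}\mrm{H}^i(M)) = \opn{Supp}(\mrm{H}^i(M))\cap V(\bar{x})$ and for Nakayama, so it is genuinely needed in your argument as well). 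The only point worth making explicit is that the supremum defining $\lcdim(M)$ is attained, which holds since $\lcdim(M) \le \sup(M) + \dim(\mrm{H}^0(A)) < \infty$.
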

\begin{proof}
By Proposition \ref{prop:dim-reduction} we have that
\[
\lcdim_A(M//\bar{x}M) = \lcdim_{\bar{A}}\left(M//\bar{x}M \otimes^{\mrm{L}}_A \mrm{H}^0(A)\right).
\]
By Lemma \ref{lem:koszul-red} this is equal to
\[
\lcdim_{\bar{A}} \left(K_{\mrm{H}^0(A)}\left(\bar{x};M\otimes^{\mrm{L}}_A \mrm{H}^0(A)\right) \right).
\]
It follows from Lemma \ref{lem:W0Aequal} that
\[
\bar{x} \notin \bigcup_{\bar{\p} \in W_0^{\mrm{H}^0(A)}\left(M\otimes^{\mrm{L}}_A \mrm{H}^0(A)\right)} \bar{\p}.
\]
Hence, by \cite[Proposition 2.8]{ChII}, there is an equality
\[
\lcdim_{\bar{A}} \left(K_{\mrm{H}^0(A)}\left(\bar{x};M\otimes^{\mrm{L}}_A \mrm{H}^0(A)\right) \right)
= \lcdim_{\bar{A}} \left(M\otimes^{\mrm{L}}_A \mrm{H}^0(A)\right) -1
\]
and by Proposition \ref{prop:dim-reduction}, 
the latter is equal to $\lcdim_A(M) - 1$, as claimed.
\end{proof}

\begin{lem}\label{lem:dimanddepth}
Let $(A,\bar{\m})$ be a noetherian local DG-ring,
let $M \in \cat{D}^{\mrm{b}}_{\mrm{f}}(A)$,
and suppose that:
\begin{enumerate}
\item $\opn{seq.depth}_A(M) > 0$; that is $\depth_A(M) > \inf(M)$
\item $\opn{der.dim}_A(M) > 0$; that is $\lcdim_A(M) > \sup(M)$.
\end{enumerate}
Then there exists $\bar{x} \in \bar{\m}$ such that $\bar{x}$ is $M$-regular,
and moreover $\lcdim(M//\bar{x}M) = \lcdim(M)-1$.
\end{lem}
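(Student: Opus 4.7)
The plan is to find $\bar{x} \in \bar{\m}$ by a prime avoidance argument, avoiding the two finite sets of primes $\opn{Ass}_A(M)$ and $W_0^A(M)$ simultaneously. Once this is done, Proposition \ref{prop:out-of-ass-reg} gives that $\bar{x}$ is $M$-regular, and Proposition \ref{prop:out-of-w0-dim} gives that $\lcdim(M//\bar{x}M) = \lcdim(M)-1$, which is precisely the conclusion.

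The key preparation is to verify that the maximal ideal $\bar{\m}$ itself lies in neither $\opn{Ass}_A(M)$ nor $W_0^A(M)$. For this, recall that since $\mrm{H}^0(A)$ is local with maximal ideal $\bar{\m}$, every element of $\mrm{H}^0(A) \setminus \bar{\m}$ is already invertible, so the canonical maps $A \to A_{\bar{\m}}$ and $M \to M_{\bar{\m}}$ are quasi-isomorphisms; in particular $\depth_{A_{\bar{\m}}}(M_{\bar{\m}}) = \depth_A(M)$, $\inf(M_{\bar{\m}}) = \inf(M)$, and $\sup(M_{\bar{\m}}) = \sup(M)$. Hence if $\bar{\m}$ were in $\opn{Ass}_A(M)$, we would have $\depth_A(M) = \inf(M)$, contradicting hypothesis (1); and if $\bar{\m}$ were in $W_0^A(M)$, then $\dim(\mrm{H}^0(A)/\bar{\m}) = 0$ would force $\lcdim_A(M) \le \sup(M)$, contradicting hypothesis (2).

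With that in hand, the rest is straightforward. By Propositions \ref{prop:assfinite} and \ref{prop:W0finite}, both $\opn{Ass}_A(M)$ and $W_0^A(M)$ are finite sets of primes, and by the previous paragraph each of them consists of prime ideals strictly contained in $\bar{\m}$. Applying prime avoidance to the finite union
\[
\bigcup_{\bar{\p} \in \opn{Ass}_A(M)} \bar{\p} \;\cup\; \bigcup_{\bar{\p} \in W_0^A(M)} \bar{\p},
\]
which cannot cover $\bar{\m}$, yields an element $\bar{x} \in \bar{\m}$ lying outside every such $\bar{\p}$. Proposition \ref{prop:out-of-ass-reg} then gives $M$-regularity of $\bar{x}$, while Proposition \ref{prop:out-of-w0-dim} yields the desired dimension drop.

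I do not anticipate any real obstacle: the whole content is that the previous sections were set up precisely so that prime avoidance would work. The only subtle point that I would take care to spell out is the reduction that turns the hypotheses (1) and (2) into the statement $\bar{\m} \notin \opn{Ass}_A(M) \cup W_0^A(M)$, since these definitions involve localizing $M$ at $\bar{\m}$ and one must observe that this localization is the identity up to quasi-isomorphism.
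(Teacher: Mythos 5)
Your proposal is correct and follows essentially the same route as the paper: observe that hypotheses (1) and (2) exclude $\bar{\m}$ from the finite sets $\opn{Ass}_A(M)$ and $W_0^A(M)$ (Propositions \ref{prop:assfinite} and \ref{prop:W0finite}), pick $\bar{x}$ by prime avoidance, and conclude via Propositions \ref{prop:out-of-ass-reg} and \ref{prop:out-of-w0-dim}. The only difference is that you spell out the (correct) observation that localization at $\bar{\m}$ is a quasi-isomorphism, a step the paper leaves implicit.
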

\begin{proof}
It follows from assumption (1) that $\bar{\m} \notin \opn{Ass}_A(M)$,
and from assumption (2) that $\bar{\m} \notin W_0^A(M)$.
By Proposition \ref{prop:assfinite} and Proposition \ref{prop:W0finite},
the two sets of prime ideals $\opn{Ass}_A(M), W_0^A(M)$ are finite sets,
and since they do not include $\bar{\m}$,
by the prime avoidance lemma,
there exists $\bar{x} \in \bar{\m}$ such that
\[
\bar{x} \notin \bigcup_{\bar{\p} \in \opn{Ass}_A(M)} \bar{\p},
\]
and moreover
\[
\bar{x} \notin \bigcup_{\bar{\p} \in W_0^A(M)} \bar{\p}.
\]
It follows from Proposition \ref{prop:out-of-ass-reg} that $\bar{x}$ is $M$-regular,
and from Proposition \ref{prop:out-of-w0-dim} that $\lcdim(M//\bar{x}M) = \lcdim(M)-1$,
as claimed.
\end{proof}

The following theorem is a DG version of \cite[Theorem 3.6]{ChII}.

\begin{thm}\label{thm:dg-sys-par}
Let $(A,\bar{\m})$ be a noetherian local DG-ring,
and let $M \in \cat{D}^{\mrm{b}}_{\mrm{f}}(A)$.
Assume that
\[
\amp\left(\mrm{R}\Gamma_{\bar{\m}}(M)\right) \ge \amp(M).
\]
Then there exists a maximal $M$-regular sequence $\bar{x}_1, \dots, \bar{x}_n \in \bar{\m}$
such that for each $1 \le i \le n$, 
there is an equality
\begin{equation}\label{eqn:in-thm-dim}
\lcdim\left(M//(\bar{x}_1,\dots,\bar{x}_i)M\right) = \lcdim(M) - i.
\end{equation}
\end{thm}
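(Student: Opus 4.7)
The plan is to induct on $n := \opn{seq.depth}_A(M)$, extracting one element at a time via Lemma \ref{lem:dimanddepth}. The key preliminary observation is that the hypothesis $\amp(\mrm{R}\Gamma_{\bar{\m}}(M)) \ge \amp(M)$, combined with the identity $\amp(\mrm{R}\Gamma_{\bar{\m}}(M)) = \lcdim_A(M) - \depth_A(M)$ (from Theorem \ref{thm:non-vanish} and Proposition \ref{prop:depth}) and formula (\ref{eqn:chdepth}), is equivalent to
\[
\opn{der.dim}_A(M) \;\ge\; \opn{seq.depth}_A(M).
\]
Thus whenever $\opn{seq.depth}_A(M) > 0$, both hypotheses of Lemma \ref{lem:dimanddepth} are automatically in force, so the lemma always provides a usable element.

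The base case $n = 0$ is trivial: the empty sequence has nothing to verify. For the inductive step, assume $n \ge 1$. Lemma \ref{lem:dimanddepth} produces $\bar{x}_1 \in \bar{\m}$ which is $M$-regular and satisfies $\lcdim(M//\bar{x}_1 M) = \lcdim(M) - 1$. Set $A'' := A//\bar{x}_1$ with maximal ideal $\bar{\m}''$, and $M' := M//\bar{x}_1 M \in \cat{D}^{\mrm{b}}_{\mrm{f}}(A'')$. The heart of the argument is verifying that the pair $(A'', M')$ still satisfies the hypothesis of the theorem, so that the inductive hypothesis applies.

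To that end: by (\ref{eqn:reg-has-same-amp}), regularity of $\bar{x}_1$ gives $\amp(M') = \amp(M)$ (and in particular $\inf(M') = \inf(M)$), while the lemma gives $\lcdim(M') = \lcdim(M) - 1$. To track depth, apply $\mrm{R}\opn{Hom}_A(\k,-)$ to the defining triangle $M \xrightarrow{\bar{x}_1} M \to M' \to M[1]$; since $\bar{x}_1 \in \bar{\m}$ acts as zero on $\mrm{R}\opn{Hom}_A(\k, M)$, the triangle splits into
\[
\mrm{R}\opn{Hom}_A(\k, M') \;\cong\; \mrm{R}\opn{Hom}_A(\k, M) \,\oplus\, \mrm{R}\opn{Hom}_A(\k, M)[1],
\]
so $\depth_A(M') = \depth_A(M) - 1$. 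An adjunction computation, using that $\k \otimes^{\mrm{L}}_A A'' \cong \k \oplus \k[1]$ (obtained by tensoring the triangle defining $A''$ with $\k$), yields $\depth_A(M') = \depth_{A''}(M')$. Combining the two drops via $\amp(\mrm{R}\Gamma) = \lcdim - \depth$ gives
\[
\amp\!\bigl(\mrm{R}\Gamma^{A''}_{\bar{\m}''}(M')\bigr) \,=\, \amp\!\bigl(\mrm{R}\Gamma^{A}_{\bar{\m}}(M)\bigr) \,\ge\, \amp(M) \,=\, \amp(M'),
\]
and moreover (\ref{eqn:chdepth}) gives $\opn{seq.depth}_{A''}(M') = n-1$. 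The inductive hypothesis produces a maximal $M'$-regular sequence $\bar{x}_2,\dots,\bar{x}_n \in \bar{\m}''$ satisfying $\lcdim(M'//(\bar{x}_2,\dots,\bar{x}_j)M') = \lcdim(M') - (j-1)$, and prepending $\bar{x}_1$ delivers the required sequence for $M$. The main obstacle throughout is precisely the verification that the amplitude-of-local-cohomology inequality is inherited by $(A'', M')$; once the simultaneous drops of $\lcdim$ and $\depth$ by one are in hand, the rest of the argument is bookkeeping.
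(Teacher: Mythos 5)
Your proposal is correct and follows essentially the same route as the paper's proof: induction on $\opn{seq.depth}_A(M)$, using the identity $\amp(\mrm{R}\Gamma_{\bar{\m}}(M)) = \lcdim(M) - \depth(M)$ to verify the hypotheses of Lemma \ref{lem:dimanddepth}, extracting one regular element that drops $\lcdim$ and $\depth$ simultaneously, and checking that the amplitude hypothesis passes to $M//\bar{x}_1M$. The only difference is that you spell out the equality $\depth(M//\bar{x}_1M) = \depth(M)-1$ (via the split triangle for $\mrm{R}\opn{Hom}_A(\k,-)$ and the change-of-rings adjunction), which the paper simply asserts.
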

\begin{proof}
The proof is by induction on $\opn{seq.depth}_A(M)$.
If $\opn{seq.depth}_A(M) = 0$, 
there is nothing to prove, 
as in this case, a maximal $M$-regular sequence is empty.
Suppose $\opn{seq.depth}_A(M) > 0$;
that is, suppose that $\depth_A(M) > \inf(M)$.
By Theorem \ref{thm:non-vanish} and Proposition \ref{prop:depth},
we have that
\[
\amp\left(\mrm{R}\Gamma_{\bar{\m}}(M)\right) = \lcdim(M) - \depth(M).
\]
By assumption, this number is greater or equal to $\amp(M) = \sup(M) - \inf(M)$,
which implies that
\[
\opn{der.dim}(M) = \lcdim(M) - \sup(M) \ge \depth(M) - \inf(M) > 0.
\]
It follows that the conditions of Lemma \ref{lem:dimanddepth} are satisfied,
so there exists $\bar{x} \in \bar{\m}$ such that $\bar{x}$ is $M$-regular,
and moreover $\lcdim(M//\bar{x}M) = \lcdim(M)-1$.
Since $\bar{x}$ is $M$-regular,
we have that $\depth(M//\bar{x}M) = \depth(M)-1$. 
Also, by (\ref{eqn:reg-has-same-amp}) we have that $\amp(M//\bar{x}M) = \amp(M)$,
and $\inf(M//\bar{x}M) = \inf(M)$, 
so that 
\[
\opn{seq.depth}_A(M//\bar{x}M) = \opn{seq.depth}_A(M)-1.
\]
Applying Theorem \ref{thm:non-vanish} and Proposition \ref{prop:depth} again,
we obtain
\begin{align}
\amp\left(\mrm{R}\Gamma_{\bar{\m}}(M//\bar{x}M)\right) = \lcdim(M//\bar{x}M) - \depth(M//\bar{x}M) = \nonumber\\
= (\lcdim(M) - 1) - (\depth(M) - 1) = \nonumber\\
= \amp\left(\mrm{R}\Gamma_{\bar{\m}}(M)\right) \ge \amp(M) = \amp(M//\bar{x}M).\nonumber
\end{align}
By the induction hypothesis,
there is a maximal $M//\bar{x}M$-regular sequence $\bar{x}_2,\dots \bar{x}_n \in \bar{\m}$ such that 
\[
\lcdim\left( (M//\bar{x}M)//(\bar{x}_2,\dots \bar{x}_i) \right) =
\lcdim (M//\bar{x}M) -(i-1) = \lcdim(M) - i,
\]
which implies that $\bar{x}_1,\dots \bar{x}_n \in \bar{\m}$ is a maximal $M$-regular sequence which satisfies (\ref{eqn:in-thm-dim}).
\end{proof}

\begin{cor}\label{cor:DGhasSOP}
Let $(A,\bar{\m})$ be a noetherian local DG-ring with bounded cohomology.
Then there exists a maximal $A$-regular sequence $\bar{x}_1,\dots,\bar{x}_n \in \bar{\m}$
such that $\bar{x}_1,\dots,\bar{x}_n$ can be completed to system of parameters of $\mrm{H}^0(A)$.
\end{cor}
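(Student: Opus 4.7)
The plan is to deduce this corollary as a direct application of Theorem \ref{thm:dg-sys-par} with $M = A$. First I would verify that the hypothesis of that theorem is satisfied when $M = A$: we need $\amp(\mrm{R}\Gamma_{\bar{\m}}(A)) \ge \amp(A)$, and this is precisely the content of Theorem \ref{thm:main-amp}(1). Since we are assuming $A$ has bounded cohomology, this applies.

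Applying Theorem \ref{thm:dg-sys-par} to $M = A$, we obtain a maximal $A$-regular sequence $\bar{x}_1,\dots,\bar{x}_n \in \bar{\m}$ such that for each $1 \le i \le n$,
\[
\lcdim\bigl(A//(\bar{x}_1,\dots,\bar{x}_i)\bigr) = \lcdim(A) - i.
\]
Next, I would translate these local cohomology Krull dimension equalities into ordinary Krull dimension equalities on the zeroth cohomology. By (\ref{eqn:dimofA}), for any noetherian local DG-ring $B$ with bounded cohomology we have $\lcdim(B) = \dim(\mrm{H}^0(B))$. Since $A//(\bar{x}_1,\dots,\bar{x}_i)$ is a noetherian local DG-ring with $\mrm{H}^0\bigl(A//(\bar{x}_1,\dots,\bar{x}_i)\bigr) \cong \mrm{H}^0(A)/(\bar{x}_1,\dots,\bar{x}_i)$ (by the discussion of the quotient construction at the beginning of this section), the displayed equation becomes
\[
\dim\bigl(\mrm{H}^0(A)/(\bar{x}_1,\dots,\bar{x}_i)\bigr) = \dim(\mrm{H}^0(A)) - i
\]
for each $1 \le i \le n$.

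Taking $i = n$, this says exactly that the sequence $\bar{x}_1,\dots,\bar{x}_n$ is part of a system of parameters of $\mrm{H}^0(A)$, and can therefore be completed to a full system of parameters of the noetherian local ring $\mrm{H}^0(A)$ by choosing $d - n$ further elements of $\bar{\m}$ whose images in $\mrm{H}^0(A)/(\bar{x}_1,\dots,\bar{x}_n)$ form a system of parameters there (where $d = \dim(\mrm{H}^0(A))$). This yields the desired conclusion. No step appears to be a serious obstacle, since all the heavy lifting was done in Theorem \ref{thm:dg-sys-par} and Theorem \ref{thm:main-amp}; the only thing worth emphasising is that $\amp(A//\bar{x}) < \infty$ whenever $\amp(A) < \infty$ (which was noted at the beginning of the section), so that (\ref{eqn:dimofA}) is indeed applicable at each stage.
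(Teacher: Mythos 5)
Your proposal is correct and follows essentially the same route as the paper: apply Theorem \ref{thm:dg-sys-par} to $M=A$ (hypothesis verified by Theorem \ref{thm:main-amp}(1)), then convert the equalities $\lcdim\left(A//(\bar{x}_1,\dots,\bar{x}_i)\right) = \lcdim(A) - i$ into Krull dimension equalities for $\mrm{H}^0(A)/(\bar{x}_1,\dots,\bar{x}_i)$ via (\ref{eqn:dimofA}). The only difference is that you spell out the final (standard) completion-to-a-system-of-parameters step, which the paper leaves implicit.
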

\begin{proof}
It follows from Theorem \ref{thm:main-amp}(1) that     
$A$ satisfies the assumption of Theorem \ref{thm:dg-sys-par},
so there exists a maximal $A$-regular sequence $\bar{x}_1,\dots,\bar{x}_n \in \bar{\m}$
which satisfies
\[
\lcdim\left(A//(\bar{x}_1,\dots,\bar{x}_i)\right) = \lcdim(A) - i.
\]
The result now follows from the fact that
\[
\lcdim\left(A//(\bar{x}_1,\dots,\bar{x}_i)\right) = \dim\left(\mrm{H}^0(A)/(\bar{x}_1,\dots,\bar{x}_i)\right),
\]
and that $\lcdim(A) = \dim\left(\mrm{H}^0(A)\right)$.
\end{proof}

\begin{cor}\label{cor:CMhasSP}
Let $(A,\bar{\m})$ be a noetherian local-Cohen-Macaulay DG-ring.
Then there exists a maximal $A$-regular sequence $\bar{x}_1,\dots,\bar{x}_n \in \bar{\m}$
which is a system of parameters of $\mrm{H}^0(A)$.
Moreover, for each $1\le i \le n$,
the DG-ring $A//(\bar{x}_1,\dots,\bar{x}_i)$ is local-Cohen-Macaulay.
\end{cor}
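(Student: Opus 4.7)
The plan is to deduce everything from the two tools already assembled: Corollary \ref{cor:chdepth}, which identifies $\opn{seq.depth}_A(A) = \dim(\mrm{H}^0(A))$ with the local-Cohen-Macaulay property, and Corollary \ref{cor:DGhasSOP}, which produces a maximal $A$-regular sequence that can be completed to a system of parameters. First, let $d = \dim(\mrm{H}^0(A))$. Since $A$ is local-Cohen-Macaulay, Corollary \ref{cor:chdepth} gives $\opn{seq.depth}_A(A) = d$, so any maximal $A$-regular sequence in $\bar{\m}$ already has length exactly $d$. Applying Corollary \ref{cor:DGhasSOP} produces such a sequence $\bar{x}_1,\dots,\bar{x}_d \in \bar{\m}$ which can be completed to a system of parameters of $\mrm{H}^0(A)$; but since the sequence itself already has length $d$, no completion is needed and $\bar{x}_1,\dots,\bar{x}_d$ is itself a system of parameters of $\mrm{H}^0(A)$.

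For the second part, fix $1 \le i \le d$ and write $A_i = A//(\bar{x}_1,\dots,\bar{x}_i)$. I would first verify that $A_i$ is a noetherian local DG-ring with bounded cohomology so that the definition of local-Cohen-Macaulay applies. Boundedness is immediate: each $\bar{x}_j$ is regular on $A//(\bar{x}_1,\dots,\bar{x}_{j-1})$, so by the amplitude invariance recalled in equation (\ref{eqn:reg-has-same-amp}) one has $\amp(A_i) = \amp(A) < \infty$. That $A_i$ is local and noetherian was already noted when the quotient construction was introduced.

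Next I would compute the two invariants that must agree. On the one hand, because $\bar{x}_1,\dots,\bar{x}_d$ is a system of parameters of $\mrm{H}^0(A)$, the quotient ring $\mrm{H}^0(A_i) \cong \mrm{H}^0(A)/(\bar{x}_1,\dots,\bar{x}_i)$ has Krull dimension exactly $d-i$. On the other hand, the tail $\bar{x}_{i+1},\dots,\bar{x}_d$ is by Definition \ref{dfn:seq} an $A_i$-regular sequence of length $d-i$ contained in the maximal ideal of $A_i$, so $\opn{seq.depth}_{A_i}(A_i) \ge d-i$. Corollary \ref{cor:chdepth} applied to $A_i$ gives the reverse inequality $\opn{seq.depth}_{A_i}(A_i) \le \dim(\mrm{H}^0(A_i)) = d-i$, so equality holds, and Corollary \ref{cor:chdepth} then tells us $A_i$ is local-Cohen-Macaulay.

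I do not expect a genuine obstacle here: the corollary is essentially a bookkeeping consequence of Corollaries \ref{cor:chdepth} and \ref{cor:DGhasSOP}. The only point to double-check is that the inequality $\opn{seq.depth}_{A_i}(A_i) \ge d-i$ is actually produced by the tail of our original sequence, which is exactly the content of the inductive clause in Definition \ref{dfn:seq}.
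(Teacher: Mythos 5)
Your proposal is correct and takes essentially the same route as the paper: the first claim comes from combining Corollary \ref{cor:chdepth} (which gives $\opn{seq.depth}_A(A)=\dim(\mrm{H}^0(A))$) with Corollary \ref{cor:DGhasSOP}, and the second from matching $\opn{seq.depth}$ against $\dim(\mrm{H}^0(-))$ for the quotients and invoking Corollary \ref{cor:chdepth} again. The paper organizes the second step as an induction in which each quotient $A//\bar{x}_1$ drops both $\opn{seq.depth}$ and $\dim(\mrm{H}^0(-))$ by one, which is exactly the bookkeeping your tail-of-the-sequence argument carries out directly.
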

\begin{proof}
The first claim follows immediately from Corollary \ref{cor:DGhasSOP} 
since the fact that $A$ is local-Cohen-Macaulay implies that $\opn{seq.depth}(A) = \dim(\mrm{H}^0(A))$.
To prove the second claim, 
note that $\opn{seq.depth}(A//\bar{x}_1) = \opn{seq.depth}(A) - 1$,
and that $\dim(\mrm{H}^0(A//\bar{x}_1)) = \dim(\mrm{H}^0(A)) - 1$,
so by Corollary \ref{cor:chdepth}, 
the DG-ring $A//\bar{x}_1$ is local-Cohen-Macaulay,
and the general result follows by induction on $\opn{seq.depth}(A) = \dim(\mrm{H}^0(A))$.
\end{proof}

We now prove a DG-version of the Bass conjecture about local-Cohen-Macaulay rings.

\begin{thm}\label{thm:bassConj}
Let $(A,\bar{\m})$ be a noetherian local DG-ring with bounded cohomology.
\begin{enumerate}[wide, labelwidth=!, labelindent=0pt]
\item If $A$ is local-Cohen-Macaulay, there exists $0 \ncong M \in \cat{D}^{\mrm{b}}_{\mrm{f}}(A)$
such that $\injdim_A(M) < \infty$, and such that $\amp(M) = \amp(A)$.
\item Assume further that $A$ has a noetherian model.
For any $0 \ncong M \in \cat{D}^{\mrm{b}}_{\mrm{f}}(A)$
such that $\injdim_A(M) < \infty$,
we have that $\amp(M) \ge \amp(A)$.
If there exists such $M$ with $\amp(M) = \amp(A)$,
then $A$ is local-Cohen-Macaulay.
\end{enumerate}
\end{thm}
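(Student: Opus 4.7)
The plan for Part (1) is to reduce to the zero-dimensional case. By Corollary \ref{cor:CMhasSP} there is an $A$-regular system of parameters $\bar{x}_1,\dots,\bar{x}_d \in \bar{\m}$ with $d = \dim\mrm{H}^0(A)$; set $A' := A//(\bar{x}_1,\dots,\bar{x}_d)$. Corollary \ref{cor:CMhasSP} ensures $A'$ is again local-Cohen-Macaulay, and repeated application of (\ref{eqn:reg-has-same-amp}) forces $\amp(A') = \amp(A)$. Since $\dim\mrm{H}^0(A') = 0$, the maximal ideal $\bar{\m}'$ is nilpotent, so Lemma \ref{lem:lc-of-nil} together with the MGM equivalence shows that $A'$ coincides with its own derived $\bar{\m}'$-adic completion; by \cite[Proposition 7.21]{Sh2} it carries a dualizing DG-module $R'$, and Proposition \ref{prop:CM-by-DC} pins down $\amp(R') = \amp(A') = \amp(A)$. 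I would then view $R'$ as an object of $\cat{D}(A)$ via restriction along $A \to A'$: because the Koszul-type quotient $A \to A'$ is perfect, hence of finite flat dimension, a standard change-of-rings argument transfers finite injective dimension from $A'$ to $A$, and because $\mrm{H}^0(A')$ is finitely generated as an $\mrm{H}^0(A)$-module, $R'$ stays in $\cat{D}^{\mrm{b}}_{\mrm{f}}(A)$. Thus $M := R'$ witnesses the claim.

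For Part (2), I would first pass to a noetherian model $B$, which is legitimate because the zig-zag of quasi-isomorphisms gives an equivalence of derived categories preserving amplitude, depth, injective dimension, local cohomology Krull dimension, and the local-Cohen-Macaulay property. The inequality $\amp(M) \ge \amp(A)$ should follow from a derived Auslander-Buchsbaum-Bass-type identity in the DG setting, namely $\injdim_B(M) + \inf(M) = \depth(B)$ for $0 \ncong M \in \cat{D}^{\mrm{b}}_{\mrm{f}}(B)$ of finite injective dimension. Granted this identity, one combines the equality $\amp(\mrm{R}\Gamma_{\bar{\m}}(A)) = \lcdim(A) - \depth(A)$ (from Theorem \ref{thm:non-vanish} and Proposition \ref{prop:depth}) with the lower bound $\amp(\mrm{R}\Gamma_{\bar{\m}}(A)) \ge \amp(A)$ of Theorem \ref{thm:main-amp}(1), tracking $\sup(M)$ and $\inf(M)$ to obtain $\amp(M) \ge \amp(A)$. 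For the converse direction, equality $\amp(M) = \amp(A)$ propagates backwards through these identities to force $\depth(A) = \lcdim(A) = \dim\mrm{H}^0(A)$, so that $\opn{seq.depth}(A) = \dim\mrm{H}^0(A)$, whence Corollary \ref{cor:chdepth} gives that $A$ is local-Cohen-Macaulay.

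The main obstacle is Part (2): a noetherian local DG-ring need not admit a dualizing DG-module, so direct local-duality arguments are unavailable, and the noetherian model is indispensable for reducing the question to resolutions of finitely generated modules over the genuinely noetherian ring $B^0$, where Peskine-Szpiro-type intersection theorems apply. Formulating and proving the derived Auslander-Buchsbaum-Bass identity in sufficient generality, so that it handles arbitrary $M \in \cat{D}^{\mrm{b}}_{\mrm{f}}(B)$ rather than just modules, is the technical heart of the argument, and the place where the noetherian model assumption is genuinely used.
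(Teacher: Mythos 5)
Your Part (1) is correct and amounts to the same construction as the paper's, packaged differently. The paper takes $N=A//(\bar{x}_1,\dots,\bar{x}_d)$ for the regular system of parameters supplied by Corollary \ref{cor:CMhasSP} and sets $M=\mrm{R}\opn{Hom}_A(N,E(A,\bar{\m}))$, getting $\amp(M)=\amp(N)=\amp(A)$ from Matlis duality, finite injective dimension from the adjunction $\mrm{R}\opn{Hom}_A(-,M)\cong\mrm{R}\opn{Hom}_A(-\otimes^{\mrm{L}}_A N,E)$ together with $\flatdim_A(N)<\infty$, and finite generation of the cohomologies from the fact that the $\mrm{H}^{-n}(N)$ have finite length. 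Your route through a dualizing DG-module $R'$ over the zero-dimensional quotient $A'$ and restriction along $A\to A'$ produces essentially the same object (over a zero-dimensional $A'$ one has $R'\cong\mrm{R}\Gamma_{\bar{\m}'}(R')\cong E(A',\bar{\m}')$, which is the paper's $\mrm{R}\opn{Hom}_A(N,E)$ up to isomorphism); the paper's direct Matlis-dual construction has the small advantage of not needing the existence statement for dualizing DG-modules over the completion, but both arguments are sound and rest on the same two inputs, Corollary \ref{cor:CMhasSP} and the finite-flat-dimension adjunction trick.

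Part (2) is where your proposal has a genuine gap. The inequality $\amp(M)\ge\amp(A)$ does not follow from a Bass-type identity $\injdim_B(M)+\inf(M)=\depth(B)$ by ``tracking $\sup(M)$ and $\inf(M)$'': that identity bounds $\injdim$ in terms of $\inf(M)$ and gives no lower bound whatsoever on $\sup(M)-\inf(M)$. To see that no such formal bookkeeping can work, specialize to the case where $A$ is a ring and $M$ is a finitely generated module: then $\amp(M)\ge\amp(A)$ reads $\amp(\mrm{R}\Gamma_{\m}(A))=0$ once combined with Theorem \ref{thm:main-amp}(1), i.e. it is exactly the Bass conjecture, whose proof (Peskine--Szpiro \cite{PS}) requires the intersection theorems and is strictly deeper than the Bass formula $\injdim_A(M)=\depth(A)$. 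The paper's proof of (2) is a one-line application of the DG amplitude inequality of J{\o}rgensen, \cite[Theorem B]{Jo2}, which states (under the noetherian model hypothesis, and this is the only place that hypothesis enters) that $\amp(M)\ge\amp\left(\mrm{R}\Gamma_{\bar{\m}}(A)\right)$ for every $0\ncong M\in\cat{D}^{\mrm{b}}_{\mrm{f}}(A)$ of finite injective dimension; combining with $\amp\left(\mrm{R}\Gamma_{\bar{\m}}(A)\right)\ge\amp(A)$ from Theorem \ref{thm:main-amp}(1) gives both the inequality and, when equality holds, the local-Cohen-Macaulay property directly from Definition \ref{def:CM}. You correctly sense that Peskine--Szpiro-type input is indispensable, but your outline neither supplies it nor proves the identity you propose, so (2) remains unproved as written; also note the minor slip that in the DG setting local-Cohen-Macaulayness means $\depth(A)-\inf(A)=\dim(\mrm{H}^0(A))$, not $\depth(A)=\dim(\mrm{H}^0(A))$.
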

\begin{proof}
\begin{enumerate}[wide, labelwidth=!, labelindent=0pt]
\item Let $d = \dim(\mrm{H}^0(A))$.
By Corollary \ref{cor:CMhasSP},
there exists a system of parameters $\bar{x}_1,\dots,\bar{x}_d \in \bar{\m}$ of $\mrm{H}^0(A)$
which is a maximal $A$-regular sequence.
Consider the DG-module $N = A//(\bar{x}_1,\dots,\bar{x}_d) \in \cat{D}^{\mrm{b}}_{\mrm{f}}(A)$.
It follows from (\ref{eqn:reg-has-same-amp}) that $\amp(N) = \amp(A)$.
Moreover, note that by our construction of $N$,
we have that $\flatdim_A(N) < \infty$.
Let $E = E(A,\bar{\m}) \in \cat{D}(A)$ be the injective DG-module corresponding to the injective hull $\bar{E}$ of the residue field $\mrm{H}^0(A)/\bar{\m}$.
Finally, let $M = \mrm{R}\opn{Hom}_A(N,E)$.
Note that by \cite[Theorem 4.10]{Sh2},
we have for all $n\in \mathbb{Z}$ an isomorphism
\begin{equation}\label{eqn:mat-dual-is-hm}
\mrm{H}^n(M) \cong \opn{Hom}_{\mrm{H}^0(A)}(\mrm{H}^{-n}(N),\bar{E}),
\end{equation}
so since $\bar{E}$ is a cogenerator of $\opn{Mod}(\mrm{H}^0(A))$,
we have that $\mrm{H}^n(M) = 0$ if and only if $\mrm{H}^{-n}(N) = 0$.
We deduce that $\amp(M) = \amp(N)$.
Since $\flatdim_A(N) < \infty$ and $\injdim_A(E) < \infty$, 
it follows from the adjunction isomorphism
\[
\mrm{R}\opn{Hom}_A(-,M) = \mrm{R}\opn{Hom}_A(-,\mrm{R}\opn{Hom}_A(N,E)) \cong 
\mrm{R}\opn{Hom}_A(-\otimes^{\mrm{L}}_A N,E)
\]
that $\injdim_A(M) < \infty$.
It remains to show that the cohomologies of $M$ are finitely generated over $\mrm{H}^0(A)$.
To see this, note first that the fact that $\bar{x}_1,\dots,\bar{x}_d$ is a system of parameters of $\mrm{H}^0(A)$
implies that
\[
\mrm{H}^0(N) = \mrm{H}^0\left(A//(\bar{x}_1,\dots,\bar{x}_d)\right) = 
\mrm{H}^0(A)/(\bar{x}_1,\dots,\bar{x}_d)
\]
is a zero-dimensional local ring.
For any $n \in \mathbb{Z}$, we have that $\mrm{H}^{-n}(N)$ is a finitely generated $\mrm{H}^0(N)$-module,
and since the $\mrm{H}^0(A)$-action on $\mrm{H}^{-n}(N)$ factors through the ring $\mrm{H}^0(N)$,
we deduce that the $\mrm{H}^0(A)$-module $\mrm{H}^{-n}(N)$ is artinian,
and hence of finite length.
It follows from Matlis duality that its Matlis dual,
which by (\ref{eqn:mat-dual-is-hm}) is $\mrm{H}^{n}(M)$ is finitely generated, 
proving the claim.
\item Since $A$ has a noetherian model, $M \in \cat{D}^{\mrm{b}}_{\mrm{f}}(A)$ and $\injdim_A(M) < \infty$,
it follows from \cite[Theorem B]{Jo2} that 
\[
\amp(M) \ge \amp\left(\mrm{R}\Gamma_{\bar{\m}}(A)\right).
\]
By Theorem \ref{thm:main-amp}(1), 
we have that 
\[
\amp\left(\mrm{R}\Gamma_{\bar{\m}}(A)\right) \ge \amp(A),
\]
which implies that $\amp(M) \ge \amp(A)$.
These inequalities also imply that if $\amp(M) = \amp(A)$,
then we must have 
\[
\amp\left(\mrm{R}\Gamma_{\bar{\m}}(A)\right) = \amp(A),
\]
so that $A$ is local-Cohen-Macaulay.
\end{enumerate}
\end{proof}

\begin{rem}
Item (2) above solves a recent conjecture of Minamoto under the mild noetherian model assumption.
See \cite[Conjecture 3.36]{Mi2}.
\end{rem}

\begin{rem}\label{rem:noet-model}
Unlike the rest of this paper, in item (2) above we had to impose the noetherian model assumption,
in addition to our standing assumption that DG-rings are noetherian.
The reason for this is our use of the results of \cite{Jo1,Jo2} which made this assumption.
We conjecture that this assumption is redundant, 
both here and in general, in the main theorems of \cite{Jo1,Jo2}.
\end{rem}

\section{Local-Cohen-Macaulay DG-modules}\label{sec:MCM}

In this section we will define and study local-Cohen-Macaulay DG-modules
and maximal local-Cohen-Macaulay DG-modules over local-Cohen-Macaulay DG-rings.
Over a noetherian local ring, one can define both Cohen-Macaulay modules and,
more generally, Cohen-Macaulay complexes. 
Our notion of a local-Cohen-Macaulay DG-module generalizes Cohen-Macaulay modules and not Cohen-Macaulay complexes.
As Yekutieli and Zhang observed in \cite[Theorem 6.2]{YZ}, 
the category of Cohen-Macaulay complexes over a ring is an abelian category,
so this makes finding a definition which generalizes Cohen-Macaulay complexes particularly appealing, 
but it is currently not clear to us how to do that.

For Cohen-Macaulay modules, however, we do have the following generalization:

\begin{dfn}\label{def:CMM}
Let $(A,\bar{\m})$ be a noetherian local DG-ring with $\amp(A) < \infty$.
We say that $M \in \cat{D}^{\mrm{b}}_{\mrm{f}}(A)$ is a local-Cohen-Macaulay DG-module if there are equalities
\[
\amp(M) = \amp(A) = \amp\left(\mrm{R}\Gamma_{\bar{\m}}(M)\right).
\]
We denote by $\opn{CM}(A)$ the full subcategory of $\cat{D}^{\mrm{b}}_{\mrm{f}}(A)$ consisting of local-Cohen-Macaulay DG-module.
\end{dfn}

It is clear from this definition and Definition \ref{def:CM} 
that $A$ is local-Cohen-Macaulay as a DG-module if and only if it is local-Cohen-Macaulay as a DG-ring.

\begin{prop}\label{CMbyDual}
Let $(A,\bar{\m})$ be a noetherian local-Cohen-Macaulay DG-ring,
and let $R$ be a dualizing DG-module over $A$.
Let $M \in \cat{D}^{\mrm{b}}_{\mrm{f}}(A)$ be such that $\amp(M) = \amp(A)$.
Then $M \in \opn{CM}(A)$ if and only if 
\[
\amp\left(\mrm{R}\opn{Hom}_A(M,R)\right) = \amp(A).
\]
In this case we also have that 
\[
\mrm{R}\opn{Hom}_A(M,R) \in \opn{CM}(A).
\]
\end{prop}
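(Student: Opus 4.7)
The plan is to reduce everything to the single amplitude identity
\[
\amp\left(\mrm{R}\Gamma_{\bar{\m}}(N)\right)\;=\;\amp\left(\mrm{R}\opn{Hom}_A(N,R)\right)
\]
valid for every $N\in\cat{D}^{\mrm{b}}_{\mrm{f}}(A)$. This is precisely the amplitude incarnation of local duality for local DG-rings: the statement for $N=A$ is \cite[Corollary 7.29]{Sh2}, and the proof there goes through the isomorphism $\mrm{R}\Gamma_{\bar{\m}}(N)\cong\mrm{R}\opn{Hom}_A(\mrm{R}\opn{Hom}_A(N,R),E(A,\bar{\m}))$ combined with Matlis duality over $\mrm{H}^0(A)$ (the results \cite[Theorem~4.10, Corollary~4.12]{Sh2} used in the proof of Proposition~\ref{prop:amp-of-dc} give the required amplitude preservation under Matlis duality). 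Once the displayed identity is in hand, the proposition is essentially formal.

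First I would apply the identity to $M$ directly: under the hypothesis $\amp(M)=\amp(A)$, the condition $M\in\opn{CM}(A)$ unfolds, by Definition~\ref{def:CMM}, to $\amp(\mrm{R}\Gamma_{\bar{\m}}(M))=\amp(A)$, and the identity rewrites this as $\amp(\mrm{R}\opn{Hom}_A(M,R))=\amp(A)$. This gives the stated equivalence in both directions.

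For the \textit{moreover} clause, set $N:=\mrm{R}\opn{Hom}_A(M,R)$. Since $R$ is dualizing and $M\in\cat{D}^{\mrm{b}}_{\mrm{f}}(A)$, we have $N\in\cat{D}^{\mrm{b}}_{\mrm{f}}(A)$ and biduality gives $\mrm{R}\opn{Hom}_A(N,R)\cong M$ in $\cat{D}(A)$. By the hypothesis of the \emph{in this case} clause we have $\amp(N)=\amp(A)$, and applying the local-duality amplitude identity again yields
\[
\amp\left(\mrm{R}\Gamma_{\bar{\m}}(N)\right)\;=\;\amp\left(\mrm{R}\opn{Hom}_A(N,R)\right)\;=\;\amp(M)\;=\;\amp(A).
\]
Together with $\amp(N)=\amp(A)$, this is exactly the statement that $N\in\opn{CM}(A)$.

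The only real point requiring care is justifying the amplitude identity for an arbitrary $N\in\cat{D}^{\mrm{b}}_{\mrm{f}}(A)$ rather than just for $N=A$, but given the machinery already built in \cite{Sh2} (local duality plus the Matlis-type equivalence on injectives) this is a routine extension; concretely, the Matlis dual $\mrm{R}\opn{Hom}_{\mrm{H}^0(A)}(-,\mrm{H}^0(E(A,\bar{\m})))$ preserves amplitude on finitely generated and on artinian cohomologies, which is exactly the setting produced by local duality applied to $N$. So the expected obstacle is purely notational rather than mathematical.
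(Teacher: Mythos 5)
Your proposal is correct and follows essentially the same route as the paper: the paper also deduces $\amp\left(\mrm{R}\Gamma_{\bar{\m}}(M)\right) = \amp\left(\mrm{R}\opn{Hom}_A(M,R)\right)$ from DG local duality together with the fact that $\bar{E}$ is an injective cogenerator, and then obtains the \emph{moreover} clause from biduality $M \cong \mrm{R}\opn{Hom}_A(\mrm{R}\opn{Hom}_A(M,R),R)$. The only difference is that the extension you flag as the delicate point is not needed: the amplitude identity for arbitrary $N \in \cat{D}^{\mrm{b}}_{\mrm{f}}(A)$ is exactly the statement of the DG local duality theorem \cite[Theorem 7.26]{Sh2}, which the paper cites directly instead of extrapolating from the case $N = A$ of \cite[Corollary 7.29]{Sh2}.
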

\begin{proof}
Let $\bar{E}$ be the $\mrm{H}^0(A)$-module which is the injective hull of $\mrm{H}^0(A)/\bar{\m}$.
By the DG local duality theorem (\cite[Theorem 7.26]{Sh2}),
we have that
\[
\mrm{H}^n\left( \mrm{R}\Gamma_{\bar{\m}}(M) \right) = \opn{Hom}_{\mrm{H}^0(A)}\left(\opn{Ext}^{-n}_A(M,R),\bar{E}\right).
\]
Since $\bar{E}$ is an injective cogenerator, 
we deduce that
\[
\amp\left( \mrm{R}\Gamma_{\bar{\m}}(M) \right) = \amp\left(\mrm{R}\opn{Hom}_A(M,R)\right),
\]
proving the first claim.
The second claim follows from the duality isomorphism
\[
M \cong \mrm{R}\opn{Hom}_A(\mrm{R}\opn{Hom}_A(M,R),R)
\]
that holds because $R$ is a dualizing DG-module.
\end{proof}

\begin{prop}
Let $(A,\bar{\m})$ be a noetherian local-Cohen-Macaulay DG-ring,
and let $R$ be a dualizing DG-module over $A$.
Then $R \in \opn{CM}(A)$.
\end{prop}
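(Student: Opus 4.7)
The plan is to verify the three conditions in Definition \ref{def:CMM} for $M = R$: namely, that $R \in \cat{D}^{\mrm{b}}_{\mrm{f}}(A)$, that $\amp(R) = \amp(A)$, and that $\amp(\mrm{R}\Gamma_{\bar{\m}}(R)) = \amp(A)$. The first condition is essentially built into the setup, since a dualizing DG-module lies in $\cat{D}^{+}_{\mrm{f}}(A)$ by definition, and Theorem \ref{thm:main-amp}(2) gives $\amp(R) \le \amp(A) + \dim(\mrm{H}^0(A)) < \infty$ because $A$ has bounded cohomology and finite Krull dimension.

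The second condition is immediate from Proposition \ref{prop:CM-by-DC}, which asserts precisely that for a local DG-ring with bounded cohomology admitting a dualizing DG-module, the local-Cohen-Macaulay property is equivalent to $\amp(A) = \amp(R)$.

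For the third condition, I would apply Proposition \ref{CMbyDual} with the choice $M := R$. That proposition tells us that for an object in $\cat{D}^{\mrm{b}}_{\mrm{f}}(A)$ with the correct amplitude, membership in $\opn{CM}(A)$ is equivalent to $\amp\left(\mrm{R}\opn{Hom}_A(M,R)\right) = \amp(A)$. Taking $M = R$, the defining property of a dualizing DG-module gives the isomorphism $\mrm{R}\opn{Hom}_A(R,R) \cong A$ in $\cat{D}(A)$, so that $\amp\left(\mrm{R}\opn{Hom}_A(R,R)\right) = \amp(A)$ trivially. (Alternatively, one can invoke DG local duality directly to obtain $\mrm{H}^n(\mrm{R}\Gamma_{\bar{\m}}(R)) \cong \opn{Hom}_{\mrm{H}^0(A)}(\mrm{H}^{-n}(A),\bar{E})$ and conclude $\amp(\mrm{R}\Gamma_{\bar{\m}}(R)) = \amp(A)$ since $\bar{E}$ is an injective cogenerator.)

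There is no real obstacle here; the statement is essentially a bookkeeping consequence of the two already-established facts that $\amp(R) = \amp(A)$ under the local-Cohen-Macaulay hypothesis, and that the functor $\mrm{R}\opn{Hom}_A(-,R)$ intertwines $R$ with $A$. The only minor care needed is to confirm that $R$ lies in the bounded derived category so that Proposition \ref{CMbyDual} applies, but this is guaranteed by Theorem \ref{thm:main-amp}(2).
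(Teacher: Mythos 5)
Your proposal is correct and follows essentially the same argument as the paper: apply Proposition \ref{prop:CM-by-DC} to obtain $\amp(R) = \amp(A)$, then invoke Proposition \ref{CMbyDual} with $M = R$ and use $\mrm{R}\opn{Hom}_A(R,R) \cong A$. The only addition is your explicit check that $R \in \cat{D}^{\mrm{b}}_{\mrm{f}}(A)$ via Theorem \ref{thm:main-amp}(2), which the paper leaves implicit.
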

\begin{proof}
By Proposition \ref{prop:CM-by-DC} we have that $\amp(R) = \amp(A)$,
and since $R$ is dualizing, 
\[
\mrm{R}\opn{Hom}_A(R,R) = A,
\]
so the result follows from Proposition \ref{CMbyDual}.
\end{proof}

In view of the local duality theorem, the above duality of local-Cohen-Macaulay DG-modules might seem tautological.
More interesting is its restriction to maximal local-Cohen-Macaulay DG-modules which we now discuss.

Recall that over a noetherian local ring $A$,
a Cohen-Macaulay $A$-module $M$ is called maximal Cohen-Macaulay if $\dim(M) = \dim(A)$.
To generalize this to local DG-rings, we recall from (\ref{eqn:dim})
that if $A$ is a $(A,\bar{\m})$ be a noetherian local DG-ring,
and $M \in \cat{D}^{\mrm{b}}_{\mrm{f}}(A)$
that $\lcdim(M) \le \sup(M) + \dim(\mrm{H}^0(A))$.
It is thus make sense to define:

\begin{dfn}\label{def:MCMM}
Let $(A,\bar{\m})$ be a noetherian local DG-ring with $\amp(A) < \infty$,
and let $M \in \opn{CM}(A)$.
We say that $M$ is a maximal local-Cohen-Macaulay DG-module if $\lcdim(M) = \sup(M) + \dim(\mrm{H}^0(A))$.
We denote by $\opn{MCM}(A)$ the full subcategory of $\opn{CM}(A)$ consisting of maximal local-Cohen-Macaulay DG-modules over $A$.
\end{dfn}

It follows from this definition and from (\ref{eqn:dimofA}) that if $A$ is a local-Cohen-Macaulay DG-ring,
then $A \in \opn{MCM}(A)$. 

We will soon prove that dualizing DG-modules over a local-Cohen-Macaulay DG-ring are also maximal Cohen-Macaulay DG-modules.
Before proving this, let us discuss the situation over rings.
Let $A$ be a local ring, 
and let a $R$ be a dualizing complex over $A$,
normalized so that $\inf(R) = -\dim(A)$.
According to \cite[tag 0A7U]{SP}, for any $0 \le i \le \dim(A)$ we have that
\[
\dim\left( \mrm{H}^{-i}(R) \right) \le i
\]
and $\mrm{H}^i(R) = 0$ for $i \notin [-d,0]$.
Moreover, by \cite[tag 0AWE]{SP}, 
there is an equality 
\[
\dim\left( \mrm{H}^{-d}(R) \right) = d.
\]
It follows from these facts and the definition of local cohomology Krull dimension that 
\begin{equation}\label{eqn:dim-of-dc}
\lcdim(R) = 0 = \inf(R) + \dim(A).
\end{equation}
Hence, it happens that $\lcdim(R) = \sup(R) + \dim(A)$ if and only if $\amp(R) = 0$,
if and only if $A$ is a Cohen-Macaulay ring. 
Let us record this fact, as we are not aware of any literature containing it.
\begin{prop}
Let $(A,\m)$ be a noetherian local ring of dimension $d$,
and let $R$ be a dualizing complex over $A$.
Then $A$ is Cohen-Macaulay if and only if 
\[
\lcdim(R) = \sup(R) + d.
\]
\end{prop}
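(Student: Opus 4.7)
The plan is to reduce immediately to the normalized case and then apply the preceding discussion. First I would observe that the asserted equivalence is invariant under shifts of $R$: if we replace $R$ by $R[n]$, then $\sup(R[n]) = \sup(R) - n$, and since
\[
\lcdim(R[n]) = \sup_{\ell} \{\dim(\mrm{H}^{\ell}(R[n])) + \ell\} = \sup_{\ell}\{\dim(\mrm{H}^{\ell-n}(R)) + \ell\} = \lcdim(R) - n,
\]
the quantities $\lcdim(R)$ and $\sup(R) + d$ shift by the same amount, while Cohen-Macaulayness of $A$ is intrinsic to $A$. Hence we may assume $R$ is normalized, i.e. $\inf(R) = -d$.

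Under this normalization, the structural bounds quoted from \cite[tag 0A7U]{SP} and \cite[tag 0AWE]{SP} are in force, namely $\dim(\mrm{H}^{-i}(R)) \le i$ for $0 \le i \le d$ and $\dim(\mrm{H}^{-d}(R)) = d$, with $\mrm{H}^i(R)=0$ outside $[-d,0]$. Plugging these into the definition of $\lcdim$ yields equation (\ref{eqn:dim-of-dc}):
\[
\lcdim(R) \;=\; \sup_{0\le i\le d}\{\dim(\mrm{H}^{-i}(R)) - i\} \;=\; 0 \;=\; \inf(R) + d.
\]

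With $\lcdim(R) = 0$ established, the condition $\lcdim(R) = \sup(R) + d$ becomes $\sup(R) = -d$, which, combined with $\inf(R) = -d$, is precisely the statement that $\amp(R) = 0$. The final step is to invoke the classical equivalence between Cohen-Macaulayness of $A$ and the existence of a dualizing module, i.e.\ $\amp(R) = 0$, recorded as item (6) of Classical Theorem \ref{cthmA}; this is a direct consequence of Grothendieck's local duality. No part of the argument is delicate once one commits to the normalization; the only thing to be careful about is the invariance-under-shifts reduction at the start, which keeps the statement well-posed without a choice of normalization.
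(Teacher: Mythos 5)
Your proposal is correct and follows essentially the same route as the paper: the paper's proof is exactly the discussion preceding the proposition, which uses the Stacks Project bounds \cite[tag 0A7U]{SP} and \cite[tag 0AWE]{SP} for a normalized dualizing complex to get $\lcdim(R)=0=\inf(R)+d$, and then identifies the condition $\lcdim(R)=\sup(R)+d$ with $\amp(R)=0$, i.e.\ with $A$ being Cohen-Macaulay. The only difference is that you spell out the shift-invariance reduction to the normalized case, which the paper leaves implicit; that is a harmless (and slightly more careful) addition, not a different argument.
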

\qed

The next result generalizes this discussion to DG-rings.
In the DG-setting, unlike the case of rings,
there can be more than one cohomology module of $R$ of dimension $d$
(and there are at least two such cohomologies if $A$ is local-Cohen-Macaulay and $\amp(A) > 0$).

\begin{thm}\label{thm:str-of-dc}
Let $(A,\bar{\m})$ be a noetherian local DG-ring,
let $d = \dim(\mrm{H}^0(A))$,
and let $R$ be a dualizing DG-module over $A$.
Assume that $R$ is normalized so that $\inf(R) = -d$.
Then the following holds:
\begin{enumerate}
\item
There is an equality
\[
\dim_{\bar{A}}\left(\mrm{H}^{\inf(R)}(R)\right) = \dim_{\bar{A}}\left(\mrm{H}^0(A)\right).
\]  
\item If moreover $\amp(A) = n < \infty$,
then for every $0 \le i \le \dim_{\bar{A}}(\mrm{H}^0(A))$ there is an inequality
\[
\dim_{\bar{A}}\left(\mrm{H}^{-i+n}(R)\right) \le i
\]
\item Assuming $\amp(A) = n < \infty$,
we have that $A$ is local-Cohen-Macaulay if and only if there is an equality
\[
\dim_{\bar{A}}\left(\mrm{H}^{\sup(R)}(R)\right) = \dim_{\bar{A}}\left(\mrm{H}^0(A)\right),
\]
if and only if
\[
\lcdim_A(R) = \sup(R) + \dim(\mrm{H}^0(A)),
\]
if and only if $R \in \opn{MCM}(A)$.
\end{enumerate}
\end{thm}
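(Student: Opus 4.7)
The plan is to pass from the DG-setting to the classical theory of dualizing complexes over noetherian rings via the reduction complex $R' := \mrm{R}\opn{Hom}_A(\mrm{H}^0(A), R)$, which by \cite[Proposition 7.5]{Ye1} is a dualizing complex over $\bar{A} := \mrm{H}^0(A)$. By the reduction formula (\ref{eqn:red-rhom}), $\inf(R') = \inf(R) = -d$ and $\mrm{H}^{-d}(R') \cong \mrm{H}^{-d}(R)$, so $R'$ is a \emph{normalized} dualizing complex over the noetherian local ring $\bar{A}$. Part (1) then follows immediately from the classical fact (see \cite[Tag 0AWE]{SP}) that the bottom cohomology of a normalized dualizing complex over a local noetherian ring of dimension $d$ has Krull dimension exactly $d$.

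For part (2), fix $\bar{\p} \in \opn{Supp}_{\bar{A}}(\mrm{H}^{-i+n}(R))$ and set $h := \opn{ht}(\bar{\p})$, $s := \dim(\bar{A}/\bar{\p})$. The crucial step is to establish the shift identification $\inf(R_{\bar{\p}}) = -h - s$. To see this, note that $R_{\bar{\p}}$ is dualizing over the noetherian local DG-ring $A_{\bar{\p}}$ (since localization is cohomologically essentially smooth, by \cite[Corollary 6.11]{Sh} as already used in the proof of Theorem \ref{thm:main-amp}(2)), and flat base change identifies $R'_{\bar{\p}} \cong \mrm{R}\opn{Hom}_{A_{\bar{\p}}}(\mrm{H}^0(A_{\bar{\p}}), R_{\bar{\p}})$, so the reduction formula applied at $\bar{\p}$ gives $\inf(R_{\bar{\p}}) = \inf(R'_{\bar{\p}})$. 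The classical Stacks Project result \cite[Tag 0A7U]{SP} applied to the normalized dualizing $R'$ over $\bar{A}$ asserts that $R'[-s]_{\bar{\p}}$ is the normalized dualizing complex over $\bar{A}_{\bar{\p}}$, yielding $\inf(R'_{\bar{\p}}) = -h - s$. Consequently $R_{\bar{\p}}[-s]$ is a normalized dualizing DG-module over $A_{\bar{\p}}$, and Theorem \ref{thm:main-amp}(2) applied to $A_{\bar{\p}}$ bounds its amplitude by $\amp(A_{\bar{\p}}) + h \le n + h$. Thus $\sup(R_{\bar{\p}}) \le (-h-s) + (n+h) = n - s$, and combining with $-i + n \le \sup(R_{\bar{\p}})$ (from non-vanishing at that degree) forces $s \le i$. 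Varying $\bar{\p}$ over the support produces the bound $\dim_{\bar{A}}(\mrm{H}^{-i+n}(R)) \le i$.

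For part (3), the equivalence between the conditions $\dim_{\bar{A}}(\mrm{H}^{\sup(R)}(R)) = d$ and $\lcdim_A(R) = \sup(R) + d$ is immediate from (\ref{eqn:dim}). The remaining equivalences rest on the computation $\lcdim(R) = n$ and $\depth(R) = 0$: by DG local duality, exactly as used in the proof of Proposition \ref{CMbyDual}, one has $\mrm{H}^j(\mrm{R}\Gamma_{\bar{\m}}(R)) \cong \opn{Hom}_{\bar{A}}(\mrm{H}^{-j}(A), \bar{E})$, so $\sup(\mrm{R}\Gamma_{\bar{\m}}(R)) = n$ and $\inf(\mrm{R}\Gamma_{\bar{\m}}(R)) = 0$, from which Theorem \ref{thm:non-vanish} and Proposition \ref{prop:depth} give the claimed values. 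Hence $\lcdim(R) = \sup(R) + d$ is equivalent to $\sup(R) = n - d$, i.e. to $\amp(R) = n = \amp(A)$, which by Proposition \ref{prop:CM-by-DC} is equivalent to $A$ being local-Cohen-Macaulay. Finally, the equivalence with $R \in \opn{MCM}(A)$ follows because $R \in \opn{CM}(A)$ is already known once $A$ is local-Cohen-Macaulay (shown before the statement of the theorem), and conversely $R \in \opn{MCM}(A) \subseteq \opn{CM}(A)$ enforces $\amp(R) = \amp(A)$, again yielding local-Cohen-Macaulayness by Proposition \ref{prop:CM-by-DC}.

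The principal obstacle is the shift identification $\inf(R_{\bar{\p}}) = -h - s$ in part (2); it is here that the interaction of the DG reduction functor with the classical behaviour of dualizing complexes under localization must be deployed. Without this identification, neither the normalization $R_{\bar{\p}}[-s]$ nor the resulting bound $\sup(R_{\bar{\p}}) \le n - s$ would be available, and the required inequality $s \le i$ would be out of reach.
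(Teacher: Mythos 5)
Your proofs of (1) and (3) are essentially the paper's own: for (1) you pass to the dualizing complex $\mrm{R}\opn{Hom}_A(\mrm{H}^0(A),R)$ over $\mrm{H}^0(A)$ via (\ref{eqn:red-rhom}) and quote \cite[Tag 0AWE]{SP}, exactly as in the text; for (3) you establish $\lcdim_A(R)=n$ and then run through $\sup(R)=n-d$, $\amp(R)=\amp(A)$, Proposition \ref{prop:CM-by-DC} and the definition of $\opn{MCM}(A)$, which is the paper's chain of equivalences (your derivation of $\lcdim_A(R)=n$ via DG local duality, as in Proposition \ref{CMbyDual}, is interchangeable with the paper's use of $\mrm{R}\Gamma_{\bar{\m}}(R)\cong E(A,\bar{\m})$ from \cite[Proposition 7.25]{Sh2} together with Theorem \ref{thm:non-vanish}). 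Part (2) is where you take a genuinely different, and correct, route. The paper never leaves the closed point: having computed $\lcdim_A(R)=n$, the inequality $\dim_{\bar{A}}(\mrm{H}^{-i+n}(R))+(-i+n)\le\lcdim_A(R)=n$ is immediate from Definition \ref{dfn:krulldim}, so (2) is a two-line consequence. You instead localize at each $\bar{\p}\in\opn{Supp}_{\bar{A}}(\mrm{H}^{-i+n}(R))$, identify $\inf(R_{\bar{\p}})=-\opn{ht}(\bar{\p})-\dim(\bar{A}/\bar{\p})$ by pushing the classical localization-with-shift property of normalized dualizing complexes over $\bar{A}$ through the reduction functor (that property is the Stacks Project localization lemma for normalized dualizing complexes, not Tag 0A7U, which is the dimension bound itself --- a citation slip, not a gap), and then apply Theorem \ref{thm:main-amp}(2) over $A_{\bar{\p}}$ to get $\sup(R_{\bar{\p}})\le n-\dim(\bar{A}/\bar{\p})$, whence $\dim(\bar{A}/\bar{\p})\le i$. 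This costs extra machinery (flat base change for $\mrm{R}\opn{Hom}$, \cite[Corollary 6.11]{Sh}, and the classical theory at every prime) where the paper's argument needs none of it, but it buys a sharper, pointwise conclusion --- the bound on $\sup(R_{\bar{\p}})$ in terms of $\dim(\bar{A}/\bar{\p})$ --- of which the stated inequality on supports is the shadow.
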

\begin{proof}
\begin{enumerate}[wide, labelwidth=!, labelindent=0pt]
\item
By assumption $\inf(R) = -d$.
According to (\ref{eqn:red-rhom}), we have that
\[
\inf\left(\mrm{R}\opn{Hom}_A(\mrm{H}^0(A),R)\right) = -d, \quad
\mrm{H}^{-d}(R) = \mrm{H}^{-d}\left(\mrm{R}\opn{Hom}_A(\mrm{H}^0(A),R)\right).
\]
Hence, it is enough to show that
\[
\dim_{\bar{A}}\left(\mrm{H}^{-d}\left(\mrm{R}\opn{Hom}_A(\mrm{H}^0(A),R)\right)\right) = d,
\]
and this follows from \cite[tag 0AWE]{SP},
because by \cite[Proposition 7.5]{Ye1}, 
the complex $\mrm{R}\opn{Hom}_A(\mrm{H}^0(A),R)$ is a dualizing complex over $\mrm{H}^0(A)$.
\item According to \cite[Proposition 7.25]{Sh2},
we have that 
\[
\mrm{R}\Gamma_{\bar{\m}}(R) = E(A,\bar{\m}),
\]
so in particular 
\[
\sup\left(\mrm{R}\Gamma_{\bar{\m}}(R)\right) = n.
\]
Hence, by Theorem \ref{thm:non-vanish}, 
we deduce that $\lcdim(R) = n$.
Given $0 \le i \le \dim_{\bar{A}}(\mrm{H}^0(A))$,
by the definition of Krull dimension of DG-modules,
there are inequalities
\[
n = \lcdim(R) = \sup_{\ell \in \mathbb{Z}} \left\{\dim(\mrm{H}^{\ell}(R)) + \ell\right\}  \ge \dim_{\bar{A}}\left(\mrm{H}^{-i+n}(R)\right) + (-i+n)
\]
which implies that
\[
\dim_{\bar{A}}\left(\mrm{H}^{-i+n}(R)\right) \le i.
\]
\item The fact that 
\[
\dim_{\bar{A}}\left(\mrm{H}^{\sup(R)}(R)\right) = \dim_{\bar{A}}\left(\mrm{H}^0(A)\right),
\]
if and only if
\[
\lcdim_A(R) = \sup(R) + \dim(\mrm{H}^0(A)),
\]
follows from the definition of dimension and from (\ref{eqn:max-dim}).
To see that this is equivalent to $A$ being local-Cohen-Macaulay,
note that, as we have seen in the proof of (2), $\lcdim_A(R) = n$.
It follows that 
\[
\lcdim_A(R) = \sup(R) + \dim(\mrm{H}^0(A))
\]
if and only if $\sup(R) = n-d$,
if and only if 
\[
\amp(R) = \sup(R)-\inf(R) = (n-d)-(-d) = n,
\]
and by Proposition \ref{prop:CM-by-DC},
this is equivalent to $A$ being local-Cohen-Macaulay.
Finally, if $A$ is local-Cohen-Macaulay, 
we have seen that $R \in \opn{CM}(A)$,
so the above implies that $R \in \opn{MCM}(A)$,
while if $R \in \opn{MCM}(A)$,
in particular $R \in \opn{CM}(A)$,
so that $A$ is local-Cohen-Macaulay.
\end{enumerate}
\end{proof}

\begin{rem}\label{rem:DCvanCoh}
In view of the above result, 
it is natural to wonder if for a dualizing DG-module $R$ over a local-Cohen-Macaulay DG-ring $A$,
one has that $\dim(\mrm{H}^i(R)) = d$ for all $-d\le i\le n-d$.
This is not the case. In fact, it could happen that $\mrm{H}^i(R) = 0$ for all $-d<i<n-d$.
Such an example will be given in Example \ref{exa:DCvanCoh} below.
\end{rem}

As we have seen above, 
the topmost and bottommost cohomologies of a dualizing DG-module over a noetherian local-Cohen-Macaulay DG-ring
have maximal dimension. 
The next result shows that this is the case for every maximal local-Cohen-Macaulay DG-module.

\begin{prop}
Let $(A,\bar{\m})$ be a noetherian local DG-ring with $\amp(A) < \infty$,
and let $M \in \opn{MCM}(A)$.
Then
\[
\dim_{\bar{A}}\left(\mrm{H}^{\inf(M)}(M)\right) = \dim_{\bar{A}}\left(\mrm{H}^{\sup(M)}(M)\right) = \dim(\mrm{H}^0(A)).
\]
\end{prop}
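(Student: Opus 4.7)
The second equality is essentially immediate from the definitions: by the discussion around (\ref{eqn:max-dim}), the condition $\lcdim(M) = \sup(M) + \dim(\mrm{H}^0(A))$ in Definition \ref{def:MCMM} is equivalent to $\dim_{\bar{A}}(\mrm{H}^{\sup(M)}(M)) = \dim(\mrm{H}^0(A))$. So the content of the proposition is the first equality, for the infimum cohomology.

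The plan is to extract the depth of $M$ precisely, and then compare it to the Foxby-style upper bound in Proposition \ref{prop:depth-dg-bound}. Set $d = \dim(\mrm{H}^0(A))$ and $n = \amp(A) = \amp(M)$. Since $M \in \opn{CM}(A)$ we have $\amp(\mrm{R}\Gamma_{\bar{\m}}(M)) = n$. Theorem \ref{thm:non-vanish}, combined with the maximality condition $\lcdim(M) = \sup(M) + d$, gives
\[
\sup\left(\mrm{R}\Gamma_{\bar{\m}}(M)\right) = \lcdim(M) = \sup(M) + d,
\]
so that
\[
\inf\left(\mrm{R}\Gamma_{\bar{\m}}(M)\right) = \sup(M) + d - n = \inf(M) + d.
\]
By Proposition \ref{prop:depth} the left-hand side is $\depth_A(M)$, so $\depth_A(M) = \inf(M) + d$.

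Now apply Proposition \ref{prop:depth-dg-bound}:
\[
\inf(M) + d = \depth_A(M) \le \dim_{\bar{A}}\!\left(\mrm{H}^{\inf(M)}(M)\right) + \inf(M),
\]
which forces $\dim_{\bar{A}}(\mrm{H}^{\inf(M)}(M)) \ge d$. The reverse inequality is automatic since $\mrm{H}^{\inf(M)}(M)$ is an $\mrm{H}^0(A)$-module, so the two dimensions agree.

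There is no real obstacle here; the proof is essentially a bookkeeping consequence of the amplitude identity $\amp(M) = \amp(\mrm{R}\Gamma_{\bar{\m}}(M))$ together with the already-established non-vanishing Theorem \ref{thm:non-vanish} and depth bound Proposition \ref{prop:depth-dg-bound}. The only mildly delicate point is to keep straight that $M \in \opn{MCM}(A)$ supplies two pieces of information (cohomological: $\amp(M) = n$; dimensional: $\lcdim(M)$ is maximal), both of which are used to pin down $\sup$ and $\inf$ of $\mrm{R}\Gamma_{\bar{\m}}(M)$ exactly.
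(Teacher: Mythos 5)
Your proposal is correct and follows essentially the same route as the paper: both use the amplitude identity $\amp(\mrm{R}\Gamma_{\bar{\m}}(M)) = \amp(M)$ together with Theorem \ref{thm:non-vanish} and Proposition \ref{prop:depth} to pin down $\depth_A(M) = \inf(M) + \dim(\mrm{H}^0(A))$ via the maximality condition, and then invoke Proposition \ref{prop:depth-dg-bound} to force $\dim_{\bar{A}}(\mrm{H}^{\inf(M)}(M)) \ge \dim(\mrm{H}^0(A))$, the reverse inequality being automatic. The treatment of the second equality via (\ref{eqn:max-dim}) also matches the paper.
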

\begin{proof}
The second equality follows from the definition and from (\ref{eqn:max-dim}).
Since $M$ is maximal local-Cohen-Macaulay, 
we have that 
\[
\depth_A(M) = \lcdim(M) - \sup(M) + \inf(M) = \dim\left(\mrm{H}^0(A)\right) + \inf(M),
\]
so by Proposition \ref{prop:depth-dg-bound} we deduce that
\[
\dim\left(\mrm{H}^0(A)\right) + \inf(M) \le \dim_{\bar{A}}\left(\mrm{H}^{\inf(M)}(M)\right) + \inf(M),
\]
which implies that 
\[
\dim\left(\mrm{H}^0(A)\right) \le \dim_{\bar{A}}\left(\mrm{H}^{\inf(M)}(M)\right),
\]
so these numbers must be equal.
\end{proof}

We now discuss characterizations of local-Cohen-Macaulay and maximal local-Cohen-Macaulay DG-modules using the notion of a regular sequence from the previous section.

\begin{prop}
Let $(A,\bar{\m})$ be a noetherian local DG-ring,
let $M \in \cat{D}^{\mrm{b}}_{\mrm{f}}(A)$,
and suppose that $\amp(M) = \amp(A)$.
Then $M \in \opn{CM}(A)$ if and only if 
the length of a maximal $M$-regular sequence contained in $\bar{\m}$ is equal to $\lcdim(M) - \sup(M)$;
that is if and only if $\opn{seq.depth}_A(M) = \opn{der.dim}_A(M)$.
If $M \in \opn{CM}(A)$,
then $M \in \opn{MCM}(A)$ if and only if $\opn{seq.depth}_A(M) = \opn{der.dim}(A)$.
\end{prop}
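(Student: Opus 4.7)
The plan is to reduce both equivalences to a single amplitude identity for $\mrm{R}\Gamma_{\bar{\m}}(M)$ that was essentially established during the proof of Corollary \ref{cor:chdepth}. Specifically, combining Theorem \ref{thm:non-vanish} (which gives $\sup(\mrm{R}\Gamma_{\bar{\m}}(M)) = \lcdim(M)$ since $M \in \cat{D}^{\mrm{b}}_{\mrm{f}}(A)$) with Proposition \ref{prop:depth} (which gives $\inf(\mrm{R}\Gamma_{\bar{\m}}(M)) = \depth_A(M)$), one obtains
\[
\amp\!\left(\mrm{R}\Gamma_{\bar{\m}}(M)\right) \;=\; \lcdim(M) - \depth_A(M).
\]
Using the formula $\opn{seq.depth}_A(M) = \depth_A(M) - \inf(M)$ from (\ref{eqn:chdepth}) and the definition $\opn{der.dim}_A(M) = \lcdim_A(M) - \sup(M)$, this rearranges as
\[
\opn{der.dim}_A(M) - \opn{seq.depth}_A(M) \;=\; \amp\!\left(\mrm{R}\Gamma_{\bar{\m}}(M)\right) - \amp(M).
\]

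For the first equivalence, I would simply unwind the definition of $\opn{CM}(A)$: under the standing assumption $\amp(M) = \amp(A)$, membership $M \in \opn{CM}(A)$ is equivalent to $\amp(\mrm{R}\Gamma_{\bar{\m}}(M)) = \amp(M)$, which by the boxed identity is equivalent to $\opn{seq.depth}_A(M) = \opn{der.dim}_A(M)$. This gives both implications at once.

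For the second equivalence, assume $M \in \opn{CM}(A)$, so that $\opn{seq.depth}_A(M) = \opn{der.dim}_A(M)$ from the first part. By Definition \ref{def:MCMM}, $M \in \opn{MCM}(A)$ is equivalent to $\lcdim(M) = \sup(M) + \dim(\mrm{H}^0(A))$, i.e., to $\opn{der.dim}_A(M) = \dim(\mrm{H}^0(A))$. Since $A$ is non-positive, (\ref{eqn:dimofA}) gives $\opn{der.dim}(A) = \lcdim(A) - \sup(A) = \dim(\mrm{H}^0(A))$, so this condition reads $\opn{der.dim}_A(M) = \opn{der.dim}(A)$. Combining with the equality $\opn{seq.depth}_A(M) = \opn{der.dim}_A(M)$ yields exactly $\opn{seq.depth}_A(M) = \opn{der.dim}(A)$, and conversely this equality together with $\opn{seq.depth}_A(M) = \opn{der.dim}_A(M)$ forces $\opn{der.dim}_A(M) = \opn{der.dim}(A)$, i.e., $M \in \opn{MCM}(A)$.

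There is no genuine obstacle here: everything follows from the identity $\amp(\mrm{R}\Gamma_{\bar{\m}}(M)) = \lcdim(M) - \depth(M)$ together with the translation between $\depth/\inf$ and $\opn{seq.depth}$, and between $\lcdim/\sup$ and $\opn{der.dim}$. The only small care needed is to verify that all invariants involved are finite, which is guaranteed by $M \in \cat{D}^{\mrm{b}}_{\mrm{f}}(A)$ and $\amp(M) = \amp(A) < \infty$ (ensuring we are in the setting where both Theorem \ref{thm:non-vanish} and Proposition \ref{prop:depth} apply, and where $M \ne 0$ so that $\depth_A(M)$ is a genuine integer).
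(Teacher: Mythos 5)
Your proposal is correct and follows essentially the same route as the paper: the paper's proof likewise combines Theorem \ref{thm:non-vanish} and Proposition \ref{prop:depth} (giving $\amp(\mrm{R}\Gamma_{\bar{\m}}(M)) = \lcdim(M) - \depth_A(M)$) with the translation $\opn{seq.depth}_A(M) = \depth_A(M) - \inf(M)$ and the definitions of $\opn{der.dim}$, $\opn{CM}(A)$ and $\opn{MCM}(A)$. Your write-up just makes explicit the bookkeeping that the paper leaves to the reader.
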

\begin{proof}
It follows from the definitions and from from Theorem \ref{thm:non-vanish} and Proposition \ref{prop:depth} that
$\opn{seq.depth}_A(M) = \opn{der.dim}_A(M)$ if and only if $\amp(\mrm{R}\Gamma_{\bar{\m}}(M)) = \amp(M)$ if and only if $M \in \opn{CM}(A)$.
In that case, it also follows from the definitions that $M \in \opn{MCM}(A)$ if and only if $\opn{seq.depth}_A(M) = \opn{der.dim}(A)$.
\end{proof}

Next we show that like Cohen-Macaulay DG-modules, 
over local-Cohen-Macaulay DG-rings maximal local-Cohen-Macaulay DG-modules are also self-dual:

\begin{prop}
Let $(A,\bar{\m})$ be a noetherian local-Cohen-Macaulay DG-ring,
let $R$ be a dualizing DG-module over $A$, 
and let $M \in \opn{MCM}(A)$.
Then 
\[
\mrm{R}\opn{Hom}_A(M,R) \in \opn{MCM}(A).
\]
\end{prop}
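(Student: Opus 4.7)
Set $N := \mrm{R}\opn{Hom}_A(M,R)$ and let $d = \dim(\mrm{H}^0(A))$, $n = \amp(A)$. Since $M \in \opn{MCM}(A) \subseteq \opn{CM}(A)$ satisfies $\amp(M) = \amp(A)$, Proposition \ref{CMbyDual} already gives $N \in \opn{CM}(A)$ with $\amp(N) = \amp(A)$. Hence the only thing left to verify is the dimension condition $\lcdim(N) = \sup(N) + d$, i.e., that the $\sup$ cohomology of $N$ attains maximal Krull dimension.

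The plan is to extract all the needed numerical information by applying the DG local duality isomorphism $\mrm{H}^i(\mrm{R}\Gamma_{\bar{\m}}(X)) \cong \opn{Hom}_{\mrm{H}^0(A)}(\opn{Ext}^{-i}_A(X,R),\bar{E})$ twice, once to $X=M$ and once to $X=N$, where $\bar{E}$ is the injective hull of the residue field. Since $\bar{E}$ is a faithful injective cogenerator, this passage exchanges $\sup$ and $-\inf$. Applied to $M$ it gives
\[
\sup\left(\mrm{R}\Gamma_{\bar{\m}}(M)\right) = -\inf(N), \qquad \inf\left(\mrm{R}\Gamma_{\bar{\m}}(M)\right) = -\sup(N),
\]
and applied to $N$, using the reflexivity $\mrm{R}\opn{Hom}_A(N,R) \cong M$, it gives
\[
\sup\left(\mrm{R}\Gamma_{\bar{\m}}(N)\right) = -\inf(M).
\]

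Next, Theorem \ref{thm:non-vanish} identifies the supremum of local cohomology with local cohomology Krull dimension, so $\lcdim(N) = \sup(\mrm{R}\Gamma_{\bar{\m}}(N)) = -\inf(M)$. It remains to compute $\sup(N)$. Using that $M \in \opn{MCM}(A)$ and $A$ is local-Cohen-Macaulay, Theorem \ref{thm:non-vanish} again yields $\sup(\mrm{R}\Gamma_{\bar{\m}}(M)) = \lcdim(M) = \sup(M) + d$, and the equality $\amp(\mrm{R}\Gamma_{\bar{\m}}(M)) = n$ then forces
\[
\inf\left(\mrm{R}\Gamma_{\bar{\m}}(M)\right) = \sup(M) + d - n = \inf(M) + d.
\]
Combining with the duality formulas, $\sup(N) = -\inf(\mrm{R}\Gamma_{\bar{\m}}(M)) = -\inf(M) - d$, so $\sup(N) + d = -\inf(M) = \lcdim(N)$, as required.

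There is no serious obstacle; the argument is a bookkeeping exercise in applying DG local duality together with Theorem \ref{thm:non-vanish}. The only mildly subtle point is making sure one uses the full maximality hypothesis $\lcdim(M) = \sup(M) + d$ (and not merely $M \in \opn{CM}(A)$) in order to pin down $\inf(\mrm{R}\Gamma_{\bar{\m}}(M))$ exactly; without it, one would only get an inequality $\lcdim(N) \ge \sup(N) + d$, which combined with (\ref{eqn:dim}) would of course also suffice, so in practice the argument has a built-in safety net.
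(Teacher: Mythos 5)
Your proof is correct and follows essentially the same route as the paper: both arguments pass through DG local duality (so that Matlis duality exchanges $\sup$ and $-\inf$, giving $\lcdim(N)=\sup(\mrm{R}\Gamma_{\bar{\m}}(N))=-\inf(M)$ via biduality and $\sup(N)=-\inf(\mrm{R}\Gamma_{\bar{\m}}(M))$) together with Theorem \ref{thm:non-vanish}; the only cosmetic difference is that the paper cashes in the hypotheses via $\inf(\mrm{R}\Gamma_{\bar{\m}}(M))=\depth(M)$ and $\opn{seq.depth}(M)=\dim(\mrm{H}^0(A))$, whereas you do the equivalent bookkeeping with amplitudes and $\lcdim(M)=\sup(M)+d$.

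One caveat: your closing ``safety net'' remark is false. Without the maximality hypothesis the same computation gives exactly $\lcdim(N)-\sup(N)=\lcdim(M)-\sup(M)$, i.e.\ duality preserves $\opn{der.dim}$, so you would only recover $\lcdim(N)\le\sup(N)+d$, which is just (\ref{eqn:dim}) again and not the reverse inequality; a local-Cohen-Macaulay but non-maximal $M$ dualizes to a non-maximal module, so $M\in\opn{MCM}(A)$ is genuinely needed. Your main argument does use it correctly, so this does not affect the proof itself.
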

\begin{proof}
It is clear that $\mrm{R}\opn{Hom}_A(M,R) \in \opn{CM}(A)$,
so we only need to show that
\[
\lcdim\left(\mrm{R}\opn{Hom}_A(M,R)\right) - \sup\left(\mrm{R}\opn{Hom}_A(M,R)\right) = \dim(\mrm{H}^0(A)).
\]
According to Theorem \ref{thm:non-vanish} we have that 
\[
\lcdim\left(\mrm{R}\opn{Hom}_A(M,R)\right) = \sup\left(\mrm{R}\Gamma_{\bar{\m}}(\mrm{R}\opn{Hom}_A(M,R))\right),
\]
and by the DG local duality theorem (\cite[Theorem 7.26]{Sh2}),
the latter is equal to $-\inf(M)$.
The local duality theorem also implies that
\[
\sup\left(\mrm{R}\opn{Hom}_A(M,R)\right) = -\inf\left(\mrm{R}\Gamma_{\bar{\m}}(M)\right),
\]
and by Proposition \ref{prop:depth} the latter is equal to $-\depth(M)$.
Hence,
\begin{align}
\lcdim\left(\mrm{R}\opn{Hom}_A(M,R)\right) - \sup\left(\mrm{R}\opn{Hom}_A(M,R)\right) =\nonumber\\
-\inf(M)+\depth(M) = \opn{seq.depth}(M)\nonumber
\end{align}
and since $M$ is maximal local-Cohen-Macaulay, 
the latter is equal to $\dim(\mrm{H}^0(A))$, 
as claimed.
\end{proof}

\section{Trivial extension DG-rings and the local-Cohen-Macaulay property}\label{sec:text}

In this section we study trivial extension DG-rings.
This will allow us to construct many examples of local-Cohen-Macaulay DG-rings.
We first recall the construction, 
following \cite[Section 1]{Jo3} (but note that we use cohomological notation, unlike the homological notation used there).

Let $A$ be a commutative DG-ring,
and let $M \in \cat{D}(A)$. Suppose that $\sup(M) < 0$.
Donating by $d_A$ the differential of $A$ and by $d_M$ the differential of $M$,
we give the graded abelian group $A\oplus M$ the structure of a commutative DG-ring by letting the differential be
\[
d_{A \oplus M}\left( \begin{matrix} a \\ m \end{matrix} \right) = \left(\begin{matrix} d_A(a) \\ d_M(m) \end{matrix} \right),
\]
and defining multiplication by the rule
\[
\left(\begin{matrix} a_1 \\ m_1 \end{matrix} \right) \cdot \left(\begin{matrix} a_2 \\ m_2 \end{matrix} \right) = 
\left(\begin{matrix} a_1\cdot a_2 \\ a_1 \cdot m_2 + m_1 \cdot a_2 \end{matrix} \right).
\]
This gives $A\oplus M$ the structure of a commutative (non-positive) DG-ring,
which we will denote by $A \skewtimes M$.
We note that there are natural maps of DG-rings $A \to A \skewtimes M \to A$,
such that their composition is equal to $1_A$.

The next result follows from the definition:
\begin{prop}\label{prop:triv-ext}
Let $A$ be a commutative noetherian DG-ring,
let $M \in \cat{D}^{-}_{\mrm{f}}(A)$,
and assume that $\sup(M) < 0$.
Then $A \skewtimes M$ is a commutative noetherian DG-ring with 
\[
\mrm{H}^0(A \skewtimes M) = \mrm{H}^0(A).
\]
In particular, if $(A,\bar{\m})$ is a local DG-ring,
then $(A \skewtimes M,\bar{\m})$ is a local DG-ring.
\end{prop}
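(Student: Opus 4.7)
The plan is to set up the construction with a convenient representative of $M$ and then directly compute each cohomology module of $A \skewtimes M$. First, since $\sup(M) < 0$, I would replace $M$ by its smart truncation $\opn{smt}^{<0}(M)$, which is quasi-isomorphic to $M$ in $\cat{D}(A)$, still lies in $\cat{D}^{-}_{\mrm{f}}(A)$, and, as a DG-module, is strictly concentrated in negative cohomological degrees. With this choice the construction recalled before the statement produces an honest non-positive commutative DG-ring $A \skewtimes M$.

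Next I would exploit the split pair of DG-ring maps $A \inj A \skewtimes M \surj A$ whose composition is the identity of $A$. The kernel of the surjection, viewed as a square-zero DG ideal in $A \skewtimes M$, is a copy of $M$, so we obtain a short exact sequence of DG $A$-modules
\[
0 \to M \to A \skewtimes M \to A \to 0
\]
which is split by the inclusion $A \to A \skewtimes M$. Passing to cohomology therefore yields a split short exact sequence
\[
0 \to \mrm{H}^i(M) \to \mrm{H}^i(A \skewtimes M) \to \mrm{H}^i(A) \to 0
\]
of $\mrm{H}^0(A)$-modules for every $i \in \mathbb{Z}$.

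Taking $i = 0$ and using $\mrm{H}^0(M) = 0$, which holds because $\sup(M) < 0$, gives the desired equality $\mrm{H}^0(A \skewtimes M) = \mrm{H}^0(A)$. For $i < 0$, both $\mrm{H}^i(A)$ and $\mrm{H}^i(M)$ are finitely generated over $\mrm{H}^0(A)$ by the hypotheses that $A$ is noetherian and $M \in \cat{D}^{-}_{\mrm{f}}(A)$, so $\mrm{H}^i(A \skewtimes M)$ is also a finitely generated module over $\mrm{H}^0(A \skewtimes M) = \mrm{H}^0(A)$; this shows that $A \skewtimes M$ is noetherian. The local statement is then immediate from the equality $\mrm{H}^0(A \skewtimes M) = \mrm{H}^0(A)$. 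The whole argument is essentially formal bookkeeping, and the only mild subtlety is the choice of a DG-module representative of $M$ which is concretely concentrated in negative degrees, handled effortlessly by smart truncation.
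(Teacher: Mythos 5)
Your argument is correct and is essentially the routine verification that the paper itself leaves implicit (its proof is just ``follows from the definition''): as a complex of $A$-modules $A \skewtimes M$ decomposes as $A \oplus M$, so the cohomology, the $\mrm{H}^0$-computation, the noetherian condition, and the local statement all follow at once. The only cosmetic point is that your truncation should be written as $\opn{smt}^{\le -1}(M)$ in the paper's notation, which is indeed concentrated in negative degrees and quasi-isomorphic to $M$ since $\sup(M)<0$.
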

\qed

This construction will allow us to construct many interesting examples and counterexamples.
For instance, let us show that the notion of a regular sequence behaves different in the DG-setting:

\begin{exa}\label{exa:reg-not-par}
Let $\k$ be a field,
let $R = \k[[x,y]]/(x \cdot y)$,
and let $M$ be the $R$-module $R/(x) \cong \k[[y]]$.
Consider the DG-ring $A = R \skewtimes M[1]$,
and consider $y \in \mrm{H}^0(A) = R$.
Since $y$ is $M$-regular,
and $M = \mrm{H}^{\inf(A)}(A)$,
we see that $y$ is $A$-regular.
However, since $\dim(R) = \dim(R/y)$,
we have that $\dim(\mrm{H}^0(A)) = \dim(\mrm{H}^0(A//y))$.
\end{exa}

Next, we provide a sufficient condition for the trivial extension to be local-Cohen-Macaulay:

\begin{thm}\label{thm:triv-ext}
Let $(A,\m)$ be a noetherian local ring, and $M \in \cat{D}^{\mrm{b}}_{\mrm{f}}(A)$.
Assume that:
\begin{enumerate}
\item $\sup(M) < 0$.
\item $\lcdim(M) = \inf(M) + \dim(A)$.
\item $\lcdim(M) \le \opn{depth}(A)$.
\end{enumerate}
Then the trivial extension $A \skewtimes M$ is a noetherian local-Cohen-Macaulay DG-ring if and only if $M$ is a Cohen-Macaulay complex over $A$. 
\end{thm}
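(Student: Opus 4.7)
The plan is to reduce the local-Cohen-Macaulay property of $B := A \skewtimes M$ to a simple numerical condition on $M$. By Proposition \ref{prop:triv-ext}, $B$ is a noetherian local DG-ring with $\mrm{H}^0(B) = A$ and maximal ideal $\m$. The componentwise differential of the trivial extension gives $\mrm{H}^0(B) = A$ and $\mrm{H}^i(B) = \mrm{H}^i(M)$ for $i<0$, so $\amp(B) = -\inf(M)$. Since $\inf(M) \le \sup(M) < 0$, hypothesis (2) forces $\lcdim(M) = \inf(M)+d < d$, hence $\lcdim(B) = \max(d,\lcdim(M)) = d$.

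The crucial step is the derived-category decomposition
\[
\mrm{R}\Gamma_{\m}^B(B) \cong \mrm{R}\Gamma_{\m}^A(A) \oplus \mrm{R}\Gamma_{\m}^A(M)
\]
in $\cat{D}(A)$. Taking a finite generating set $\mathbf{x}$ of $\m$ inside $A$, the telescope formula (\ref{eqn:RGamma}) together with the base-change property of $\opn{Tel}$ gives
\[
\mrm{R}\Gamma_{\m}^B(B) \cong \opn{Tel}(B^0;\mathbf{x}) \otimes_{B^0} B \cong \opn{Tel}(A;\mathbf{x}) \otimes_A B,
\]
and the DG-ring retraction $A \to B \to A$ makes $B$ split as $A \oplus M$ as a complex of $A$-modules, yielding the claimed decomposition after tensoring.

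With this in hand, Proposition \ref{prop:dim-in-rings} gives $\sup(\mrm{R}\Gamma_{\m}^B(B)) = \max(d,\lcdim(M)) = d$, while Proposition \ref{prop:depth-ring-bound} (together with the definition of $\lcdim$) yields the unconditional inequality $\depth(M) \le \lcdim(M)$; combined with hypothesis (3), this gives $\depth(M) \le \depth(A)$, so $\inf(\mrm{R}\Gamma_{\m}^B(B)) = \depth(M)$. Therefore
\[
\amp\bigl(\mrm{R}\Gamma_{\m}^B(B)\bigr) = d - \depth(M),
\]
and $B$ is local-Cohen-Macaulay iff $d-\depth(M) = -\inf(M)$, iff $\depth(M) = \lcdim(M)$ (using hypothesis (2) again), iff $\amp(\mrm{R}\Gamma_{\m}^A(M)) = 0$, which is the defining condition for $M$ to be a Cohen-Macaulay complex over $A$.

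The main obstacle is the derived direct-sum decomposition of $\mrm{R}\Gamma_{\m}^B(B)$ in the first displayed isomorphism; once it is secured via the telescope complex and the splitting of the projection $B \to A$, the remainder of the argument is a short chase through the amplitude, depth, and dimension inequalities already established.
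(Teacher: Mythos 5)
Your proof is correct and follows essentially the same route as the paper: restrict along the retraction so that $B = A \skewtimes M$ splits as $A \oplus M$ in $\cat{D}(A)$, decompose $\mrm{R}\Gamma_{\m}(B)$ accordingly (you justify the commutation of local cohomology with restriction via the telescope complex, which is exactly the mechanism the paper relies on), and then compare $\sup$ and $\inf$ using hypotheses (2) and (3) together with $\depth(M) \le \lcdim(M)$. The bookkeeping identifying local-Cohen-Macaulayness of $B$ with $\depth(M) = \lcdim(M)$ matches the paper's conclusion.
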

\begin{proof}
By Proposition \ref{prop:triv-ext} and the definition,
 the DG-ring $A \skewtimes M$ is a noetherian local DG-ring with bounded cohomology,
and it holds that $\amp(A \skewtimes M) = -\inf(M)$.
To compute 
\[
\amp\left(\mrm{R}\Gamma_{\m}^{A \skewtimes M}(A \skewtimes M)\right),
\]
we may apply the forgetful functor $\cat{D}(A \skewtimes M) \to \cat{D}(A)$,
and since local cohomology commutes with it, 
and the ideals of definition of $A$ and $A \skewtimes M$ coincide,
we may compute instead the number
\[
\amp\left(\mrm{R}\Gamma_{\m}^{A}(A \skewtimes M)\right),
\]
where we now compute local cohomology over the local ring $A$.
Since in the category $\cat{D}(A)$ we have that $A \skewtimes M = A \oplus M$,
so that
\[
\mrm{R}\Gamma^A_{\m}(A \skewtimes M) \cong \mrm{R}\Gamma^A_{\m}(A) \oplus \mrm{R}\Gamma^A_{\m}(M).
\]
Since $\lcdim(M) \le \depth(A)$,
we must have $\depth(M) \le \depth(A)$.
Hence,
\[
\inf \left(\mrm{R}\Gamma^A_{\m}(A \skewtimes M) \right) = \inf \left(\mrm{R}\Gamma^A_{\m}(M)\right) = \depth(M).
\]
Similarly, since $\lcdim(M) = \inf(M) + \dim(A) \le \sup(M) + \dim(A) < \dim(A)$,
we have that
\[
\sup \left(\mrm{R}\Gamma^A_{\m}(A \skewtimes M) \right) = \sup \left(\mrm{R}\Gamma^A_{\m}(A)\right) = \dim(A) = \lcdim(M) - \inf(M).
\]
Hence,
\begin{align}
\amp\left(\mrm{R}\Gamma_{\m}^{A}(A \skewtimes M)\right) = \lcdim(M) - \inf(M) - \depth(M) =\nonumber\\
\left(\lcdim(M) - \depth(M)\right) + (-\inf(M)) =\nonumber\\
\left(\lcdim(M) - \depth(M)\right) + \amp(A \skewtimes M)\nonumber
\end{align}
We see that $A \skewtimes M$ is local-Cohen-Macaulay if and only if $\lcdim(M) - \depth(M) = 0$
if and only if $M$ is a Cohen-Macaulay complex over $A$.
\end{proof}

\begin{exa}
Let $(A,\m)$ be a noetherian local ring,
let $M$ be a finitely generated $A$-module,
and suppose that $\dim(M) = \dim(A)$.
Then $M[\dim(A)]$ satisfies the conditions of Theorem \ref{thm:triv-ext},
so $A\skewtimes M[\dim(A)]$ is a local-Cohen-Macaulay DG-ring if and only if $M$ is a Cohen-Macaulay module 
if and only if $M$ is a maximal Cohen-Macaulay module.
\end{exa}

\begin{exa}
A bit more generally,
if $(A,\m)$ is a noetherian local ring,
and $M$ is an arbitrary finitely generated $A$-module,
we can replace $A$ with $A/\opn{ann}(M)$,
and consider the DG-ring
\[
B = A/\opn{ann}(M)\skewtimes M[\dim(M)].
\]
Then the conditions of Theorem \ref{thm:triv-ext} hold,
and we see that $B$ is a local-Cohen-Macaulay DG-ring if and only if $M$ is a Cohen-Macaulay module over $A$.
\end{exa}

\begin{exa}
Let $(A,\m)$ be a noetherian local ring,
let $R$ be a dualizing complex over $A$,
and suppose that $\sup(R) < 0$.
J{\o}rgensen proved in \cite{Jo3} that in this case $A\skewtimes R$ is a Gorenstein DG-ring.
In particular, by Proposition \ref{prop:GorisCM}, 
$A\skewtimes R$ is a local-Cohen-Macaulay DG-ring.
Here is an alternative proof of the latter fact:
By (\ref{eqn:dim-of-dc}), condition (2) of Theorem \ref{thm:triv-ext} is satisfied.
Moreover, Grothendieck's local duality implies that:
\[
0 > \sup(R) = \inf(R) + \amp(R) = \inf(R) + \dim(A) - \depth(A) = \lcdim(R) - \depth(A),
\]
so that $\lcdim(R) < \depth(A)$.
Hence, all the conditions of Theorem \ref{thm:triv-ext} are satisfied,
and we deduce that $A\skewtimes R$ is a local-Cohen-Macaulay DG-ring.
\end{exa}

\begin{exa}\label{exa:DCvanCoh}
Following Remark \ref{rem:DCvanCoh},
let $(A,\m)$ be a Cohen-Macaulay local ring with a dualizing module $M$.
Then by the above mentioned Theorem of J{\o}rgensen from \cite{Jo3},
we have that $B = A \skewtimes M[\dim(A)]$ is a Gorenstein DG-ring.
In particular, $B$ is a local-Cohen-Macaulay DG-ring,
and a dualizing DG-module over itself. 
Moreover, for all $\inf(B) < i < \sup(B)$, we have that $\mrm{H}^i(B) = 0$.
\end{exa}

\begin{exa}\label{exa:no-dualizing}
Let $(A,\m)$ be a noetherian local ring of Krull dimension $d>0$ which does not have a dualizing complex,
but does have a maximal Cohen-Macaulay module $M$.
To give a concrete example of such $A,M$,
in \cite[Section 6, Example 1]{Og} there is an example of Cohen-Macaulay ring which does not have a canonical module.
Then one can take this ring to be $A$, and $M = A$.
By Theorem \ref{thm:triv-ext}, 
the DG-ring $B = A\skewtimes M[d]$ is a local-Cohen-Macaulay DG-ring.
Since $\mrm{H}^0(B) = A$ does not have a dualizing complex,
it follows from \cite[Proposition 7.5]{Ye1} that $B$ does not have a dualizing DG-module.
\end{exa}

\section{Finite maps, localizations and global Cohen-Macaulay DG-rings}\label{sec:localiz}

In this section we discuss analogues of the following two basic properties of Cohen-Macaulay rings and modules:

\begin{enumerate}[wide, labelwidth=!, labelindent=0pt]
\item (\textbf{Independence of base ring}):
Assume $f:(A,\m) \to (B,\n)$ is a local map between two noetherian local rings which is a finite ring map,
and that $M$ is a finitely generated $B$-module.
Then $M$ is Cohen-Macaulay over $B$ if and only if $M$ is Cohen-Macaulay over $A$.
\item (\textbf{Localization}): Let $(A,\m)$ be a noetherian local Cohen-Macaulay ring,
and let $\p \in \opn{Spec}(A)$.
Then $(A_{\p},\p\cdot A_{\p})$ is also Cohen-Macaulay.
\end{enumerate}

Unfortunately, both of these properties fail in general in the DG-case,
and for the same reason: change of amplitude.
Any commutative ring $A$ has $\amp(A) = 0$,
but different DG-rings have different amplitude.
Regarding item (1) above, if one assumes that $\amp(A) = \amp(B)$,
then its DG-generalization is true.
Regarding item (2), it could happen that for a DG-ring $A$,
and $\bar{\p} \in \opn{Spec}(\mrm{H}^0(A))$ there is a strict inequality $\amp(A_{\bar{\p}}) < \amp(A)$,
and when that happens, it could happen that $A$ is local-Cohen-Macaulay but $A_{\bar{\p}}$ is not.
Again, when $\amp(A_{\bar{\p}}) = \amp(A)$,
it holds that if $A$ is local-Cohen-Macaulay then $A_{\bar{\p}}$ is.
On the positive side, 
we will see below that often for local-Cohen-Macaulay DG-rings, 
there is always an equality $\amp(A_{\bar{\p}}) = \amp(A)$,
so that under the assumption that the topological space $\opn{Spec}(\mrm{H}^0(A))$ is irreducible,
the local-Cohen-Macaulay property is stable under localization.

\begin{prop}\label{prop:ind}
(Independence of base DG-ring):
Let $(A,\bar{\m})$ and $(B,\bar{\n})$ be two noetherian local DG-rings,
such that $\amp(A) = \amp(B) < \infty$.
Let $f:A \to B$ be a map of DG-rings such that the induced map
$\mrm{H}^0(f):\mrm{H}^0(A) \to \mrm{H}^0(B)$ is a finite ring map which is local.
Given $M \in \cat{D}^{\mrm{b}}_{\mrm{f}}(B)$,
we have that $M$ is a local-Cohen-Macaulay DG-module over $B$
if and only if $M$ is a local-Cohen-Macaulay DG-module over $A$.
\end{prop}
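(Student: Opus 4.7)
The plan is to reduce both sides of the ``if and only if'' to a single statement about local cohomology, then use the telescope complex description to identify local cohomology over $A$ with local cohomology over $B$.

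First I would unpack Definition \ref{def:CMM}: under the hypothesis $\amp(A)=\amp(B)$, the condition $\amp(M)=\amp(B)$ over $B$ is literally the same as the condition $\amp(M)=\amp(A)$ over $A$, so the first of the two amplitude equalities in the definition is base-independent without further work. The content of the proposition is therefore the equality
\[
\amp\bigl(\mrm{R}\Gamma^A_{\bar{\m}}(M)\bigr) \;=\; \amp\bigl(\mrm{R}\Gamma^B_{\bar{\n}}(M)\bigr),
\]
where $M$ on the left is viewed over $A$ via the restriction functor $\cat{D}(B)\to\cat{D}(A)$ induced by $f$. Since amplitude is computed at the level of underlying graded abelian groups of cohomology, the restriction functor preserves amplitude, so it suffices to produce a natural isomorphism between these two objects in $\cat{D}(A)$.

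The key step is the telescope-complex computation (\ref{eqn:RGamma}). Pick a finite sequence $\mathbf{a}=(a_1,\dots,a_n)$ in $A^0$ lifting generators of $\bar{\m}$, and let $\mathbf{b}=f(\mathbf{a})\subseteq B^0$. Then
\[
\mrm{R}\Gamma^A_{\bar{\m}}(M) \;\cong\; \opn{Tel}(A^0;\mathbf{a})\otimes_{A^0} A \otimes_A M \;\cong\; \opn{Tel}(B^0;\mathbf{b})\otimes_{B^0} B\otimes_B M,
\]
using the base-change property of the telescope complex along $A^0\to B^0$ (and the fact that $M$ is already a $B$-module). The right hand side is exactly the telescope-complex formula for $\mrm{R}\Gamma^B_{\bar{\b}}(M)$, where $\bar{\b}$ denotes the ideal of $\mrm{H}^0(B)$ generated by the images of $\mathbf{a}$, that is, $\bar{\b}=\bar{\m}\cdot\mrm{H}^0(B)$.

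Finally I would invoke (\ref{eqn:RGammaRadical}) on $\mrm{H}^0(B)$: because $\mrm{H}^0(f)$ is finite and local, the quotient $\mrm{H}^0(B)/\bar{\m}\mrm{H}^0(B)$ is a finite local $\mrm{H}^0(A)/\bar{\m}$-algebra, hence zero-dimensional, so its maximal ideal $\bar{\n}/\bar{\m}\mrm{H}^0(B)$ is nilpotent. Consequently $\sqrt{\bar{\m}\cdot\mrm{H}^0(B)}=\bar{\n}$, and (\ref{eqn:RGammaRadical}) gives $\mrm{R}\Gamma^B_{\bar{\b}}(M)\cong \mrm{R}\Gamma^B_{\bar{\n}}(M)$, completing the identification and therefore the proof. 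There is no real obstacle here beyond bookkeeping; the only subtle point worth checking carefully is that the telescope formula, stated in the excerpt over a fixed DG-ring, commutes appropriately with restriction along $f$, and this is precisely what the base-change property of $\opn{Tel}$ supplies.
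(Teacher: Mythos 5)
Your argument is correct and is essentially the paper's own proof: the paper isolates your telescope-complex/base-change/radical computation as Lemma \ref{lem:RGofForg} (showing $\mrm{R}\Gamma_{\bar{\m}}\circ Q \cong Q\circ\mrm{R}\Gamma_{\bar{\n}}$ for the forgetful functor $Q$, using (\ref{eqn:RGamma}) and (\ref{eqn:RGammaRadical}) exactly as you do), and then concludes just as you do from $\amp(A)=\amp(B)$. The only bookkeeping you might add is that finiteness of $\mrm{H}^0(f)$ also guarantees $M$ lies in $\cat{D}^{\mrm{b}}_{\mrm{f}}(A)$, as required by Definition \ref{def:CMM}.
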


Before proving this result, 
we shall need the following lemma.

\begin{lem}\label{lem:RGofForg}
Let $(A,\bar{\m})$ and $(B,\bar{\n})$ be two noetherian local DG-rings,
let $f:A \to B$ be a map of DG-rings,
and suppose that $\mrm{H}^0(f)(\bar{\m})\cdot \mrm{H}^0(B)$ contains some power of $\bar{\n}$.
Letting $Q:\cat{D}(B) \to \cat{D}(A)$ be the forgetful functor,
there is an isomorphism
\[
\mrm{R}\Gamma_{\bar{\m}}\circ Q(-) \cong Q \circ \mrm{R}\Gamma_{\bar{\n}}(-)
\]
of functors $\cat{D}(B) \to \cat{D}(A)$.
\end{lem}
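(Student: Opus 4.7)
The plan is to compute both sides of the desired isomorphism via the telescope complex formula (\ref{eqn:RGamma}) and compare them using the base change property of the telescope complex. First, I would choose generators $\bar{x}_1,\ldots,\bar{x}_n$ of $\bar{\m}$ inside $\mrm{H}^0(A)$ and lift them to elements $\mathbf{x}=(x_1,\ldots,x_n)\in A^0$. Setting $\mathbf{y}:=(f(x_1),\ldots,f(x_n))\in B^0$ and $\bar{\a} := \mrm{H}^0(f)(\bar{\m})\cdot \mrm{H}^0(B)$, the images of the $y_i$ in $\mrm{H}^0(B)$ generate $\bar{\a}$.

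Next, for any $M \in \cat{D}(B)$, applying (\ref{eqn:RGamma}) over $A$ to the DG-module $Q(M)$ and the ideal $\bar{\m}$ gives
\[
\mrm{R}\Gamma_{\bar{\m}}(Q(M)) \cong \opn{Tel}(A^0;\mathbf{x}) \otimes_{A^0} A \otimes_A Q(M) \cong \opn{Tel}(A^0;\mathbf{x}) \otimes_{A^0} Q(M).
\]
The base change property of the telescope complex yields
\[
\opn{Tel}(A^0;\mathbf{x}) \otimes_{A^0} B^0 \cong \opn{Tel}(B^0;\mathbf{y}),
\]
so by associativity of the tensor product,
\[
\opn{Tel}(A^0;\mathbf{x}) \otimes_{A^0} Q(M) \cong \opn{Tel}(A^0;\mathbf{x}) \otimes_{A^0} B^0 \otimes_{B^0} M \cong \opn{Tel}(B^0;\mathbf{y}) \otimes_{B^0} M,
\]
all identifications being natural in $M$ and compatible with the forgetful functor $Q$. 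A second application of (\ref{eqn:RGamma}), this time over $B$ to $M$ and the ideal $\bar{\a}$, identifies the right-hand side with $Q(\mrm{R}\Gamma_{\bar{\a}}(M))$. Thus $\mrm{R}\Gamma_{\bar{\m}} \circ Q \cong Q \circ \mrm{R}\Gamma_{\bar{\a}}$ as functors $\cat{D}(B) \to \cat{D}(A)$.

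To finish, I would replace $\bar{\a}$ by $\bar{\n}$ using radical invariance. Since $\bar{\a}$ contains a power of $\bar{\n}$ and, being a proper ideal of the local ring $\mrm{H}^0(B)$, is contained in $\bar{\n}$, one has $\sqrt{\bar{\a}} = \bar{\n}$. Hence by (\ref{eqn:RGammaRadical}), $\mrm{R}\Gamma_{\bar{\a}}(M) \cong \mrm{R}\Gamma_{\bar{\n}}(M)$ naturally in $M$, completing the proof.

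The substantive step is the base change identity for the telescope complex; the rest is formal bookkeeping, and I do not expect any real obstacle, since the telescope formula from Section \ref{sec:lc} already packages the local cohomology functor into a concrete derived tensor product that is manifestly compatible with restriction along DG-ring maps.
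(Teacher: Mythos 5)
Your proof is correct and takes essentially the same route as the paper's: both compute the local cohomology functors via the telescope formula (\ref{eqn:RGamma}), use the base-change property of the telescope complex together with associativity of the tensor product, and then invoke radical invariance (\ref{eqn:RGammaRadical}) to pass from the extended ideal $\bar{\a}=\mrm{H}^0(f)(\bar{\m})\cdot\mrm{H}^0(B)$ to $\bar{\n}$. The only caveat, shared with the paper's own proof, is that concluding $\sqrt{\bar{\a}}=\bar{\n}$ needs $\bar{\a}\subseteq\bar{\n}$, which you justify by asserting $\bar{\a}$ is proper; this does not follow from the stated hypothesis alone, but it holds in the intended application where $\mrm{H}^0(f)$ is a local homomorphism.
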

\begin{proof}
Let $\mathbf{a}$ be a finite sequence of elements of $A^0$ whose image in $\mrm{H}^0(A)$ generates $\bar{\m}$,
let $\mathbf{b}$ be a finite sequence of elements of $B^0$ whose image in $\mrm{H}^0(B)$ generates $\bar{\n}$,
and let $N \in \cat{D}(N)$.
By (\ref{eqn:RGamma}) there are natural isomorphisms:
\[
\mrm{R}\Gamma_{\bar{\m}}\left(Q(N)\right) \cong
\mrm{R}\Gamma_{\bar{\m}}(B\otimes^{\mrm{L}}_B N) \cong
\opn{Tel}(A^0;\mathbf{a}) \otimes_{A^0} A \otimes^{\mrm{L}}_A B\otimes^{\mrm{L}}_B N
\]
By associativity of the derived tensor product, 
this is naturally isomorphic to
\[
\opn{Tel}(A^0;\mathbf{a}) \otimes_{A^0} B^0 \otimes^{\mrm{L}}_{B^0} B \otimes^{\mrm{L}}_B N.
\]
Our assumption on $\mrm{H}^0(f)$ and (\ref{eqn:RGammaRadical}) imply that
\[
\opn{Tel}(A^0;\mathbf{a}) \otimes_{A^0} B^0 \otimes^{\mrm{L}}_{B^0} B \cong
\opn{Tel}(B^0;\mathbf{b}) \otimes^{\mrm{L}}_{B^0} B,
\]
so the result follows from (\ref{eqn:RGamma}).
\end{proof}

\begin{proof}[Proof of Proposition \ref{prop:ind}]
It follows from Lemma \ref{lem:RGofForg} that there is an equality
\[
\amp\left(\mrm{R}\Gamma_{\bar{\m}}(M)\right) = \amp\left(\mrm{R}\Gamma_{\bar{\m}}(N)\right).
\]
If $M$ is local-Cohen-Macaulay over $A$ then
\[
\amp(M) = \amp(A) = \amp\left(\mrm{R}\Gamma_{\bar{\m}}(M)\right),
\]
and the assumption that $\amp(A) = \amp(B)$ then implies that $M$ is local-Cohen-Macaulay over $B$.
The claim that $M \in \opn{CM}(B) \implies M \in \opn{CM}(A)$ follows similarly.
\end{proof}

Next we discuss the problem of localization.
We begin with a counterexample.

\begin{exa}
Let $(A,\m)$ be a noetherian local ring.
Assume that there is a finitely generated $A$-module $M$ which is maximal Cohen-Macaulay,
such that for some $\p \in \opn{Spec}(A)$ we have that $\p \notin \opn{Supp}(M)$,
and the ring $A_{\p}$ is not Cohen-Macaulay.
As a concrete example,
let $\k$ be a field,
take $A$ to be the localization of $\k[x,y,z]/(y^2z,xyz)$ at the origin,
$M = A/zA$ and $\p = (x,y)$.
It follows from Theorem \ref{thm:triv-ext}
that $B = A \skewtimes M[\dim(A)]$ is a local-Cohen-Macaulay DG-ring.
Localizing at $\p \in \mrm{H}^0(B)$,
the fact that $M_{\p} = 0$ implies that
\[
B_{\p} = \left(A \skewtimes M[\dim(A)]\right)_{\p} \cong A_{\p}
\]
which is not local-Cohen-Macaulay.
\end{exa}

Two features of this counterexample that caused this unfortunate phenomena are that $\amp(B_{\p}) < \amp(B)$,
and that $\opn{Spec}(\mrm{H}^0(B))$ is reducible. 

\begin{thm}\label{thm:localization}
Let $(A,\bar{\m})$ be a noetherian local-Cohen-Macaulay DG-ring,
and let $\bar{\p} \in \opn{Spec}(\mrm{H}^0(A))$.
Suppose that $A$ has a dualizing DG-module.
Assume that $\amp(A_{\bar{\p}}) = \amp(A)$; 
equivalently, that $\bar{\p} \in \opn{Supp}(\mrm{H}^{\inf(A)}(A))$.
Then $\left(A_{\bar{\p}},\bar{\p}\cdot \mrm{H}^0(A_{\bar{\p}})\right)$ is a local-Cohen-Macaulay DG-ring.
\end{thm}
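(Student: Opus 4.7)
The plan is to transfer the local-Cohen-Macaulay property through the dualizing DG-module, using the characterization in Proposition \ref{prop:CM-by-DC}. The two key ingredients are (i) that a dualizing DG-module localizes to a dualizing DG-module, and (ii) that amplitude can only decrease under localization, while the amplitude inequality $\amp(A) \le \amp(R)$ from Theorem \ref{thm:main-amp}(2) provides a matching lower bound.

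First, since localization of DG-rings is cohomologically essentially smooth in the sense of \cite[Definition 6.4]{Sh}, the DG-module $R_{\bar{\p}}$ is a dualizing DG-module over the noetherian local DG-ring $A_{\bar{\p}}$ (this was used already in the proof of Theorem \ref{thm:main-amp}(2) and relies on \cite[Corollary 6.11]{Sh}). Note that $A_{\bar{\p}}$ has bounded cohomology because $\amp(A_{\bar{\p}}) \le \amp(A) < \infty$.

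Next, since $A$ is local-Cohen-Macaulay, Proposition \ref{prop:CM-by-DC} gives $\amp(R) = \amp(A)$. By the formula $\mrm{H}^n(R_{\bar{\p}}) \cong \mrm{H}^n(R)_{\bar{\p}}$, localization cannot create nonzero cohomology, so $\sup(R_{\bar{\p}}) \le \sup(R)$ and $\inf(R_{\bar{\p}}) \ge \inf(R)$, which yields $\amp(R_{\bar{\p}}) \le \amp(R) = \amp(A)$. Meanwhile, Theorem \ref{thm:main-amp}(2) applied to $A_{\bar{\p}}$ together with its dualizing DG-module $R_{\bar{\p}}$ gives $\amp(A_{\bar{\p}}) \le \amp(R_{\bar{\p}})$. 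Assembling these inequalities with the hypothesis $\amp(A_{\bar{\p}}) = \amp(A)$ produces the sandwich
\[
\amp(A_{\bar{\p}}) \;\le\; \amp(R_{\bar{\p}}) \;\le\; \amp(R) \;=\; \amp(A) \;=\; \amp(A_{\bar{\p}}),
\]
forcing $\amp(R_{\bar{\p}}) = \amp(A_{\bar{\p}})$. A second application of Proposition \ref{prop:CM-by-DC}, now over $A_{\bar{\p}}$, concludes that $A_{\bar{\p}}$ is local-Cohen-Macaulay.

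There is no real obstacle here: the argument is genuinely a short sandwich argument once one has the two facts that dualizing DG-modules localize (a previously established result) and that the fundamental amplitude inequality $\amp(-) \le \amp(\text{dualizing})$ is available at both $A$ and $A_{\bar{\p}}$. The only point worth verifying in passing is the parenthetical equivalence in the hypothesis: since $\sup(A) = 0$ is automatically preserved under localization (every prime of $\mrm{H}^0(A)$ supports $\mrm{H}^0(A)$), the condition $\amp(A_{\bar{\p}}) = \amp(A)$ reduces to $\inf(A_{\bar{\p}}) = \inf(A)$, which by $\mrm{H}^n(A_{\bar{\p}}) = \mrm{H}^n(A)_{\bar{\p}}$ is exactly the condition that $\bar{\p} \in \opn{Supp}(\mrm{H}^{\inf(A)}(A))$.
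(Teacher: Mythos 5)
Your proof is correct and is essentially identical to the paper's own argument: localize the dualizing DG-module via \cite[Corollary 6.11]{Sh}, sandwich $\amp(R_{\bar{\p}})$ between $\amp(A_{\bar{\p}})$ (Theorem \ref{thm:main-amp}(2)) and $\amp(R)=\amp(A)=\amp(A_{\bar{\p}})$ (Proposition \ref{prop:CM-by-DC} and the hypothesis), and apply Proposition \ref{prop:CM-by-DC} again over $A_{\bar{\p}}$. No differences worth noting beyond your (correct) extra verification of the parenthetical equivalence in the hypothesis.
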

\begin{proof}
Assume $R$ is a dualizing DG-module over $A$.
It follows from \cite[Corollary 6.11]{Sh} that $R_{\bar{\p}}$ is a dualizing DG-module over $A_{\bar{\p}}$.
It follows from the definition of localization that 
\[
\amp(R_{\bar{\p}}) \le \amp(R),
\]
while by Theorem \ref{thm:main-amp} we have that 
\[
\amp(R_{\bar{\p}}) \ge \amp(A_{\bar{\p}}) = \amp(A).
\]
Since $A$ is local-Cohen-Macaulay, 
by Proposition \ref{prop:CM-by-DC} we have that $\amp(A) = \amp(R)$,
so the above inequalities imply that $\amp(R_{\bar{\p}}) = \amp(A_{\bar{\p}})$.
Proposition \ref{prop:CM-by-DC} now implies that $A_{\bar{\p}}$ is local-Cohen-Macaulay.
\end{proof}

\begin{prop}\label{prop:full-supp}
Let $(A,\m)$ be a noetherian local ring such that $\opn{Spec}(A)$ is irreducible;
equivalently, such that $A$ has a unique minimal prime ideal.
Let $M$ be a finitely generated $A$-module such that $\dim(M) = \dim(A)$.
Then $\opn{Supp}(M) = \opn{Spec}(A)$.
\end{prop}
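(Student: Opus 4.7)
My plan is to argue by contradiction, reducing everything to the elementary observation that a nonzero ideal in a noetherian local domain strictly drops the Krull dimension. Let $\p_0$ denote the unique minimal prime of $A$; since $A$ is noetherian and $\opn{Spec}(A)$ is irreducible, $\p_0$ coincides with the nilradical $\sqrt{0}$, so the quotient $A/\p_0$ is a noetherian local domain with $\dim(A/\p_0)=\dim(A)=:d$.

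Suppose, for contradiction, that $\opn{Supp}(M)\neq\opn{Spec}(A)$. Since $M$ is finitely generated, $\opn{Supp}(M)=V(\opn{ann}(M))$ is closed in $\opn{Spec}(A)$, and irreducibility of $\opn{Spec}(A)$ with generic point $\p_0$ forces $\p_0\notin\opn{Supp}(M)$, equivalently $\opn{ann}(M)\not\subseteq\p_0$. The image $\bar{J}$ of $\opn{ann}(M)$ in $A/\p_0$ is therefore a nonzero ideal in the noetherian local domain $A/\p_0$; it is moreover proper, because $M\neq0$ forces $\opn{ann}(M)\subseteq\bar{\m}$.

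To finish, I compare dimensions. Being a nonzero proper ideal in a noetherian local domain, $\bar{J}$ satisfies $\dim\bigl((A/\p_0)/\bar{J}\bigr)<\dim(A/\p_0)=d$. Since $\p_0$ is the nilradical we have $\p_0\subseteq\sqrt{\opn{ann}(M)}$, hence $\sqrt{\opn{ann}(M)+\p_0}=\sqrt{\opn{ann}(M)}$, which yields
\[
\dim(M)=\dim\bigl(A/\opn{ann}(M)\bigr)=\dim\bigl((A/\p_0)/\bar{J}\bigr)<d=\dim(A),
\]
contradicting the hypothesis $\dim(M)=\dim(A)$. I do not anticipate a genuine obstacle: the only substantive input beyond definitions is that a nonzero (proper) ideal in a noetherian local domain strictly drops the Krull dimension, which is immediate from the identity $\dim(B)=\sup_{\q\in\opn{Min}(B)}\dim(B/\q)$ applied to $B=A/\p_0$, whose unique minimal prime of maximal dimension is $(0)$.
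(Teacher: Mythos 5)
Your proof is correct, and it takes a mildly different route from the paper's. The paper argues directly: since $\dim(M)=\dim(A/\opn{ann}(M))=\dim(A)$, a chain of primes of length $\dim(A)$ lies inside $\opn{Supp}(M)=V(\opn{ann}(M))$, so its bottom must be the unique minimal prime $\q$; then $\opn{ann}(M)\subseteq\q\subseteq\p$ for every prime $\p$, giving $\opn{Supp}(M)=\opn{Spec}(A)$. You instead argue in the contrapositive: if the support were a proper closed subset it would miss the generic point $\p_0$, so $\opn{ann}(M)$ has nonzero image in the noetherian local domain $A/\p_0$, and the fact that a nonzero ideal in a finite-dimensional local domain strictly drops Krull dimension (cleanest justification: any prime containing it is nonzero, so prepend $(0)$ to any chain above it — slightly more direct than the identity you quote) contradicts $\dim(M)=\dim(A)$; your reduction $\sqrt{\opn{ann}(M)+\p_0}=\sqrt{\opn{ann}(M)}$ via the nilradical is exactly what makes this legitimate. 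Both arguments pivot on the same point, namely that $\dim(M)=\dim(A)$ forces the unique minimal prime into $\opn{Supp}(M)$; the paper's chain argument is marginally shorter, while yours makes the role of the generic point and of the reduction $A/\p_0$ explicit. One notational slip: you wrote $\bar{\m}$ for the maximal ideal of $A$, which in this paper denotes the maximal ideal of $\mrm{H}^0(A)$ of a DG-ring; here it should simply be $\m$.
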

\begin{proof}
Since $M$ is finitely generated by \cite[tag 00L2]{SP},
we have that 
\begin{equation}\label{eqn:suppOffg}
\opn{Supp}(M) = \{\p \in \opn{Spec}(A) \mid \opn{ann}(M) \subseteq \p\}.
\end{equation}
Let $\q$ be the unique minimal prime ideal of $A$.
Since $\dim(M) = \dim(A)$, 
so that $\opn{Supp}(M)$ contains a chain of primes of length $\dim(A)$,
we must have $\q \in \opn{Supp}(M)$.
It follows from (\ref{eqn:suppOffg}) that $\opn{ann}(M) \subseteq \q$,
and since for all $\p \in \opn{Spec}(A)$, 
we have that $\q \subseteq \p$, 
we deduce that $\opn{Supp}(M) = \opn{Spec}(A)$.
\end{proof}

\begin{cor}\label{cor:irr-loc}
Given a noetherian local-Cohen-Macaulay DG-ring $(A,\bar{\m})$,
assume that $A$ has a dualizing DG-module.
If $\opn{Spec}(\mrm{H}^0(A))$ is irreducible,
then for all $\bar{\p} \in \opn{Spec}(\mrm{H}^0(A))$,
the DG-ring $\left(A_{\bar{\p}},\bar{\p}\cdot \mrm{H}^0(A_{\bar{\p}})\right)$ is local-Cohen-Macaulay.
\end{cor}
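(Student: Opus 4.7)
The plan is to reduce this corollary to Theorem \ref{thm:localization} by showing that the hypothesis $\amp(A_{\bar{\p}}) = \amp(A)$ holds for every $\bar{\p} \in \opn{Spec}(\mrm{H}^0(A))$ under the irreducibility assumption. Set $n = \amp(A)$, so $\inf(A) = -n$. By the equivalent formulation given in Theorem \ref{thm:localization}, it is enough to prove that $\bar{\p} \in \opn{Supp}_{\mrm{H}^0(A)}\left(\mrm{H}^{-n}(A)\right)$ for every prime $\bar{\p}$ of $\mrm{H}^0(A)$; in other words, that this support is all of $\opn{Spec}(\mrm{H}^0(A))$.

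To establish full support, I would invoke Proposition \ref{prop:dim-of-inf-cm}, which is designed precisely for this situation: because $A$ is local-Cohen-Macaulay, one gets
\[
\dim_{\mrm{H}^0(A)}\left(\mrm{H}^{-n}(A)\right) = \dim\left(\mrm{H}^0(A)\right).
\]
Since $A$ is noetherian, $\mrm{H}^{-n}(A)$ is a finitely generated $\mrm{H}^0(A)$-module. The irreducibility of $\opn{Spec}(\mrm{H}^0(A))$ means the ring $\mrm{H}^0(A)$ has a unique minimal prime, so Proposition \ref{prop:full-supp} applies to $\mrm{H}^{-n}(A)$ and yields
\[
\opn{Supp}_{\mrm{H}^0(A)}\left(\mrm{H}^{-n}(A)\right) = \opn{Spec}\left(\mrm{H}^0(A)\right).
\]

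With full support in hand, for any $\bar{\p} \in \opn{Spec}(\mrm{H}^0(A))$ we have $\mrm{H}^{-n}(A)_{\bar{\p}} \ne 0$, which by \eqref{eqn:loc-coho} gives $\mrm{H}^{-n}(A_{\bar{\p}}) \ne 0$, and hence $\inf(A_{\bar{\p}}) = -n$. Combined with the general inequality $\amp(A_{\bar{\p}}) \le \amp(A)$ from the discussion preceding Lemma \ref{lem:local-ten-hz}, this forces $\amp(A_{\bar{\p}}) = \amp(A)$. Now Theorem \ref{thm:localization} directly concludes that $\left(A_{\bar{\p}},\bar{\p}\cdot \mrm{H}^0(A_{\bar{\p}})\right)$ is local-Cohen-Macaulay.

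There is really no technical obstacle here; the corollary is essentially a bookkeeping exercise combining three earlier results (Proposition \ref{prop:dim-of-inf-cm}, Proposition \ref{prop:full-supp}, and Theorem \ref{thm:localization}). The only conceptual point to verify carefully is that the dualizing DG-module hypothesis, which is what Theorem \ref{thm:localization} needs to pass to the localization (through \cite[Corollary 6.11]{Sh}), is precisely what is being assumed in the corollary, so no additional work is required on that front.
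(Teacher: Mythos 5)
Your proposal is correct and follows exactly the paper's own argument: apply Proposition \ref{prop:dim-of-inf-cm} to get maximal dimension of $\mrm{H}^{\inf(A)}(A)$, use Proposition \ref{prop:full-supp} with irreducibility to get full support, and conclude via Theorem \ref{thm:localization}. The extra detail you supply on why full support forces $\amp(A_{\bar{\p}}) = \amp(A)$ is accurate and consistent with the paper.
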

\begin{proof}
By Proposition \ref{prop:dim-of-inf-cm},
we have that 
\[
\dim\left(\mrm{H}^{\inf(A)}(A)\right) = \dim(\mrm{H}^0(A)),
\]
so by Proposition \ref{prop:full-supp}, 
we deduce that
\[
\opn{Supp}\left(\mrm{H}^{\inf(A)}(A)\right) = \opn{Spec}(\mrm{H}^0(A)).
\]
Hence, for all $\bar{\p} \in \opn{Spec}(\mrm{H}^0(A))$,
we have that $\amp(A_{\bar{\p}}) = \amp(A)$,
so the result follows from Theorem \ref{thm:localization}.
\end{proof}

We now make a global definition of Cohen-Macaulay DG-rings:

\begin{dfn}
Let $A$ be a noetherian DG-ring with bounded cohomology.
We say that $A$ is a Cohen-Macaulay DG-ring if for all $\bar{\p} \in \opn{Spec}(\mrm{H}^0(A))$,
the DG-ring $A_{\bar{\p}}$ is a local-Cohen-Macaulay DG-ring.
\end{dfn}

\begin{prop}
Let $A$ be a noetherian DG-ring which is Gorenstein. Then $A$ is Cohen-Macaulay.
\end{prop}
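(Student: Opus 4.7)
The plan is to reduce immediately to the local case and invoke Proposition \ref{prop:GorisCM}. Assuming the global Gorenstein hypothesis on $A$ means that $\amp(A) < \infty$ and $A$ itself has finite injective dimension as a DG-module over $A$ (equivalently, by the theory of Frankild--Iyengar--J{\o}rgensen, $A$ is a dualizing DG-module over itself), the aim is to verify that, for every $\bar{\p} \in \opn{Spec}(\mrm{H}^0(A))$, the localization $(A_{\bar{\p}},\bar{\p}\cdot \mrm{H}^0(A_{\bar{\p}}))$ is again local Gorenstein; combined with Proposition \ref{prop:GorisCM} this immediately gives the local-Cohen-Macaulay property of every stalk, which is exactly the global Cohen-Macaulay condition.

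The first step is to show $\amp(A_{\bar{\p}}) \le \amp(A) < \infty$, which we have already recorded in the preliminaries as a formal consequence of the flatness of $A^0_{\p}$ over $A^0$ and the identity $\mrm{H}^n(A_{\bar{\p}}) = \mrm{H}^n(A)_{\bar{\p}}$ from \eqref{eqn:loc-coho}. The second and central step is to transfer the injective-dimension/dualizing property to the localization. Since $A$ is a dualizing DG-module over itself, and localization of DG-rings is cohomologically essentially smooth in the sense of \cite[Definition 6.4]{Sh}, we can apply \cite[Corollary 6.11]{Sh} (already used in the proof of Theorem \ref{thm:main-amp}(2)) to conclude that $A_{\bar{\p}}$ is a dualizing DG-module over $A_{\bar{\p}}$. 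In particular $\injdim_{A_{\bar{\p}}}(A_{\bar{\p}}) < \infty$, so that $(A_{\bar{\p}},\bar{\p}\cdot \mrm{H}^0(A_{\bar{\p}}))$ is a noetherian local Gorenstein DG-ring in the sense of \cite{FJ,FIJ}.

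Applying Proposition \ref{prop:GorisCM} to $A_{\bar{\p}}$ now yields that $A_{\bar{\p}}$ is local-Cohen-Macaulay. Since $\bar{\p}$ was arbitrary, this is precisely the statement that $A$ is a Cohen-Macaulay DG-ring in the global sense of the definition preceding the proposition. The main conceptual obstacle is simply the identification of the correct global notion of Gorenstein and the confirmation that it localizes well; once one accepts the dualizing-DG-module viewpoint this reduces to the already-established localization result \cite[Corollary 6.11]{Sh}. No further amplitude bookkeeping is required, because the Gorenstein condition directly supplies a dualizing DG-module at every stalk, bypassing any subtleties about change of amplitude under localization that were discussed in Section \ref{sec:localiz}.
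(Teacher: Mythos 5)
Your proposal is correct and follows essentially the same route as the paper: localize at an arbitrary $\bar{\p}$, use \cite[Corollary 6.11]{Sh} to see that $A_{\bar{\p}}$ is local Gorenstein, and conclude by Proposition \ref{prop:GorisCM} that every localization is local-Cohen-Macaulay. The only difference is that you unpack the citation via the viewpoint that $A$ is a dualizing DG-module over itself, which is exactly how the paper's argument works implicitly.
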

\begin{proof}
For any $\bar{\p} \in \opn{Spec}(\mrm{H}^0(A))$, 
it follows from \cite[Corollary 6.11]{Sh} that $A_{\bar{\p}}$ is a local Gorenstein DG-ring,
so by Proposition \ref{prop:GorisCM}, $A_{\bar{\p}}$ is local-Cohen-Macaulay.
\end{proof}

\begin{cor}
Let $A$ be a noetherian DG-ring with bounded cohomology,
and assume $A$ has a dualizing DG-module.
Suppose that $\opn{Spec}(\mrm{H}^0(A))$ is locally irreducible;
equivalently, any maximal ideal of $\opn{Spec}(\mrm{H}^0(A))$ contains a unique minimal prime ideal.
Then $A$ is a Cohen-Macaulay DG-ring if and only if for any maximal ideal $\bar{\m} \in \opn{Spec}(\mrm{H}^0(A))$,
the DG-ring $A_{\bar{\m}}$ is local-Cohen-Macaulay.
\end{cor}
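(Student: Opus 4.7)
The forward direction is immediate from the definition of a Cohen-Macaulay DG-ring: if $A$ is Cohen-Macaulay, then by definition $A_{\bar{\p}}$ is local-Cohen-Macaulay for \emph{every} $\bar{\p} \in \opn{Spec}(\mrm{H}^0(A))$, so in particular for every maximal ideal. So the real content is the reverse direction, and the strategy is to reduce to Corollary \ref{cor:irr-loc} via a two-step localization.

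The plan is as follows. Suppose every maximal ideal $\bar{\m}$ of $\mrm{H}^0(A)$ has $A_{\bar{\m}}$ local-Cohen-Macaulay, and let $\bar{\p} \in \opn{Spec}(\mrm{H}^0(A))$ be an arbitrary prime. Choose a maximal ideal $\bar{\m}$ containing $\bar{\p}$. First, I would verify that $A_{\bar{\m}}$ satisfies the hypotheses of Corollary \ref{cor:irr-loc}: it is a noetherian local-Cohen-Macaulay DG-ring by assumption; it admits a dualizing DG-module, because if $R$ is a dualizing DG-module over $A$, then by \cite[Corollary 6.11]{Sh} the localization $R_{\bar{\m}}$ is a dualizing DG-module over $A_{\bar{\m}}$; and $\opn{Spec}(\mrm{H}^0(A_{\bar{\m}})) = \opn{Spec}(\mrm{H}^0(A)_{\bar{\m}})$ is irreducible, because the local irreducibility assumption says precisely that $\bar{\m}$ contains a unique minimal prime of $\mrm{H}^0(A)$, which then becomes the unique minimal prime of $\mrm{H}^0(A)_{\bar{\m}}$.

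Next, applying Corollary \ref{cor:irr-loc} to $A_{\bar{\m}}$, I conclude that $(A_{\bar{\m}})_{\bar{\q}}$ is local-Cohen-Macaulay for every $\bar{\q} \in \opn{Spec}(\mrm{H}^0(A_{\bar{\m}}))$. Taking $\bar{\q} = \bar{\p}\cdot \mrm{H}^0(A)_{\bar{\m}}$ and invoking transitivity of localization $A_{\bar{\p}} \cong (A_{\bar{\m}})_{\bar{\p}\cdot \mrm{H}^0(A)_{\bar{\m}}}$ (which follows at the level of $\mrm{H}^0$ from standard commutative algebra, and then for the whole DG-ring from the definition $A_{\bar{\p}} = A\otimes_{A^0} A^0_{\p}$ and the analogous identity for $A_{\bar{\m}}$), I obtain that $A_{\bar{\p}}$ is local-Cohen-Macaulay. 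Since $\bar{\p}$ was arbitrary, this establishes that $A$ is Cohen-Macaulay.

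The argument is essentially formal once Corollary \ref{cor:irr-loc} is available, so I do not expect a genuine obstacle. The only point that needs mild care is checking that localization commutes as $DG$-rings and that irreducibility passes from the neighborhood of $\bar{\m}$ in $\opn{Spec}(\mrm{H}^0(A))$ to $\opn{Spec}(\mrm{H}^0(A_{\bar{\m}}))$; both are straightforward from the formula $\mrm{H}^0(A_{\bar{\m}}) = \mrm{H}^0(A)_{\bar{\m}}$ (equation (\ref{eqn:loc-coho})) and the usual correspondence between primes under localization.
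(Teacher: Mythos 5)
Your proposal is correct and follows essentially the same route as the paper: the forward direction is definitional, and the reverse direction applies Corollary \ref{cor:irr-loc} to $A_{\bar{\m}}$ (which is local-Cohen-Macaulay by hypothesis, inherits a dualizing DG-module by \cite[Corollary 6.11]{Sh}, and has irreducible spectrum since $\bar{\m}$ contains a unique minimal prime), then uses transitivity of localization to handle an arbitrary prime $\bar{\p} \subseteq \bar{\m}$. Your write-up is in fact slightly more explicit than the paper's about the dualizing DG-module localizing and about the identification $A_{\bar{\p}} \cong (A_{\bar{\m}})_{\bar{\p}\cdot \mrm{H}^0(A)_{\bar{\m}}}$, but these are the same ingredients the paper relies on.
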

\begin{proof}
The assumption that $\opn{Spec}(\mrm{H}^0(A))$ is locally irreducible implies that for any maximal ideal 
$\bar{\m} \in \opn{Spec}(\mrm{H}^0(A))$,
the ring $\mrm{H}^0(A_{\bar{\m}})$ has an irreducible spectrum, 
and by assumption the DG-ring $A_{\bar{\m}}$ is local-Cohen-Macaulay.
Since the property of having a single minimal prime ideal is preserved by localization,
the result now follows from Corollary \ref{cor:irr-loc}.
\end{proof}

\begin{cor}
Let $A$ be a noetherian DG-ring with bounded cohomology,
and assume $A$ has a dualizing DG-module.
Let $n = \amp(A)$, and suppose that the $\mrm{H}^0(A)$-module $\mrm{H}^{-n}(A)$ has full support.
Then $A$ is a Cohen-Macaulay DG-ring if and only if for any maximal ideal $\bar{\m} \in \opn{Spec}(\mrm{H}^0(A))$,
the DG-ring $A_{\bar{\m}}$ is local-Cohen-Macaulay.
\end{cor}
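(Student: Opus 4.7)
The plan is to prove the nontrivial (``if'') direction by a standard two-step localization argument, using the full-support hypothesis to keep the amplitude constant so that Theorem \ref{thm:localization} can be applied. The forward direction is immediate from the definition of a Cohen-Macaulay DG-ring, so only the reverse direction needs work.

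Fix an arbitrary prime $\bar{\p} \in \opn{Spec}(\mrm{H}^0(A))$ and pick a maximal ideal $\bar{\m} \in \opn{Spec}(\mrm{H}^0(A))$ with $\bar{\p} \subseteq \bar{\m}$. By hypothesis, $A_{\bar{\m}}$ is a local-Cohen-Macaulay DG-ring. Since $A$ has a dualizing DG-module $R$, the localization $R_{\bar{\m}}$ is a dualizing DG-module over $A_{\bar{\m}}$ (as already used in the proof of Theorem \ref{thm:main-amp}, via \cite[Corollary 6.11]{Sh}). The full-support assumption gives $\mrm{H}^{-n}(A)_{\bar{\m}} \ne 0$ and $\mrm{H}^{-n}(A)_{\bar{\p}} \ne 0$, hence, since $\amp(A_{\bar{\q}}) \le \amp(A) = n$ for every prime $\bar{\q}$, we obtain
\[
\amp(A_{\bar{\m}}) = n = \amp(A_{\bar{\p}}).
\]

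Now let $\bar{\p}' := \bar{\p}\cdot \mrm{H}^0(A_{\bar{\m}}) \in \opn{Spec}(\mrm{H}^0(A_{\bar{\m}}))$ be the prime of $\mrm{H}^0(A_{\bar{\m}})$ corresponding to $\bar{\p}$. By flatness and transitivity of localization at the level of $A^0$, there is a natural identification $(A_{\bar{\m}})_{\bar{\p}'} \cong A_{\bar{\p}}$, so in particular $\amp\bigl((A_{\bar{\m}})_{\bar{\p}'}\bigr) = \amp(A_{\bar{\p}}) = n = \amp(A_{\bar{\m}})$. Therefore the amplitude hypothesis of Theorem \ref{thm:localization} is fulfilled for the local-Cohen-Macaulay DG-ring $A_{\bar{\m}}$ (which has the dualizing DG-module $R_{\bar{\m}}$) and the prime $\bar{\p}'$. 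Applying Theorem \ref{thm:localization} yields that $(A_{\bar{\m}})_{\bar{\p}'} \cong A_{\bar{\p}}$ is local-Cohen-Macaulay. Since $\bar{\p}$ was arbitrary, $A$ is a Cohen-Macaulay DG-ring.

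The only delicate point is ensuring the amplitude equality $\amp(A_{\bar{\p}}) = \amp(A_{\bar{\m}})$ needed to invoke Theorem \ref{thm:localization}; this is exactly where the full-support hypothesis on $\mrm{H}^{-n}(A)$ is essential, and without it the counterexample discussed in Section \ref{sec:localiz} would obstruct the argument. Once this is in place, the rest is a routine reduction of global Cohen-Macaulayness to the local hypotheses at maximal ideals.
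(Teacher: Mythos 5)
Your proposal is correct and is essentially the paper's argument: the paper proves this corollary by directly invoking Theorem \ref{thm:localization}, and your write-up simply makes explicit the intended reduction, namely localizing first at a maximal ideal $\bar{\m} \supseteq \bar{\p}$, using the full-support hypothesis on $\mrm{H}^{-n}(A)$ to guarantee $\amp(A_{\bar{\p}}) = \amp(A_{\bar{\m}}) = \amp(A)$, and identifying $(A_{\bar{\m}})_{\bar{\p}'} \cong A_{\bar{\p}}$ by transitivity of localization. The verification details you supply (that $R_{\bar{\m}}$ is dualizing over $A_{\bar{\m}}$ and that the amplitude hypothesis of Theorem \ref{thm:localization} holds) are exactly what the paper's one-line proof leaves implicit.
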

\begin{proof}
This follows from Theorem \ref{thm:localization}.
\end{proof}

We finish this section with the following observation about the cohomology of a dualizing DG-module over
a local-Cohen-Macaulay DG-ring, as it follows from applying localization to it.

\begin{prop}
Let $(A,\bar{\m})$ be a noetherian local-Cohen-Macaulay DG-ring,
and let $R$ be a dualizing DG-module over $A$.
Then
\[
\opn{Supp}\left(\mrm{H}^{\inf(A)}(A)\right) \subseteq \opn{Supp}\left(\mrm{H}^{\inf(R)}(R)\right),
\]
and
\[
\opn{Supp}\left(\mrm{H}^{\inf(A)}(A)\right) \subseteq \opn{Supp}\left(\mrm{H}^{\sup(R)}(R)\right).
\]
\end{prop}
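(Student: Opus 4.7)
My plan is to reduce to the local situation at a prime $\bar{\p}$ in the given support and exploit the fact that local-Cohen-Macaulayness is preserved under such a localization, so that the dualizing DG-module also becomes a dualizing DG-module of the same amplitude there.

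Concretely, fix $\bar{\p} \in \opn{Supp}(\mrm{H}^{\inf(A)}(A))$. First I would observe that since $\mrm{H}^{\inf(A)}(A)_{\bar{\p}} \neq 0$ and $\mrm{H}^i(A_{\bar{\p}}) \cong \mrm{H}^i(A)_{\bar{\p}}$ for all $i$, we have $\inf(A_{\bar{\p}}) = \inf(A)$, and since localization cannot raise cohomological degrees we also get $\sup(A_{\bar{\p}}) = \sup(A) = 0$, so $\amp(A_{\bar{\p}}) = \amp(A)$. Because $A$ has a dualizing DG-module $R$, Theorem \ref{thm:localization} applies and yields that $A_{\bar{\p}}$ is itself local-Cohen-Macaulay. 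Moreover, by \cite[Corollary 6.11]{Sh} (already used in the proof of Theorem \ref{thm:main-amp}) the localization $R_{\bar{\p}}$ is a dualizing DG-module over $A_{\bar{\p}}$.

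Now I would apply Proposition \ref{prop:CM-by-DC} to the local-Cohen-Macaulay DG-ring $A_{\bar{\p}}$ with the dualizing DG-module $R_{\bar{\p}}$ to obtain
\[
\amp(R_{\bar{\p}}) \;=\; \amp(A_{\bar{\p}}) \;=\; \amp(A) \;=\; \amp(R),
\]
where the last equality uses Proposition \ref{prop:CM-by-DC} applied to $A$ itself. Since $\mrm{H}^i(R_{\bar{\p}}) \cong \mrm{H}^i(R)_{\bar{\p}}$, nonvanishing of a cohomology of $R_{\bar{\p}}$ forces nonvanishing of the same cohomology of $R$, so $\inf(R_{\bar{\p}}) \ge \inf(R)$ and $\sup(R_{\bar{\p}}) \le \sup(R)$. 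Combined with the equality of amplitudes, both inequalities are equalities: $\inf(R_{\bar{\p}}) = \inf(R)$ and $\sup(R_{\bar{\p}}) = \sup(R)$.

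Therefore $\mrm{H}^{\inf(R)}(R)_{\bar{\p}} \cong \mrm{H}^{\inf(R_{\bar{\p}})}(R_{\bar{\p}}) \neq 0$ and $\mrm{H}^{\sup(R)}(R)_{\bar{\p}} \cong \mrm{H}^{\sup(R_{\bar{\p}})}(R_{\bar{\p}}) \neq 0$, which gives $\bar{\p} \in \opn{Supp}(\mrm{H}^{\inf(R)}(R)) \cap \opn{Supp}(\mrm{H}^{\sup(R)}(R))$, as desired. There is no substantive obstacle here beyond assembling these ingredients; the main conceptual point is simply that Theorem \ref{thm:localization} provides precisely the localization-stability of the local-Cohen-Macaulay property that is needed, after one checks the amplitude equality $\amp(A_{\bar{\p}}) = \amp(A)$ guaranteed by the hypothesis $\bar{\p} \in \opn{Supp}(\mrm{H}^{\inf(A)}(A))$.
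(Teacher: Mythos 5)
Your proposal is correct and follows essentially the same route as the paper: localize at $\bar{\p}$, note $\amp(A_{\bar{\p}})=\amp(A)$, use the localization theorem (whose proof already gives $\amp(R_{\bar{\p}})=\amp(R)$, which you recover by applying Proposition \ref{prop:CM-by-DC} at $\bar{\p}$), and conclude from $\mrm{H}^i(R_{\bar{\p}})\cong\mrm{H}^i(R)_{\bar{\p}}$ that the extreme cohomologies of $R$ survive localization. Your write-up just makes explicit the standard inequalities $\inf(R_{\bar{\p}})\ge\inf(R)$, $\sup(R_{\bar{\p}})\le\sup(R)$ that the paper leaves implicit.
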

\begin{proof}
Given $\bar{\p} \in \opn{Supp}\left(\mrm{H}^{\inf(A)}(A)\right)$,
we have that $\amp(A_{\bar{\p}}) = \amp(A)$,
so by the proof of Theorem \ref{thm:localization}, 
we have that $\amp(R_{\bar{\p}}) = \amp(R)$.
This in turn implies that
\[
\bar{\p} \in \opn{Supp}\left(\mrm{H}^{\inf(R)}(R)\right),
\]
and that
\[
\bar{\p} \in \opn{Supp}\left(\mrm{H}^{\sup(R)}(R)\right).
\]
\end{proof}

\section{Some remarks on non-negatively graded commutative DG-rings}\label{sec:positive}

In this final section we will work with non-negatively graded commutative DG-rings 
\[
A = \bigoplus_{n = 0}^{\infty} A^n
\]
with a differential of degree $+1$.
We wish to discuss here the question:
when is such a DG-ring Cohen-Macaulay?
We briefly discuss a possible answer to this question.
We will continue to work with the assumption that $A$ is noetherian local and has bounded cohomology;
that is, we assume that $(\mrm{H}^0(A),\bar{\m})$ is a noetherian local ring,
for each $i > 0$, the $\mrm{H}^0(A)$-module $\mrm{H}^i(A)$ is finitely generated,
and for $i>>0$ we have that $\mrm{H}^i(A) = 0$.

Similarly to Section \ref{sec:lc},
the results of \cite{BIK} still apply in the non-negative setting,
and there is a local cohomology functor 
\[
\mrm{R}\Gamma^A_{\bar{\m}}: \cat{D}(A) \to \cat{D}(A).
\]

Note that in the non-negative setting,
the map $\mrm{H}^0(A) \to A$ goes in the other direction,
so there is a forgetful functor $Q:\cat{D}(A) \to \cat{D}(\mrm{H}^0(A))$.
It follows from \cite[Proposition 2.7]{BIK2} that local cohomology commutes with $Q$,
so that there is an isomorphism
\begin{equation}\label{eqn:LCcomNN}
Q \circ \mrm{R}\Gamma^A_{\bar{\m}}(-) \cong \mrm{R}\Gamma^{\bar{A}}_{\bar{\m}}\circ Q(-)
\end{equation}
of functors $\cat{D}(A) \to \cat{D}(\mrm{H}^0(A))$.

This implies that working with local cohomology in the non-negative setting is actually easier,
as the computation immediately reduce to the local ring $\mrm{H}^0(A)$.
This allows us to construct the following counterexample to Theorem \ref{thm:main-amp} in the non-negative setting:

\begin{exa}
Let $(B,\m)$ be a noetherian local Cohen-Macaulay ring of dimension $1$,
and let $\k = B/\m$.
Consider the non-negative noetherian local DG-ring $A = B \skewtimes \k[-1]$.
Then it follows from (\ref{eqn:LCcomNN}) that,
as a complex of $B$-modules
\[
\mrm{R}\Gamma_{\m}(A) \cong \mrm{R}\Gamma_{\m}(B) \oplus \k[-1] 
\]
is concentrated in degree $+1$,
so in particular 
\[
\amp\left(\mrm{R}\Gamma_{\m}(A)\right) = 0 < \amp(A) = 1.
\]
\end{exa}

The reason for the existence of this counterexample is that
\begin{equation}\label{eqn:dim-is-smaller}
\dim\left(\mrm{H}^{\sup(A)}\right) < \dim\left(\mrm{H}^0(A)\right).
\end{equation}
This of course cannot happen in the non-positive setting.

Now, for non-positive local-Cohen-Macaulay DG-rings,
not only that (\ref{eqn:dim-is-smaller}) is an equality,
but also, according to Proposition \ref{prop:dim-of-inf-cm},
there is a dual equality:
\[
\dim\left(\mrm{H}^{\inf(A)}\right) = \dim\left(\mrm{H}^0(A)\right).
\]
If one assumes that the inequality in (\ref{eqn:dim-is-smaller}) is an equality,
then it follows that the analogue of Theorem \ref{thm:main-amp} does hold in the non-negative setting.

In view of this discussion, we propose that a noetherian local non-negative DG-ring $(A,\bar{\m})$
with $\amp(A) < \infty$ will be called local-Cohen-Macaulay if it satisfies the following two conditions:

\begin{enumerate}
\item There is an equality
\[
\dim\left(\mrm{H}^{\sup(A)}(A)\right) = \dim\left(\mrm{H}^0(A)\right).
\]
\item There is an equality
\[
\amp\left(\mrm{R}\Gamma_{\bar{\m}}(A)\right) = \amp(A).
\]
\end{enumerate}

To give further evidence that this is a good definition,
consider a non-negative Gorenstein DG-rings $(A,\bar{\m})$.
It is often the case that if $R$ is a dualizing complex over $\mrm{H}^0(A)$,
then $\mrm{R}\opn{Hom}_{\mrm{H}^0(A)}(A,R)$ is a dualizing DG-module over $A$.
Assuming the uniqueness theorem for dualizing DG-modules holds in this setting,
the assumption that $A$ is Gorenstein implies that $A$ is isomorphic to a shift of 
$\mrm{R}\opn{Hom}_{\mrm{H}^0(A)}(A,R)$.
Then Grothendieck's local duality implies that
\[
\amp\left(\mrm{R}\Gamma_{\bar{\m}}(A)\right) = \amp(A).
\]
Moreover, a calculation shows that $\depth(A) = \dim(\mrm{H}^0(A))$,
and from this one can deduce that
\[
\dim\left(\mrm{H}^{\sup(A)}(A)\right) = \dim\left(\mrm{H}^0(A)\right),
\]
so that $A$ is local-Cohen-Macaulay in the above sense.

\textbf{Acknowledgments.}

The author is grateful for the anonymous referee for many comments that helped improving the manuscript.
The author thanks Amnon Yekutieli for many useful suggestions.
The author was partially supported by the Israel Science Foundation (grant no. 1346/15).
This work has been supported by Charles University Research Centre program No.UNCE/SCI/022.

\end{document}